\documentclass[reqno, 10pt]{amsart}

\usepackage[usenames,dvipsnames]{color}

\usepackage{amsthm,amsfonts,amssymb,amsmath,amsxtra}
\usepackage[all]{xy}
\SelectTips{cm}{}
\usepackage{xr-hyper}
\usepackage[colorlinks=
   citecolor=Black,
   linkcolor=Red,
   urlcolor=Blue
]{hyperref}
\usepackage{verbatim}

\usepackage[margin=1.45in]{geometry}

\usepackage{mathrsfs}

\RequirePackage{xspace}
\RequirePackage{etoolbox}
\RequirePackage{varwidth}
\RequirePackage{enumitem}
\RequirePackage{tensor}
\RequirePackage{mathtools}
\RequirePackage{longtable}
\RequirePackage{multirow}

\newcommand\reallywidehat[1]{\arraycolsep=0pt\relax%
\begin{array}{c}
\stretchto{
  \scaleto{
    \scalerel*[\widthof{\ensuremath{#1}}]{\kern-.5pt\bigwedge\kern-.5pt}
    {\rule[-\textheight/2]{1ex}{\textheight}} 
  }{\textheight} %
}{0.8ex}\\           
#1\\                 
\rule{-1ex}{0ex}
\end{array}
}

\setcounter{tocdepth}{2}

\newcommand{\sB}{\ensuremath{\mathscr{B}}\xspace}

\newcommand{\sD}{\ensuremath{\mathscr{D}}\xspace}

\newcommand{\sG}{\ensuremath{\mathscr{G}}\xspace}

\newcommand{\sM}{\ensuremath{\mathscr{M}}\xspace}

\newcommand{\sP}{\ensuremath{\mathscr{P}}\xspace}

\newcommand{\sS}{\ensuremath{\mathscr{S}}\xspace}

\newcommand{\sZ}{\ensuremath{\mathscr{Z}}\xspace}

\newcommand{\fkm}{\ensuremath{\mathfrak{m}}\xspace}

\newcommand{\fkM}{\ensuremath{\mathfrak{M}}\xspace}

\newcommand{\et}{{\text{\rm \'et}}}

\newcommand{\diam}{{\Diamond}}

\newcommand{\BA}{\ensuremath{\mathbb {A}}\xspace}
\newcommand{\BB}{\ensuremath{\mathbb {B}}\xspace}

\newcommand{\BD}{\ensuremath{\mathbb {D}}\xspace}

\newcommand{\BF}{\ensuremath{\mathbb {F}}\xspace}
\newcommand{\BG}{\ensuremath{\mathbb {G}}\xspace}
\newcommand{\BH}{\ensuremath{\mathbb {H}}\xspace}

\newcommand{\BM}{\ensuremath{\mathbb {M}}\xspace}

\newcommand{\BP}{\ensuremath{\mathbb {P}}\xspace}
\newcommand{\BQ}{\ensuremath{\mathbb {Q}}\xspace}
\newcommand{\BR}{\ensuremath{\mathbb {R}}\xspace}

\newcommand{\BW}{\ensuremath{\mathbb {W}}\xspace}
\newcommand{\BX}{\ensuremath{\mathbb {X}}\xspace}

\newcommand{\BZ}{\ensuremath{\mathbb {Z}}\xspace}

\newcommand{\CD}{\ensuremath{\mathcal {D}}\xspace}
\newcommand{\CE}{\ensuremath{\mathcal {E}}\xspace}
\newcommand{\CF}{\ensuremath{\mathcal {F}}\xspace}
\newcommand{\CG}{\ensuremath{\mathcal {G}}\xspace}
\newcommand{\CH}{\ensuremath{\mathcal {H}}\xspace}

\newcommand{\CL}{\ensuremath{\mathcal {L}}\xspace}
\newcommand{\CM}{\ensuremath{\mathcal {M}}\xspace}

\newcommand{\CO}{\ensuremath{\mathcal {O}}\xspace}
\newcommand{\CP}{\ensuremath{\mathcal {P}}\xspace}
\newcommand{\CQ}{\ensuremath{\mathcal {Q}}\xspace}

\newcommand{\CT}{\ensuremath{\mathcal {T}}\xspace}

\newcommand{\CY}{\ensuremath{\mathcal {Y}}\xspace}
\newcommand{\CZ}{\ensuremath{\mathcal {Z}}\xspace}

\newcommand{\ad}{{\mathrm{ad}}}

\DeclareMathOperator{\Aut}{Aut}

\DeclareMathOperator{\diag}{diag}

\DeclareMathOperator{\Gal}{Gal}
\newcommand{\GL}{\mathrm{GL}}

\newcommand{\GO}{\mathrm{GO}}

\let\Im\relax
\DeclareMathOperator{\Im}{Im}

\newcommand{\loc}{\ensuremath{\mathrm{loc}}\xspace}

\newcommand{\red}{\ensuremath{\mathrm{red}}\xspace}

\DeclareMathOperator{\Res}{Res}

\newcommand{\SL}{{\mathrm{SL}}}
\DeclareMathOperator{\Spa}{Spa\,}
\DeclareMathOperator{\Spec}{Spec\,}
\DeclareMathOperator{\Spd}{Spd\,}
\DeclareMathOperator{\Spf}{Spf\,}

\newcommand{\Sp}{{\mathrm{Sp}}}

\newcommand{\wt}{\widetilde}
\newcommand{\wh}{\widehat}


\newcommand{\lps}{[\![}
\newcommand{\rps}{]\!]}


%
\newtheorem{theorem}{Theorem}
\newtheorem{proposition}[theorem]{Proposition}
\newtheorem{lemma}[theorem]{Lemma}
\newtheorem {conjecture}[theorem]{Conjecture}
\newtheorem{corollary}[theorem]{Corollary}

\theoremstyle{definition}

\newtheorem{definition}[theorem]{Definition}

\newtheorem{remark}[theorem]{Remark}

\newenvironment{altenumerate}
   {\begin{list}
      {\textup{(\theenumi)} }
      {\usecounter{enumi}
       \setlength{\labelwidth}{0pt}
       \setlength{\labelsep}{0pt}
       \setlength{\leftmargin}{0pt}
       \setlength{\itemsep}{\the\smallskipamount}
       \renewcommand{\theenumi}{\roman{enumi}}
      }}
   {\end{list}}
\newenvironment{altitemize}
   {\begin{list}
      {$\bullet$}
      {\setlength{\labelwidth}{0pt}
	   \setlength{\itemindent}{5pt}
       \setlength{\labelsep}{5pt}
       \setlength{\leftmargin}{0pt}
       \setlength{\itemsep}{\the\smallskipamount}
      }}
   {\end{list}}

\numberwithin{equation}{subsection}
\numberwithin{theorem}{subsection}




\setitemize[0]{leftmargin=0.3in,itemsep=\the\smallskipamount}
\setenumerate[0]{leftmargin=0.3in,itemsep=\the\smallskipamount}

\renewcommand{\to}{%
   \ifbool{@display}{\longrightarrow}{\rightarrow}%
   }
\let\shortmapsto\mapsto
\renewcommand{\mapsto}{%
   \ifbool{@display}{\longmapsto}{\shortmapsto}%
   }
\newlength{\olen}
\newlength{\ulen}
\newlength{\xlen}
\newcommand{\xra}[2][]{%
   \ifbool{@display}%
      {\settowidth{\olen}{$\overset{#2}{\longrightarrow}$}%
       \settowidth{\ulen}{$\underset{#1}{\longrightarrow}$}%
       \settowidth{\xlen}{$\xrightarrow[#1]{#2}$}%
       \ifdimgreater{\olen}{\xlen}%
          {\underset{#1}{\overset{#2}{\longrightarrow}}}%
          {\ifdimgreater{\ulen}{\xlen}%
             {\underset{#1}{\overset{#2}{\longrightarrow}}}
             {\xrightarrow[#1]{#2}}}}%
      {\xrightarrow[#1]{#2}}
   }
\makeatother
\newcommand{\xyra}[2][]{%
   \settowidth{\xlen}{$\xrightarrow[#1]{#2}$}%
   \ifbool{@display}%
      {\settowidth{\olen}{$\overset{#2}{\longrightarrow}$}%
       \settowidth{\ulen}{$\underset{#1}{\longrightarrow}$}%
       \ifdimgreater{\olen}{\xlen}%
          {\mathrel{\xymatrix@M=.12ex@C=3.2ex{\ar[r]^-{#2}_-{#1} &}}}%
          {\ifdimgreater{\ulen}{\xlen}%
             {\mathrel{\xymatrix@M=.12ex@C=3.2ex{\ar[r]^-{#2}_-{#1} &}}}
             {\mathrel{\xymatrix@M=.12ex@C=\the\xlen{\ar[r]^-{#2}_-{#1} &}}}}}%
      {\mathrel{\xymatrix@M=.12ex@C=\the\xlen{\ar[r]^-{#2}_-{#1} &}}}%
   }
\makeatletter
\newcommand{\xla}[2][]{%
   \ifbool{@display}%
      {\settowidth{\olen}{$\overset{#2}{\longleftarrow}$}%
       \settowidth{\ulen}{$\underset{#1}{\longleftarrow}$}%
       \settowidth{\xlen}{$\xleftarrow[#1]{#2}$}%
       \ifdimgreater{\olen}{\xlen}%
          {\underset{#1}{\overset{#2}{\longleftarrow}}}%
          {\ifdimgreater{\ulen}{\xlen}%
             {\underset{#1}{\overset{#2}{\longleftarrow}}}
             {\xleftarrow[#1]{#2}}}}%
      {\xleftarrow[#1]{#2}}
   }
\newcommand{\isoarrow}{%
   \ifbool{@display}{\overset{\sim}{\longrightarrow}}{\xrightarrow\sim}%
   }

\newcommand{\quash}[1]{}

 \usepackage{relsize} 
\usepackage[bbgreekl]{mathbbol} 
\usepackage{amsfonts} 
\DeclareSymbolFontAlphabet{\mathbb}{AMSb} 
\DeclareSymbolFontAlphabet{\mathbbl}{bbold}


\newcommand{\LG}{\wh{{L}^+_W\CG}}

 \newcommand{\Ms}{ {\mathcal {M}}^{\rm int}_{/x_0}}
\newcommand{\und}{\underline}

 \newcommand{\br}{\breve}

\newcommand{\Loc}{{\rm {Loc}}}
\newcommand{\Adm}{{\rm {Adm}}}


\newcommand{\into}{\hookrightarrow}
\newcommand{\hook}{\hookrightarrow}
\newcommand{\La}{\Lambda}

\newcommand{\iso}{\xrightarrow{\sim}}

\setcounter{tocdepth}{1}
\begin{document}
 
\title{On integral local Shimura varieties }
\author[G. Pappas]{Georgios Pappas}
\address{Dept. of Mathematics, Michigan State University, E. Lansing, MI 48824, USA}
\email{pappasg@msu.edu}

\author[M. Rapoport]{Michael Rapoport}
\address{Mathematisches Institut der Universit\"at Bonn, Endenicher Allee 60, 53115 Bonn, Germany} 
\email{rapoport@math.uni-bonn.de}

\date{\today}

\begin{abstract}{We give a construction of \emph{integral local Shimura varieties} which are formal schemes that generalize the well-known  integral models of the Drinfeld $p$-adic upper half spaces. The construction applies to all classical groups, at least for odd $p$. These formal schemes also generalize the formal schemes defined by Rapoport-Zink via moduli of $p$-divisible groups, and are characterized purely in group-theoretic terms. 

More precisely, for a local $p$-adic Shimura datum $(G, b, \mu)$ and a quasi-parahoric group scheme $\CG$ for $G$, Scholze has defined a functor on perfectoid spaces which parametrizes $p$-adic shtukas. He  conjectured that this functor is representable by a normal formal scheme which is locally formally of finite type and flat over $O_{\br E}$. Scholze-Weinstein proved this conjecture  when $(G, b, \mu)$ is of (P)EL type  by using Rapoport-Zink formal schemes. 
 We prove this conjecture for any $(G, \mu)$  of abelian type when  $p\neq 2$, and when  $p=2$ and $G$ is of type $A$ or $C$.
We also relate the generic fiber of this formal scheme to the local Shimura variety, a rigid-analytic space attached by Scholze to $(G, b, \mu, \CG)$. 
}  
\end{abstract}

\maketitle

\tableofcontents

 \section{Introduction}
 
 Recall the  mechanism of Shimura varieties: to a Shimura datum $(G, X)$ (here the first entry is a reductive group over $\BQ$), there is associated a tower of algebraic varieties $( {\rm Sh}_K(G, X)\mid K\subset G(\BA_f))$ over the reflex field $E=E(G, X)$. Here, $K$ runs through the open compact subgroups of $G(\BA_f)$. Furthermore, at least if $(G, X)$ is of PEL-type, there exist integral $p$-adic models if $K$ is of the form $K=K^p K_p$, where $K_p$ satisfies some additional conditions. More precisely, fix a prime number $p$ and a $p$-adic place $v$ of $E$. Then if $K_p\subset G(\BQ_p)$ is the stabilizer of a point in the extended Bruhat-Tits building of $G_{\BQ_p}$, there exists in the PEL-type case an integral model $\sS_K(G, X)$ over $O_{E, v}$.
  The study of these integral models has been  the focus of interest for many years, with spectacular applications in arithmetic.   Furthermore, in recent years such models have  been constructed even for Shimura varieties of abelian type, comp. \cite{KP}, \cite{KZhou}, see \cite{PaICM}. In the present paper, we are concerned with a local $p$-adic analogue of this mechanism. 
  
 We fix a prime number $p$. There is the notion of a local Shimura datum $(G, b, \mu)$ \cite{RV}, a $p$-adic analogue of the notion of a (global) Shimura datum. Here $G$ is a reductive group over $\BQ_p$, and $b\in G(\br \BQ_p)$ and $\mu$ is a conjugacy class of minuscule cocharacters of $G$. Let $E=E(G, \mu)$ be the local reflex field. In \cite{RV}, it is postulated that there should be an associated tower of rigid-analytic varieties $( {\rm Sht}_K(G, b, \mu)\mid K\subset G(\BQ_p))$ over the completion $\br E$ of the maximal unramified extension of $E$. This idea was put into reality by P.~Scholze, see \cite{SchICM}. He defined functors on the category ${\rm Perfd}_k$ of perfectoid spaces over the residue field $k$ of $\br E$ and showed that they are representable by rigid-analytic spaces. Furthermore, if $\CG$ is a smooth model of $G$ over $\BZ_p$, Scholze defines a functor $\CM^{\rm int}_{\CG, b, \mu}$ on ${\rm Perfd}_k$ which he shows to be a $v$-sheaf.  Let $\CG$ be a quasi-parahoric group scheme, in the sense of  \cite[\S 21.5]{Schber}; see \S\ref{def:quasi}. This is a natural class of smooth group scheme models over $\BZ_p$ of $G$. In the analogous function field case, T. Richarz \cite{Ric} has proved that quasi-parahoric group schemes can be characterized as those smooth group scheme models of $G$ for which the corresponding affine Grassmannian is ind-proper. Scholze has conjectured that $\CM^{\rm int}_{\CG, b, \mu}$ is representable by a formal scheme $\sM_{\CG, b, \mu}$ which is normal and flat and locally formally of finite type over $O_{\br E}$. Assuming this conjecture, Scholze-Weinstein \cite{Schber}  show that when $\CG$ is a parahoric group scheme for $G$ and $K=\CG(\BZ_p)$, then $\sM_{\CG, b, \mu}$ is an integral model of ${\rm Sht}_K(G, b, \mu)$ where $K=\CG(\BZ_p)$, i.e., the rigid-analytic generic fiber $\sM_{\CG, b, \mu}^{\rm rig}$ of $\CM^{\rm int}_{\CG, b, \mu}$ can be identified    with ${\rm Sht}_K(G, b, \mu)$. In \cite{Schber} the representability conjecture is proved when $(G, b, \mu)$ is of EL-type, and  in many cases when $(G, b, \mu)$ is of PEL-type, by relating the functor $\CM^{\rm int}_{\CG, b, \mu}$ to Rapoport-Zink formal schemes. The rigid-analytic variety $ {\rm Sht}_K(G, b, \mu)$ is called the \emph{local Shimura variety} (associated to the local Shimura datum $(G, b, \mu)$ and the open compact subgroup $K$ of $G(\BQ_p)$) and the functor $\CM^{\rm int}_{\CG, b, \mu}$, resp. the formal scheme $\sM_{\CG, b, \mu}$  the \emph{integral local Shimura variety} (for the quasi-parahoric group scheme $\CG$ for $G$).

 It is interesting to observe that the ``classical'' approach of \cite{R-Z} proceeds in the reverse way compared to the approach in \cite{Schber}. Namely, when $(G, b, \mu)$ arises from (P)EL-data, then for certain quasi-parahorics $\CG$ for $G$ one constructs a formal scheme $\sM_{\CG, b, \mu}$ by posing a certain moduli problem of $p$-divisible groups with additional structure on the category ${\rm Nilp}_{O_{\br E}}$. Then ${\rm Sht}_{\CG(\BZ_p)}(G, b, \mu)$  is \emph{defined} to be the generic fiber of $\sM_{\CG, b, \mu}$ and the rest of the  tower ${\rm Sht}_K(G, b, \mu)$ is constructed by imposing  level $K$ structures on the $p$-adic Tate module of the generic fiber of the universal $p$-divisible group over $\sM_{\CG, b, \mu}$.  This definition depends a priori on the choice of (P)EL data and  is not  obviously functorial in the triple $(\CG, b, \mu)$.  On the other hand,  this    approach has the dividend that  the structure of the formal schemes $\sM_{\CG, b, \mu}$ can be studied by making use of the  theory of $p$-divisible groups. The model for such a structure result  is Drinfeld's description of his integral model of the Drinfeld $p$-adic halfspace in \cite{Drin}. This approach has been used in a number of problems of arithmetic, e.g. in applications to the Zhang Arithmetic Fundamental Lemma conjecture \cite{Zha} and the Arithmetic Transfer conjecture \cite{RSZ} and to the Kudla-Rapoport Divisor Intersection conjecture \cite{KuRa}. The Rapoport-Zink approach has been generalized to certain Hodge type cases by W. Kim \cite{Kim}, Howard-Pappas \cite{HP}, Hamacher-Kim \cite{HamaKim}, and to some abelian type cases by Shen \cite{Shen}.
 Also, under a mild condition on $b$, there is a purely group-theoretical definition by B\"ultel-Pappas \cite{BP} of the moduli problem on ${\rm Nilp}_{O_{\br E}}$ for $\sM_{\CG, b, \mu}$ for hyperspecial parahoric group schemes $\CG$. In the Hodge type case, it is shown in \cite{BP} that this moduli problem, restricted to Noetherian test rings, is representable by a formal scheme. It coincides with the formal schemes in \cite{Kim}, \cite{HP} and \cite{HamaKim}.
 
 The Scholze-Weinstein approach has the advantage that the input data are purely group-theoretical and that the result is functorial.
  In this way, they are able in certain cases to identify two RZ formal schemes for different (P)EL data which define closely related group-theoretical data. For instance, using this approach  they  prove the conjectures  of Rapoport-Zink \cite{RZdrin} and of Kudla-Rapoport-Zink \cite{KRZ} which postulated such hidden identifications, cf. \cite[\S 25.4-25.5]{Schber}. The downside of this approach is that the global structure of the formal schemes $\sM_{\CG, b, \mu}$ is   harder to study. 
 
 We note that one expects a precise relation  between integral local Shimura varieties and formal completions of global Shimura varieties along isogeny loci of their reduction modulo $p$. This is provided  by the theory of non-archimedean uniformization in the Rapoport-Zink framework; something analogous is conjectured to hold in the general Scholze-Weinstein context, and is known   in the Hodge type case, cf. \cite[Thm. 1.3.3]{PRglsv}.

 In this paper, we are concerned with passing from the (P)EL case to the more general case when the local Shimura datum $(G, b, \mu)$ is of abelian type. Here the definition of this last term is modeled on the case of (global) Shimura varieties, cf. \cite{HPR}, \cite{HLR}.  Namely,  $(G, b, \mu)$ is of abelian type if the associated adjoint local Shimura datum  $(G_\ad, b_\ad, \mu_\ad)$ is isomorphic to the associated adjoint local Shimura datum  $(G_{1, \ad}, b_{1, \ad}, \mu_{1, \ad})$ to a local Shimura datum $(G_1, b_1, \mu_1)$, where  $(G_1, b_1, \mu_1)$ is of Hodge type (i.e., $(G_1,  \mu_1)$ admits an embedding into $(\GL_n,\mu_d)$, where $\mu_d$ is a minuscule coweight of $\GL_n$). In particular, $G_\ad$ is a  classical  group. We prove that Scholze's conjecture holds true when $(G, b, \mu)$ is of abelian type and either $p\neq 2$ or $p=2$ and $G_\ad$ is of type $A$ or $C$. We allow here general quasi-parahoric group schemes, even those outside the class singled out  in \cite[\S25.3]{Schber} (those for which the group $\Pi_\CG$ below is trivial). Allowing general quasi-parahorics  is important for two related reasons:  moduli problems leading to Rapoport-Zink spaces  often correspond to quasi-parahorics,  and allowing quasi-parahorics  is important for  devissage in the proofs.

 We also show that $\sM_{\CG, b, \mu}$ is an integral model of ${\rm Sht}_{K}(G, b, \mu)
$, in the following sense. Let
 \begin{equation}\label{mentionPi}
 \Pi_\CG=\ker ({\rm H}_{\et}^1(\BZ_p, \CG)\to {\rm H}_{\et}^1(\BQ_p, G) ) ,
 \end{equation} 
 a finite abelian group, cf. \cite[25.3]{Schber}. To every $\bar\beta\in \Pi_\CG$, we associate a quasi-parahoric group scheme $\CG_\beta$ for $G$ over $\BZ_p$ such that $\br K_\beta=\CG_\beta(\br\BZ_p)$ is conjugate to $\br K=\CG(\br\BZ_p)$ in $G(\br\BQ_p)$, and we prove that 
 \begin{equation}\label{mentiondec}
 \sM_{\CG, b, \mu}^{\rm rig}\simeq \bigsqcup_{\bar\beta\in\Pi_\CG}{\rm Sht}_{K_\beta}(G, b, \mu) .
 \end{equation}
This formula is reminiscent of the formula of Kottwitz \cite{KotPoints}, according to which the generic fiber of a PEL-moduli scheme in \cite{KotPoints} is a disjoint sum of  copies of Shimura varieties enumerated by $\ker ({\rm H}^1(\BQ, G)\to \prod_v{\rm H}^1(\BQ_v, G) ) $ (these copies are mutually isomorphic in types A and C, but not necessarily in type D). 

In fact, we prove that   the decomposition \eqref{mentiondec} comes by passing to the generic fiber of a decomposition of functors
 \begin{equation}\label{mentionformdec}
 \CM_{\CG, b, \mu}^{\rm int}\simeq \bigsqcup_{\bar\beta\in\Pi_\CG}\CM^{\rm int}_{\CG^o_\beta, b, \mu}/\pi_0(\CG_\beta)^\phi .
 \end{equation}
 Here $\CG^o_\beta$ denotes the parahoric group scheme associated to the quasi-parahoric group scheme $\CG_\beta$ (the neutral  connected component). This formula also gives a reduction of Scholze's conjecture from quasi-parahoric group schemes to parahoric group schemes. 
 
 We know quite a bit about the local structure of the formal scheme $\sM_{\CG, b, \mu}$. Indeed, let $\BM^\loc_{\CG, \mu}=\BM^\loc_{\CG^o, \mu}$ be the local model associated to the parahoric group scheme $\CG^o$ associated to $\CG$.  Here the associated $v$-sheaf on ${\rm Perfd}_k$ is defined in \cite[\S 21]{Schber} and the representability by a weakly normal scheme $\BM^\loc_{\CG^o, \mu}$ flat over $O_{\br E}$ is established in the case of a general local Shimura datum by J.~Ansch\"utz, I.~Gleason, J.~Louren\c co, T.~Richarz in  \cite{AGLR}. 
 In fact, $\BM^\loc_{\CG^o, \mu}$  is always normal with reduced special fiber 
 (by \cite{AGLR} and \cite{GL22} which settled some remaining cases for $p=2$ and $p=3$).   When $p\neq 2$, this local model also coincides for local Shimura data of abelian type with the local model in the style of Pappas and Zhu \cite{PZ} (modified in \cite{HPR}), as extended to groups which arise by restriction of scalars from wild extensions by B.~Levin \cite{Levin}. When $p=2$ and $G_\ad$ is of type $A$ or $C$, this local model also coincides with the local model obtained by taking the closure of the generic fiber in the naive local model of \cite{R-Z}. We also know that, if  $p\neq 2$,  $\BM^\loc_{\CG, \mu}$ is  Cohen-Macaulay \cite{HR2}. Then, under our assumptions above, we prove that for every $x\in  \CM_{\CG, b, \mu}^{\rm int}(k)$, there exists $y\in \BM^\loc_{\CG, \mu}(k)$ and an isomorphism of formal completions
 $$
\CM_{\CG, b, \mu/x}^{\rm int}\simeq \BM^{\rm loc}_{\CG,\mu/y} .
 $$
When $\CG$ is a parahoric group scheme, Gleason (\cite{Gl, Gl21}) has defined the formal completion $\CM^{\rm int}_{\CG, b, \mu/x}$  as a $v$-sheaf. In our approach, we first show that his definition extends to the case of an arbitrary quasi-parahoric group scheme and then show that $\CM^{\rm int}_{\CG, b, \mu/x}$ is representable (by the formal spectrum of a complete Noetherian local ring which is the completion of a corresponding local model, as above). The representability  of $\CM^{\rm int}_{\CG, b, \mu/x}$ for all $x$ is closely related to the representability of $\CM^{\rm int}_{\CG, b, \mu}$. In fact, in \cite{PRglsv}, we show  in the Hodge type case under certain hypotheses that, conversely, if all formal completions  are representable, then so is $\CM^{\rm int}_{\CG, b, \mu}$. Using this statement, we prove in \cite{PRglsv} the representability of $\CM^{\rm int}_{\CG, b, \mu}$ in many cases of Hodge type by using global methods. 

By contrast, the approach in the present paper is purely local and direct,  with more general results. The key case for $p>2$ occurs for $(G,  b,\mu, \CG)$ of Hodge type when the closed immersion $G\hookrightarrow \GL_n$ extends to a closed immersion $\CG\hookrightarrow \GL(\Lambda)$ for a $\BZ_p$-lattice $\Lambda\subset\BQ_p^n$ and satisfies certain additional conditions.  In this case, the representability of $\CM^{\rm int}_{\CG, b, \mu}$ is established by imitating the construction in \cite{KP}, as extended in \cite{KZhou}, \cite{KPZ}, of integral models of global Shimura varieties of Hodge type.  The crucial step here is the construction of a suitable versal deformation of a $p$-divisible group equipped with crystalline tensors. This is done by using Zink's displays and requires the assumption $p>2$. The general Hodge type case is reduced to this case by devissage. In this devissage, there are two steps. In a first step, one shows that the assertion is independent of the quasi-parahoric $\CG$ within the class of all quasi-parahorics sharing a fixed parahoric as their neutral component. In a second step, one shows that the assertion is independent of the group $G$ within the class of all groups sharing the same adjoint group. In these devissage steps one has to deal with affine Deligne-Lusztig varieties for quasi-parahorics, as already considered by U.~G\"ortz, X.~He and S.~Nie in \cite{GHN}. Finally, the general abelian type is reduced to the Hodge type case by following Deligne's analogous reduction \cite{DeligneCorvallis}  in the case of global Shimura varieties.
   
 What remains to be done for the construction of integral local Shimura varieties and Scholze's representability conjecture in all generality? For local Shimura data of abelian type, there are still  cases open  for $p=2$.
Some of these appear accessible but there are several technical complications. Outside the abelian type cases, we have only cases that involve exceptional groups of type $E_6$ or $E_7$,  even orthogonal groups with cocharacters which are  ``mixed'' of type ${D}^{\BR}$ with ${D}^{\BH}$, and trialitarian forms. These are more mysterious.   Especially the cases of exceptional groups are wide open, even though recently the representability of the formal completions $ \CM^{\rm int}_{\CG, b, \mu/x} $ has been proved in the case that $\CG$ is a hyperspecial parahoric by S. Bartling \cite[Thm. 1.4]{Ba} (for $p\geq 3$) and K. Ito \cite[Thm. 5.3.5]{Ito}. 
 
 On the other hand, it is encouraging that the representability of $(\CM^{\rm int}_{\CG, b, \mu})_\red$ is known in general (even when $\mu$ is not minuscule). More precisely, Gleason \cite{Gl21} defines $(\CM^{\rm int}_{\CG, b, \mu})_\red$ as a a
\emph{scheme-theoretic $v$-sheaf} and proves that it is representable by a perfect $k$-scheme $X_\CG(b, \mu^{-1})$ contained in the Witt vector affine Grassmannian $X_\CG=LG/L^+\CG$ (in \cite{Gl21}, Gleason assumes that $\CG$ is parahoric but the result holds for general quasi-parahorics, cf. Proposition \ref{MX}). Furthermore, $X_\CG(b, \mu^{-1})(k)$ can be identified with the corresponding affine Deligne-Lusztig set, cf. \eqref{defADL}. When $\mu$ is minuscule, one expects   a natural deperfection of $X_\CG(b, \mu^{-1})$ as a scheme locally of finite type over $k$ but this seems only known as a consequence of the representability of $\CM^{\rm int}_{\CG, b, \mu}$.

 We thank P.~Scholze for helpful comments, and R.~Zhou for useful discussions about \cite{KZhou}.  The first author also acknowledges support by NSF grant \#DMS-2100743.

 \section{Statements of the main results}\label{statements}

\subsection{Hodge and Abelian type}  Let $(G, b, \{\mu\})$ be a local Shimura datum over $\BQ_p$, cf.  \cite{RV}. Recall that this means that $G$ is a reductive group, that $b\in G(\br\BQ_p)$ and that $\{\mu\}$ is a conjugacy class of a minuscule cocharacter $\mu:\BG_{m/\bar\BQ_p}\to G_{\bar\BQ_p}$. It is assumed that $b$ is neutral acceptable, i.e., the $\sigma$-conjugacy class $[b]$ lies in $B(G, \mu^{-1})$. 
We will often omit the bracket and simply write $(G, b, \mu)$.

We will be concerned with local Shimura data of a particular type. 

\begin{definition}\label{def:hodgetype}
The local Shimura datum $(G, b, \mu)$ is called of  \emph{Hodge type} if there is an embedding
\[
i: (G, \mu)\into (\GL_h, \mu_d),
\]
for some $0\leq d\leq h$, where  $\mu_d(a)=\diag(a^{(d)}, 1^{(h-d)})$ is the standard minuscule cocharacter of $\GL_h$.
\end{definition}

\begin{remark}
This is the local analogue of the corresponding terminology in the classical theory of Shimura varieties, where one requires an embedding into a similitude symplectic group. 
\end{remark}

 \begin{definition}(\cite[\S 2.7]{HPR}, \cite[Def. 9.6]{HLR})\label{def:abtype} The local Shimura datum $(G, b, \mu)$ is called of \emph{abelian type} if there is a local Shimura datum  $(G_1, b_1, \mu_1)$ of Hodge type and an isomorphism $(G_{1,\ad}, b_{1,\ad}, \mu_{1, \ad})\simeq  (G_\ad, b_{\ad}, \mu_\ad)$. In this case,  
 $ (G_1, b_1, \mu_1)$ is called a \emph{central lift of Hodge type} for $(G, b, \mu)$, cf. \cite[\S 9]{HLR}. 
 
\smallskip

 Note that the existence of $b_1$ that lifts $b_{\ad}$ is automatic since
 $B(G,\mu)\simeq B(G_{\ad},\mu_\ad)$ by \cite[(6.5.1)]{KotIsoII}, so this is really a property of the pair $(G, \mu)$. 
 \end{definition}
 \begin{remark}
 Note that this is weaker than the analogous notion in the global case, where one asks that there is a  morphism $(G_1, \mu_1)\to (G_\ad, \mu_\ad)$ such that the  induced morphism $G_{1, {\rm der}}\to G_\ad$   admits a factorization through $G_{\rm der}$ and induces an isomorphism $(G_{1,\ad},  \mu_{1, \ad})\simeq  (G_\ad,  \mu_\ad)$, comp. \cite{KP}. In \cite{HPR}, it is this stronger version that is imposed also in the local case; however, in the local case, this stronger notion seems unnecessary, as pointed out in \cite{HLR}. 
 \end{remark}
 
 \subsection{Quasi-parahoric subgroups}\label{def:quasi} Let $\br G$ be a reductive group over $\br\BQ_p$. By definition, a quasi-parahoric  subgroup  of $\br G(\br\BQ_p)$ is a subgroup $\br K$ which is squeezed as
 $$
 \br G(\br \BQ_p)^0\cap {\rm Stab}_{\frak F}\subset \br K\subset \br G(\br \BQ_p)^1\cap {\rm Stab}_{\frak F} .
 $$
 Here ${\rm Stab}_{\frak F}$ is the stabilizer of  a facet $\frak F$ in the building  $\sB(\br G_\ad, \br\BQ_p)$, and   
 $$
 \br G(\br\BQ_p)^0=\ker(\kappa: \br G(\br\BQ_p)\to \pi_1(\br G)_I), \quad \br G(\br\BQ_p)^1=\ker(\kappa: \br G(\br\BQ_p)\to \pi_1(\br G)_I\otimes\BQ) .
 $$
Here, $\kappa$ is the Kottwitz homomorphism. Equivalently, it is a subgroup of finite index in $\br G(\br\BQ_p)^1\cap {\rm Stab}_{{\frak F}}$.
Still another way of characterizing quasi-parahoric subgroups is to  say that they are of finite index in the stabilizer of a point in the extended building $\sB^e(\br G, \br\BQ_p)$.  In the case that the quasi-parahoric subgroup coincides with $\br G(\br\BQ_p)^1\cap {\rm Stab}_{\bf x}$, with ${\bf x}\in \sB(\br G_\ad, \br\BQ_p)$ (equivalently, if it coincides with the stabilizer subgroup in $\br G(\br\BQ_p)$ of a point in the extended building $\sB^e(\br G, \br\BQ_p)$), it is called a \emph{stabilizer quasi-parahoric}. The parahoric subgroup $\br K^o=\br G(\br \BQ_p)^0\cap {\rm Stab}_{\frak F}$ is called the parahoric subgroup \emph{associated} to the quasi-parahoric subgroup $\br K$. Note that if $\pi_1(\br G)_I$ is torsion-free, then any quasi-parahoric is a parahoric. 
 
  By Bruhat-Tits theory \cite{BT2}, there is a unique smooth group scheme $\br\CG$  over $\br\BZ_p$ with generic fiber $\br G$ such that $\br\CG(\br\BZ_p)=\br K$. 
  
 Now let $G$ be a reductive group over $\BQ_p$. Then,  in analogy with the Bruhat-Tits definition of a $\BQ_p$-parahoric subgroup \cite[right after Def. 5.2.6]{BT2},  there is the notion of a $\BQ_p$-quasi-parahoric subgroup  of $G$. It can be equivalently defined as  a quasi-parahoric subgroup $\br K$ of $G(\br\BQ_p)$ which is invariant under Frobenius, or as a subgroup of finite index in $G(\br\BQ_p)^1\cap {\rm Stab}_{{\frak F}}$,  where  $ {\rm Stab}_{\frak F}$ is the stabilizer of 
    a facet $\frak F$ in the building  $\sB(G_\ad, \BQ_p)$.   By descent from $\br\BZ_p$ to $\BZ_p$, we have  a corresponding smooth group scheme $\CG$ over $\Spec(\BZ_p)$.   Then 
 $\CG(\BZ_p)=\br K\cap G(\BQ_p)$. 
 
 In the sequel, we simply call $\CG$ a \emph{quasi-parahoric group scheme}  for $G$. Note that a parahoric group scheme for $G$ is uniquely defined by its associated subgroup $\CG(\BZ_p)$ of $G(\BQ_p)$, cf. \cite[Prop. 5.2.8]{BT2}. However, for  general quasi-parahoric group schemes for $G$, the association $\CG\mapsto \CG(\BZ_p)$ ceases to be injective  (for instance $\CG^o(\BZ_p)= \CG(\BZ_p)$ when $\pi_0(\CG)^\phi$ is trivial, which may happen even when $\pi_0(\CG)$ is non-trivial).    Still, we write $K=\CG(\BZ_p)=\br K\cap G(\BQ_p)$ and sometimes use the notation $K$  to refer to our choice of $\CG$. This does not lead to confusions.

\subsection{The local model } \label{ss:LM}

  Recall\footnote{In fact, in  \cite[\S 20]{Schber} only the case where $\CG$ is reductive is considered; in  \cite[\S 21]{Schber} the case where $\CG$ is an arbitrary parahoric and where $\mu$ is minuscule is considered. The general case is treated in  \cite[\S 4.2]{AGLR}.} the Scholze-Weinstein $v$-sheaf local model $\BM^v_{\CG,\mu}$ over $\Spd(O_E)$ (in \cite{Schber} it is denoted by ${\rm Gr}_{\CG, \Spd(O_E),\leq \mu}$).  

Assume $\mu$ is minuscule. In this case, 
${\rm Gr}_{\CG, \Spd(O_E),\leq \mu}={\rm Gr}_{\CG, \Spd(O_E), \mu}$ and the $v$-sheaf local model $\BM^v_{\CG,\mu}={\rm Gr}_{\CG, \Spd(O_E), \mu}$
is given as the closure inside the Beilinson-Drinfeld style affine Grassmannian ${\rm Gr}_{\CG, \Spd(O_E)}$ of the $v$-sheaf $X_\mu^\diam$ associated to the symmetric space $X_\mu={\rm Gr}_{G,\mu}$ of parabolics of type $\mu$ over $\Spec(E)$. 
 Then $\BM^v_{\CG,\mu}$ is representable by a   normal flat projective $O_E$-scheme $\BM^\loc_{\CG,\mu}$ with reduced special fiber.
This was conjectured by Scholze-Weinstein \cite[Conj. 21.4.1]{Schber} and was shown in \cite{AGLR} and \cite{GL22} (which settled the normality in some remaining cases in characteristics $2$ and $3$). Here $\BM^\loc_{\CG,\mu}$ is uniquely determined. 
Assume in addition that $(G, \mu)$ is of abelian type and satisfies Condition (A) or (B), cf. Definition \ref{def:acc} below. Then the normality of $\BM^\loc_{\CG,\mu}$ is given by \cite[Thm. 7.23]{AGLR}. Furthermore, in the case (A)   $\BM^\loc_{\CG,\mu}$ can be obtained by the procedure of \cite{PZ}, extended to restrictions of scalars from wild extensions in \cite{Levin} and as modified in \cite{HPR} when $p\mid |\pi_1(G_{\rm der})|$. This statement uses Remark \ref{acc}.  In the case (B) of  Definition \ref{def:acc}, $\BM^\loc_{\CG,\mu}$ can also be obtained by taking the closure of the generic fiber in the naive local model of \cite{R-Z}.
There is no difference when discussing quasi-parahorics because the natural map is an isomorphism, 
\begin{equation}
\label{locmodGtoG0}
\BM^{v}_{\CG^o, \mu}\isoarrow  \BM^{v}_{\CG, \mu},
\end{equation}
cf. \cite[Prop. 21.4.3]{Schber}. (This proposition assumes that $\mu$ is minuscule, but the argument applies to general $\mu$, see also \cite[\S4]{AGLR}). If $\mu$ is minuscule, this also yields an isomorphism for the corresponding scheme local models, 
\[
\BM^\loc_{\CG^o, \mu}\isoarrow  \BM^\loc_{\CG, \mu}.
\]

\subsection{The integral local Shimura variety $\CM^{\rm int}_{\CG, b, \mu}$}\label{ss:inlocshim}

  Let $\CG$ be a smooth affine group scheme over $\BZ_p$ with generic fiber a reductive group $G$, let $b\in G(\breve \BQ_p)$, and let $\mu$ be a conjugacy class of cocharacters of $G$. It is assumed that the Frobenius-conjugacy class of $b$ lies in $B(G, \mu^{-1})$. We call the triple $(\CG, b, \mu)$ an \emph{integral local shtuka datum} and $(G, b, \mu)$ a \emph{rational local shtuka datum}. As usual, we denote by $E$ the field of definition of $\mu$ and by $\breve E$ the completion of the maximal unramified extension. 
  
Then Scholze-Weinstein associate to  $(\CG, b, \mu)$   an ``integral" moduli space of shtukas $\CM^{\rm int}_{\CG, b, \mu}$ over $O_{\breve E}$, \cite[Def. 25.1.1]{Schber}. It is given as a ``$v$-sheaf moduli space" of certain $\CG$-shtukas with one leg bounded by $\mu$ with a fixed associated Frobenius element, cf.  \cite[\S\S 23.1, 23.2, 23.3]{Schber}, as follows.

 \begin{definition}\label{defintSht} 
  The \emph{integral moduli 
  space of local shtuka} $\CM^{\rm int}_{\CG, b, \mu}$ is
the  functor  that sends $S\in {\rm Perfd}_k$ to the  set of isomorphism classes of tuples 
 \begin{equation*}
 (S^\sharp,  \sP , \phi_{\sP }, i_r) ,
 \end{equation*}
 where
 \begin{itemize}
 
\item[1)] $S^\sharp$ is an untilt of $S$ over $\Spa( O_{\br E})$,
\item[2)] $(\sP , \phi_{\sP })$ is a $\CG$-shtuka over $S$ with one leg along $S^\sharp$ bounded 
by $\mu$,
\item[3)] $i_r$ is a ``framing", i.e. an isomorphism of $G$-torsors
\begin{equation}
i_r: G_{\CY_{[r,\infty)}(S)}\xrightarrow{\sim} \sP_{\, |\CY_{[r,\infty)}(S)}
\end{equation}
for large enough $r$ (for an implicit choice of pseudouniformizer $\varpi$), under which
$\phi_{\sP }$ is identified with $\phi_b=b\times {\rm Frob}_S$.
 \end{itemize}
 \end{definition}
 Here, $\CY_{[r,\infty)}(S)$ is as defined in \cite{Schber}, \cite[II]{FS}. We have denoted by $G_{\CY_{[r,\infty)}(S)}$ the trivial $G$-torsor over ${\CY_{[r,\infty)}(S)}$ (denoted $G\times {\CY_{[r,\infty)}(S)}$ in \cite[App. to \S 19]{Schber}), and by 
 \begin{equation}\label{phi_G}
 \phi_b=\phi_{G, b}\colon \phi^*(G_{\CY_{[r,\infty)}(S)})=G_{\CY_{[\frac{1}{p}r,\infty)}(S)}\xrightarrow{b\phi} G_{\CY_{[r,\infty)}(S)}
 \end{equation}
   the  isomorphism given by the $\phi$-linear isomorphism induced by right multiplication by $b$ (denoted by $b\times {\rm Frob}$ in \cite[Def. 23.1.1]{Schber}).
In 3) we mean more precisely an equivalence class, where $i_r$ and $i'_{r'}$ are called equivalent if there exists $r''\geq r, r'$ such that ${i_r}_{\, |\CY_{[r'',\infty)}(S)}={i'_{r'}}_{\, |\CY_{[r'',\infty)}(S)}$. 
 Also, in 2) the precise definition of ``bounded by $\mu$" is given via the local model $\BM^v_{\CG, \mu}$ (see \cite[Def. 25.1.1]{Schber}, \cite[p. 46]{PRglsv}).

 Note that the formation of $\CM^{\rm int}_{\CG, b,\mu}$ is functorial, i.e., for a morphism $(\CG, b, \mu)\to (\CG', b', \mu')$ we have $\CM^{\rm int}_{\CG, b,\mu}\to \CM^{\rm int}_{\CG', b',\mu'}\times_{\Spd(O_{\br E})} \Spd(O_{\br E'})$. 
 It is also compatible with products, i.e., 
$
\CM^{\rm int}_{\CG_1\times \CG_2, b_1\times b_2,\mu_1\times \mu_2}$ is isomorphic to the product 
$\CM^{\rm int}_{\CG_1, b_1,\mu_1}\times \CM^{\rm int}_{\CG_2, b_2,\mu_2}
$
after base changing to the compositum of the reflex fields (in this, we omit the base changes from the notation, for simplicity).  
 In addition, the $\phi$-centralizer group $J_b(\BQ_p)$ acts on 
$\CM^{\rm int}_{\CG, b,\mu}$ by changing the framing.

 By loc. cit.,   $\CM^{\rm int}_{\CG, b, \mu}$ is a $v$-sheaf over $\Spd( O_{\br E})$.  In fact, as in \cite[Prop. 2.23]{Gl21},  $\CM^{\rm int}_{\CG, b, \mu}$ is a small $v$-sheaf. 
 In addition,  $\CM^{\rm int}_{\CG, b,\mu}$
supports a Weil descent datum from $\CO_{\br E}$ down to $O_E$, as explained in \cite[\S 3.1, \S 3.2]{PRglsv}. In this paper, even though this is not always pointed out explicitly,  the smooth group scheme $\CG$ will always be a quasi-parahoric group scheme for $G$. 

In fact, most of the time we are interested in the case that $(G, b, \mu)$ is a local Shimura datum, i.e. we also assume that $\mu$ is minuscule.
In addition, we take $\CG$ to be a quasi-parahoric group scheme for $G$.   In this case we call ${\CM^{\rm int}_{\CG, b, \mu}}$ the \emph{integral local Shimura variety} associated to the local Shimura datum $(G, b, \mu)$ and the quasi-parahoric group scheme $\CG$.

 \subsection{Statement of the main results}
 
 Our concern is with the following conjecture. 
  
\begin{conjecture}{\rm (Scholze)}\label{repconj}
Let $(G, b, \mu)$ be a local Shimura datum and $\CG$ a quasi-parahoric group scheme for $G$. 
 There exists  a formal scheme $\sM_{\CG, b, \mu}$ which is normal  and flat locally formally of finite type over $O_{\br E}$ with
 \[
 \CM^{\rm int}_{\CG, b, \mu}=\sM_{\CG, b, \mu}^\diam,
 \]
 as $v$-sheaves over $\Spd(O_{\br E})$. Then, $\sM_{\CG, b, \mu}$ is unique (\cite[Prop. 18.4.1]{Schber}).
 \end{conjecture}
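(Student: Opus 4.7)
The strategy is a sequence of reductions culminating in an explicit deformation-theoretic construction in a tractable Hodge type subcase. First, by the decomposition \eqref{mentionformdec}, representability of $\CM^{\rm int}_{\CG, b, \mu}$ at the quasi-parahoric level reduces to representability at the parahoric level: finite quotients and disjoint unions of formal schemes of the required shape are again of the required shape. Second, the abelian type case is reduced to Hodge type by adapting Deligne's construction in \cite{DeligneCorvallis}: choosing a central lift $(G_1, b_1, \mu_1)$ of Hodge type together with a compatible parahoric $\CG_1$, one realises $\CM^{\rm int}_{\CG, b, \mu}$ as a quotient arising from $\CM^{\rm int}_{\CG_1, b_1, \mu_1}$ via the identification of adjoint local shtuka data, and transfers normality and flatness through this construction.

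Within the Hodge type case I would perform a further devissage in two steps. The first varies over quasi-parahorics sharing a fixed parahoric as their neutral component, employing affine Deligne--Lusztig varieties for quasi-parahorics in the spirit of \cite{GHN} to reduce to the parahoric setting. The second varies over groups with a fixed adjoint group and reduces to the key base case: $(G, b, \mu, \CG)$ of Hodge type such that the closed immersion $G \hookrightarrow \GL_n$ extends to a closed immersion $\CG \hookrightarrow \GL(\La)$ for a $\BZ_p$-lattice $\La$, satisfying the additional integrality hypotheses required for the Kisin--Pappas method. In this base case, one imitates the construction of integral models of Hodge type global Shimura varieties from \cite{KP}, \cite{KZhou}, \cite{KPZ}: the crucial input is a versal deformation of the universal $p$-divisible group equipped with its crystalline tensors over the completed local ring at a chosen point $y \in \BM^\loc_{\CG, \mu}(k)$, produced via Zink displays (this is the step that forces the restriction $p > 2$).

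To pass from this pointwise input to the global formal scheme, I would first extend Gleason's definition of the formal completion $\CM^{\rm int}_{\CG, b, \mu/x}$ as a $v$-sheaf from parahoric to arbitrary quasi-parahoric $\CG$, and then establish pointwise isomorphisms $\CM^{\rm int}_{\CG, b, \mu/x} \simeq \BM^\loc_{\CG, \mu/y}$ of $v$-sheaves. Combined with the known representability of the reduced locus $(\CM^{\rm int}_{\CG, b, \mu})_\red \simeq X_\CG(b, \mu^{-1})$ (extending \cite{Gl21} to quasi-parahorics, cf. Proposition \ref{MX}) and with the normality and flatness of the local model from \cite{AGLR} and \cite{GL22}, these pointwise identifications package into a formal scheme $\sM_{\CG, b, \mu}$ whose underlying reduced subscheme is a deperfection of $X_\CG(b, \mu^{-1})$ and whose completed local rings match those of the local model, as required; the uniqueness clause is automatic by \cite[Prop.~18.4.1]{Schber}.

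The principal obstacle is the construction of the versal deformation with crystalline tensors in the embedded Hodge type base case. Beyond identifying the correct completed local ring (which is prescribed by the local model), producing a $p$-divisible group with tensors over its formal spectrum that genuinely represents $\CM^{\rm int}_{\CG, b, \mu/x}$ as a $v$-sheaf is a delicate argument relying essentially on Zink's display theory; this is precisely what forces $p > 2$, and where the parahoric combinatorics interact most intricately with the displays machinery. A secondary difficulty is arranging that the successive devissage steps preserve normality and flatness, particularly in the quotient constructions appearing in the quasi-parahoric-to-parahoric and abelian-to-Hodge reductions, where finite group actions along special fiber strata must be controlled carefully.
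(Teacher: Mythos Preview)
Note first that Conjecture \ref{repconj} is not proved in the paper in full generality; what is established is Theorem \ref{MainThm}, i.e.\ the conjecture under the abelian type hypothesis together with Condition (A) or (B). Your outline is broadly the paper's strategy for case (A), and you correctly isolate the display-theoretic construction of Theorem \ref{thmRep} as the core input. However, two of your reduction steps are described in a way that does not match the paper and, as written, would leave genuine gaps.

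The abelian-to-Hodge passage is not carried out as a quotient construction in the style of \cite{DeligneCorvallis}. Instead the paper routes everything through the adjoint group via component-wise isomorphisms: for an ad-isomorphism $f:(\CG, b, \mu)\to(\CG', b', \mu')$, Theorem \ref{thm451} gives $\CM^{\rm int,\tau}_{\CG, b, \mu}\simeq \CM^{\rm int,\tau'}_{\CG', b', \mu'}\times_{\Spd(O')}\Spd(O)$ on each component indexed by the Kottwitz map. One applies this to both $G\to G_\ad$ and $G_1\to G_{1,\ad}\simeq G_\ad$, and then uses transitivity of the $J_b(\BQ_p)$-action on the component set (Proposition \ref{actJ}, applicable since the center of $G_\ad$ is connected) to reach all components. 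No finite quotient is taken at this stage; the only quotient in the paper is in the quasi-parahoric versus parahoric comparison (Theorem \ref{quasiThm}). Separately, your ``packaging'' of the pointwise isomorphisms $\CM^{\rm int}_{\CG, b, \mu/x}\simeq \BM^{\rm loc}_{\CG,\mu/y}$ into a global formal scheme is not done by gluing along a deperfection of $X_\CG(b,\mu^{-1})$; no such gluing mechanism is available. In the key Hodge type case the paper instead uses the closed immersion $\CM^{\rm int}_{\CG_1, b_1, \mu_1}\hookrightarrow (\CM^{\rm int}_{\GL(\La),\iota(b_1),\mu_d})_O$ into the ambient $\GL_n$ Rapoport--Zink formal scheme (representable by \cite{Schber}) and invokes \cite[Thm.~3.7.1]{PRglsv}: the formal completion control then pins down the closed $v$-subsheaf as a normal flat closed formal subscheme of that ambient space. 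This ambient embedding is what supplies the global formal scheme structure.
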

  Our main result is a proof of this conjecture when $(G, b, \mu)$ is of abelian type under certain mild hypotheses.
 \begin{definition}\label{def:acc} Let $(G, b, \mu)$ be of abelian type. We introduce two kinds of conditions on $p$, $G$ and $\mu$. 
 \begin{itemize}
 \item[(A)]  $p\neq 2$. 

\item[(B)] $p=2$ and   $G_\ad=\prod_i {\rm Res}_{F_i/\BQ_p}H_i$, where for each $i$, $H_i= B_i^\times/F_i^\times$ 
with $B_i$ a simple algebra with center $F_i$, or 
$H_i={\rm PGSp}_{2n_i}$, or the corresponding component $\mu_{\ad, i}$ of $\mu_{\ad}$ is trivial.
 \end{itemize}
 Note that if $G=T$ is a torus, $T$ trivially satisfies (A) or (B). 
  \end{definition}

\begin{remark}\label{acc}
Recall from \cite[\S 5]{PRglsv}, comp. \cite[Def. 3.1.4]{KPZ}, that $G$ is called  essentially tamely ramified 
if $G_\ad\simeq\prod_i {\rm Res}_{F_i/\BQ_p}H_i$, where $H_i$ is absolutely simple and splits over a tamely ramified extension of $F_i$. Note that   $G$ is automatically   essentially tamely ramified   if $p\geq 5$, cf. \cite[\S 5]{PRglsv}. Moreover, if $p=3$, the   condition of essentially tame ramification 
 only excludes groups whose adjoint groups contain a ramified triality group (type $D_4^{(3)}$ or $D_4^{(6)}$) as
a factor. We note that by Serre \cite[\S 3, Cor. 2]{Serre} this last possibility does not occur when $(G, b, \mu)$ is of abelian type, unless the corresponding component of $\mu_{\rm ad}$ is trivial. In other words, Condition (A) implies that the simple factors of $G_{\rm ad}$ on which the projections of $\mu_{\rm ad}$ are non-trivial, are  essentially tamely ramified. Indeed, let $(G_1, \mu_1)$ be a central lift of Hodge type for $(G, \mu)$. Let $G_1'$ be the minimal normal subgroup of $G_1$ containing the conjugacy class $\mu_1$. Then by  \cite[\S 3, Cor. 2]{Serre}, $G'_{1,\ad}$ contains no ramified triality group as a factor. On the other hand, $G'_{1,\ad}=\prod_{\{i\mid \mu_{\ad, i}\neq 1\}} {\rm Res}_{F_i/\BQ_p}H_i$, which proves the claim.
\end{remark}

 \begin{theorem}\label{MainThm}  Let
$(G, b, \mu)$ be a local Shimura datum of abelian type satisfying Condition (A) or (B), and let $\CG$ be a quasi-parahoric group scheme for $G$. Then $\CM^{\rm int}_{\CG, b, \mu}$ satisfies Conjecture \ref{repconj}.
\end{theorem}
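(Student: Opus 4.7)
The plan is to reduce the general abelian-type statement to a particularly favorable ``key case'' of Hodge type where a direct construction is available, and then to work outward in two independent devissage steps. The key case is a Hodge type local Shimura datum $(G,b,\mu)$ together with a quasi-parahoric $\CG$ such that the embedding $G\into \GL_h$ from Definition \ref{def:hodgetype} extends to a closed immersion $\CG\into \GL(\Lambda)$ for a $\BZ_p$-lattice $\Lambda\subset \BQ_p^h$, compatibly with a collection of crystalline tensors cutting out $G$ inside $\GL_h$. In this situation I would start from the Rapoport--Zink formal scheme $\sM_{\GL(\Lambda),b,\mu_d}$, whose representability is provided by the PEL result of \cite{Schber}, carve out inside it the tensor-preserving closed formal subscheme, and then construct a versal deformation of the universal $p$-divisible group equipped with the crystalline tensors via Zink's theory of displays, mirroring the construction of integral models of global Shimura varieties of Hodge type in \cite{KP,KZhou,KPZ}. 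The desired formal scheme $\sM_{\CG,b,\mu}$ is obtained as the resulting flat normal closure, with each formal completion $\sM_{\CG,b,\mu/x}$ automatically identified with the formal completion of the local model $\BM^\loc_{\CG,\mu}$ at a suitable $k$-point $y$; this relies on the normality and the reducedness of the special fiber of $\BM^\loc_{\CG,\mu}$ established in \cite{AGLR,GL22}. The Zink display construction requires $p>2$, which covers Condition (A); for Condition (B), where $p=2$ and $G_\ad$ is of type $A$ or $C$, the local model is already known to agree with the closure of the generic fiber in the naive local model of \cite{R-Z}, so a suitable Rapoport--Zink formal scheme delivers $\sM_{\CG,b,\mu}$ directly.

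Within Hodge type, two independent devissage steps then extend the construction. The first devissage passes from a parahoric to an arbitrary quasi-parahoric $\CG$ sharing the same neutral component $\CG^o$. The target is the decomposition of functors
\[
\CM^{\rm int}_{\CG,b,\mu}\simeq \bigsqcup_{\bar\beta\in \Pi_\CG}\CM^{\rm int}_{\CG^o_\beta,b,\mu}/\pi_0(\CG_\beta)^\phi,
\]
as in \eqref{mentionformdec}. I would prove this at the level of $v$-sheaves using affine Deligne--Lusztig varieties for quasi-parahorics following \cite{GHN}, and then deduce representability for quasi-parahorics from representability for parahorics together with the fact that $\pi_0(\CG_\beta)^\phi$ is a finite group acting by formal automorphisms. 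The second devissage varies $G$ inside the class of reductive groups with prescribed adjoint $G_\ad$: representability is inherited through $z$-extensions, since central isogenies leave $\BM^\loc_{\CG,\mu}$ essentially unchanged and only modify $\CM^{\rm int}_{\CG,b,\mu}$ by central pieces controlled by abelian $1$-cocycles.

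With the Hodge type case settled, the final step is the reduction from abelian type to Hodge type, modelled on Deligne's passage for global Shimura varieties \cite{DeligneCorvallis}. By Definition \ref{def:abtype} there is a central lift $(G_1,b_1,\mu_1)$ of Hodge type with $(G_{1,\ad},b_{1,\ad},\mu_{1,\ad})\simeq (G_\ad,b_\ad,\mu_\ad)$, and the representability for $(G_1,b_1,\mu_1)$ transports to $(G,b,\mu)$ via the second devissage after replacing the central torus suitably. The main obstacle I expect is the construction of the tensor-adapted versal deformation in the key case, since one needs its formal completions to realise the formal completions of $\BM^\loc_{\CG,\mu}$ on the nose: this is precisely where Zink displays and the hypothesis $p>2$ are essential, and is also the reason Condition (B) requires a separate argument via the naive local model. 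A secondary difficulty is keeping the two devissage steps and the abelian-type transfer compatible with the Weil descent datum from $O_{\br E}$ down to $O_E$ and with the finite group actions of $\pi_0(\CG_\beta)^\phi$, particularly at the interface where quasi-parahoric modifications and central modifications of $G$ interact.
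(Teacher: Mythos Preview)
Your proposal follows essentially the same route as the paper: a crucial Hodge-type case handled via Zink displays inside the $\GL_h$ Rapoport--Zink space (case (A)) or directly via EL/PEL Rapoport--Zink schemes (case (B)), combined with two devissage steps (varying the quasi-parahoric with fixed neutral component, and varying $G$ within a fixed adjoint class), and a final transfer from Hodge type to abelian type through the adjoint group. The overall architecture is correct.

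A few places where the paper is sharper than your sketch and which you should address explicitly. First, in the key Hodge-type case the integral embedding $\CG\hookrightarrow\GL(\Lambda)$ must be \emph{very good} in the sense of \cite{KPZ} (the canonical isomorphism $c$ of \eqref{canonicaliso} must preserve the tensors); this is an extra condition beyond (a)--(c), and producing such an embedding for an arbitrary abelian-type datum is the content of Proposition \ref{goal}, which is a nontrivial case-by-case construction over the Dynkin types. Second, your ``second devissage via $z$-extensions'' is too narrow: the paper works with general ad-isomorphisms and proves (Theorem \ref{thm451}) that such a map induces an isomorphism on each \emph{component} $\CM^{\rm int,\tau}_{\CG,b,\mu}\xrightarrow{\sim}\CM^{\rm int,\tau'}_{\CG',b',\mu'}$ indexed by $C_\CG$. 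Since the transfer $G_1\to G_\ad\leftarrow G$ only hits the components in the image of $C_{\CG_1}\to C_{\CG_\ad}$, one still needs to know that \emph{all} components of $\CM^{\rm int}_{\CG_\ad^o,b_\ad,\mu_\ad}$ are isomorphic; this is supplied by the transitivity of the $J_b(\BQ_p)$-action on $\pi_1(G_\ad)_I^\phi$ (Proposition \ref{actJ}), a point your plan does not mention. Third, your passage from ``formal completions represent completions of the local model'' to ``$\CM^{\rm int}_{\CG,b,\mu}$ is representable'' is not automatic: the paper invokes the argument of \cite[Thm.~3.7.1]{PRglsv}, which takes the closed immersion into the ambient RZ space and uses the local representability plus topological flatness to identify the $v$-sheaf with the flat closure. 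You gesture at ``flat normal closure'' but should make this step explicit.
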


Theorem \ref{MainThm} is closely related to the following result. In fact,  it was shown in \cite{PRglsv} that the following result implies Theorem \ref{MainThm}  in the Hodge type case under some mild additional hypotheses, cf. \cite[proof of Thm. 3.7.1]{PRglsv}.
\begin{theorem}\label{thmRepgoal} Let
$(G, b, \mu)$ be a local Shimura datum of abelian type satisfying Condition (A) or (B),  and let $\CG$ be a quasi-parahoric group scheme for $G$. For any   $\Spd(k)$-valued point $x$ of  $\CM^{\rm int}_{\CG, b, \mu}$, there is a  
 $k$-valued point $y$
of $\BM^{\rm loc}_{\CG,\mu}$ such that
   \begin{equation}\label{Gliso}
   \CM^{\rm int}_{\CG, b, \mu/x} \simeq (\BM^{\rm loc}_{\CG,\mu/y})^\diam,
   \end{equation}
   as $v$-sheaves over $\Spd(O_{\br E})$.
   \end{theorem}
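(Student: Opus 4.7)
\medskip

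\noindent\emph{Plan of proof.} The strategy is to establish Theorem \ref{thmRepgoal} through a sequence of reductions, starting from a concrete ``key case'' and propagating to the full abelian type statement. The key case occurs under Condition (A) when $(G, b, \mu)$ is of Hodge type with a closed immersion $i\colon (G,\mu)\hookrightarrow(\GL_h,\mu_d)$ which extends to a closed immersion $\CG\hookrightarrow\GL(\Lambda)$ for some self-dual (up to scalars) lattice chain, and satisfies additional hypotheses to be specified (such as those of \cite{KP,KZhou,KPZ}). In this situation, a $k$-point $x\in\CM^{\rm int}_{\CG,b,\mu}(k)$ corresponds to a $p$-divisible group $\BX_0$ over $k$ equipped with Frobenius-invariant crystalline tensors $(s_\alpha)$, together with a trivialization identifying its covariant Dieudonn\'e module with the lattice chain giving $b$. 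The plan is then to construct a versal formal deformation of $\BX_0$ together with parallel transports of the tensors $(s_\alpha)$, using Zink's theory of displays (or equivalently Breuil--Kisin modules), and to identify its base with $\Spf$ of the completed local ring of $\BM^{\rm loc}_{\CG,\mu}$ at a suitable point $y$. The hard input is the existence of a $\CG$-display structure on this versal deformation, which produces a $\CG$-shtuka over the formal neighborhood lifting $x$; this is where the assumption $p\neq 2$ enters essentially.

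\medskip

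\noindent Once the key case is settled, the \emph{general Hodge type case} is reduced to it by a two-step devissage. First, I would compare the formal completions for two quasi-parahorics $\CG$ and $\CG'$ sharing the same parahoric $\CG^o$ as neutral component: using \eqref{locmodGtoG0} the target local model does not change, and the difference in the source is governed by a torsor under the finite group $\pi_0(\CG)^\phi$, which by the decomposition \eqref{mentionformdec} reduces the statement for $\CG$ to that for $\CG^o$. The second devissage step varies the central isogeny class, replacing $G$ by a larger group $G^\flat$ such that $(G^\flat,\mu^\flat)$ admits an embedding into $(\GL(\Lambda),\mu_d)$ of the key case type; here I would appeal to the theory of affine Deligne--Lusztig varieties for quasi-parahorics (following G\"ortz--He--Nie \cite{GHN}) to transfer the representability of formal neighborhoods along the induced map $\CM^{\rm int}_{\CG,b,\mu}\to\CM^{\rm int}_{\CG^\flat,b^\flat,\mu^\flat}$.

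\medskip

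\noindent For the \emph{abelian type case}, I would pick a central lift of Hodge type $(G_1,b_1,\mu_1)$ together with a compatible quasi-parahoric $\CG_1$ mapping to the given $(G,b,\mu,\CG)$ at the adjoint level, imitating Deligne's construction \cite{DeligneCorvallis}. The map on integral local Shimura varieties factors through a quotient by a group acting on $\CM^{\rm int}_{\CG_1,b_1,\mu_1}$ encoding the difference between $G$ and $G_1$. Since $(G_{1,\ad},\mu_{1,\ad})=(G_\ad,\mu_\ad)$ the local models $\BM^{\rm loc}_{\CG,\mu}$ and $\BM^{\rm loc}_{\CG_1,\mu_1}$ have the same completions up to the adjoint passage (by \cite{AGLR} and the relation to the Pappas--Zhu construction under (A), or the naive local model under (B)), so a formal completion isomorphism at a lift $x_1$ of $x$ on the Hodge type side descends to the required isomorphism at $x$. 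Under Condition (B) the key case uses the Rapoport--Zink moduli description of \cite{R-Z} for type $A$ and $C$ factors, which provides the required formal completion identification directly from the theory of $p$-divisible groups with additional structure.

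\medskip

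\noindent The main obstacle, and the technical heart of the argument, is the construction in the key case of a versal $\CG$-shtuka deformation whose base is canonically identified with $\Spf$ of the completed local ring of $\BM^{\rm loc}_{\CG,\mu}$ at $y$. Producing this identification requires extending the tensors $(s_\alpha)$ to a universal deformation, showing that the resulting crystalline realization is controlled by a $\CG$-valued filtration, and checking that it exhausts all deformations of the shtuka (versality). This is precisely where Zink's theory of displays is used and where the restriction $p\neq 2$ is needed; handling the case $p=2$, $G_\ad$ of type $A$ or $C$ requires substitute arguments based on the naive local model and a case-by-case analysis of the relevant Rapoport--Zink functors. The devissage steps, by contrast, are largely formal once the key case is in hand, but rely on a careful bookkeeping of Kottwitz homomorphisms, $\pi_1$-invariants, and the Weil descent data in order to match the formal completions on the nose and not merely up to the action of $\pi_0(\CG)^\phi$.
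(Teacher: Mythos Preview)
Your plan is essentially the paper's own strategy: a crucial Hodge type case handled via Zink displays (Theorem~\ref{thmRep}, Proposition~\ref{adapted}), then devissage through quasi-parahorics (Proposition~\ref{neutralIso}) and ad-isomorphisms (Proposition~\ref{adIso}), with case~(B) treated directly via Rapoport--Zink formal schemes. Two points deserve sharpening.

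First, in your second devissage step you invoke the affine Deligne--Lusztig theory of \cite{GHN} to ``transfer the representability of formal neighborhoods.'' The ADLV theory only controls the reduced locus (Proposition~\ref{MX}); the actual mechanism for comparing formal completions under an ad-isomorphism is Gleason's $v$-sheaf local model diagram (Theorem~\ref{vLMD}), which yields Proposition~\ref{adIso} directly. The ADLV analysis in \S\ref{ss:adADLV} is used only to track which components are hit.

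Second, and more substantively, your abelian-type reduction says ``a formal completion isomorphism at a lift $x_1$ of $x$ on the Hodge type side descends to the required isomorphism at $x$.'' But not every $k$-point $x'$ of $\CM^{\rm int}_{\CG^o_\ad,b_\ad,\mu_\ad}$ lies in the image of $\CM^{\rm int}_{\CG^o_1,b_1,\mu_1}$: by Lemma~\ref{forparas} the image consists only of those components indexed by $\tau\in c_{b,\mu}+\Omega_{G_\ad}^\phi$ that lift to $\Omega_{G_1}^\phi$. The paper closes this gap via Proposition~\ref{actJ}: since $G_\ad$ has connected (trivial) center, $J_{b_\ad}(\BQ_p)$ acts transitively on the set of components, and the map $\ell$ of \eqref{Lmap} is constant on $J_{b_\ad}(\BQ_p)$-orbits, so the isomorphism established on one component propagates to all. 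Without this step your argument only covers a proper subset of $k$-points on the adjoint side.
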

In this, $y$ is  a $k$-valued point
of $\BM^{\rm loc}_{\CG,\mu}$ obtained by fixing a trivialization of the $\CG$-torsor underlying the $\CG$-shtuka at $x$ (more precisely, $y$ is in the orbit  $\ell(x)$ given by (\ref{Lmap})).
  The notation $\CM^{\rm int}_{\CG, b, \mu/x}$ stands for the {\sl formal completion} of the $v$-sheaf
  $\CM^{\rm int}_{\CG, b, \mu}$ at $x$, as defined in \cite[Def. 4.18]{Gl}, cf. \cite[\S 3.3.1]{PRglsv}, 
  see also \S\ref{def:spec}.
 On the RHS, $(\BM^{\rm loc}_{\CG,\mu/y})^\diam$ denotes the $v$-sheaf attached to the adic space
$\Spa(A, A)$, where $A$ is the completion of the local ring of the scheme $\BM^{\rm loc}_{\CG, \mu}$ at
$y$, taken with the topology defined by the maximal ideal.

The combination of these two results also gives

\begin{corollary}
Let
$(G, b, \mu)$ be a local Shimura datum of abelian type satisfying Condition (A) or (B), and let $\CG$ be a quasi-parahoric group scheme for $G$.
 Let 
$\sM_{\CG, b, \mu}$ be the formal scheme that represents $\CM^{\rm int}_{\CG, b, \mu}$ by Theorem \ref{MainThm}. 
For any   $k$-valued point $x$ of $\sM_{\CG, b, \mu}$, there is 
a  
 $k$-valued point $y$
of $\BM^{\rm loc}_{\CG,\mu}$ such that
 $
   \sM_{\CG, b, \mu/x} \simeq \BM^{\rm loc}_{\CG,\mu/y}.
$
 \end{corollary}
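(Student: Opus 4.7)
The proof should be a short deduction from the two preceding theorems, so the plan is to package them correctly. First, I note that by Theorem \ref{MainThm} we have a $v$-sheaf isomorphism $\CM^{\rm int}_{\CG, b, \mu}\simeq \sM_{\CG, b, \mu}^\diam$, and a $k$-valued point $x$ of $\sM_{\CG, b, \mu}$ corresponds under this isomorphism to a $\Spd(k)$-valued point of $\CM^{\rm int}_{\CG, b, \mu}$. Since $\sM_{\CG, b, \mu}$ is locally formally of finite type over $O_{\br E}$, its formal completion $\sM_{\CG, b, \mu/x}=\Spf(\wh\CO_{\sM_{\CG, b, \mu},x})$ is the formal spectrum of a complete Noetherian local $O_{\br E}$-algebra.

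The key compatibility to verify is that Gleason's $v$-sheaf formal completion $\CM^{\rm int}_{\CG, b, \mu/x}$ agrees with $(\sM_{\CG, b, \mu/x})^\diam$. This is essentially built into the definition of the formal completion of a $v$-sheaf at a point: given any formal scheme $\mathfrak X$ locally formally of finite type over $O_{\br E}$ with a $k$-point $x$, the $v$-sheaf formal completion of $\mathfrak X^\diam$ at the induced point is canonically $(\mathfrak X_{/x})^\diam$, because both sides represent the same moduli problem of deformations of $x$ over Artinian thickenings (the $v$-sheaf completion of Gleason is defined precisely so as to recover this in the representable case, compare \cite[Def.~4.18]{Gl} and the discussion in \cite[\S 3.3.1]{PRglsv}).

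Combining this with Theorem \ref{thmRepgoal}, one obtains for the given $x$ a point $y\in\BM^{\rm loc}_{\CG,\mu}(k)$ together with a $v$-sheaf isomorphism
\[
(\sM_{\CG, b, \mu/x})^\diam \;\simeq\; \CM^{\rm int}_{\CG, b, \mu/x} \;\simeq\; (\BM^{\rm loc}_{\CG,\mu/y})^\diam.
\]
Both $\sM_{\CG, b, \mu/x}$ and $\BM^{\rm loc}_{\CG,\mu/y}$ are formal spectra of complete Noetherian local $O_{\br E}$-algebras with residue field $k$. By the full faithfulness of the functor $\mathfrak X\mapsto \mathfrak X^\diam$ on normal flat formal schemes locally formally of finite type over $O_{\br E}$, i.e.\ the uniqueness statement of \cite[Prop.~18.4.1]{Schber} invoked in Conjecture \ref{repconj}, the above $v$-sheaf isomorphism is induced by a unique isomorphism of formal schemes
\[
\sM_{\CG, b, \mu/x} \;\simeq\; \BM^{\rm loc}_{\CG,\mu/y},
\]
which is the desired statement.

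The main obstacle, which is actually minor once the machinery is in place, is the compatibility asserted in the second paragraph between the formal scheme completion and the $v$-sheaf completion of Gleason at a representable point; the full faithfulness of $\diam$ (which handles both this compatibility and the final descent step) is the essential input, and all the deep content has already been packed into Theorems \ref{MainThm} and \ref{thmRepgoal}.
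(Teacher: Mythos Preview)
Your proof is correct and matches the paper's approach, which treats the corollary as an immediate consequence of Theorems \ref{MainThm} and \ref{thmRepgoal} without spelling out the details. The only minor imprecision is your heuristic justification that the $v$-sheaf and formal-scheme completions agree because ``both sides represent the same moduli problem of deformations over Artinian thickenings''; Gleason's definition is via the specialization map rather than Artin rings, but the compatibility you need does hold (see \cite[Prop.~4.20]{Gl}), and your citation of \cite[Def.~4.18]{Gl} and \cite[\S 3.3.1]{PRglsv} is appropriate.
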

  Note here that, by the full-faithfulness result of \cite[Prop. 18.4.1]{Schber}, the Weil descent datum for the $v$-sheaf $\CM^{\rm int}_{\CG, b,\mu}$ gives a corresponding  Weil descent datum  for 
the formal scheme $\sM_{\CG, b, \mu}$, from $\CO_{\br E}$ down to $O_E$. 

The following result shows that $\sM_{\CG, b, \mu}$ is an integral model of the local Shimura variety in a certain sense. Recall from the introduction that to $\CG$ is associated the finite abelian group $\Pi_\CG$, and  that to every $\bar\beta\in\Pi_\CG$, there is associated a quasi-parahoric group scheme $\CG_\beta$ over $\BZ_p$, as well as its associated parahoric group scheme $\CG^o_\beta$, and the finite group of its connected components $\pi_0(\CG_\beta)$ with its action by the Frobenius $\phi$. Also we let $K_\beta=\CG_\beta(\BZ_p)$. 
\begin{theorem} \label{decompintro}Let
$(G, b, \mu)$ be a local Shimura datum of abelian type satisfying Condition (A) or (B), and let $\CG$ be a quasi-parahoric group scheme for $G$. There is an isomorphism of rigid-analytic varieties over $\br E$,
$$
 \sM_{\CG, b, \mu}^{\rm rig}\simeq \bigsqcup_{\bar\beta\in\Pi_\CG}{\rm Sht}_{K_\beta}(G, b, \mu) .
$$
This isomorphism is induced by an isomorphism of formal schemes
$$
 \sM_{\CG, b, \mu}\simeq \bigsqcup_{\bar\beta\in\Pi_\CG}{\sM}_{\CG^o_\beta, b, \mu}/\pi_0(\CG_\beta)^\phi .
$$

\end{theorem}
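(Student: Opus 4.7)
The plan is to establish the claimed formal scheme isomorphism first and then derive the rigid-analytic decomposition as its generic fiber. The main intermediate step is a $v$-sheaf identity
\begin{equation*}
\CM^{\rm int}_{\CG, b, \mu} \simeq \bigsqcup_{\bar\beta \in \Pi_\CG} \CM^{\rm int}_{\CG^o_\beta, b, \mu}/\pi_0(\CG_\beta)^\phi,
\end{equation*}
which I would then lift to formal schemes using Theorem \ref{MainThm}. The hypotheses of that theorem depend only on the rational datum $(G, b, \mu)$, which is unchanged on the right-hand side, so Theorem \ref{MainThm} applies to each $(\CG^o_\beta, b, \mu)$.

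First, for the $v$-sheaf decomposition: given $S \in {\rm Perfd}_k$ and a point $(\sP, \phi_\sP, i_r) \in \CM^{\rm int}_{\CG, b, \mu}(S)$, I would push $\sP$ out along the smooth quotient $\CG \to \pi_0(\CG)$ to obtain a Frobenius-equivariant $\pi_0(\CG)$-torsor equipped with a trivialization over $\CY_{[r,\infty)}(S)$ coming from $i_r$. Because $\pi_0(\CG)$ is finite étale over $\BZ_p$, this datum is locally constant on $S$; the trivialization at infinity kills its image in ${\rm H}^1(\BQ_p, G)$, and hence the class lies in the finite pointed set $\Pi_\CG = \ker({\rm H}^1_{\et}(\BZ_p, \CG) \to {\rm H}^1(\BQ_p, G))$. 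This produces an open-and-closed decomposition indexed by $\Pi_\CG$. To identify the $\bar\beta$-component, pick a cocycle representative $\beta$ and form the inner twist $\CG_\beta$, a quasi-parahoric of $G$ with parahoric neutral component $\CG^o_\beta$. Scholze's functoriality of the shtuka moduli then gives a natural map $\CM^{\rm int}_{\CG^o_\beta, b, \mu} \to \CM^{\rm int}_{\CG, b, \mu}$ whose image is exactly the $\bar\beta$-component; the action of $\pi_0(\CG_\beta)^\phi$ on the source by modifying the framing is free with this map as its quotient, which one verifies pointwise on $\Spa(C, \CO_C)$-points using the classification of $\CG$-torsors.

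Next, Theorem \ref{MainThm} applied to the parahoric datum $(\CG^o_\beta, b, \mu)$ represents $\CM^{\rm int}_{\CG^o_\beta, b, \mu}$ by a normal, flat, locally formally of finite type formal scheme $\sM_{\CG^o_\beta, b, \mu}$ over $O_{\br E}$. The finite group $\pi_0(\CG_\beta)^\phi$ acts on this formal scheme, and since invariants of Noetherian complete normal local rings under a finite group remain so, the quotient $\sM_{\CG^o_\beta, b, \mu}/\pi_0(\CG_\beta)^\phi$ exists as a formal scheme of the same type. Full faithfulness of the diamond functor (\cite[Prop. 18.4.1]{Schber}) then converts the $v$-sheaf isomorphism into the claimed formal scheme isomorphism; as a by-product, this reduces Theorem \ref{MainThm} for quasi-parahoric $\CG$ to its parahoric case. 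Finally, passing to rigid generic fibers and using the Scholze-Weinstein identification $\sM_{\CG^o_\beta, b, \mu}^{\rm rig} \simeq {\rm Sht}_{\CG^o_\beta(\BZ_p)}(G, b, \mu)$ for parahoric group schemes, together with the standard comparison that quotienting by $\pi_0(\CG_\beta)^\phi$ on the generic fibers recovers the local Shimura variety at level $K_\beta = \CG_\beta(\BZ_p)$, yields the desired rigid-analytic decomposition. The main obstacle is the $v$-sheaf decomposition itself: one must ensure the torsor-class invariant genuinely lands in the kernel $\Pi_\CG$ and not in the larger group ${\rm H}^1_{\et}(\BZ_p, \CG)$, and that the maps from the twisted summands surjectively exhaust each component — both depending on a careful analysis of the shtuka framing and its compatibility with the Kottwitz map.
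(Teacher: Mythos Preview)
Your overall architecture matches the paper: first establish the $v$-sheaf decomposition (this is Theorem \ref{quasiThm}), then apply Theorem \ref{MainThm} to each parahoric summand, invoke full faithfulness, and pass to generic fibers (Theorem \ref{quasiGeneric2}). The formal-scheme and rigid-analytic statements indeed follow formally once the $v$-sheaf identity is in hand.

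However, your mechanism for the $v$-sheaf decomposition has two genuine gaps. First, the invariant in $\Pi_\CG$ cannot be produced by ``pushing out $\sP$ along $\CG\to\pi_0(\CG)$'': there is no such morphism of group schemes over $\BZ_p$. One only has an exact sequence $1\to\CG^o\to\CG\to i_*(\pi_0(\CG))\to 1$ of \'etale sheaves with $i:\Spec(\BF_p)\hookrightarrow\Spec(\BZ_p)$, so the quotient is supported on the special fiber, whereas the shtuka lives over $\CY_{[0,\infty)}(S)$ where $p$ is invertible and $\CG$ restricts to the connected group $G$. The paper instead defines the decomposition via the specialization map to the reduced locus (the ADLV $X_\CG(b,\mu^{-1})$) composed with the Kottwitz map to $C_\CG=\Omega_G/\pi_0(\CG)$; the index set $\Pi_\CG$ then appears through a careful analysis of how $X_{\CG^o_\beta}(b,\mu^{-1})$ sits over $X_\CG(b,\mu^{-1})$ (Propositions \ref{separate}, \ref{quasiADLV}). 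The interpretation you sketch is discussed in \S4.6 of the paper, but only on the generic fiber and with the explicit caveat that making it rigorous would require further foundations on pro-\'etale local systems.

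Second, there is no group-scheme homomorphism $\CG^o_\beta\to\CG$, so ``Scholze's functoriality'' does not directly give the map $\CM^{\rm int}_{\CG^o_\beta,b,\mu}\to\CM^{\rm int}_{\CG,b,\mu}$. The groups $\CG_\beta$ and $\CG$ are only conjugate over $\br\BZ_p$, not over $\BZ_p$. The paper constructs this map by hand (equation \eqref{naivemap} and the surrounding paragraphs): one chooses a lift $\dot\gamma\in N(\br\BQ_p)$ of an element $\gamma\in\Omega$ with $(1-\phi)\gamma=\beta$ (Lemma \ref{liftgamma}), twists the $\CG_\beta$-action by conjugation by $\dot\gamma$, and corrects the Frobenius by the element $\xi=\phi(\dot\gamma)^{-1}\dot\gamma\in\CG(\br\BZ_p)$. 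That this is well-defined, lands in the right component, and assembles into an isomorphism after quotienting by $\pi_0(\CG_\beta)^\phi$ is then checked by reducing to $(C,O_C)$-points via qcqs-ness (Proposition \ref{neutralIso}(a)), partial properness, the ADLV bijection (Proposition \ref{quasiADLV}), and the identification of formal completions $\CM^{\rm int}_{\CG^o,b,\mu/x}\simeq\CM^{\rm int}_{\CG,b,\mu/\pi(x)}$ (Proposition \ref{neutralIso}(b)).
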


 We finally mention the following result. Let
$(G, b, \mu)$ be a  local shtuka datum,
 and let $\CG$ be a quasi-parahoric group scheme for $G$. Consider the scheme-theoretic $v$-sheaf $(\CM^{\rm int}_{\CG, b, \mu})_\red$, cf. \cite{Gl21}. Then $(\CM^{\rm int}_{\CG, b, \mu})_\red$  is representable by a perfect $k$-scheme $X_\CG(b, \mu^{-1})$  which can be identified with the admissible set  \eqref{defADL} contained in the Witt vector affine Grassmannian $X_\CG=LG/L^+\CG$, cf. Proposition \ref{MX}. The problem of determining the set of connected components of  $X_\CG(b, \mu^{-1})$ is of considerable interest. 
  
  Let $\CG$ be a parahoric. Then there is a surjective map 
  \begin{equation}\label{mapX}
  X_\CG(b, \mu^{-1})\to c_{b, \mu}+\pi_1(G)_I^\phi ,
  \end{equation}
  where $c_{b, \mu}\in \pi_1(G)_I$ is a certain element unique up to $\pi_1(G)_I^\phi$, cf. \cite[Lem. 6.1]{HZ}. By a recent result of Gleason-Lim-Xu \cite[Thm. 1.2]{GLX},  the fibers of \eqref{mapX} are the connected components of $X_\CG(b, \mu^{-1})$ 
  when $(b, \mu)$ is Hodge-Newton irreducible (proving a conjecture of X. He). 
   The map \eqref{mapX}  is equivariant for the natural action of the $\phi$-centralizer group $J_b(\BQ_p)$ on source and target. We prove the following result.
   \begin{proposition}\label{actJintro}
 Assume that the center of $G$ is connected. Then the action of $J_b(\BQ_p)$ on $c_{b, \mu}+\pi_1(G)_I^\phi $ is transitive. In particular,  any two fibers of the map \eqref{mapX}  are isomorphic. 
 \end{proposition}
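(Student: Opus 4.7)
The map \eqref{mapX} is the restriction to $X_\CG(b,\mu^{-1})$ of the Kottwitz map $\kappa\colon X_\CG = LG/L^+\CG \to \pi_1(G)_I$, which is equivariant for the left-translation action of $G(\br\BQ_p)$ (and the translation action on $\pi_1(G)_I$ via $\kappa_G$). For $j\in J_b(\BQ_p)\subset G(\br\BQ_p)$, the defining relation $\phi(j) = b^{-1} j b$ combined with $\kappa_G$ being a $\phi$-equivariant group homomorphism (into an abelian target) gives $\phi\cdot\kappa_G(j) = \kappa_G(j)$, so the image of $\kappa_G|_{J_b(\BQ_p)}$ already lies in $\pi_1(G)_I^\phi$. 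Consequently, the $J_b(\BQ_p)$-action on $c_{b,\mu}+\pi_1(G)_I^\phi$ is by translation via this homomorphism, and the transitivity claim becomes equivalent to the surjectivity of
\[
\kappa_G|_{J_b(\BQ_p)}\colon J_b(\BQ_p) \to \pi_1(G)_I^\phi .
\]

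To prove surjectivity, the natural plan is to factor this map through the Kottwitz homomorphism of the $\BQ_p$-group $J_b$. Recall that $J_b$ is an inner form of the Levi subgroup $M_b = \mathrm{Cent}_G(\nu_b)$ of $G_{\br\BQ_p}$; since $\nu_b$ is central in $M_b$, the twisting by $\Ad(b)$ induces the trivial automorphism of $\pi_1(M_b)$, so the natural identification $\pi_1(J_b)\simeq \pi_1(M_b)$ is compatible with the Frobenius actions. By Kottwitz's surjectivity theorem for reductive groups over $\BQ_p$, the map $\kappa_{J_b}\colon J_b(\BQ_p) \twoheadrightarrow \pi_1(M_b)_I^\phi$ is surjective; by functoriality of $\kappa$ with respect to the inclusion $M_b\hookrightarrow G_{\br\BQ_p}$, the restriction $\kappa_G|_{J_b(\BQ_p)}$ factors as
\[
J_b(\BQ_p) \xrightarrow{\ \kappa_{J_b}\ } \pi_1(M_b)_I^\phi \xrightarrow{\ \iota_*\ } \pi_1(G)_I^\phi .
\]
Thus it suffices to show that the induced map $\iota_*$ is surjective.

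This last step is where the hypothesis that $Z(G)$ is connected enters crucially. Under this assumption $G_{{\rm der}}$ is simply connected, hence so is $M_{b,{\rm der}}$, and therefore $\pi_1(G) = X_*(G^{\ab})$ and $\pi_1(M_b) = X_*(M_b^{\ab})$ are both torsion-free; moreover the kernel $K$ of the surjection $\pi_1(M_b) \twoheadrightarrow \pi_1(G)$ is the cocharacter lattice of the connected torus $\ker(M_b^{\ab}\to G^{\ab})$, and is therefore likewise torsion-free. Taking $I$-coinvariants preserves surjectivity, so $\pi_1(M_b)_I \twoheadrightarrow \pi_1(G)_I$; the cokernel of $\iota_*$ then embeds in $H^1(\langle\phi\rangle, K_I)$. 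I expect the vanishing of this obstruction to be the main technical difficulty. The cleanest route is to exhibit explicit lifts into $J_b(\BQ_p)$: the $\BQ_p$-points of the central torus $Z(M_b)\subset M_b$ (all of which lie in $J_b$), together with the valuation $\nu_b(p^N)\in J_b(\BQ_p)$ of a suitable integer multiple of the Newton cocharacter, provide $\phi$-fixed elements whose $\kappa_G$-images generate the missing classes; one then checks via a Tate--Nakayama-style computation that these, combined with $\kappa_{J_b}(J_b(\BQ_p)) = \pi_1(M_b)_I^\phi$, exhaust all of $\pi_1(G)_I^\phi$, forcing the obstruction to vanish.
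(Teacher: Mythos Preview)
Your overall strategy---factor $\kappa_G|_{J_b(\BQ_p)}$ through the surjection $\kappa_{J_b}\colon J_b(\BQ_p)\twoheadrightarrow \pi_1(J_b)_I^\phi\simeq\pi_1(M_b)_I^\phi$ and then prove that $\iota_*\colon \pi_1(M_b)_I^\phi\to\pi_1(G)_I^\phi$ is surjective---is exactly the shape of the paper's argument. The problem is the step where you invoke the hypothesis.

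The claim ``$Z(G)$ connected implies $G_{\rm der}$ simply connected'' is false: take $G=\PGL_n$, whose center is trivial (hence connected) but whose derived group has $\pi_1\simeq\BZ/n$. So your torsion-freeness argument for the kernel collapses, and the final paragraph (lifting via $\nu_b(p^N)$ and $Z(M_b)$-points) is only a heuristic sketch that does not actually establish the vanishing of the obstruction in $H^1(\langle\phi\rangle,K_I)$.

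The paper handles the surjectivity of $\iota_*$ by a completely different mechanism, and in fact does \emph{not} use the connected-center hypothesis at that step. After reducing to quasisplit $G$ (so that one can take $b$ basic in a \emph{standard} Levi $M$ defined over $\BQ_p$, with $J_b=J_{M,b_M}$), the kernel of $\pi_1(M)\to\pi_1(G)$ is $\BZ\Phi^\vee_G/\BZ\Phi^\vee_M$, the free abelian group on the coroots of $G$ not in $M$. These coroots are permuted by $\Gal(\bar\BQ_p/\BQ_p)$, so the kernel is a permutation $\Gamma$-module; this is what forces the map to stay surjective after applying $(-)_I$ and then $(-)^\phi$. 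The hypothesis that $Z(G)$ is connected enters only in the reduction to the quasisplit case: it guarantees that the inner twisting from the quasisplit form $G_0$ to $G$ can be realized by an element of $G_0(\breve\BQ_p)$ (rather than merely $G_{0,\ad}(\breve\BQ_p)$), yielding an identification $J_b\simeq J_{0,b_0}$ compatible with $\kappa_G$.
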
  
 Composing  \eqref{mapX}  with the specialization map (\ref{specM}) gives  a $J_b(\BQ_p)$-equivariant map
   \begin{equation}\label{mapX2}
\CM^{\rm int}_{\CG, b, \mu}\to \underline{c_{b, \mu}+\pi_1(G)_I^\phi},
  \end{equation} 
  where the target is the corresponding constant $v$-sheaf. Hence, if the center of $G$ is connected, 
  any two fibers of the map \eqref{mapX2}  are also isomorphic.

 \subsection{The plan of the proof} 

The proof of the main theorems proceeds in the following steps.
\begin{altitemize}
\item {\sl Step 1.} We develop a devissage procedure that allows us to pass  

a) between $\CG$ and $\CG^o$.

b) between groups linked via an ad-isomorphism.

In most of the statements in this step, we deal with integral local shtuka data $(\CG, b, \mu)$, i.e. we do not assume that $\mu$ is minuscule.

\item {\sl Step 2.} We show the results in ``good" Hodge type cases: 
In such cases, the proof follows from the case of $\GL_n$ (already treated in \cite{Schber} 
by relating to RZ formal schemes) and from
the constructions in \cite{KP}, \cite{KZhou}, \cite{KPZ} (which use Zink displays and Breuil-Kisin modules).

\item {\sl Step 3.}  Using Step 1 we reduce the general case to:

a) the Hodge type cases handled in Step 2, (in case (A)),

b) EL/PEL cases for which we give directly $\sM_{\CG,b,\mu}$ as a Rapoport-Zink formal scheme, (in case (B)).
\end{altitemize}

\subsection{The lay-out of the paper} In \S \ref{s:prelim}, we generalize the Ansch\"utz purity theorem from parahoric group schemes to quasi-parahoric group schemes, and use this to transfer to this more general context the  formalism of specialization, formal completion and $v$-sheaf local model diagram that Gleason had established for hyperspecial parahoric $\CG$. We also make the link between the reduced locus of $ \CM^{\rm int}_{\CG, b, \mu}$ and affine Deligne-Lusztig varieties for quasi-parahorics. In \S \ref{s:parvs}, we study the devissage step of varying the quasi-parahorics with a given associated parahoric. We also prove at this point Theorem \ref{decompintro}. In \S \ref{s:adiso}, we study the devissage step of varying the groups  with a given adjoint group. In \S \ref{s:strivmu} we treat the case when $\mu_\ad$ is trivial. In \S \ref{s:cruc} we consider the crucial Hodge type case mentioned in the introduction. In \S \ref{s:exHE}, we develop methods to relate Hodge type cases and abelian type cases to the crucial Hodge type case. Everything comes together in \S \ref{s:proofs}, where we give the proofs of Theorems \ref{MainThm} and \ref{thmRepgoal}; this is done separately for $G$  satisfying Condition  (A) and  (B).

\section{Preliminaries}\label{s:prelim}

\subsection{Torsors under quasi-parahoric group schemes}

Let $\Omega=\Omega_G=\pi_1(G)_I$ (coinvariants of the \emph{algebraic fundamental group} under the inertia group, cf., e.g., \cite[\S 3]{PRTwisted}). We recall the Kottwitz homomorphism 
$$
\kappa\colon G(\br\BQ_p)\to \Omega_G .
$$
A parahoric subgroup lies in the kernel of $\kappa$ and in fact, the kernel of $\kappa$ is generated by all parahoric subgroups, cf. \cite[Lem. 17]{HR}. Also, for any quasi-parahoric subgroup $\br K$ of $G(\br\BQ_p)$ with associated parahoric subgroup $\br K^o$ we have $\br K^o=\ker (\kappa_{| \br K}\colon \br K\to \Omega)$. Let $\CG$ be the corresponding quasi-parahoric group scheme and consider the injective map 
\begin{equation} \label{pi0inO}
  \pi_0(\CG)=\br K/\br K^o\hookrightarrow \Omega_G.
 \end{equation}
 In \cite[25.3.1]{Schber}, Scholze-Weinstein point out the importance of the finite abelian group,
  \begin{equation}\label{defPi}
 \Pi_\CG:={\rm ker}(\pi_0(\CG)_\phi\to \Omega_\phi)
 \end{equation}
(coinvariants under the action of $\phi$).
 
 \begin{lemma}\label{lemma311} There is a natural isomorphism
\[
\Pi_\CG \cong \ker({\rm H}^1_{\et}(\BZ_p, \CG)\to {\rm H}^1_{\et}(\BQ_p, G)).
\]
\end{lemma}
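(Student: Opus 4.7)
The plan is to compare non-abelian étale cohomology via the exact sequence of group sheaves over $\Spec \BZ_p$,
$$1 \to \CG^o \to \CG \to \pi_0(\CG) \to 1,$$
where the quotient is the étale sheaf of fiberwise connected components, with $\br\BZ_p$-sections $\br K/\br K^o$ and trivial generic fiber (since $G$ is connected).

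First, I would check that $H^1_{\et}(\br\BZ_p, \CG) = 1$: Lang's theorem gives $H^1_{\et}(\br\BZ_p, \CG^o) = 1$ because $\CG^o$ is smooth with connected special fiber over the strictly Henselian base, while $\pi_0(\CG)$ restricted to $\br\BZ_p$ is the constant sheaf $\br K/\br K^o$ supported at the special fiber, with vanishing $H^1$ since $\bar\BF_p$ has trivial absolute Galois group; the vanishing then propagates via the six-term sequence. Galois descent from $\br\BZ_p$ down to $\BZ_p$ identifies $H^1_{\et}(\BZ_p, \CG)$ with the set of Frobenius-twisted conjugacy classes $\br K/{\sim}$, where $k \sim b^{-1}k\phi(b)$ for $b \in \br K$, and similarly $H^1_{\et}(\BZ_p, \CG^o) \cong \br K^o/{\sim}$.

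Next, I would invoke the Kottwitz homomorphism $\kappa_G\colon G(\br\BQ_p) \to \Omega$, which vanishes on $\br K^o$ and restricts on $\br K$ to the inclusion \eqref{pi0inO}. This yields a commutative square
$$
\begin{array}{ccc}
H^1_{\et}(\BZ_p, \CG) & \longrightarrow & H^1_{\et}(\BQ_p, G) \\
\downarrow & & \downarrow \\
\pi_0(\CG)_\phi & \longrightarrow & \Omega_\phi,
\end{array}
$$
where the left vertical arrow sends $[k] \mapsto [\bar k]$ (well-defined because $\phi$-conjugation by $b \in \br K$ shifts $\bar k$ by $(\phi-1)\bar b$), and the right vertical arrow is Kottwitz's description of $H^1(\BQ_p, G)$ in terms of the algebraic fundamental group. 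The crucial identification is: a class $[k] \in H^1_{\et}(\BZ_p, \CG)$ with $k \in \br K$ is trivial in $H^1_{\et}(\BQ_p, G)$ iff $k = h^{-1}\phi(h)$ for some $h \in G(\br\BQ_p)$; applying $\kappa_G$ gives $\kappa_G(k) = (\phi-1)\kappa_G(h)$, so $[\bar k] = 0$ in $\Omega_\phi$, which shows the image lies in $\Pi_\CG$.

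For the converse bijection, given $\bar\beta \in \Pi_\CG$, I would lift it to $k \in \br K$ and find $\omega \in \Omega$ with $\kappa_G(k) = (\phi-1)\omega$. Surjectivity of $\kappa_G$ (Borovoi--Kottwitz) produces $h \in G(\br\BQ_p)$ with $\kappa_G(h) = \omega$; then $k \cdot (h^{-1}\phi(h))^{-1} \in \ker(\kappa_G) \cap \br K = \br K^o$, and Kottwitz's classification of $B(G)$ by (Newton point, $\kappa$-invariant) shows that any element of $\br K^o$ is $\phi$-conjugate to $1$ in $G(\br\BQ_p)$, allowing us to modify $h$ so that $k = h^{-1}\phi(h)$ literally. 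The resulting map $\Pi_\CG \to \ker(H^1_{\et}(\BZ_p, \CG) \to H^1_{\et}(\BQ_p, G))$ is inverse to the one constructed above, proving the isomorphism. The main obstacle is exactly this injectivity portion, equivalent to showing $H^1_{\et}(\BZ_p, \CG^o) \to H^1_{\et}(\BQ_p, G)$ is injective; this is where the full force of Kottwitz's classification of $B(G)$ enters, while all other ingredients (Lang, the six-term sequence, surjectivity of $\kappa_G$) are standard.
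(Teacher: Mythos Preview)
Your overall strategy is the same as the paper's---exact sequence for $\CG^o\subset\CG$, Lang, and Kottwitz's description of $H^1_{\et}(\BQ_p,G)$ inside $B(G)_{\rm basic}\cong\Omega_\phi$---but there is a genuine gap in your converse step, and you miss a simplification that the paper exploits.

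The gap is the claim that $k\cdot (h^{-1}\phi(h))^{-1}\in\ker(\kappa_G)\cap\br K=\br K^o$. The element $h^{-1}\phi(h)$ lies only in $G(\br\BQ_p)$, not in $\br K$, so there is no reason for the product to lie in $\br K$. Worse, even granting that some element is $\phi$-conjugate to $1$, the identity $k=g^{-1}\phi(g)\cdot h^{-1}\phi(h)$ is \emph{not} of the form $(gh)^{-1}\phi(gh)$, so your ``modify $h$'' step does not go through as a cocycle computation. The correct argument avoids this manipulation entirely: since $k\in\br K$ lies in a bounded subgroup, its class $[k]\in B(G)$ is basic; its Kottwitz invariant is the image of $\kappa_G(k)=(\phi-1)\omega$ in $\Omega_\phi$, which is zero; Kottwitz's bijection $B(G)_{\rm basic}\xrightarrow{\sim}\Omega_\phi$ then forces $[k]$ to be trivial. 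This is exactly what the paper does, packaged as the commutativity of the diagram \eqref{diag}.

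The simplification you miss is that Lang's theorem over $\BF_p$ (not just over $\br\BZ_p$) gives $H^1_{\et}(\BZ_p,\CG^o)=0$ directly, since $\CG^o$ is smooth with connected special fiber over the Henselian base. The exact sequence then immediately yields an isomorphism $H^1_{\et}(\BZ_p,\CG)\cong\pi_0(\CG)_\phi$, and your ``main obstacle''---injectivity of $H^1_{\et}(\BZ_p,\CG^o)\to H^1_{\et}(\BQ_p,G)$---is vacuous because the source is trivial. With this in hand, both well-definedness and bijectivity of the map $\bar\beta\mapsto[k]$ are automatic, and no explicit construction of an inverse is needed.
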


\begin{proof}
This is sketched in \cite[25.3]{Schber}. The exact sequence $0\to \br K^o\to \br K\to \pi_0(\CG)\to 0$ induces an exact sequence
\[
{\rm H}^1_{\et}(\BZ_p, \CG^o)\to {\rm H}^1_{\et}(\BZ_p, \CG)\to {\rm H}^1_{\et}(\BZ_p, \pi_0(\CG)) .
\]
Here the left term vanishes  by Lang's theorem. The second map is surjective since ${\rm H}^1_{\et}(\BZ_p, \CG)=\br K/_{\!\!\phi}\br K$. We therefore obtain
$${\rm H}^1_{\et}(\BZ_p, \CG)\simeq {\rm H}^1_{\et}(\BF_p, \pi_0(\CG))\simeq {\rm H}^1(\langle \phi\rangle, \pi_0(\CG))\cong \pi_0(\CG)_\phi.
$$
On the other hand, we have an injection
\begin{equation}\label{arrowH1}
{\rm H}^1_{\et}(\BQ_p, G)\into {\rm H}^1(\langle \phi\rangle, G(\br\BQ_p))\cong B(G)
\end{equation}
with the image landing in the basic elements $B(G)_{\rm basic}\subset B(G)$, cf. \cite{KotIsoII}. Now
\[
B(G)_{\rm basic}\simeq \pi_1(G)_\Gamma=(\pi_1(G)_I)_\phi=\Omega_\phi,
\]
and the map \eqref{arrowH1} induces  an identification 
\[
{\rm H}^1_{\et}(\BQ_p, G)\simeq \Omega_{\phi, {\rm tors}} ,
\]
where $\Omega_{\phi, {\rm tors}}$ is the torsion subgroup of $\Omega_\phi$. It remains to observe that the following diagram is commutative,  
\begin{equation}\label{diag}
\begin{aligned}
 \xymatrix{
      {\rm H}^1_{\et}(\BZ_p, \CG)  \ar[r] \ar[d]_\simeq &  {\rm H}^1_{\et}(\BQ_p, G)\,\ar@{^{(}->}[r] \ar[d]^\simeq & B(G)_{\rm basic}\ar[d]^\simeq\\
        \pi_0(\CG)_\phi\ar[r]  &  \Omega_{\phi, {\rm tors}} \,\ar@{^{(}->}[r]&  \Omega_{\phi}.
        }
        \end{aligned}
\end{equation}
 This commutativity follows  from the fact that the vertical homomorphisms on the outer left and the outer right are both induced by the Kottwitz homomorphism $G(\br\BQ_p)\to \Omega$: this is obvious from the construction for the outer left map and follows from \cite[\S 7.5]{KotIsoII} for the outer right map.
\end{proof}

\subsection{Purity of torsors}

The following purity/extension result is crucial. It is a quick generalization of the corresponding result 
 for parahoric subgroups due to Ansch\"utz (\cite{An}).
 
 \begin{proposition}\label{Anext}
 Suppose   $\CG$ is a quasi-parahoric group scheme over $\BZ_p$, and consider a pair $(C, C^+)$ with $C$ an algebraically closed non-archimedean complete field
 of characteristic $p$ and $C^+$ an open and bounded  valuation  ring of $C$. Then every $\CG$-torsor over  $U=\Spec(W(C^+))\setminus V(p, [\varpi])$ extends to a $\CG$-torsor over $\Spec(W(C^+))$ and hence is trivial.
 \smallskip

 \noindent\emph{Here $\varpi$ denotes a pseudo-uniformizer of $C^+$.} 
 \end{proposition}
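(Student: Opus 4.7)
The plan is to bootstrap from Ansch\"utz's theorem for the parahoric $\CG^o$ and to handle the difference via the component group $\pi_0(\CG)$. First, I record the short exact sequence of smooth affine group schemes over $\BZ_p$,
\[
1 \to \CG^o \to \CG \to \pi_0(\CG) \to 1,
\]
in which $\pi_0(\CG) = \CG/\CG^o$ is a smooth finite group scheme over $\BZ_p$, hence \'etale. Given a $\CG$-torsor $\CT$ on $U$, I form the associated $\pi_0(\CG)$-torsor $\bar\CT := \CT \times^{\CG} \pi_0(\CG)$ on $U$, which amounts to a finite \'etale cover of $U$ equipped with a compatible $\pi_0(\CG)$-action.

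The heart of the argument is to extend $\bar\CT$ across the missing locus $V(p,[\varpi])$. This locus is supported at the single ``corner'' point with algebraically closed residue field, and $W(C^+)$ is ``two-dimensional'' at this point in the appropriate sense, so I would invoke a purity statement for finite \'etale covers of $U$. One route is to pick a faithful representation $\pi_0(\CG) \hookrightarrow \GL_n$ over $\BZ_p$: the induced $\GL_n$-torsor $\bar\CT \times^{\pi_0(\CG)} \GL_n$ on $U$ extends to a (necessarily trivial) $\GL_n$-torsor on $\Spec W(C^+)$ by Ansch\"utz's original theorem, and one then argues that the $\pi_0(\CG)$-reduction of structure group extends across $V(p,[\varpi])$ using that $\GL_n/\pi_0(\CG)$ is affine and that sections of an affine scheme extend across a codimension-$\geq 2$ locus in this setting. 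Alternatively, one appeals directly to known purity results for finite \'etale covers of punctured spectra of $W(C^+)$-type rings. Denote the resulting extension by $\bar\CT'$.

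Having extended $\bar\CT$, I pass to the finite \'etale surjection $f\colon V := \bar\CT' \to \Spec W(C^+)$, which trivializes $f^* \bar\CT'$. Each connected component of $V$ is again of the form $\Spec W(C'^+)$ for some valuation ring $C'^+$ of an algebraically closed non-archimedean field $C'$, so Ansch\"utz's theorem applies componentwise. Pulling $\CT$ back to $U_V := V \times_{\Spec W(C^+)} U$ and using the trivialization of the associated $\pi_0(\CG)$-torsor, I obtain a reduction of $\CT|_{U_V}$ to a $\CG^o$-torsor, which extends uniquely to a trivial $\CG^o$-torsor on $V$ by Ansch\"utz for $\CG^o$; pushing this forward along $\CG^o \hookrightarrow \CG$ gives a $\CG$-torsor on $V$ extending $\CT|_{U_V}$. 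Finally, \'etale descent along $f$ produces the sought extension of $\CT$ to a $\CG$-torsor on $\Spec W(C^+)$: the descent datum on $U$ provided by the original $\CG$-structure on $\CT$ extends across $V(p,[\varpi])$ by the uniqueness of the extensions constructed, and is effective since $\CG$ is affine.

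The main obstacle is the purity step for the finite \'etale $\pi_0(\CG)$-torsor, an issue invisible in the parahoric case treated by Ansch\"utz. The triviality of the extended torsor on $\Spec W(C^+)$ then follows from Ansch\"utz's triviality statement for $\CG^o$ combined with the vanishing of $H^1_{\et}(\Spec W(C^+),\pi_0(\CG))$; the latter reduces modulo $p$ (by $p$-adic completeness of $W(C^+)$) to the triviality of finite \'etale $\pi_0(\CG)_{C^+}$-torsors on $\Spec C^+$, a henselian local ring with algebraically closed residue field.
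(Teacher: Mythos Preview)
Your argument rests on the claim that $\pi_0(\CG)=\CG/\CG^o$ is a finite \'etale group scheme over $\BZ_p$. This is false: since $\CG$ and $\CG^o$ have the \emph{same} generic fibre (the connected reductive group $G$), the quotient sheaf $\CG/\CG^o$ is trivial over $\BQ_p$ and supported entirely on the special fibre. In the paper's notation, as an \'etale sheaf one has $\CG/\CG^o\simeq i_*(\pi_0(\CG))$ with $i\colon\Spec(k)\hookrightarrow\Spec(\br\BZ_p)$. Consequently there is no finite \'etale $\pi_0(\CG)$-cover of $U$ to which one could apply purity, no embedding $\pi_0(\CG)\hookrightarrow\GL_n$ over $\BZ_p$, and the descent manoeuvre you set up collapses.

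Once one recognises this, the argument becomes much shorter than what you propose. The long exact sequence in \'etale cohomology gives
\[
{\rm H}^1(U,\CG^o)\to {\rm H}^1(U,\CG)\to {\rm H}^1\bigl(U,i_*(\pi_0(\CG))\bigr),
\]
and the last term is ${\rm H}^1(U\times_{\Spec(\br\BZ_p)}\Spec(k),\pi_0(\CG))={\rm H}^1(\Spec(C),\pi_0(\CG))=0$ because $C$ is algebraically closed. Thus every $\CG$-torsor on $U$ admits a $\CG^o$-reduction, and Ansch\"utz's theorem for the parahoric $\CG^o$ finishes the job. The purity step you identified as the ``main obstacle'' simply does not arise: the component group lives only over $\Spec(C)\subset U$, where it is automatically trivial. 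Your final paragraph on triviality over $\Spec(W(C^+))$ is correct in spirit but likewise simplifies once $i_*(\pi_0(\CG))$ is used.
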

 
 \begin{proof} 
  We have an exact sequence
 \[
 1\to \CG^o\to \CG\to i_*(\pi_0(\CG))\to 1
 \]
 of \'etale sheaves, where $i: \Spec(k)\hookrightarrow \Spec(\br\BZ_p)$.
 Note  $U\times_{\Spec(\br\BZ_p)}\Spec(k)=\Spec(C^+[1/\varpi])=\Spec(C)$.
This gives the exact sequence
 \[
 {\rm H}^1(U, \CG^o)\to {\rm H}^1(U, \CG)\to {\rm H}^1(U, i_*(\pi_0(\CG)))={\rm H}^1(\Spec(C), \pi_0(\CG))=(0).
 \]
By Ansch\"utz's theorem \cite{An}, ${\rm H}^1(U, \CG^o)=(0)$ and   the result follows.
 \end{proof}
 
 This extends as follows to strictly totally disconnected affinoid perfectoids which are given as ``products of points".
 
 \begin{proposition}\label{AnextProduct}
 Let $(R, R^+)$ be a ``product of the points $(C_i, C^+_i)$, $i\in I$", with all $C_i$ algebraically closed  of characteristic $p$, i.e., $R^+=\prod_{i\in I}C_i^+$, $R=R^+[1/\varpi]$, with the $\varpi$-topology, where $\varpi=(\varpi_i)_i$, with $\varpi_i$ pseudouniformizers of $C^+_i$. Then every $\CG$-torsor over
  $\Spec(W(R^+))\setminus V(p, [\varpi])$ extends to a $\CG$-torsor over $\Spec(W(R^+))$ and is trivial.
 \end{proposition}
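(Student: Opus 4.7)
The plan is to adapt the argument of Proposition \ref{Anext} verbatim to the product-of-points setting, using the same short exact sequence of \'etale sheaves on $\Spec(W(R^+))$,
\[
1 \to \CG^o \to \CG \to i_*(\pi_0(\CG)) \to 1,
\]
where $i: \Spec(\BF_p) \hookrightarrow \Spec(\BZ_p)$. Restricting to $U = \Spec(W(R^+)) \setminus V(p, [\varpi])$ and passing to the long exact cohomology sequence yields
\[
{\rm H}^1(U, \CG^o) \to {\rm H}^1(U, \CG) \to {\rm H}^1(U, i_*(\pi_0(\CG))).
\]
It suffices to show that both flanking terms vanish; then every $\CG$-torsor over $U$ is trivial and in particular extends trivially to $\Spec(W(R^+))$.

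For the left-hand term, the plan is to appeal to the analogue of Ansch\"utz's purity theorem \cite{An} for the parahoric group scheme $\CG^o$ over the product-of-points base $(R,R^+)$, asserting that every $\CG^o$-torsor on $\Spec(W(R^+))\setminus V(p,[\varpi])$ is trivial. This generalization is available from the analysis of the Beilinson--Drinfeld affine Grassmannian over products of points used in \cite{AGLR}; failing a direct citation, one can attempt to patch the trivializations furnished by the single-point case of Proposition \ref{Anext} applied to each factor $(C_i,C_i^+)$ across the index set $I$.

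For the right-hand term, note that the closed subscheme $U \cap V(p)$ equals $\Spec(R^+)\setminus V(\varpi) = \Spec(R)$, because $[\varpi]$ reduces to $\varpi$ modulo $p$. Therefore the pushforward identity for closed immersions gives
\[
{\rm H}^1(U, i_*(\pi_0(\CG))) = {\rm H}^1_{\et}(\Spec(R), \pi_0(\CG)).
\]
Since $(R,R^+)$ is a product of the algebraically closed points $(C_i,C_i^+)$, the adic space $\Spa(R,R^+)$ is strictly totally disconnected, so every finite \'etale cover of $\Spec(R)$ splits and this cohomology group vanishes for any finite constant sheaf, in particular for $\pi_0(\CG)$.

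The main obstacle I expect is pinning down the product-of-points extension of Ansch\"utz's theorem in step two, since in the single-point case the argument rests on a delicate analysis of the strict henselization of $W(C^+)$ at $(p,[\varpi])$ and does not immediately descend to the non-local ring $W(R^+)$. Once this input is granted, the remainder is purely formal: the long exact cohomology sequence, the standard $R^0 i_* = i_*$, $R^{>0} i_* = 0$ computation for a closed immersion, and the triviality of finite \'etale covers of a strictly totally disconnected perfectoid base.
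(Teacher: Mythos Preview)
Your proposal is correct and follows essentially the same approach as the paper: reduce to the parahoric case via the exact sequence $1\to\CG^o\to\CG\to i_*(\pi_0(\CG))\to 1$, then handle ${\rm H}^1(U,\CG^o)$ and ${\rm H}^1(\Spec(R),\pi_0(\CG))$ separately. The obstacle you flag---the product-of-points version of Ansch\"utz's theorem for $\CG^o$---is not something you need to reprove or patch by hand: it is \cite[Prop.~11.5]{An} (see also \cite[Thm.~2.8]{Gl21}), and the paper simply cites this. For the vanishing of ${\rm H}^1_{\et}(\Spec(R),\pi_0(\CG))$, the paper gives a slightly more self-contained argument than your appeal to strict total disconnectedness of $\Spa(R,R^+)$: it argues directly that $\pi_0(\Spec(R))\simeq\pi_0(\Spec(R^+))\simeq\beta I$ and that each connected component of $\Spec(R)$ is the spectrum of a valuation ring with algebraically closed fraction field, so every \'etale cover splits; this avoids having to pass between the \'etale sites of $\Spa(R,R^+)$ and $\Spec(R)$.
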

 
 \begin{proof}   In the parahoric case $\CG=\CG^o$, the extension follows from \cite[Prop. 11.5]{An}, see also \cite[Thm. 2.8]{Gl21}. 
Observe that all \'etale covers of
 \[
 \Spec(R^+)=\Spec(\prod_{i\in I}C_i^+ )
 \]
 split. Indeed, as in the proof of \cite[Lem. 6.2]{BS}, we can consider 
 \[
 \Spec(\prod_{i\in I}C_i^+)\to \pi_0(\Spec(\prod_i C_i^+))=\pi_0(\Spec(\prod_i k_i))=\beta I
 \]
 where $k_i=C_i^+/\fkm_i$ is the (algebraically closed) residue field. Here, $\beta I$ is the Stone-\v Cech compactification of the discrete set $I$. Each connected component of $\Spec(R^+)$ (i.e. fiber of this map) is the spectrum of a valuation ring $V$ with algebraically closed
 fraction field $K$; this valuation ring is an ultraproduct of $C^+_i$. It follows that all \'etale covers of $\Spec(V)$ and then also of $\Spec(R^+)$ split.
 (We see that $R^+$ is ``strictly $w$-local" in the terminology of \cite[2.2]{BSproet}, and, in fact, $\Spec(R^+)$ is ``$w$-contractible", \cite[Lem. 2.4.8]{BSproet}.) 
 
 A similar picture holds for $R=R^+[1/\varpi]$: in this case, we have \[\pi_0(\Spec(R))\simeq \pi_0(\Spec(R^+))\simeq \beta I\] (by considering idempotents) and
 \[
 \Spec(R)\to \pi_0(\Spec(R))\simeq \beta I,
 \]
 has every fiber isomorphic to $\Spec(V[1/\varpi])$, with $V[1/\varpi]$ a valuation ring with (algebraically closed)
 fraction field $K$. \'Etale covers over each such $\Spec(V[1/\varpi])$ split, and then  \'etale covers over $\Spec(R)$ also split.   
 
 Now it also follows that every $\CG$-torsor over $\Spec(W(R^+))$ is trivial. 
 Indeed, since $\CG$ is smooth and $W(R^+)$ is $p$-adically complete, it is enough 
 to show that all $\CG$-torsors over $R^+=\prod_{i\in I} C^+_i$ are trivial. As above, 
 we see that all \'etale covers of $\Spec(R^+)$ split and so this follows using the smoothness of $\CG$; the same applies of course to $\CG^o$-torsors.
 The argument in the proof of Proposition \ref{Anext} above now 
 extends to $U=\Spec(W(R^+))\setminus V(p,[\varpi])$, to complete the proof.
 \end{proof}
 
 \subsection{The  affine Witt Grassmannian and affine Deligne-Lusztig varieties}\label{4.3}
 
 Let
 \begin{equation}
 X_\CG:={\rm Gr}^W_\CG =LG/L^+\CG
 \end{equation}
 be the Witt vector affine Grassmannian for $\CG$, cf. \cite{BS, ZhuAfGr}. Here $LG(R)=G(W(R)[\frac{1}{p}])$ and $L^+\CG(R)=\CG(W(R))$ for any perfect $k$-algebra $R$. Note that here $\CG$ is only assumed to be a quasi-parahoric group scheme, whereas in loc.~cit. it is assumed that $\CG$ is a parahoric group scheme. Also, note that $k$-points of $X_\CG$ are given by isomorphism classes of pairs $(\CP,\alpha)$ of  a $\CG$-torsor $\CP$ over $W(k)$ with a trivialization $\alpha$ of its restriction to $W(k)[1/p]$.
 
 Let $(\CG, b, \mu)$ be an integral local shtuka datum such that   $\CG$ is a quasi-parahoric group scheme for $G$.  Inside the Witt vector affine Grassmannian we consider the affine Deligne-Lusztig variety $X_{\CG}(b, \mu^{-1})$ (the $\mu^{-1}$-admissible locus). This is
 a perfect scheme which is locally (perfectly) of finite type (cf. \cite{ZhuAfGr}, \cite{HamaVie}) over $k$ with 
\begin{equation}\label{defADL}
X_{\CG}(b, \mu^{-1})(k)=\{g\breve K\in G(\br\BQ_p)/\breve K\mid g^{-1}b\phi(g)\in \Adm^K(\mu^{-1})\}\subset X_\CG(k) .
\end{equation}
Here $\Adm^K(\mu^{-1})=\breve K\Adm(\mu^{-1})\breve K$, where $\Adm(\mu^{-1})\subset \wt W$ is the $\mu^{-1}$-admissible subset of the Iwahori Weyl group of $G$. 

 \begin{proposition}\label{MX}
 Let $(\CG, b, \mu)$ be an integral local shtuka datum such that   $\CG$ is a quasi-parahoric group scheme for $G$. Then the {\sl reduced locus} $(\CM^{\rm int}_{\CG, b, \mu})_\red$ 
 is represented by the perfect $k$-scheme $X_{\CG}(b, \mu^{-1}) $, and hence
  \[
 \CM^{\rm int}_{\CG, b, \mu}(\Spd(k))=X_{\CG}(b, \mu^{-1})(k) .
 \] 
\end{proposition}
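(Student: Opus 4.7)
The plan is to extend Gleason's result in \cite{Gl21}, which established the claim for parahoric $\CG=\CG^o$, to the quasi-parahoric setting. The key new ingredients -- both of which we have already proved in this preliminary section -- are the purity/extension Propositions \ref{Anext} and \ref{AnextProduct} for quasi-parahoric $\CG$. These were deduced from Ansch\"utz's parahoric theorem via the exact sequence $1\to \CG^o\to \CG\to i_*(\pi_0(\CG))\to 1$ of \'etale sheaves together with the vanishing of $H^1$ of the discrete cokernel on the relevant punctured spectra.

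I would first establish the identification of $k$-valued points. A $\Spd(k)$-valued point of $\CM^{\rm int}_{\CG, b, \mu}$ is a $\CG$-shtuka $(\sP, \phi_\sP)$ over $\Spa(k)$ with one leg at the untilt $\Spa(O_{\br E})$, equipped with a framing $i_r$ at infinity. Applying Proposition \ref{Anext} to $W(k)$ (viewed as $W(C^+)$ for $C=C^+=k$), the $\CG$-torsor underlying $\sP$ extends to, and is trivial on, all of $\Spec(W(k))$. A trivialization is unique up to the action of $\br K=\CG(W(k))$; combined with the framing at infinity it produces a well-defined coset $g\br K\in G(\br\BQ_p)/\br K$. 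Under this trivialization, $\phi_\sP$ is given by right multiplication by $g^{-1}b\phi(g)$, and the boundedness of the leg by $\mu$, which is defined via the $v$-sheaf local model $\BM^v_{\CG,\mu}$, translates exactly to $g^{-1}b\phi(g)\in \br K\Adm(\mu^{-1})\br K=\Adm^K(\mu^{-1})$. This is precisely the description \eqref{defADL} of $X_\CG(b, \mu^{-1})(k)$, giving the displayed bijection.

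The upgrade to an isomorphism of reduced $v$-sheaves proceeds by invoking Gleason's scheme-theoretic $v$-sheaf formalism in \cite{Gl21}. His arguments rest on three inputs: (i) purity for $\CG$-torsors on $\Spec(W(R^+))$ for $R^+$ a product of points; (ii) the representability of the Witt vector affine Grassmannian $X_\CG=LG/L^+\CG$ as a perfect ind-scheme locally of perfectly finite type over $k$; and (iii) the group-theoretic description of the admissible locus $\Adm^K(\mu^{-1})$ inside the Iwahori--Weyl group $\wt W$. Input (i) is exactly our Proposition \ref{AnextProduct}, which now holds for quasi-parahoric $\CG$. Input (ii) extends from the parahoric case since $X_\CG=X_{\CG^o}/\pi_0(\CG)$ as an fpqc quotient by a finite \'etale group, hence remains a perfect ind-scheme locally of perfectly finite type. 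Input (iii) depends only on the Iwahori--Weyl group and the facet, and is insensitive to the passage from $\CG^o$ to $\CG$. With (i)--(iii) in place, Gleason's identification of $(\CM^{\rm int}_{\CG^o, b, \mu})_\red$ with $X_{\CG^o}(b, \mu^{-1})$ transfers verbatim to identify $(\CM^{\rm int}_{\CG, b, \mu})_\red$ with $X_\CG(b, \mu^{-1})$.

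The main obstacle I expect is bookkeeping rather than substance: one must verify that every appeal to ``parahoric'' in \cite{Gl21} can be replaced by ``quasi-parahoric'' once the purity input is in hand. The only substantive use of the connectedness of $\CG$ in loc.~cit.~is to invoke Ansch\"utz's extension theorem, which we have superseded. In particular, the construction of the specialization map, the description of $L^+\CG$-orbits on $X_\CG$, and descent along $v$-covers remain valid for quasi-parahoric $\CG$, since they are formal consequences of the smoothness of $\CG$ together with the purity result of Proposition \ref{AnextProduct}.
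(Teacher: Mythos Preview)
Your approach is essentially the same as the paper's: both reduce to Gleason's parahoric result \cite[Prop.~2.30]{Gl21} and observe that the only substantive input requiring connectedness of $\CG$ is Ansch\"utz purity, now supplied for quasi-parahorics by Propositions~\ref{Anext} and~\ref{AnextProduct}. The paper is slightly more explicit on one point you gloss over: the claim that boundedness by $\mu$ ``translates exactly to'' the admissibility condition $g^{-1}b\phi(g)\in\Adm^K(\mu^{-1})$ requires knowing that $(\BM^v_{\CG,\mu})_\red$ is represented by the $\mu$-admissible locus in $X_\CG$, which the paper obtains from \cite[Thm.~6.16]{AGLR} combined with the isomorphism~\eqref{locmodGtoG0}; you should cite this rather than treating it as self-evident.
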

\begin{proof} Here, the reduced locus $\CF_\red$ of a small $v$-sheaf $\CF$ is a ``scheme-theoretic" $v$-sheaf which is defined as in \cite[Def. 3.12]{Gl}.  For   parahoric $\CG$ the proposition is shown in \cite{Gl21}, cf. \cite[Prop. 2.30]{Gl21}.
One main ingredient in the proof of this is the Ansch\"utz purity theorem for $\CG$ and its extension to products of points. By Propositions \ref{Anext} and \ref{AnextProduct},  these purity statements remain true for quasi-parahoric group schemes. Also, by \cite[Thm. 6.16]{AGLR} combined with (\ref{locmodGtoG0}), the reduced locus $(\BM^v_{\CG, \mu})_\red$ is represented by the perfect $k$-scheme which is the  $\mu$-admissible  locus in $X_\CG$. With these ingredients, the proof in \cite{Gl21} now extends to this case.  
\end{proof}
\begin{remark}
In the identification of Prop. \ref{MX} above, the inverse $\mu^{-1}$ appears on the RHS because of the convention in the definition of ``bounded by $\mu$" for $\CG$-shtuka, comp. \cite[2.4.4]{PRglsv}. 
 \end{remark}
 
\subsection{The specialization map and formal completions} \label{def:spec}
 
 Gleason explains certain conditions on a small $v$-sheaf $\CF$ to construct a continuous specialization map on the underlying spaces,
 \begin{equation*}
{ \rm sp}_\CF\colon |\CF|\to |\CF_\red| ,
 \end{equation*}
 cf. \cite[\S 4.2]{Gl}, see also \cite[\S 2.3]{AGLR}.

 Futhermore, he proves that these conditions are satisfied when $\CF=\BM^v_{\CG, \mu}$ 
 with $\CG$ reductive (hyperspecial parahoric). This result is extended to parahoric $\CG$ by \cite[Prop. 4.14]{AGLR}.
 In view of (\ref{locmodGtoG0}), it also holds for quasi-parahoric $\CG$. The scheme-theoretic $v$-sheaf $(\BM^v_{\CG, \mu})_\red$ is represented by a perfect $k$-scheme (by \cite{AGLR} this is the $\mu$-admissible locus) which is a closed subscheme of the Witt affine Grassmannian $X_\CG={\rm Gr}^W_\CG$. 
   
 By \cite[Prop. 2.30]{Gl21}, these conditions are also satisfied in the case when $\CF=\CM^{\rm int}_{\CG, b, \mu}$ where $\CG$ is a hyperspecial parahoric subgroup. Again, one main ingredient is the Ansch\"utz purity theorem for $\CG$ and its extension to product of points. 
  By Propositions \ref{Anext} and \ref{AnextProduct},  these purity statements remain true for quasi-parahoric group schemes, and the proof extends. 
  
  In fact, by Proposition \ref{MX},  $(\CM^{\rm int}_{\CG, b, \mu})_\red$ is represented by the affine Deligne-Lusztig variety (ADLV)  $X_\CG(b, \mu^{-1})$ in the Witt affine Grassmannian $X_\CG$. Hence we get a continuous map
 \begin{equation}\label{specM}
 {\rm sp}_{\CM^{\rm int}_{\CG, b, \mu}}\colon |\CM^{\rm int}_{\CG, b, \mu}|\to |(\CM^{\rm int}_{\CG, b, \mu})_\red|=|X_\CG(b,\mu^{-1})| .
 \end{equation}

 Using this, we define the formal completion of $\CM^{\rm int}_{\CG, b, \mu}$ along a point $x\in \CM^{\rm int}_{\CG, b, \mu}(\Spd(k))=X_\CG(b, \mu^{-1})(k)$ as the sub-$v$-sheaf of $\CM^{\rm int}_{\CG, b, \mu}$, with
$$
\CM^{\rm int}_{\CG, b, \mu/x}(S)=\{y\colon S\to \CM^{\rm int}_{\CG, b, \mu}\mid { \rm sp}_{\CM^{\rm int}_{\CG, b, \mu}}\circ y(|S|)\subset \{x\} \} .
$$
Similarly, we can define the formal completion $\BM^v_{\CG, \mu/y}$ of $\BM^v_{\CG, \mu}$ along a point 
\[
y\in \BM^v_{\CG, \mu}(\Spd(k))\subset X_\CG(k)=G(W(k)[1/p])/\CG(W(k)).
\] 
 Using the condition ``bounded by $\mu$", we obtain a map
\begin{equation}\label{Lmap}
\ell: \CM^{\rm int}_{\CG, b, \mu}(\Spd(k))\to \CG(W(k))\backslash 
\BM^v_{\CG, \mu}(\Spd(k)). 
\end{equation}
The set of orbits which appears as the target of $\ell$ is
the $\mu$-admissible set for $K$. The image of a  point in  $\CM^{\rm int}_{\CG, b, \mu}(\Spd(k))$ under $\ell$ is obtained by choosing a trivialization of the $\CG$-shtuka and taking the coset given by the inverse of the Frobenius map.

If $\mu$ is minuscule, then $\BM^v_{\CG, \mu}$ is representable by the $O_E$-scheme $\BM^{\rm loc}_{\CG, \mu}$ (\cite{AGLR}), and the formal completion $\BM^v_{\CG, \mu/y}$
is given as $\Spd(A, A)$, where $A$ is the completion of the local ring of $\BM^{\rm loc}_{\CG, \mu}\otimes_{O_E}{\br O_E}$ at the corresponding point, taken with the topology given by the maximal ideal.  In this case, we also have 
\[
\CG(W(k))\backslash 
\BM^v_{\CG, \mu}(\Spd(k))=\CG(k)\backslash \BM^{\rm loc}_{\CG, \mu}(k),
\]
so $\ell(x)$ above can be also considered as a $\CG(k)$-orbit in $\BM^{\rm loc}_{\CG, \mu}(k)$.

\subsection{Change of base point}\label{ss:basepoint}

Note that, when $b\in {\rm Adm}^K(\mu^{-1})$, then $\CM^{\rm int}_{\CG, b, \mu}$ has a canonical $\Spd(k)$-valued ``base point" $x_0$. Under the identification $\CM^{\rm int}_{\CG, b, \mu}(\Spd(k))=X_\CG(b, \mu^{-1})(k)\subset X_\CG(k)$ it corresponds to the ``base point" of the ADLV given by the trivial coset. 

In general, let $x\in \CM^{\rm int}_{\CG, b, \mu}(\Spd(k))=X_\CG(b, \mu^{-1})(k)\subset X_\CG(k)$ correspond to the isomorphism class of a pair $(\CP, \alpha)$,
where $\CP$ is a $\CG$-torsor over $W(k)$ and 
\[
\alpha:  \CP[1/p] \xrightarrow{\sim}  \CG\times \Spec(W(k)[1/p])
\]
 is a trivialization of the restriction $\CP[1/p]$ of $\CP$ to $\Spec(W(k)[1/p])$
  such that
\[
\phi_\CP=\alpha^{-1}\cdot \phi_b\cdot \phi^*(\alpha): \phi^*(\CP)[1/p]\xrightarrow{\sim} \CP[1/p]
\]
has pole at $p=0$ bounded by $\mu$. Choose a trivialization of the $\CG$-torsor $\CP$ over $W(k)$. Then $\alpha$ is given by $g\in \CG(W(k)[1/p])$ and $\phi_\CP$ by $b_x\times \phi\colon \CG[1/p]\to\CG[1/p]$ such that  $b_x=g^{-1}b\phi(g)$. Hence we have $b_x=g^{-1}b\phi(g)\in {\rm Adm}^K(\mu^{-1})$. We obtain an isomorphism
\[
\tau_g\colon \CM^{\rm int}_{\CG, b, \mu}\to \CM^{\rm int}_{\CG, b_x, \mu} ,\quad (\sP, \phi_\sP, i_r)\mapsto (\sP, \phi_\sP, i_r\circ g^{-1}) 
\]
which sends $x$ to the base point $x_0$  of $\CM^{\rm int}_{\CG, b_x, \mu}$, cf. \cite[proof of Prop. 3.4.1]{PRglsv}. 
This gives an isomorphism (depending on our choices) 
\begin{equation}\label{eqBPt}
\CM^{\rm int}_{\CG, b, \mu/x}\simeq  \CM^{\rm int}_{\CG, b_x, \mu/x_0} .
\end{equation}
In particular, $ \CM^{\rm int}_{\CG, b, \mu/x}$ is representable (by a normal complete Noetherian local ring) for given $\CG$ and any $b$ in a given $\phi$-conjugacy class in $B(G)$ and arbitrary $x\in  \CM^{\rm int}_{\CG, b, \mu}(\Spd(k))$ if and only if this holds for the base point $x_0$ of  $\CM^{\rm int}_{\CG, b, \mu}$, for given $\CG$ and any $b\in {\rm Adm}^K(\mu^{-1})$ in the given $\phi$-conjugacy class.   Note that, by He's theorem \cite{He}, any $\phi$-conjugacy class $[b]\in B(G, \mu^{-1})$ contains elements $b\in {\rm Adm}^K(\mu^{-1})$.  

\subsection{A $v$-sheaf ``local model diagram"}

Let $\CH$ be an affine group scheme over $\br\BZ_p$. For an affinoid perfectoid $(R, R^+)$ over $k$, consider
\[
\BW^+\CH(\Spa(R, R^+))=\CH(W(R^+)),
\]
and
 \[
 \wh \BW^+\CH(R, R^+)=\{h\in \CH(W(R^+))\ |\ h\equiv 1\, \hbox{\rm mod}\, [\varpi_h]\},
 \]
 ($\varpi_h$ stands for some pseudo-uniformizer of $R^+$ that depends on $h$).

These definitions extend to perfectoid spaces over $k$ and define $v$-sheaves of groups $\BW^+\CH$ and $ \wh \BW^+\CH$ on ${\rm Perfd}_k$. We set
\[
\wh {L_W^+}\CH= \wh \BW^+\CH\times \Spd(\BZ_p)
\]
which is a $v$-sheaf of groups over $\Spd(\BZ_p)$. (This definition appears
in \cite[2.3.15]{Gl}.) We have 
\begin{equation}
\wh {L_W^+}\CH(S)=\{((S^\sharp, y), h)\ |\ h\in \CH(W(R^+)),\ h\equiv 1\, {\mathrm {mod}}\, [\varpi_h] \},
\end{equation}
where $S=\Spa(R, R^+)$ and $(S^\sharp, y)$ is an $S$-valued point of $\Spd(\BZ_p)$, and where $\varpi_h$ is a pseudouniformizer of $R^+$ (that depends on $h$).  Here, $S^\sharp=\Spa(R^\sharp,R^{\sharp +})$ together with $y:(S^\sharp)^\flat\xrightarrow{\sim} S$ is an untilt of $S$, see
 \cite[Prop. 11.3.1]{Schber}.

 We will later need the following lemma. 
 
 \begin{lemma}\label{recursive}
 For each $h\in  \wh \BW^+\CH(R, R^+)$, there is a unique $\lambda\in \wh \BW^+\CH(R, R^+)$
 such that
 $
 h=\lambda^{-1}\cdot \phi(\lambda).
 $
  \end{lemma}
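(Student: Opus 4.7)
The plan is to rewrite the equation $h = \lambda^{-1}\phi(\lambda)$ as the fixed-point equation $\lambda = T(\lambda)$, where $T(\lambda) := \phi(\lambda)\, h^{-1}$, and then to exhibit the fixed point by Picard iteration, exploiting that Frobenius on $W(R^+)$ is contracting on the ideal generated by the Teichm\"uller lift of a pseudouniformizer.

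First, I would set $\lambda_0 = 1$ and iterate, producing
\[
\lambda_n := T^n(1) = \phi^{n-1}(h^{-1})\,\phi^{n-2}(h^{-1})\cdots \phi(h^{-1})\,h^{-1}\in \CH(W(R^+)).
\]
Since $h\equiv 1\pmod{[\varpi_h]}$, the multiplicativity of the Teichm\"uller lift yields $\phi^k(h^{-1}) \equiv 1\pmod{[\varpi_h]^{p^k}}$, and $[\varpi_h]^{p^k} = [\varpi_h^{p^k}]\in (p,[\varpi_h])^{p^k}$. Hence the successive quotients $\lambda_{n+1}\lambda_n^{-1} = \phi^n(h^{-1})$ tend to $1$ in the $(p,[\varpi_h])$-adic topology on $W(R^+)$. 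To make this a convergence statement inside the affine group scheme $\CH$, I would pick a closed immersion $\CH \hookrightarrow \GL_N$ (or work in the coordinate ring $O(\CH)$ where the same estimates apply coordinate-wise): since $W(R^+)$ is $(p,[\varpi_h])$-adically complete, the sequence $\lambda_n$ converges to some $\lambda\in \CH(W(R^+))$, and the $n=0$ term shows $\lambda\equiv h^{-1}\equiv 1\pmod{[\varpi_h]}$, so $\lambda\in \wh{\BW}^+\CH(R,R^+)$. By construction $T(\lambda) = \lim T(\lambda_n) = \lim \lambda_{n+1} = \lambda$, i.e.\ $\phi(\lambda) = \lambda h$, which is the desired identity.

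For uniqueness, suppose $\lambda_1,\lambda_2\in \wh{\BW}^+\CH(R,R^+)$ both solve $h=\lambda_i^{-1}\phi(\lambda_i)$. Then $\mu := \lambda_2\lambda_1^{-1}$ satisfies $\phi(\mu)=\phi(\lambda_2)\phi(\lambda_1)^{-1}= \lambda_2 h h^{-1}\lambda_1^{-1} = \mu$, so $\mu$ is Frobenius-fixed, and $\mu\equiv 1\pmod{[\varpi']}$ for $\varpi' = \varpi_{\lambda_1}\varpi_{\lambda_2}$ (using that the kernel of $\CH(W(R^+))\to \CH(W(R^+)/[\varpi']W(R^+))$ is a subgroup). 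Iterating Frobenius gives $\mu - 1 = \phi^n(\mu-1) \in [\varpi'^{p^n}]\, W(R^+)\subset (p,[\varpi'])^{p^n} W(R^+)$ for every $n$, and the $(p,[\varpi'])$-adic separatedness of $W(R^+)$ forces $\mu = 1$, proving $\lambda_1=\lambda_2$.

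The main obstacle, and essentially the only nontrivial point, is the convergence of the Picard iteration inside $\CH(W(R^+))$: one must verify that the $(p,[\varpi_h])$-adic estimate $\phi^k(h^{-1})\equiv 1 \pmod{(p,[\varpi_h])^{p^k}}$, obtained from $\phi([\varpi_h]) = [\varpi_h^p]$, persists after the non-commutative multiplication in $\CH$, which is why I would reduce to the matrix setting via an embedding $\CH \hookrightarrow \GL_N$ (or argue in $O(\CH)$). Once this is in place, both existence and uniqueness follow as above.
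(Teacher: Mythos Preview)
Your proof is correct and takes essentially the same approach as the paper: your iteration $\lambda_{n+1}=T(\lambda_n)=\phi(\lambda_n)h^{-1}$ is literally the paper's recursion $\lambda_{n+1}=\lambda_n\eta_n$ (since $\lambda_n\eta_n=\phi(\lambda_n)h^{-1}$), and both existence and uniqueness rest on the same $[\varpi_h]$-adic contraction estimate. The only cosmetic difference is that the paper tracks the right ratio $\eta_n=\lambda_n^{-1}\lambda_{n+1}$ via the recursion $\eta_{n+1}=h\,\phi(\eta_n)\,h^{-1}$, whereas you compute the left ratio $\lambda_{n+1}\lambda_n^{-1}=\phi^n(h^{-1})$ directly.
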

  
  \begin{proof} The ring $W(R^+)$ is complete and separated for the $[\varpi]$--topology, where $\varpi$ is a pseudo-uniformizer
  of $R^+$.
  Set inductively $\lambda_0=1$, $\eta_0=h^{-1}$, and
  \[
  \eta_n=\lambda^{-1}_n\phi(\lambda_n)\cdot h^{-1},\quad \lambda_{n+1}=\lambda_n\cdot\eta_n,
  \]
 Then we have
  \[
  \eta_{n+1}=h\cdot \phi(\eta_n)\cdot h^{-1}.
  \]
  Since $\eta_0\equiv 1\, \hbox{\rm mod}\, [\varpi_h]$, this gives $\eta_n\equiv 1\, \hbox{\rm mod}\, [\varpi^{p^n}_h]$.
  Hence, $\eta_n$ converges to $1$ and $\lambda_n$ converges to an element $\lambda$ with
  \[
  h=\lambda^{-1}\phi(\lambda).
  \]
  (cf. \cite[Lem. 2.15]{Gl21}.)
  If $\lambda=\phi(\lambda)$ with $\lambda\equiv 1\, \hbox{\rm mod}\, [\varpi]$,
  then we easily see inductively $\lambda\equiv 1\, \hbox{\rm mod}\, [\varpi^{p^n}]$
  for all $n$,
  so $\lambda=1$, so uniqueness follows.
  \end{proof}

 Let  $(\CG, b, \mu)$ be an integral local shtuka datum such that $\CG$ is quasi-parahoric.  We define as follows  a functor $\wh{L\CG}_{b, \mu}$  on ${\rm Perfd}_k$ over $\Spd(O_{\br E})$, cf. \cite[\S 2.4]{Gl21}. It  assigns to a affinoid perfectoid $S=\Spa(R, R^+)$ over $k$ the set
\[
\wh{L\CG}_{b, \mu}(S)=\{\hbox{\rm isomorphism classes of}\ ((S^\sharp, y), \CP, \psi, \sigma)\}
\]
where 
\begin{itemize}

\item $(S^\sharp, y)$ is an untilt of $S$ over $ O_{\br E}$,  

\item $\CP$ is a $\CG$-torsor over $\Spec(W(R^+))$, 

\item
$
\psi: \CP[1/\xi_{R^\sharp}]\xrightarrow{\sim} \CG\times \Spec(W(R^+)[1/\xi_{R^\sharp}])$, and $\sigma: \CP\xrightarrow{\sim}  \phi^*(\CG\times \Spec(W(R^+)))
$

are both $\CG$-isomorphisms such that:

\smallskip

1) $(\CP, \psi)$ is bounded by $\mu$ along $\xi_{R^\sharp}=0$,

\smallskip

 2) there is a pseudo-uniformizer $\varpi\in R^+$ such that $\phi_b \circ \sigma\equiv\psi \, \hbox{\rm mod}\,  [\varpi]$.
\end{itemize}

Here, we denote by $\xi_{R^\sharp}$ a generator of  the  map $W(R^+)\to R^{\sharp+}$ given by the untilt $(S^\sharp, y)$ of $S$.
Also ``bounded by $\mu$" means, by definition, that the
point of the ${\BB}_{\rm dR}$-affine Grassmanian ${\rm Gr}_{\CG, \Spd(\breve O_E)}$ given by $((S^\sharp, y), \CP, \psi)$ factors through the $v$-sheaf local model $\BM^v_{\CG, \mu}\subset {\rm Gr}_{\CG, \Spd(\breve O_E)}$. More precisely, choose (locally) a section $\tau: \CG\xrightarrow{\sim}\CP$ 
and consider $g=\psi\circ \tau(1)$; we ask that $g^{-1}\CG({\BB}^+_{\rm dR}(S^\sharp))$ lies in $\BM^v_{\CG, \mu}(R^\sharp)$.

As in \cite{Gl21}, we can see that $\wh{L\CG}_{b, \mu}$ is a $v$-sheaf over $\Spd(O_{\br E})$
(this $v$-sheaf is denoted by $\wh {\rm WSht}^{\CD, \leq \mu}$
 in \cite[Def. 2.34 (3), (4)]{Gl21}, and by $L\CM^{\rm int}_{\CG, b, \mu/x_0}$ in \cite{PRglsv}).

It is useful to observe that by using the mapping 
\[
(\CP, \psi, \sigma)\mapsto h=(\psi\circ \sigma^{-1})(1)\in \CG(W(R^+)[1/\xi_{R^\sharp}]) ,
\]
we obtain the following simpler description:
\begin{equation}\label{vlocalDescr}
\wh{L\CG}_{b, \mu}(S)=\{((S^\sharp, y), h)\ |\ h\in \CG(W(R^+)[1/\xi_{R^\sharp}]),\ h
\equiv b\, {\mathrm {mod}}\, [\varpi_h], \ [h^{-1}]\in \BM^v_{\CG, \mu}(S)\},
\end{equation}
where $(S^\sharp, y)$ is an untilt of $S$ over $ O_{\br E}$, $\varpi_h$ is a pseudo-uniformizer of $R^+$, and $[h^{-1}]$ is the $S$-point of the ${\BB}_{\rm dR}$-affine Grassmanian ${\rm Gr}_{\CG, \Spd(\breve O_E)}$ defined by the coset $h^{-1} \CG(\BB^+_{\rm dR}(R^\sharp))$.

\begin{theorem}\label{vLMD} 
There is a diagram of $v$-sheaves over $\Spd(\breve O_E)$
\begin{equation}\label{GDia}
\begin{gathered}
   \xymatrix{
	     &\wh{L\CG}_{b, \mu} \ar[dl]_-{\text{$\pi_\bullet$}} \ar[dr]^-{\text{$\pi_\star$}}\\
	  \CM^{\rm int}_{\CG, b, \mu/x_0}  & & \ \BM^v_{\CG, \mu/y_0} \ .
	}
\end{gathered}
\end{equation}
where both $\pi_{\bullet}$, $\pi_\star$ are $\LG$-torsors  (for the $v$-topology)
for two corresponding actions (see (d) below).
\smallskip

a) Here $x_0\in \CM^{\rm int}_{\CG, b, \mu}(\Spd(k))$ denotes the base point as in \S\ref{ss:basepoint} above. Similarly  $y_0\in \BM^v_{\CG,  \mu}(\Spd(k))$  denotes the corresponding point of $\BM^v_{\CG, \mu}(\Spd(k))\subset X_{\CG}(k)$ given by the pair $(\CG, b^{-1})$, i.e. by the coset $b^{-1}\,\CG(W(k))$.
Also,  $\CM^{\rm int}_{\CG, b, \mu/x_0}$, resp. $\BM^v_{\CG, \mu/y_0}$, denotes the $v$-sheaf given by the formal completion
of $\CM^{\rm int}_{\CG, b, \mu}$, resp. $\BM^v_{\CG, \mu}$, at these points.
(See \S\ref{def:spec} above.)

b) The map $\pi_\star$ is given by
\[
\pi_\star(((S^\sharp, y), \CP, \psi, \sigma))=((S^\sharp, y), \CP, \psi).
\]

c) The map $\pi_\bullet$ is given by
\[
\pi_\bullet((S^\sharp, y), \CP, \psi, \sigma)=((S^\sharp, y), (\sP,\phi_{\sP}),  i_r).
\]
Here,  $(\sP,\phi_{\sP})$ is the $\CG$-shtuka over $S$ with leg at $y$ given as follows: The $\CG$-torsor $\sP$ is $(\phi^{-1})^*(\CP)$ restricted to $\CY_{[0,\infty)}(S)$, i.e. 
\[
\sP=(\phi^{-1})^*(\CP)_{|\CY_{[0,\infty)}(S)}.
\] 
The Frobenius $\phi_{\sP}$ is
given by a similar restriction of the composition
\[
\Phi: \CP[1/\xi_{R^\sharp}]\xrightarrow{\psi} \CG\times \Spec(W(R^+)[1/\xi_{R^\sharp}])\xrightarrow{((\phi^{-1})^*\sigma)^{-1}} (\phi^{-1})^*(\CP)[1/\xi_{R^\sharp}].
\]
(Note that $\phi^*(\sP)=\CP_{|\CY_{[0,\infty)}(S)}$.) The framing $i_r$ is constructed in \cite{Gl21}.

d) The $v$-sheaf of groups $\LG$ acts on $\wh{L\CG}_{b, \mu} $ on the right by
\[
( \CP, \psi, \sigma)\star g=(  \CP, \psi, \phi^*(r_{\phi^{-1}(g)})\circ \sigma), \quad
( \CP, \psi, \sigma)\bullet g= (  \CP, r_{g}\circ \psi, \phi^*(r_{g})\circ\sigma),
\]
where, $r_a$ is the map given by right multiplication by $a$, and, for simplicity, we omit the untilt $(S^\sharp, y)$ from the notation.
\end{theorem}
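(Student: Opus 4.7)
The construction strategy is to follow and extend Gleason's treatment in \cite{Gl21} of the analogous diagram for parahoric $\CG$, in which case the result is essentially established (and the framing $i_r$ is constructed, as the statement itself indicates). The two essential new ingredients that enable the generalization to quasi-parahoric $\CG$ are the quasi-parahoric purity Propositions~\ref{Anext} and \ref{AnextProduct}, and the identification of $(\CM^{\rm int}_{\CG, b, \mu})_\red$ with the ADLV $X_\CG(b, \mu^{-1})$ in Proposition~\ref{MX}, together with the associated specialization map \eqref{specM}. With these extensions in hand, the bulk of Gleason's argument transfers directly; the task is to check that each step remains valid.

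I would first work exclusively with the simpler description \eqref{vlocalDescr}: a point of $\wh{L\CG}_{b,\mu}(S)$ is an untilt together with $h \in \CG(W(R^+)[1/\xi_{R^\sharp}])$ satisfying $h \equiv b \pmod{[\varpi_h]}$ and $[h^{-1}] \in \BM^v_{\CG,\mu}(S)$. Under this identification, $\pi_\star$ sends $h$ to the coset $[h^{-1}]$ and $\pi_\bullet$ sends $h$ to the shtuka determined by the Frobenius $h^{-1}\cdot \phi$ on the trivial $\CG$-torsor (with framing descended from the trivial framing). That both maps land in the formal completion at $x_0$ is immediate: the congruence mod $[\varpi_h]$ forces the specialization on each side to land in the $\CG(W(k))$-orbit of $b^{-1}$, which is exactly $x_0$ on both the ADLV and the $\mu$-admissible locus.

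For the torsor structure I would argue $v$-locally on $S$. Passing to a $v$-cover of $S$ which is a product of points $(C_i, C_i^+)$ of the sort in Proposition~\ref{AnextProduct}, every $\CG$-torsor on $\Spec(W(R^+))$ becomes trivial. Trivializing $\CP$ reduces the fibers of $\pi_\star$ (respectively $\pi_\bullet$) to prescribing $\sigma$ (respectively the data compatible with the target shtuka and framing) subject to the congruence $\phi_b\circ\sigma \equiv \psi \pmod{[\varpi]}$. The $\star$-action on $\sigma$ with $(\CP,\psi)$ fixed is manifestly free; for local surjectivity, starting from $\psi \equiv b \pmod{[\varpi]}$ one takes $\sigma_0=1$ as a first approximation and appeals to Lemma~\ref{recursive} to produce the unique $\lambda \in \wh\BW^+\CG$ converting $\sigma_0$ into a genuine solution via $h = \lambda^{-1}\phi(\lambda)$; this simultaneously gives existence and uniqueness, i.e. the $\LG$-torsor property for $\pi_\star$. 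The argument for $\pi_\bullet$ is parallel: the $\bullet$-action changes the trivialization of $\CP$ and hence modifies both $\psi$ and $\sigma$ in a compatible way, while the compatibility with the framing $i_r$ is again reduced, via trivialization on $\CY_{[r,\infty)}(S)$ and the $[\varpi]$-adic recursion of Lemma~\ref{recursive}, to the same uniqueness statement.

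The principal obstacle is this last step: verifying freeness and $v$-local surjectivity of the two $\phi$-twisted actions without getting the conventions entangled. The technical heart is the recursive construction of Lemma~\ref{recursive}, whose uniqueness clause is what ultimately yields freeness. Everything hinges on the availability of purity for $\CG$-torsors over $\Spec(W(R^+))$: the fact that Anschütz's theorem extends from the parahoric to the quasi-parahoric setting (Propositions~\ref{Anext} and~\ref{AnextProduct}) is precisely what permits the Gleason argument to be imported unchanged into our broader context.
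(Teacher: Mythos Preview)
Your proposal is correct and matches the paper's own proof, which simply cites Gleason's results \cite[Thm.~2.33, Lem.~2.35, Lem.~2.36]{Gl21} for the parahoric case and observes that the argument carries over to quasi-parahoric $\CG$ once Propositions~\ref{Anext} and~\ref{AnextProduct} are in place. You have supplied more of the internal mechanics than the paper does, with one small slip: in the explicit description the Frobenius on the trivial torsor is $r_h\phi$ rather than $h^{-1}\cdot\phi$, and Lemma~\ref{recursive} is used primarily to construct the framing $i_r(h)$ for $\pi_\bullet$, not for the $\star$-torsor structure (which is just a left-translation action).
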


Note that the composition
$
\Phi: \CP[1/\xi_{R^\sharp}] \to   (\phi^{-1})^*(\CP)[1/\xi_{R^\sharp}]
$
in c), is fixed under the $\bullet$-action. This follows from the identity
\[
((\phi^{-1})^*(\phi^*(r_{g})\circ \sigma))^{-1}\circ (r_g\circ \psi)=(r_g\circ (\phi^{-1})^*\sigma)^{-1}\circ r_g\circ \psi=((\phi^{-1})^*\sigma)^{-1}\circ\psi.
\]

\begin{proof}
The statement of the theorem is shown by Gleason for $\CG$ parahoric (see \cite[Thm. 2.33, Lem. 2.35, Lem. 2.36]{Gl21}). The proof generalizes to $\CG$ quasi-parahoric by using Proposition \ref{Anext} and its extension Proposition \ref{AnextProduct} to products of points. 
\end{proof}

It is  convenient to express the torsors $\pi_\star$, $\pi_\bullet$, using the description (\ref{vlocalDescr})
of $\wh{L\CG}_{b, \mu} $.
In the description (\ref{vlocalDescr}), $\pi_\star$ is given by projection to the coset $[h^{-1}]=h^{-1} \CG(\BB^+_{\rm dR}(R^\sharp))$, and $\pi_\bullet$ is given by sending $h$ to the shtuka $\sP(h)=\CG\times \CY_{[0,\infty)}(S)=\CG_{ \CY_{[0,\infty)}(S)}$ (the trivial torsor) with
 Frobenius defined by $\phi_{\sP(h)}=\phi_h=r_h\phi:  (\phi^*\CG_{\CY_{[0,\infty)}(S)})[1/\xi_{R^\sharp}]\xrightarrow{\sim} \CG_{\CY_{[0,\infty)}(S)}[1/\xi_{R^\sharp}]$. 
To check this description of $\pi_\bullet$, note that,  since $\phi_h=r_h \phi= \psi\circ \sigma^{-1}$, and $\Phi=((\phi^{-1})^*(\sigma))^{-1}\circ\psi$,
 the following diagram commutes
 \begin{displaymath}
 \xymatrix{
        \CP[1/\xi_{R^\sharp}] \ar[r]^{\Phi\ \ \ \ \ \ \  }\ar[d]_{\sigma} &   (\phi^{-1})^*(\CP)[1/\xi_{R^\sharp}]\ar[d]^{(\phi^{-1})^*\sigma}\\
     \phi^*(\CG_{ {W(R^+)}})[1/\xi_{R^\sharp}]   \ar[r]^{\ \ \ \ \ \ \phi_h\ \ \ \ }& \CG_{W(R^+)}[1/\xi_{R^\sharp}].
     }
\end{displaymath}
Hence, the $\CG$-shtukas $(\sP(h), \phi_{\sP(h)})$ and $(\sP, \phi_\sP)$ given above, are isomorphic. 

Under the above isomorphism, the framing $i_r$ of $(\sP, \phi_\sP)$ corresponds to a framing $i_r(h)$ of $(\sP(h), \phi_{\sP(h)})$; this is given by the unique lift of the identity trivialization modulo $[\varpi_h]$, cf. Lemma \ref{recursive}. 
More precisely, for $((S^\sharp, y), h)\in \wh{L\CG}_{b, \mu}(S)$, the framing 
of $(\sP(h), \phi_{\sP(h)})$ is given by   the unique element $ i_r(h)\in G(B^{[r,\infty)}_{(R, R^+)})$ with $i(h)
\equiv 1\, {\mathrm {mod}}\, [\varpi_h]$ and the property 
 \[
 h=i_r(h)^{-1}\cdot b\cdot \phi(i_r(h)). 
 \]
 (See \cite[Lem. 2.15]{Gl21}.) 
Finally, the actions correspond to $h\star g=g^{-1}h$ and $h\bullet g=\phi(g)^{-1}h g$. 
Note that $[(h\star g)^{-1}]=[(g^{-1}h)^{-1}]=[h^{-1}g]=[h^{-1}]$.

\begin{remark}\label{rem343}
a) Gleason calls the diagram (\ref{GDia}) a ($v$-sheaf) ``local model diagram". However, we warn the reader that (\ref{GDia}) does not compare directly to the local model diagrams in the theory of Shimura varieties
and of Rapoport-Zink spaces; these are of a different nature. In particular, the group acting there is $\CG$ which is, in a sense, ``finite dimensional", while here we have torsors for $\LG$.

b) The existence of the diagram (\ref{GDia}) with the properties listed in the above theorem, 
does not imply the isomorphism
 (\ref{Gliso}) of Theorem \ref{thmRepgoal}: for example, we cannot deduce from general principles that the torsors
$\pi_\star$ and $\pi_\bullet$ split, since the formal completions are not ``sufficiently local" for the $v$-topology.
\end{remark}

\section{Parahoric vs Quasi-parahoric group schemes}\label{s:parvs}

\subsection{The aim of this section}
The aim of this section is to prove the following devissage result. 
  
 \begin{theorem}\label{varyGthm} Let $(G, b, \mu)$ be a local Shimura datum. The following are equivalent:
 \begin{itemize}
 \item[1)] $\CM^{\rm int}_{\CG, b, \mu}$ satisfies the representability conjecture \ref{repconj}, for all parahoric $\CG$.
 
 \item[2)] $\CM^{\rm int}_{\CG, b, \mu}$ satisfies the representability conjecture  \ref{repconj}, for all stabilizer group schemes   $\CG$,
 
 \item[3)] $\CM^{\rm int}_{\CG, b, \mu}$ satisfies the representability conjecture  \ref{repconj}, for all quasi-parahoric $\CG$.
 \end{itemize}
  \end{theorem}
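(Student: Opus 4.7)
The implications $(3)\Rightarrow(1)$ and $(3)\Rightarrow(2)$ are trivial, since parahoric group schemes and stabilizer quasi-parahoric group schemes are in particular quasi-parahoric. So the content lies in the reverse implications, and the whole argument hinges on establishing the $v$-sheaf analogue of the decomposition formula \eqref{mentionformdec} unconditionally: for any quasi-parahoric $\CG$ with associated parahoric $\CG^o$, there is a natural isomorphism of $v$-sheaves over $\Spd(O_{\br E})$
\[
\CM^{\rm int}_{\CG, b, \mu} \;\simeq\; \bigsqcup_{\bar\beta\in \Pi_\CG} \CM^{\rm int}_{\CG^o_\beta, b, \mu}\big/\pi_0(\CG_\beta)^\phi .
\]
The strategy is to identify a $\CG$-shtuka with a $\CG^o$-shtuka plus a reduction-of-structure datum under the finite \'etale quotient $\pi_0(\CG)=\CG/\CG^o$; the Frobenius structure cuts $\pi_0(\CG)$-torsors down to $\pi_0(\CG)^\phi$-torsors, while the ``twists'' parametrizing the non-neutral components $\CG_\beta$ account exactly for the kernel $\Pi_\CG = \ker({\rm H}^1_{\et}(\BZ_p,\CG)\to {\rm H}^1_{\et}(\BQ_p,G))$ computed in Lemma \ref{lemma311}. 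Concretely, for each $\bar\beta\in\Pi_\CG$ one chooses a $\br\BZ_p$-conjugator that is invariant modulo $\pi_0(\CG)$ to produce the inner form $\CG_\beta$, whose neutral component $\CG^o_\beta$ is a parahoric conjugate to $\CG^o$ over $\br\BZ_p$. I would read this decomposition off at the $v$-sheaf level by combining the natural forgetful map $\CM^{\rm int}_{\CG^o,b,\mu}\to \CM^{\rm int}_{\CG,b,\mu}$, the Kottwitz description of $\pi_0(\CG)\hookrightarrow \Omega_G$ in \eqref{pi0inO}, and the identification of basic $\sigma$-conjugacy classes inside $B(G)$.

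Granted this $v$-sheaf decomposition, both remaining implications become essentially formal. For $(1)\Rightarrow(3)$, each $\CG^o_\beta$ is parahoric, so by hypothesis $\CM^{\rm int}_{\CG^o_\beta,b,\mu}=\sM_{\CG^o_\beta,b,\mu}^\diam$ for a normal flat formal scheme locally formally of finite type over $O_{\br E}$. The finite group $H=\pi_0(\CG_\beta)^\phi$ acts by $O_{\br E}$-automorphisms, and the quotient exists in the category of formal schemes: on an $H$-stable affine open $\Spf(R)$ one forms $\Spf(R^H)$, with $R^H$ inheriting Noetherianity and completeness since $R$ is a finite $R^H$-module, flatness over the DVR $O_{\br E}$ (since torsion-freeness passes to submodules), normality (invariants of a normal ring under a finite group are normal), and local formal finiteness of type. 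The formal-scheme quotient represents the $v$-sheaf quotient via $(-)^\diam$, and taking the disjoint union over $\bar\beta\in\Pi_\CG$ produces the required representing object $\sM_{\CG,b,\mu}$.

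For $(2)\Rightarrow(1)$, given a parahoric $\CG^o$, let $\CG^{\max}$ be the maximal stabilizer quasi-parahoric with $(\CG^{\max})^o=\CG^o$. By hypothesis $\CM^{\rm int}_{\CG^{\max},b,\mu}$ is a formal scheme, and its open-closed component indexed by $\bar\beta=0$ in the above decomposition is the formal scheme representing $\CM^{\rm int}_{\CG^o,b,\mu}/\pi_0(\CG^{\max})^\phi$. The natural structure-group-extension map $\CM^{\rm int}_{\CG^o,b,\mu}\to\CM^{\rm int}_{\CG^{\max},b,\mu}$ lands in this component and is a $\pi_0(\CG^{\max})^\phi$-torsor for the $v$-topology; finite \'etale covers of formal schemes by constant finite groups are again formal schemes (and inherit normality, flatness, and local formal finiteness of type), giving the desired $\sM_{\CG^o,b,\mu}$. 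The main obstacle in the whole proof is the unconditional $v$-sheaf decomposition above: once one pins down the correct parametrization of reductions of structure by the finite group $\Pi_\CG$ and checks that the two natural $v$-sheaf moduli problems agree on affinoid perfectoid test objects, all the remaining manipulations with finite quotients and finite \'etale covers of formal schemes are routine and preserve the relevant geometric properties.
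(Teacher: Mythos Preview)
Your proposal is essentially correct and takes the same route as the paper: the heart of the matter is the unconditional $v$-sheaf decomposition (Theorem~\ref{quasiThm}), whose proof in the paper indeed proceeds by comparing formal completions (Proposition~\ref{neutralIso}) and ADLVs (Proposition~\ref{quasiADLV}), together with a qcqs argument and partial properness to reduce to $(C,O_C)$-points.

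The deduction of Theorem~\ref{varyGthm} from this decomposition, however, is carried out differently. You pass through finite quotients and finite \'etale covers of formal schemes. The paper instead extracts from Theorem~\ref{quasiThm} the more elementary Corollary~\ref{compwise}: for quasi-parahorics $\CG\to\CG'$ with the same neutral component, each component $\CM^{\rm int,\tau}_{\CG,b,\mu}$ maps \emph{isomorphically} onto the corresponding component $\CM^{\rm int,\tau'}_{\CG',b,\mu}$. Since representability is a componentwise question, all three statements become equivalent at once without any quotient or torsor manipulation. This bypasses the step in your $(1)\Rightarrow(3)$ where you assert that the formal-scheme quotient represents the $v$-sheaf quotient: your invariant-theory argument shows the formal quotient $\Spf(R^H)$ exists with good properties, but the identification $(\Spf R^H)^\diam=(\Spf R)^\diam/H$ is not automatic for general finite group actions. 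In fact the action of $\pi_0(\CG_\beta)^\phi$ here is free and merely permutes connected components (this is part of the content of Theorem~\ref{quasiThm} and Proposition~\ref{quasiADLV}(i)), so the quotient is trivially representable; but you should say so rather than appeal to a general principle that does not hold without freeness. Once you observe the action is component-permuting, your torsor argument for $(2)\Rightarrow(1)$ is fine (indeed the cover is split), and the paper's componentwise argument and yours become two phrasings of the same fact.
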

  Here by a stabilizer group scheme we understand the BT-group scheme associated to the stabilizer of a point in the extended building $\sB^e(G, \br\BQ_p)$ (a particular type of quasi-parahoric group schemes, cf. section  \ref{def:quasi}). The proof proceeds by the comparing formal completions of $\CM^{\rm int}_{\CG, b, \mu}$ and of $\CM^{\rm int}_{\CG^o, b, \mu}$ (cf. Section \ref{ss:formcompl}) and of their underlying reduced  schemes  (cf. Section \ref{ss:ADLV}). 
\subsection{Formal completions} \label{ss:formcompl}
Let $(\CG, b, \mu)$ be an integral local shtuka datum such that $\CG$ is quasi-parahoric. For simplicity of notation, we set $O=O_{\br E}$ with residue field $k=\bar k_E$. We consider the natural $v$-sheaf morphism
\begin{equation}
\pi: {\CM^{\rm int}_{\CG^o, b, \mu}}\to {\CM^{\rm int}_{\CG, b, \mu}}
\end{equation}
over $\Spd(O)$. 

\begin{proposition}\label{neutralIso}  
 a)  The map $\pi$ is  qcqs (quasi-compact quasi-separated \cite{Sch-Diam}).
 \smallskip
 
\noindent b) The map $\pi$ induces an isomorphism
\[
\CM^{\rm int}_{\CG^o, b,\mu/x}\xrightarrow{\sim} \CM^{\rm int}_{\CG, b,\mu/\pi(x)}
\]
for each $x\in \CM^{\rm int}_{\CG^o, b,\mu}(\Spd(k))$. 
\end{proposition}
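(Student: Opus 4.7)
The strategy is to reduce both parts to the structure of the $v$-sheaf local model diagram (Theorem \ref{vLMD}) for $\CG$ and $\CG^o$, exploiting two preliminary identifications: $\LG = \LG^o$ as $v$-sheaves of groups, and $\BM^v_{\CG^o,\mu} = \BM^v_{\CG,\mu}$ from \eqref{locmodGtoG0}.

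For part (a), granting part (b), the map $\pi$ is a local isomorphism on formal completions at every $\bar k$-point. On reduced loci, by Proposition \ref{MX}, $\pi$ becomes $X_{\CG^o}(b,\mu^{-1}) \to X_\CG(b,\mu^{-1})$, a restriction of $G(\br\BQ_p)/\br K^o \to G(\br\BQ_p)/\br K$, whose fibers have cardinality bounded by $|\br K/\br K^o| = |\pi_0(\CG)|$. Combined with the pointwise formal-completion iso, this gives qcqs.

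For part (b), after the change-of-base-point construction of \S\ref{ss:basepoint} we may assume $x = x_0$ and $b \in \Adm^{K^o}(\mu^{-1})$. The identification $\LG = \LG^o$ is established by a clopen argument: for $h \in \CG(W(R^+))$ with $h \equiv 1 \mod [\varpi_h]$, the morphism $h\colon \Spec W(R^+) \to \CG$ factors through the clopen subgroup $\CG^o$ (clopen because $\CG/\CG^o = i_*(\pi_0(\CG))$ is an \'etale skyscraper on the special fiber). The preimage of $\CG^o$ under $h$ is a clopen containing both $V(p,[\varpi_h])$ (where $h = 1$) and $D(p)$ (where $\CG^o = \CG$); for $S = \Spa(C,C^+)$ with $C^+$ a valuation ring these together force the preimage to be all of $\Spec W(C^+)$, and the general case follows by $v$-descent via products of points (Proposition \ref{AnextProduct}). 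Combined with $\BM^v_{\CG^o,\mu} = \BM^v_{\CG,\mu}$, the local model diagrams express both formal completions as quotients of $\wh{L\CG^o}_{b,\mu}$ and $\wh{L\CG}_{b,\mu}$ by the $\bullet$-action of the common group $\LG$.

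The remaining step, which is the main obstacle, is to show that the natural inclusion $\wh{L\CG^o}_{b,\mu} \hookrightarrow \wh{L\CG}_{b,\mu}$ is an equality. An element $h$ of the latter lies in $\CG(W(R^+)[1/\xi])$ with $h \equiv b \mod [\varpi_h]$ and $[h^{-1}] \in \BM^v_{\CG,\mu}$, and the subtlety is that the congruence takes place in $W(R^+)[1/\xi]/[\varpi_h]$, a $\br\BQ_p$-algebra in which $\CG^o = \CG$, so it alone does not force $h$ into the identity branch. The crucial input is that $\BM^v_{\CG^o,\mu}$ is the specific connected component of the preimage of $\BM^v_{\CG,\mu}$ in ${\rm Gr}_{\CG^o,\Spd(\br O_E)}$ picked out by the canonical isomorphism of local models; combined with the specialization at $x_0$, this forces $h$ to factor through $\CG^o$. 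Making this precise using Propositions \ref{Anext}, \ref{AnextProduct} and the specialization formalism of \S\ref{def:spec} yields the required isomorphism $\CM^{\rm int}_{\CG^o,b,\mu/x_0} \simeq \CM^{\rm int}_{\CG,b,\mu/\pi(x_0)}$.
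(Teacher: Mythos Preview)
Your approach to part (b) is the same as the paper's: reduce to the base point, use the $v$-sheaf local model diagram for both $\CG^o$ and $\CG$, identify the groups $\wh\BW^+\CG^o \simeq \wh\BW^+\CG$ and the local model bases $\BM^v_{\CG^o,\mu/x_0}\simeq \BM^v_{\CG,\mu/x_0}$, and conclude. However, you mislocate the difficulty. Once those two identifications are made, the equality $\wh{L\CG}^o_{b,\mu}\xrightarrow{\sim}\wh{L\CG}_{b,\mu}$ is \emph{automatic}: the natural map between them is a morphism of $\wh\BW^+\CG$-torsors (for the $\star$-action) over the same base $\BM^v_{\CG,\mu/x_0}$, and any morphism of torsors is an isomorphism. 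There is no need to analyze $h\in\CG(W(R^+)[1/\xi])$ directly or to invoke connected components of the Beilinson--Drinfeld Grassmannian. The paper states this in one line. (Also, your clopen argument for $\wh\BW^+\CG^o=\wh\BW^+\CG$ is slightly off: $\CG^o$ is open in $\CG$ but not closed, since it contains the dense generic fiber. The clean argument is that the reduction $h_0\in\CG_k(R^+)$ maps to the identity in the constant group $\pi_0(\CG_k)(R^+)$, because it does so modulo $\varpi_h$ and $R^+$ is $\varpi_h$-adically separated.)

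For part (a), your argument is genuinely different from the paper's and is not complete as written. The paper does not deduce (a) from (b); instead it observes that $\CG^o\to\CG$ is the dilation of $\CG$ along the neutral component of its special fiber, so that $\CO_{\CG^o}=\CO_\CG[f_1,\ldots,f_m]$ with $p^N f_i\in\CO_\CG$, and then applies the argument of \cite[Prop.~3.6.2]{PRglsv} directly. Your sketch (``local isomorphism on formal completions plus finite fibers on reduced loci gives qcqs'') is a plausible heuristic but is not a proof in the $v$-sheaf setting: quasi-compactness of a map of $v$-sheaves is a statement about arbitrary affinoid perfectoid test objects, and neither formal completions at $k$-points nor the perfect-scheme structure of the reduced locus immediately controls this. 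You would need an additional argument (e.g.\ a suitable formal-neighborhood covering statement) to make this work, and it is simpler to argue as the paper does.
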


\begin{proof}
(a) Observe that $\CG^o\to \CG$ is an isomorphism on the generic fibers and can be identified with the dilation $\CG^{\rm dil}$
of $\CG$ along its closed subscheme given by the neutral component $(\CG\otimes_{\BZ_p}{\mathbb F}_p)^o$ of the special fiber $\CG\otimes_{\BZ_p}{\mathbb F}_p$. (Indeed, both $\CG^o$ and the dilation $\CG^{\rm dil}$
 are smooth,
have the same generic fiber and the same $\br\BZ_p$-points.) In particular, we have $\CO_{\CG}\into \CO_{\CG^o}=\CO_{\CG}[f_1,\ldots, f_m]$,
and there is $N\geq 1$, such that $p^Nf_i\in \CO_{\CG}$, for all $i$. Then the argument in \cite[proof of Prop. 3.6.2]{PRglsv} applies to prove the claim.

(b) As in section \ref{ss:basepoint} we have an isomorphism 
\begin{equation}\label{eqBPt2}
\CM^{\rm int}_{\CG, b, \mu/x}\simeq  \CM^{\rm int}_{\CG, b_x, \mu/x_0}
\end{equation}
and similarly for $\CG^o$.
Hence  it is enough to show the isomorphism for the base point $x=x_0\in \CM^{\rm int}_{\CG^o, b,\mu}(\Spd(k))$ (at the cost of changing $b=b_{x_0}$ to $b_x$). By  Theorem \ref{vLMD}
\[
\wh{L\CG}^o_{b,\mu}\to \CM^{\rm int}_{\CG^o, b,\mu/x_0}
\]
is a $\wh \BW^+\CG^o\times\Spd(O_{\br E})$-torsor. Also, $\wh{L\CG}^o_{b,\mu}$ is a  $\wh \BW^+\CG^o\times\Spd(O_{\br E})$-torsor over 
$\BM^{v}_{\CG^o, \mu/x_0}$, and there are corresponding statements for
$\CG$. By \eqref{locmodGtoG0}, the natural map $\CG^o\to \CG$ induces an isomorphism 
\[
\BM^{v}_{\CG^o, \mu}\xrightarrow{\sim}  \BM^{v}_{\CG, \mu}.
\]
On the other hand, it is easy to see that $\CG^o\to \CG$ induces 
$\wh \BW^+\CG^o\xrightarrow{\sim}  \wh \BW^+\CG$ and, hence, $\wh{L\CG}^o_{b,\mu}\xrightarrow{\sim} \wh{L\CG}_{b,\mu}$. The result  follows.
\end{proof}

\begin{remark}\label{largepointremark} Suppose $\kappa$ is an algebraically closed field over $k$ and set $O(\kappa)= O_{\br E}\otimes_{W(k)}W(\kappa)$. 
We can consider the base change 
\[
(\CM^{\rm int}_{\CG, b, \mu})_{O(\kappa)}:=\CM^{\rm int}_{\CG, b, \mu}\times_{\Spd(O)}\Spd( O(\kappa)),
\]
and similary for $\CG^o$.
Given $x\in \CM^{\rm int}_{\CG^o, b, \mu}(\Spd(\kappa))$, we obtain a corresponding point $x\in (\CM^{\rm int}_{\CG^o, b, \mu})_{O(\kappa)}(\Spd(\kappa))$. The formal completions $( \CM^{\rm int}_{\CG^o, b, \mu})_{ O(\kappa)/x}$ and $( \CM^{\rm int}_{\CG, b, \mu})_{ O(\kappa)/\pi(x)}$ now make sense and the argument in the proof of (b)
above also applies to give 
\begin{equation}\label{largepointeq}
( \CM^{\rm int}_{\CG^o, b, \mu})_{ O(\kappa)/x}\xrightarrow{\sim}( \CM^{\rm int}_{\CG, b, \mu})_{ O(\kappa)/\pi(x)}.
\end{equation}

\end{remark}

\subsection{ADLV for quasi-parahorics}\label{ss:ADLV}
In this subsection, we express the ADLV $X_{\CG}(b, \mu)$ for a quasi-parahoric $\CG$ in terms of ADLV attached to parahoric subgroups. Note that the results will eventually be applied for $\mu$ replaced by $\mu^{-1}$, to relate to integral local Shimura varieties via Prop. \ref{MX}.

Recall the Kottwitz map $\kappa_G: G(\br \BQ_p)\to \Omega_G$. This map can be enhanced to a morphism $LG\to \Omega_G$ which factors through 
$X_{\CG^o}=LG/L^+\CG^o$, and induces a map    with connected fibers,
\begin{equation}
\kappa_G\colon X_{\CG^o}\to \Omega_G ,
\end{equation}
 cf. \cite[Prop. 1.21]{ZhuAfGr}, comp. also \cite[\S 5]{PRTwisted}.

Recall $\pi_0(\CG)=\breve K/\breve K^o\subset \Omega$, cf. \eqref{pi0inO}. Then $\pi_0(\CG)$ acts on $X_{\CG^o}$ by $g\br K^o\mapsto g\dot\gamma\br K^o$, where $\dot\gamma\in\br K$ is a representative of a given element in $\pi_0(\CG)$. This action is permuting connected components and the quotient is $X_\CG$, 
\[
 X_{\CG^o}\to X_\CG=X_{\CG^o}/\pi_0(\CG) .
\]
Each connected component of $X_{\CG^o}$ maps isomorphically to a connected component
of $X_\CG$.

 Recall that there exists $c_{b, \mu}\in \Omega$ with $\phi(c_{b, \mu})-c_{b, \mu}=\kappa_G(b)-\kappa_G(\mu)$, comp. \cite[\S6]{HZ}. Here $\kappa_G(\mu)\in\Omega$ is the element associated to the conjugacy class $\mu$, i.e., the residue class of $\{\mu\}$ modulo the affine Weyl group. Furthermore, the class modulo $\Omega^\phi$ of $c_{b, \mu}$ is uniquely determined. 

Now $X_{\CG^o}(b, \mu)$ lies in the union of certain connected components of $X_{\CG^o}$.  From $-\kappa_G(g)+\phi(\kappa_G(g))=-\kappa_G(b)+\kappa_G(\mu)$, we obtain $\kappa_G(X_{\CG^o}(b, \mu))\subset  c_{b, \mu}+\Omega^\phi.$ In fact, by \cite[Lem. 6.1]{HZ}, we have an  equality, 
\begin{equation}\label{coset}
\kappa_G(X_{\CG^o}(b, \mu))= c_{b, \mu}+\Omega^\phi.
\end{equation}
 
We now extend these considerations to quasi-parahorics. Let $C=C_\CG=\Omega/\pi_0(\CG)$. Then the Kottwitz homomorphism  induces a map 
\begin{equation}\label{kappaCG}
\kappa_G\colon X_\CG\to C_\CG .
\end{equation}
The image of $X_\CG(b, \mu)$ 
  in $C_\phi$ is equal to  the residue class of $\kappa_G(b)-\kappa_G(\mu)$, so
\begin{equation}\label{barc}
\kappa_G(X_\CG(b, \mu))\subset \bar c_{b, \mu}+C^\phi ,
\end{equation}
where $\bar c_{b, \mu}\in C_\CG$ is the image of $ c_{b, \mu}$. This inclusion  will turn out to be  an equality, cf. Corollary \ref{imageka} below.  

Recall $\Pi_\CG=\ker(\pi_0(\CG)_\phi\to \Omega_\phi)$, cf. \eqref{defPi}. From the exact sequence $0\to\pi_0(\CG)\to \Omega\to C_\CG\to 0$ we have an exact sequence
\begin{equation}\label{exseq}
0\to \Omega^\phi/\pi_0(\CG)^\phi\to C_\CG^\phi\to \Pi_\CG\to 0 .
\end{equation}
Let $\bar\beta\in \Pi_\CG$. Take $\beta\in \pi_0(\CG)\subset \Omega$ lifting $\bar\beta$.
Then
\[
\beta=(1-\phi)\gamma
\]
for some $\gamma\in \Omega$.  Note that, since $\phi({\gamma})-{\gamma}$ is in $\pi_0(\CG)$,
  the image $\bar\gamma$ of $\gamma$ in $C$ is $\phi$-invariant and maps to $\bar\beta$ under the connecting homomorphism $C^\phi\to\Pi_\CG$.   Conversely, the image $\bar\beta$ of  an element $\bar\gamma\in C^\phi$ in $\Pi_\CG$ is described as follows. Let $\gamma\in\Omega$ be a lift of $\bar\gamma$. Then $\bar\beta$ is the class of $\beta=\phi(\gamma)-\gamma\in\pi_0(\CG)$. 
  
  Recall that the quasi-parahoric subgroup $\br K$ is defined via a point $\bf x$ in the building of $G_\ad$. Write the Iwahori-Weyl group as $\wt W=W_a\rtimes \Omega$ and
\[
1\to T(\br\BQ_p)^0\to N(\br\BQ_p)\to \wt W=W_a\rtimes \Omega\to 1
\]
with $T=Z_G(S_{\br\BQ_p})$ as in \cite{HR}. Here the choices are made such that  $\bf x$ is in the base alcove $\frak{a}_0$ of the apartment of $S$. Then $T(\br\BQ_p)^0\subset \breve K^o$. When we consider $\Omega$ as a subset of $\wt W$, we write the group law in a multiplicative way. 

\begin{lemma} \label{liftgamma}
 Let $\beta\in\pi_0(\CG)$ be of the form $\beta=\phi(\gamma)-\gamma$, for $\gamma\in\Omega$.  There is a lift  $\dot{\gamma}\in N(\br\BQ_p)$ of $\gamma\in \Omega\subset \wt W$ such that
\[
\phi(\dot{\gamma})^{-1}\dot{\gamma}\in \breve K.
\]
\end{lemma}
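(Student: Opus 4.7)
My plan is to show that \emph{any} lift $\dot{\gamma}\in N(\br\BQ_p)$ of $\gamma$ already satisfies the desired conclusion; no further adjustment by an element of $T(\br\BQ_p)^0$ is needed.

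First I would pick some $\dot{\gamma}\in N(\br\BQ_p)$ lifting $\gamma\in\Omega\subset\wt W$ using surjectivity in the exact sequence $1\to T(\br\BQ_p)^0\to N(\br\BQ_p)\to\wt W\to 1$, and compute the image in $\wt W$ of $k:=\phi(\dot{\gamma})^{-1}\dot{\gamma}\in N(\br\BQ_p)$. Writing the additive relation $\beta=\phi(\gamma)-\gamma$ multiplicatively as $\phi(\gamma)=\beta\gamma$ in $\Omega=\pi_1(G)_I$, and exploiting that $\Omega$ is abelian, the image of $k$ in $\wt W$ is $\phi(\gamma)^{-1}\gamma=\beta^{-1}$.

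Next, I would reduce the claim $k\in\breve K$ to producing \emph{some} lift $\dot\beta^{-1}\in N(\br\BQ_p)\cap \breve K$ of $\beta^{-1}\in\pi_0(\CG)$: granted such a lift, $k$ and $\dot\beta^{-1}$ have the same image in $\wt W$ and hence differ by an element of $\ker(N(\br\BQ_p)\to\wt W)=T(\br\BQ_p)^0\subset \breve K^o\subset \breve K$, so $k\in T(\br\BQ_p)^0\cdot\dot\beta^{-1}\subset \breve K$. To produce such a lift, I would take any $n\in N(\br\BQ_p)$ mapping to $\beta^{-1}\in\Omega\subset\wt W$. Its action on the apartment of $S$ factors through $\wt W$ (since $T(\br\BQ_p)^0$ acts trivially on the apartment) and is given by $\beta^{-1}$. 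The Kottwitz image $\kappa_G(n)=\beta^{-1}$ is torsion because the finite group $\pi_0(\CG)$ lies in $\Omega_{\rm tors}$, so $n\in G(\br\BQ_p)^1$. Combined with the fact that $n$ stabilizes $\frak F$ setwise (see below), this places $n$ in $G(\br\BQ_p)^1\cap{\rm Stab}_{\frak F}$ with class $\beta^{-1}\in\pi_0(\CG)$ modulo $\breve K^o$; by the very definition of $\breve K$ as the preimage of $\pi_0(\CG)$ inside $(G(\br\BQ_p)^1\cap{\rm Stab}_{\frak F})/\breve K^o$, it follows that $n\in\breve K$ as required.

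The only subtle step will be invoking standard Bruhat--Tits theory to see that every $\omega\in\pi_0(\CG)\subset\Omega_{\rm tors}$, viewed via $\Omega\hookrightarrow\wt W$, acts on $\frak{a}_0$ stabilizing $\frak F$ setwise; this is exactly the place where the hypotheses $\bf x\in\frak{a}_0$ and $T(\br\BQ_p)^0\subset\breve K^o$ enter, and it packages the identification of $\pi_0(\CG)$ inside $\Omega$ with a subset of symmetries of the base alcove fixing $\frak F$.
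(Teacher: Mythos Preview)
Your proof is correct and follows essentially the same path as the paper's: both pick an arbitrary lift $\dot\gamma\in N(\br\BQ_p)$, observe that $\phi(\dot\gamma)^{-1}\dot\gamma$ projects to $\beta^{\pm1}\in\pi_0(\CG)$, and then argue that any element of $N(\br\BQ_p)$ projecting to $\pi_0(\CG)$ already lies in $\breve K$ (up to $T(\br\BQ_p)^0\subset\breve K^o$). The only difference is packaging of the key step. The paper proves the surjectivity of $N(\br\BQ_p)\cap\breve K\to\pi_0(\CG)$ directly, using the decomposition $\breve K\subset U\cdot N(\br\BQ_p)$ from \cite[proof of Prop.~8]{HR}. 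Your ``subtle step''---that every $\omega\in\pi_0(\CG)\subset\Omega$ stabilizes the facet $\frak F$ setwise---is \emph{equivalent} to that surjectivity (given the trivial fact that $T(\br\BQ_p)^0$ acts trivially on the apartment), so you are invoking the same input under a different name. When you flesh this out, the cleanest justification is exactly the one the paper gives: write $k\in\breve K$ lifting $\omega$ as $u\cdot n$ with $u\in U\subset\breve K^o$ and $n\in N(\br\BQ_p)$; then $n\in\breve K$ has image $\omega$ in $\wt W$, so $\omega$ acts on the apartment as $n$ does, and $n\in\breve K\subset{\rm Stab}_{\frak F}$.

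One small redundancy: once you establish in your final paragraph that \emph{every} $n\in N(\br\BQ_p)$ lifting $\beta^{-1}$ lies in $\breve K$, the earlier reduction ``produce some lift $\dot\beta^{-1}\in N(\br\BQ_p)\cap\breve K$ and compare'' becomes unnecessary---you can apply the conclusion directly to $k=\phi(\dot\gamma)^{-1}\dot\gamma$ itself.
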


\begin{proof}
Lift $\gamma\in \Omega$ to $\dot{\gamma}\in N(\br\BQ_p)$. Then $\phi(\dot{\gamma})^{-1}\dot{\gamma}$
lifts $\beta\in \pi_0(\CG)\subset \Omega$. Now we use the exact sequence
$$
1\to T(\br\BQ_p)^0\to N(\br\BQ_p)\cap\br K\to \pi_0(\CG)\to 1 .
$$ This holds since, denoting by $U=U_{K^o}$ the pro-unipotent radical of $\breve K^o$, we have  $\breve K\subset U\cdot N(\br\BQ_p)$, cf.  \cite[proof of Prop. 8]{HR}. We can lift $\beta$ to $\dot{\beta}\in N(\br\BQ_p)\cap \br K$. Then $\phi(\dot{\gamma})^{-1}\dot{\gamma}\dot\beta^{-1}\in T(\br\BQ_p)^0$. Since
$T(\br\BQ_p)^0\subset \br K^o$, the result follows.   
\end{proof}

 For $\beta$, $\gamma$, $\dot\gamma$ as above, it follows that
\[
\br K\dot{\gamma}^{-1}=\br K\phi(\dot{\gamma})^{-1}.
\]
Consider now the conjugates of $\br K$, resp. $\br K^o$, 
\begin{equation}
\br K_\gamma:=\dot{\gamma} \br K\dot{\gamma}^{-1}, \quad \br K^o_\gamma:=\dot{\gamma} \br K^o\dot{\gamma}^{-1}.
\end{equation}
 These subgroups of $G(\br\BQ_p)$ depend only on $\gamma$. Indeed, if $\dot\gamma$ is replaced by $\dot\gamma\dot\delta$, with $\dot\delta\in T(\br\BQ_p)^0$, then $\dot{\gamma} \br K^o\dot{\gamma}^{-1}$ is replaced by $\dot{\gamma} \dot\delta \br K^o\dot\delta^{-1} \dot{\gamma}^{-1}=\dot{\gamma} \br K^o\dot{\gamma}^{-1}$.
We have
\[
\phi(\dot{\gamma} \br K\dot{\gamma}^{-1})=\dot{\gamma}\br K\dot{\gamma}^{-1},\quad 
\phi(\dot{\gamma} \br K^o\dot{\gamma}^{-1})=\dot{\gamma} \br K^o\dot{\gamma}^{-1}.
\]
Hence, $\br K_\gamma$ and $\br K^o_\gamma$ are  rational, i.e., correspond to subgroups of  $G(\BQ_p)$.  The parahorics $ K^o_\gamma$  are conjugate to $K^o$ in $G(\br\BQ_p)$ but not necessarily in 
$G(\BQ_p)$.
 \begin{proposition} The $G(\BQ_p)$-conjugacy class of  $\br K_\gamma$, resp. $\br K^o_\gamma$, only depends  on the class $\bar\beta$ of $\beta$ in $\Pi_\CG$.  
\end{proposition}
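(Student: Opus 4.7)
\emph{Proof plan.} Fix $\bar\beta \in \Pi_\CG$. A valid triple $(\beta, \gamma, \dot\gamma)$ lifting $\bar\beta$ is unique up to three layers of ambiguity: (i) the lift $\dot\gamma \in N(\br\BQ_p)$ of $\gamma$ is unique modulo $T(\br\BQ_p)^0$; (ii) for fixed $\beta$, the element $\gamma\in\Omega$ is unique modulo $\Omega^\phi$; (iii) the lift $\beta \in \pi_0(\CG)$ of $\bar\beta$ is unique modulo $(\phi-1)\pi_0(\CG)$. We will show that each such change leaves the $G(\BQ_p)$-conjugacy class of $\br K_\gamma$ (and simultaneously of $\br K^o_\gamma$) unchanged.

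Changes of type (i) are harmless: $T(\br\BQ_p)^0\subset \br K^o\subset \br K$, so conjugation by such elements preserves both $\br K$ and its normal subgroup $\br K^o$. For a change of type (iii), we may simultaneously replace $\beta$ by $\beta'=\beta+\phi(x)-x$ and $\gamma$ by $\gamma+x$ for some $x\in\pi_0(\CG)$. Using $\pi_0(\CG)=(N(\br\BQ_p)\cap\br K)/T(\br\BQ_p)^0$ (as in the proof of Lemma \ref{liftgamma}), lift $x$ to $\dot x\in N(\br\BQ_p)\cap\br K$; then $\dot\gamma\dot x$ satisfies the conclusion of Lemma \ref{liftgamma} for $(\beta',\gamma+x)$, and since $\dot x$ normalizes both $\br K$ and $\br K^o$, one obtains the stronger equality $\br K_{\gamma+x}=\br K_\gamma$ and similarly for $\br K^o$.

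The substantive case is (ii): a change $\gamma\mapsto \gamma+\delta$ with $\delta\in\Omega^\phi$ and $\beta$ fixed. Let $\dot\gamma'$ be a lift of $\gamma+\delta$ furnished by Lemma \ref{liftgamma}, and put
\[
k_1:=\phi(\dot\gamma)^{-1}\dot\gamma,\qquad k'_1:=\phi(\dot\gamma')^{-1}\dot\gamma'.
\]
Both are elements of $\br K$ and project to the same element of $\pi_0(\CG)$. By Lang's theorem applied to the connected smooth $\BF_p$-group $\CG^o_{\BF_p}$, we have $H^1(\langle\phi\rangle,\br K^o)=0$; consequently the natural map $H^1(\langle\phi\rangle,\br K)\to\pi_0(\CG)_\phi$ is an isomorphism, so $k_1$ and $k'_1$ represent the same cohomology class. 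Hence there exists $h\in\br K$ with $k'_1=\phi(h)^{-1}k_1 h$.

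Set $\dot\gamma'':=\dot\gamma\, h$. A direct computation yields $\phi(\dot\gamma'')^{-1}\dot\gamma''=\phi(h)^{-1}k_1 h=k'_1$, so $\dot\gamma''$ and $\dot\gamma'$ share the same Frobenius twist, and consequently $g:=\dot\gamma''\,(\dot\gamma')^{-1}$ is $\phi$-fixed, hence lies in $G(\BQ_p)$. Since $h\in\br K$ normalizes both $\br K$ and $\br K^o$, we also have $\dot\gamma''\,\br K\,\dot\gamma''^{-1}=\dot\gamma\,\br K\,\dot\gamma^{-1}=\br K_\gamma$, and likewise for $\br K^o$. Combining, $\br K_\gamma=g\,\br K_{\gamma+\delta}\,g^{-1}$, the desired $G(\BQ_p)$-conjugacy. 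The principal technical input is the identification $H^1(\langle\phi\rangle,\br K)=\pi_0(\CG)_\phi$ via Lang's theorem for $\CG^o$; once this is in hand, the rest is a direct cocycle manipulation, and I expect no obstacle beyond keeping track of the two simultaneous statements for $\br K$ and $\br K^o$.
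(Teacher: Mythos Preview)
Your argument is correct. Steps (i) and (iii) coincide with the paper's treatment (the paper handles (i) just before the proposition, and your (iii) is exactly its ``independence of $\beta$''). The difference lies in step (ii).

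For the change $\gamma\mapsto\gamma+\delta$ with $\delta\in\Omega^\phi$, the paper proceeds constructively: it lifts $\delta$ directly to a \emph{rational} element $\dot\delta\in N(\BQ_p)$ (this is \cite[Rem.~9]{HR}, which follows from $H^1(\phi,T(\br\BQ_p)^0)=0$), takes $\dot\gamma'=\dot\delta\dot\gamma$, and then $\dot\delta\in G(\BQ_p)$ is the explicit conjugating element. Your route is cohomological: you compare the two cocycles $k_1,k'_1\in\br K$, invoke the bijection $H^1(\langle\phi\rangle,\br K)\xrightarrow{\sim}\pi_0(\CG)_\phi$ to produce $h\in\br K$ with $k'_1=\phi(h)^{-1}k_1h$, and then $g=\dot\gamma h(\dot\gamma')^{-1}$ is $\phi$-fixed, hence rational. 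Both arguments are valid; the paper's is shorter and yields an explicit $g=\dot\delta$, while yours avoids the external input \cite[Rem.~9]{HR} and instead recycles the bijection already established in the paper as (the proof of) Lemma~\ref{lemma311}.

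One small remark on your phrasing: the implication ``$H^1(\langle\phi\rangle,\br K^o)=0$, consequently $H^1(\langle\phi\rangle,\br K)\to\pi_0(\CG)_\phi$ is an isomorphism'' requires the standard twisting argument in non-abelian cohomology (the fibres are controlled by $H^1$ of the \emph{twisted} forms of $\CG^o$, which are still smooth connected over $\BZ_p$, so Lang--Steinberg applies). You could simply cite Lemma~\ref{lemma311} for this bijection and skip the justification.
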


\begin{proof} We go through all choices made in the construction. 
\begin{altitemize}

\item{\it independence of $\gamma$: } If $\gamma$ is replaced by $\gamma'=\gamma+\delta$, with $\delta\in\Omega^\phi$, then as choice for $\dot\gamma'$ we can take $\dot\gamma'=\dot\delta\dot\gamma$, where  $\dot\delta\in N(\BQ_p)$
 (i.e., $\BQ_p$-rational). This follows from \cite[Rem. 9]{HR}. Hence $\dot{\gamma} \br K^o\dot{\gamma}^{-1}$ is replaced by $ \dot\delta \dot{\gamma}\br K^o \dot{\gamma}^{-1}\dot\delta^{-1}$, hence is conjugate under $G(\BQ_p)$ to $\br K_\gamma^o$. 

\item{\it independence of $\beta$:} If $\beta$ is replaced by $\beta+(\phi-1)\delta$ with  $\delta\in\pi_0(\CG)$, then $\gamma$ is replaced by $\gamma+\delta$. Then we may replace $\dot\gamma$ by $\dot\gamma\dot\delta $, where $\dot\delta\in N(\br\BQ_p)\cap\br K$. Since $\dot\delta$ normalizes $\br K^o$, the parahoric  $\br K^o_\gamma$  is unchanged. 
\end{altitemize}
This handles the case of $\br K^o_\gamma$; the case of $\br K_\gamma$ is the same. \end{proof}

\begin{remark} The above proof seems to use that we multiply $\dot\gamma$ by $\dot\delta$ from the right instead of from the left. However, $\Omega$ is an abelian group and what is being used here is that  $\dot\delta\in T(\br\BQ_p)^0$ acts trivially on $\frak{a}_0$, and  that $\dot\delta\in N(\br\BQ_p)\cap\br K$ preserves   $\frak{a}_0$  and fixes $\bf{x}$ and hence also $\dot\gamma \bf{x}$, by the commutativity of $\Omega$. 

\end{remark}
In the sequel, we make a fixed choice of $\dot\gamma$, for each element $\bar\beta\in\Pi_\CG$. We denote the corresponding groups by $\br K_\beta$, resp. $\br \CG_\beta$, resp. $\br K^o_\beta$, resp. $\br \CG^o_\beta$,  by slightly abusing notation. We consider the map
\begin{equation}\label{Xbeta}
\begin{aligned}
\pi_\gamma: X_{\CG_\gamma}=LG/L^+\br K_\gamma=LG/&L^+(\dot{\gamma} \br K\dot{\gamma}^{-1})\to LG/L^+ \br K=X_{\CG},\\
& g(\dot{\gamma} \br K\dot{\gamma}^{-1})\mapsto g\dot{\gamma} \br K .
\end{aligned}
\end{equation}
Let us check the dependency of $\dot\gamma$. If $\dot\gamma$ is replaced by $\dot\gamma'=\dot\gamma\dot\delta$ with $\dot\delta\in T(\br\BQ_p)^0$, then $g\dot\gamma'\br K\dot\gamma'^{-1}\in X_{\CG_{\gamma'}}$ is mapped under $\pi_{\gamma'}$ to $g\dot\gamma'\br K=g\dot\gamma\br K$; hence $\pi_\gamma=\pi_{\gamma'}$ under the identity identification $X_{\CG_\gamma}=X_{\CG_{\gamma'}}$. Similarly, if $\gamma$ is replaced by $\gamma+\delta$ with $\delta\in\Omega^\phi$, then, choosing $\dot\gamma'=\dot\delta\dot\gamma$ with $\dot\delta\in N(\BQ_p)\cap\br K$,  we can identify $X_{\CG_\gamma}$  with $X_{\CG_{\gamma'}}$ via $g \br K_\gamma\mapsto g\dot\delta^{-1} \br K_{\gamma'}$. The first element is mapped under $\pi_\gamma$ to $g\dot\gamma\br K$, the second element is mapped under $\pi_{\gamma'}$ to $(g\dot\delta^{-1})\dot\gamma'\br K=g\dot\gamma\br K$, hence $\pi_\gamma=\pi_{\gamma'}$. Note that $\dot\delta$ is not unique. But if $\dot\delta$ is replaced by $\dot\delta'=\dot\delta\varepsilon$ with $\varepsilon\in T(\br\BQ_p)^0$, then the identification of $X_{\CG_\gamma}$ and $X_{\CG_{\gamma'}}$ is not affected. Finally, if $\beta$ is replaced by $\beta+(\phi-1)\delta$ with  $\delta\in\pi_0(\CG)$, then $\gamma$ is replaced by $\gamma+\delta$. Then we may replace $\dot\gamma$ by $\dot\gamma'=\dot\gamma\dot\delta $, where $\dot\delta\in N(\br\BQ_p)\cap\br K$. Since $\dot\delta$ normalizes $\br K^o$, the spaces $X_{\CG_{\gamma}}$ and $X_{\CG_{\gamma'}}$ are identified compatibly with the maps $\pi_\gamma$, resp. $\pi_{\gamma'}$. 
\begin{proposition}
The above map defines by restriction a map
\[
\pi_\beta: X_{\CG_\beta}(b, \mu)\to X_\CG(b, \mu) .
\]
\end{proposition}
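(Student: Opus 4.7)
The map $\pi_\gamma$ is, essentially, right multiplication by $\dot\gamma$: it sends $g\br K_\gamma \mapsto g\dot\gamma\br K$. Since $\br K_\gamma = \dot\gamma\br K\dot\gamma^{-1}$, right multiplication by $\dot\gamma$ is a bijection $X_{\CG_\gamma}(k) \to X_{\CG}(k)$, and in fact an isomorphism of perfect ind-schemes. So the substance of the proposition is that this isomorphism carries the $\mu$-admissible locus for $\CG_\gamma$ into the $\mu$-admissible locus for $\CG$. Since the reduced ADLVs are determined by their $k$-points, it suffices to work on $k$-points.

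My plan is therefore: take $g\br K_\gamma \in X_{\CG_\gamma}(b,\mu)(k)$, so that $g^{-1}b\phi(g)\in \Adm^{K_\gamma}(\mu) = \br K_\gamma\,\Adm(\mu)\,\br K_\gamma$, and verify that $(g\dot\gamma)^{-1}b\,\phi(g\dot\gamma) = \dot\gamma^{-1}\bigl(g^{-1}b\phi(g)\bigr)\phi(\dot\gamma) \in \br K\,\Adm(\mu)\,\br K = \Adm^K(\mu)$. Substituting $\br K_\gamma = \dot\gamma\br K\dot\gamma^{-1}$, the right-hand side lies in
\[
\br K\,\bigl(\dot\gamma^{-1}\Adm(\mu)\dot\gamma\bigr)\,\br K\,\bigl(\dot\gamma^{-1}\phi(\dot\gamma)\bigr).
\]
Two inputs now finish the argument. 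First, $\dot\gamma^{-1}\phi(\dot\gamma)\in\br K$ by Lemma \ref{liftgamma} (which is the key reason for choosing $\dot\gamma$ as we did), so the trailing factor is absorbed into $\br K$. Second, $\dot\gamma^{-1}\Adm(\mu)\dot\gamma = \Adm(\mu)$: this is the $\Omega$-invariance of the admissible set, which holds because conjugation by $\gamma\in\Omega$ is a length-preserving automorphism of $\wt W$ that stabilizes $W_a$ and preserves the Bruhat order, and because the $W_0$-orbit $W_0(\mu)$ of the image of $\mu$ in $X_*(T)_I$ is $\Omega$-stable (as $\Omega$ normalizes $W_0$ and $\mu$ is a $G$-conjugacy class). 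Putting the two together gives membership in $\br K\,\Adm(\mu)\,\br K = \Adm^K(\mu)$, as required.

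The main conceptual point to verify carefully is the $\Omega$-invariance $\dot\gamma^{-1}\Adm(\mu)\dot\gamma = \Adm(\mu)$, as a set in $G(\br\BQ_p)$ modulo $T(\br\BQ_p)^0$; this is where one must be a bit attentive, since the statement mixes lifts from $\wt W$ to $N(\br\BQ_p)$ with the Bruhat-order definition of $\Adm(\mu)\subset\wt W$. Once one reduces to the statement in $\wt W$, using $T(\br\BQ_p)^0\subset\br K$ to eliminate the ambiguity of lifts, it is a standard property of the $\mu$-admissible set. Independence of the various choices made (the lifts $\dot\gamma$, the representative $\gamma$ of $\bar\beta$, and the representative $\beta$ of its class modulo $(\phi-1)\pi_0(\CG)$) was already arranged in the discussion preceding the proposition, so no extra work is needed there.
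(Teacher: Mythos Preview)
Your proof is correct and follows essentially the same route as the paper: both compute $(g\dot\gamma)^{-1}b\phi(g\dot\gamma)$, use $\Omega$-invariance of $\Adm(\mu)$ to handle the conjugation $\dot\gamma^{-1}\Adm(\mu)\dot\gamma$, and absorb the trailing factor $\dot\gamma^{-1}\phi(\dot\gamma)\in\br K$ via Lemma~\ref{liftgamma}. Your additional remarks on why $\Omega$-invariance holds and on passing between $\wt W$ and $G(\br\BQ_p)$ are helpful clarifications but not logically new.
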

\begin{proof}
Suppose that $g\br K_\beta\in X_{\CG_\beta}(b, \mu)$, i.e.,
\[
g^{-1}b\phi(g)\in \br K_\beta \Adm(\mu) \br K_\beta=\dot{\gamma} \br K\dot{\gamma}^{-1} \Adm(\mu) \dot{\gamma} \br K\dot{\gamma}^{-1}.
\]
Since $\Adm(\mu)$ is stable under the conjugation action of $\Omega$, we see that  
$$\br K\dot{\gamma}^{-1} \Adm(\mu) \dot{\gamma} \br K=\br K\Adm(\mu) \br K.
$$ Hence
\[
g^{-1}b\phi(g)\in \dot{\gamma} \br K  \Adm(\mu)   \br K\dot{\gamma}^{-1},
\]
 so
 \[
 \dot{\gamma}^{-1}g^{-1}b\phi(g)\phi(\dot{\gamma})\in \br K  \Adm(\mu)   \br K\dot{\gamma}^{-1}\phi(\dot{\gamma})\subset 
 \br K  \Adm(\mu)   \br K  ,
 \]
 since $\br K\dot{\gamma}^{-1}\phi(\dot{\gamma})=\br K.$
 Hence
 \[
 (g\dot{\gamma} )^{-1} b\phi(g\dot{\gamma})\in \br K  \Adm(\mu)   \br K ,
 \]
i.e.,  $g\dot{\gamma}\br K\in X_\CG(b, \mu)$, as had to be shown.
 \end{proof}
  Composing the above map with $ X_{\CG^o_\beta}(b, \mu)\to  X_{\CG_\beta}(b, \mu)$ and letting $\bar\beta$ vary, we obtain  the map 
 \begin{equation}\label{disjsum}
\pi: \bigsqcup_{\bar\beta\in \Pi_\CG}  X_{\CG^o_{\beta}}(b, \mu)\to X_\CG(b, \mu).
 \end{equation}
 In the sequel, we call a {\it component} of $X_{\CG^o_{\beta}}(b, \mu) $, resp. of $X_{\CG}(b, \mu) $, the non-empty fibers of the Kottwitz map to $\Omega$, resp. to $C_\CG=\Omega/\pi_0(\CG)$. In other words, we partition the points of $X_{\CG^o_{\beta}}(b, \mu) $, resp. of $X_{\CG}(b, \mu) $, according to which connected component of $X_{\CG^o_{\beta}}$, resp. of $X_{\CG}$, they lie in.

\begin{proposition}\label{separate} (i) For $\bar\beta_1\neq\bar\beta_2$, the images of $X_{\CG^o_{\beta_1}}(b, \mu) $ and $X_{\CG^o_{\beta_2}}(b, \mu) $ under $\pi$ fall into different  components
 of $X_\CG(b, \mu)$.
 \smallskip
 
 \noindent (ii) The map $\pi$ is surjective.

\end{proposition}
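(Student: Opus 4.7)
The strategy is to reduce both parts to a calculation with the enhanced Kottwitz map $\kappa_G\colon X_\CG\to C_\CG$ of \eqref{kappaCG}, whose fibres realize the components of $X_\CG$, combined with the exact sequence \eqref{exseq} which identifies $\Pi_\CG$ with $C_\CG^\phi/\bar\Omega^\phi$, where $\bar\Omega^\phi$ denotes the image of $\Omega^\phi$ in $C_\CG$. Throughout, I use that $c_{b,\mu}\in\Omega$ depends only on $(G,b,\mu)$, not on the choice of parahoric.

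For (i), take $g\br K_\beta^o\in X_{\CG_\beta^o}(b,\mu)$. Applying \eqref{coset} to the parahoric $\CG_\beta^o$ gives $\kappa_G(g)\in c_{b,\mu}+\Omega^\phi$. Since $\pi_\beta(g\br K_\beta^o)=g\dot\gamma\br K$ and $\kappa_G(\dot\gamma)=\gamma$, its Kottwitz class in $C_\CG$ lies in $\bar c_{b,\mu}+\bar\gamma+\bar\Omega^\phi$. By construction $\bar\gamma\in C_\CG^\phi$ has image $\bar\beta$ under the connecting map $C_\CG^\phi\to\Pi_\CG$; hence for distinct $\bar\beta_1,\bar\beta_2\in\Pi_\CG$ the translated cosets $\bar\gamma_{\beta_1}+\bar\Omega^\phi$ and $\bar\gamma_{\beta_2}+\bar\Omega^\phi$ are disjoint in $C_\CG^\phi$. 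Thus the images of the corresponding $X_{\CG_{\beta_i}^o}(b,\mu)$ fall into disjoint sets of components of $X_\CG(b,\mu)$.

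For (ii), given $h\br K\in X_\CG(b,\mu)$, I set $\gamma:=\kappa_G(h)-c_{b,\mu}\in\Omega$. From $h^{-1}b\phi(h)\in\br K\Adm(\mu)\br K$ together with the defining property of $c_{b,\mu}$, one verifies that the class $\bar\gamma\in C_\CG$ is $\phi$-invariant, so $\beta:=\phi(\gamma)-\gamma\in\pi_0(\CG)$; its class $\bar\beta\in\Pi_\CG$ is the image of $\bar\gamma$ under the connecting homomorphism. Pick $\dot\gamma\in N(\br\BQ_p)$ as in Lemma \ref{liftgamma} and set $g:=h\dot\gamma^{-1}$. Then $\pi_\beta(g\br K_\beta)=g\dot\gamma\br K=h\br K$ and $\kappa_G(g)=c_{b,\mu}$. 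It remains to show $g\br K_\beta^o\in X_{\CG_\beta^o}(b,\mu)$: using $\phi(\dot\gamma)^{-1}\dot\gamma\in\br K$, the $\phi$-invariance of $\br K$, and the $\Omega$-stability of $\Adm(\mu)$, a direct calculation yields
\[
g^{-1}b\phi(g)\in\dot\gamma\br K\Adm(\mu)\br K\phi(\dot\gamma)^{-1}=\br K_\beta\Adm(\mu)\br K_\beta=\br K_\beta^o\Adm(\mu)\br K_\beta^o\cdot\pi_0(\CG_\beta).
\]
Applying $\kappa_G$ and using $\kappa_G(g)=c_{b,\mu}$ gives $\kappa_G(g^{-1}b\phi(g))=\kappa_G(\mu)$, which forces the $\pi_0(\CG_\beta)$-component in the decomposition above to be trivial. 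Hence $g^{-1}b\phi(g)\in\br K_\beta^o\Adm(\mu)\br K_\beta^o$, as required.

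The main obstacle I anticipate is the final step of (ii): the naive lift $g=h\dot\gamma^{-1}$ is a priori only known to land in the quasi-parahoric-level set $X_{\CG_\beta}(b,\mu)$, and upgrading it to the parahoric-level $X_{\CG_\beta^o}(b,\mu)$ requires combining the Kottwitz calculation with the bookkeeping for the decomposition $\br K_\beta=\br K_\beta^o\cdot\pi_0(\CG_\beta)$ and the $\Omega$-invariance of $\Adm(\mu)$. As a byproduct, the argument also upgrades the inclusion \eqref{barc} to an equality, giving the forward-referenced Corollary \ref{imageka}.
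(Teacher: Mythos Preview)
Your proof is correct and follows essentially the same approach as the paper's. For (i), both you and the paper reduce to the exact sequence \eqref{exseq} via the Kottwitz map; for (ii), both define $g=h\dot\gamma^{-1}$ and verify $g^{-1}b\phi(g)$ lies in the parahoric double coset. The only bookkeeping difference is that the paper invokes \cite[Lem.~5.10(iii)]{GHN} to write $h^{-1}b\phi(h)=k_1^o\omega k$ asymmetrically and reads off $\beta=-\kappa(k)$, whereas you define $\gamma=\kappa_G(h)-c_{b,\mu}$ directly, land in $\br K_\beta\Adm(\mu)\br K_\beta$, and then use the decomposition $\br K_\beta\Adm(\mu)\br K_\beta=\bigsqcup_{\delta\in\pi_0(\CG_\beta)}(\br K_\beta^o\Adm(\mu)\br K_\beta^o)\dot\delta$ together with the Kottwitz identity $\kappa_G(g^{-1}b\phi(g))=\kappa_G(\mu)$ to force $\delta=0$; these are equivalent maneuvers (your decomposition is itself a repackaging of the cited lemma).
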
  
\begin{proof}
For (i), let $x_1\in X_{\CG^o_{\beta_1}}(b, \mu)$ and  $x_2\in X_{\CG^o_{\beta_2}}(b, \mu)$ be such that their images in $C_\CG$ are the same. We deduce that
$$
\kappa_G(x_1)+\gamma_1= \kappa_G(x_2)+\gamma_2 \mod \pi_0(\CG) .
$$
On the other hand, $\kappa_G(x_i)=c_{b, \mu} -\lambda_i$ with $\lambda_i\in\Omega^\phi$, for $i=1, 2$. We therefore get 
$$
\gamma_1-\gamma_2=\lambda_1-\lambda_2+\mu , \quad \mu\in \pi_0(\CG) .
$$
Applying $1-\phi$ to this identity, we obtain
$$
\beta_1-\beta_2=(1-\phi)\mu ,
$$
i.e., $\bar\beta_1=\bar\beta_2$, as desired. 

 For (ii), let $g\br K\in X_\CG(b, \mu)$. By \cite[Lem. 5.10, (iii)]{GHN},  
  \begin{equation}
  \br K^o  \Adm(\mu)   \br K =\br K \Adm(\mu)   \br K .
 \end{equation}
Hence  $g^{-1}b\phi(g)\in \br K\Adm(\mu)\br K=\br K^o\Adm(\mu)\br K$. 
 Write accordingly
 \[
 g^{-1}b\phi(g)=k^o_1\omega k.
 \]
 Apply $\kappa: G(\br\BQ_p)\to \Omega$
 to get
 \[
 (\phi-1)\kappa(g)+\kappa(b)=\kappa(k)+\kappa(\mu) .
 \]
 Hence we get
 \begin{equation}
  (\phi-1)\kappa(g)=\kappa(k)-(\phi-1)c_{b,\mu} .
 \end{equation}
 Let $\beta=-\kappa(k)\in\pi_0(\CG)$ and denote by $\bar\beta\in \pi_0(\CG)_\phi$  its image. Then the last equation shows that $\bar\beta\in \Pi_\CG$. We  write $\beta=(1-\phi)\gamma$ with $\gamma\in\Omega$ and choose a lift $\dot\gamma\in N(\br\BQ_p)$ as in Lemma \ref{liftgamma}. Then  $ g\dot{\gamma}^{-1} (\dot{\gamma} K^o\dot{\gamma}^{-1})$ is in $X_{\dot{\gamma} K^o\dot{\gamma}^{-1}}(b, \mu)$
  and maps to $gK\in X_K(b, \mu)$. Indeed,  $(g\dot{\gamma}^{-1})^{-1} b\phi( g\dot{\gamma}^{-1})$ lies in $(\dot{\gamma}K^o\dot{\gamma}^{-1})\Adm(\mu)(\dot{\gamma}K^o\dot{\gamma}^{-1}) $ because 
  \[
  (g\dot{\gamma}^{-1})^{-1} b\phi( g\dot{\gamma}^{-1})=\dot{\gamma} g^{-1}b\phi(g)\phi( \dot{\gamma}^{-1})=  \dot{\gamma}k_1^o \omega k \phi( \dot{\gamma}^{-1})=   \]
  \[
  = (\dot{\gamma} k^o_1  \dot{\gamma}^{-1})(\dot{\gamma}\omega \dot{\gamma}^{-1}) (\dot{\gamma} k  \phi({\dot\gamma}^{-1})).
  \]
 The middle factor lies in $\Adm(\mu)$ because conjugation by an element in $\Omega$ preserves $\Adm(\mu)$. The last factor lies in $\dot{\gamma}\br K\dot{\gamma}^{-1} $. The result follows because the last factor lies even in $\dot{\gamma}\br K^o\dot{\gamma}^{-1} $ because $\kappa (\dot{\gamma} k  \phi({\dot\gamma}^{-1}))=(1-\phi)\gamma-\beta=0$.
 \end{proof}
 \begin{corollary}\label{imageka}
 The map $\kappa$ induces a surjective map $X_\CG(b, \mu)\to \bar c_{b, \mu}+C^\phi$. 
 \end{corollary}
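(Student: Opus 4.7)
The plan is to deduce the corollary directly from Proposition \ref{separate}(ii), combined with the parahoric surjectivity statement \eqref{coset} applied to each conjugate parahoric $\CG^o_\beta$, and the exact sequence \eqref{exseq}.

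First, the inclusion $\kappa_G(X_\CG(b,\mu)) \subset \bar c_{b,\mu} + C^\phi$ is exactly \eqref{barc}, so only surjectivity has to be proved. I would fix an element $\bar c_{b,\mu} + \bar\gamma \in \bar c_{b,\mu} + C^\phi$ and produce a point in $X_\CG(b,\mu)$ mapping to it under $\kappa_G$. Using the exact sequence
\[
0 \to \Omega^\phi/\pi_0(\CG)^\phi \to C_\CG^\phi \to \Pi_\CG \to 0
\]
of \eqref{exseq}, let $\bar\beta \in \Pi_\CG$ be the image of $\bar\gamma$; lift $\bar\beta$ to $\beta \in \pi_0(\CG)$ and write $\beta = (1-\phi)\gamma'$ for some $\gamma' \in \Omega$ as in the discussion before Lemma \ref{liftgamma}, so that the image of $\gamma'$ in $C_\CG$ represents $\bar\gamma$ modulo $\Omega^\phi/\pi_0(\CG)^\phi$. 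Fix a lift $\dot\gamma' \in N(\br\BQ_p)$ as in Lemma \ref{liftgamma}.

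Next, apply \eqref{coset} to the parahoric $\CG^o_{\beta}$ (which is a perfectly valid parahoric in its own right): the Kottwitz map $\kappa_G: X_{\CG^o_\beta} \to \Omega$ satisfies $\kappa_G(X_{\CG^o_\beta}(b,\mu)) = c_{b,\mu} + \Omega^\phi$. Hence for any prescribed element $\lambda \in \Omega^\phi$, I can pick $g \br K^o_\beta \in X_{\CG^o_\beta}(b,\mu)$ with $\kappa_G(g) = c_{b,\mu} + \lambda$. By the formula \eqref{Xbeta} for $\pi_\beta$, the image $\pi_\beta(g\br K^o_\beta) = g\dot\gamma'\br K$ lies in $X_\CG(b,\mu)$ (this is exactly the surjectivity step of Proposition \ref{separate}(ii)), and its Kottwitz image in $C_\CG$ is the class of $\kappa_G(g) + \gamma' = c_{b,\mu} + \lambda + \gamma'$ modulo $\pi_0(\CG)$. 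Choosing $\lambda \in \Omega^\phi$ appropriately (using that the image of $\Omega^\phi$ in $C_\CG^\phi$ is all of $\Omega^\phi/\pi_0(\CG)^\phi$), we hit any prescribed representative of $\bar c_{b,\mu} + \bar\gamma$ in $C_\CG^\phi$. This produces the required point in $X_\CG(b,\mu)$.

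There is no real obstacle here: once Proposition \ref{separate} and the exact sequence \eqref{exseq} are in hand, the only bookkeeping is to verify that the shift by $\gamma'$ on the level of $C_\CG$ hits the correct coset modulo $\Omega^\phi/\pi_0(\CG)^\phi$, which follows by construction of $\gamma'$ as a lift of $\bar\gamma$. The slightly delicate point to double-check is that \eqref{coset}, proved in \cite{HZ} for the parahoric $\CG^o$, applies verbatim to the Frobenius-twisted parahoric $\CG^o_\beta$ with the \emph{same} element $c_{b,\mu} \in \Omega$, but this is clear since $c_{b,\mu}$ depends only on $(G,b,\mu)$ via the identity $\phi(c_{b,\mu}) - c_{b,\mu} = \kappa_G(b) - \kappa_G(\mu)$ in $\Omega$, and not on the choice of parahoric.
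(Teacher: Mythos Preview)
Your proof is correct and follows essentially the same approach as the paper: both use the parahoric surjectivity \eqref{coset} applied to each conjugate parahoric $\CG^o_\beta$, push forward via $\pi_\beta$, and use the exact sequence \eqref{exseq} to account for all of $C^\phi$. The paper organizes this by first treating the case $\bar\beta=0$ separately and then handling general $\bar\beta$ via the identification of the fiber $(C^\phi)_{\bar\beta}$ with $\Omega^\phi/\pi_0(\CG_\beta)^\phi$, whereas you do it in one pass; your explicit verification that $c_{b,\mu}$ is independent of the choice of parahoric is a useful remark that the paper leaves implicit.
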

 \begin{proof}
 Let $x=\bar c_{b, \mu}+y$, where $y\in C_\CG^\phi$. Let us first assume that  the image of $y$ in $\Pi_\CG$ is trivial, then there exists $\tilde y\in\Omega^\phi$ mapping to $y$. Then
 \[
 c_{b, \mu}+\tilde y\in \Omega 
 \]
 is a lift of $x$ which, by \eqref{coset}, can be lifted to a point of $X_{\CG^o}$ which then maps to a lift of $x$ in $X_{\CG}$, as required. 
 
 In general, let $\bar\beta\in\Pi_\CG$ be the image of $x$. 
Let $C^\phi\to\Pi_\CG$ be the natural surjective map from \eqref{exseq}, and let $(C^\phi)_{\bar\beta}$ be the fiber of this map over $\bar\beta\in\Pi_\CG$. Via our fixed choice of $\gamma$ for $\bar\beta$, this can be identified with $\Omega^\phi/\pi_0(\CG_\beta)^\phi$. By \eqref{coset},  the set of components of the inverse image of $\bar c_{b, \mu}+(C^\phi)_{\bar\beta}$ in $X_{\CG^o_{\beta}}(b, \mu)$ can then be identified with $ c_{b, \mu}+\Omega^\phi$. The assertion follows. 
 \end{proof}
 We introduce the intermediate group $K^o\subset K^\prime\subset K$, corresponding to the subgroup $\pi_0(\CG)^\phi$ of $\pi_0(\CG)$. We denote the corresponding group scheme by $\CG^\prime$. Then we obtain a factorization
 $$
 X_{\CG^o}\to X_{\CG'}\to X_\CG .
 $$
 Each of the maps induces an isomorphism of a component onto its image. In fact, the first map is the quotient by the finite abelian group $\pi_0(\CG)^\phi$, the second map is the quotient by the finite abelian group $\pi_0(\CG)/\pi_0(\CG)^\phi$, and the composed map is the quotient by the finite abelian group $\pi_0(\CG)$.  Here the actions of these covering groups are trivial, in the sense that they only permute connected components. 
 
 Similarly, for arbitrary $\bar\beta\in\Pi_\CG$, we introduce the intermediate subgroup $K_\beta^o\subset K_\beta^\prime\subset K_\beta$, corresponding to the subgroup $\pi_0(\CG_\beta)^\phi$ of $\pi_0(\CG_\beta)$. We obtain a sequence of maps,
 \begin{equation}
 X_{\CG_{\beta}^o}\to X_{\CG'_\beta}\to X_{\CG_\beta},
\end{equation}
where the first map is the quotient by the finite group $\pi_0(\CG_\beta)^\phi$,  the second map is the quotient by the finite abelian group $\pi_0(\CG_\beta)/\pi_0(\CG_\beta)^\phi$, and the composed map is the quotient by the finite abelian group $\pi_0(\CG_\beta)$. 
 \begin{proposition}\label{quasiADLV} 
  Let $(\CG, b, \mu)$ be an integral local shtuka datum such that   $\CG$ is a quasi-parahoric group scheme for $G$. 
 \smallskip
 
 \noindent (i) The ADLV $X_{\CG^o_{\beta}}(b, \mu)$ is invariant under the action of $\pi_0(\CG_\beta)^\phi$. 
 \smallskip
 
 \noindent (ii) The map \eqref{disjsum} induces an isomorphism 
 $$
 \bar\pi: \bigsqcup_{\bar\beta\in \Pi_\CG}  X_{\CG^o_{\beta}}(b, \mu)/\pi_0(\CG_\beta)^\phi\isoarrow X_\CG(b, \mu).
 $$
 \end{proposition}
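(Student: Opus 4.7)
The strategy is to first verify all claims on $k$-points, using the Kottwitz homomorphism as the main bookkeeping tool, and then to upgrade to an isomorphism of perfect $k$-schemes by exploiting the fact that $\pi_0(\CG_\beta)$ acts on $X_{\CG^o_\beta}$ only by permuting connected components.

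For part (i), I fix $\delta \in \pi_0(\CG_\beta)^\phi$ and lift it to $\dot\delta \in N(\br\BQ_p) \cap \br K_\beta$ via the exact sequence used in the proof of Lemma \ref{liftgamma}. Given $g\br K^o_\beta \in X_{\CG^o_\beta}(b,\mu)$, I expand
\[
(g\dot\delta)^{-1}\, b\, \phi(g\dot\delta) \;=\; \dot\delta^{-1}\bigl(g^{-1}b\phi(g)\bigr)\dot\delta \,\cdot\, \bigl(\dot\delta^{-1}\phi(\dot\delta)\bigr).
\]
Since $\dot\delta \in \br K_\beta$ normalizes its neutral component $\br K^o_\beta$, and since $\Omega$ normalizes $\Adm(\mu)$, the first factor lies in $\br K^o_\beta\,\Adm(\mu)\,\br K^o_\beta$. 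For the second factor, $\dot\delta^{-1}\phi(\dot\delta) \in \br K_\beta$ has Kottwitz image $-\delta + \phi(\delta) = 0$, and hence belongs to $\br K^o_\beta = \ker(\kappa_{|\br K_\beta})$. This proves the invariance.

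For part (ii), the map $\pi_\beta$ factors through $X_{\CG^o_\beta}(b,\mu)/\pi_0(\CG_\beta)^\phi$ because for any lift $\dot\delta\in\br K_\beta$ of an element of $\pi_0(\CG_\beta)$ we have $\pi_\beta(g\dot\delta\br K^o_\beta) = g\dot\delta\dot\gamma\br K = g\dot\gamma\br K$, using $\dot\gamma^{-1}\dot\delta\dot\gamma \in \br K$. Proposition \ref{separate} then yields both surjectivity of $\bar\pi$ and disjointness of the images attached to distinct $\bar\beta \in \Pi_\CG$. For injectivity on each $\bar\beta$-stratum, suppose $g\br K^o_\beta, g'\br K^o_\beta \in X_{\CG^o_\beta}(b,\mu)(k)$ satisfy $g\dot\gamma\br K = g'\dot\gamma\br K$. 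Then $g^{-1}g' \in \br K_\beta$; write $g^{-1}g' = k^o\dot\delta$ with $k^o \in \br K^o_\beta$ and $\dot\delta$ a lift of some $\delta \in \pi_0(\CG_\beta)$. Applying $\kappa$ to the two admissibility relations $(\phi-1)\kappa(g) = \kappa(\mu) - \kappa(b) = (\phi-1)\kappa(g')$ yields $(\phi-1)\delta = 0$, hence $\delta \in \Omega^\phi \cap \pi_0(\CG_\beta) = \pi_0(\CG_\beta)^\phi$. The identity
\[
g'\br K^o_\beta \;=\; g\,\dot\delta\bigl(\dot\delta^{-1}k^o\dot\delta\bigr)\br K^o_\beta \;=\; g\dot\delta\br K^o_\beta
\]
then exhibits $g'\br K^o_\beta$ in the $\pi_0(\CG_\beta)^\phi$-orbit of $g\br K^o_\beta$.

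Finally, to promote the bijection of $k$-points to an isomorphism of perfect schemes, I recall that the $\pi_0(\CG_\beta)$-action on $X_{\CG^o_\beta}$ only permutes connected components, so each component maps isomorphically under the composition $X_{\CG^o_\beta} \to X_{\CG_\beta} \isoarrow X_\CG$, where the last map is induced by right multiplication by $\dot\gamma$. Restricting to admissible loci and passing to the quotient by the subgroup $\pi_0(\CG_\beta)^\phi \subset \pi_0(\CG_\beta)$, the induced morphism $\bar\pi_\beta$ is a closed immersion on each component, and the $k$-point bijection established above forces the claimed isomorphism of perfect $k$-schemes. The only delicate point is the Kottwitz calculation in the third paragraph, ensuring that exactly $\pi_0(\CG_\beta)^\phi$ parametrizes the fibers of $\bar\pi_\beta$; beyond this, the argument is bookkeeping, so I do not anticipate further obstacles.
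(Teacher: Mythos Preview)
Your proposal is correct and follows essentially the same approach as the paper. The paper's proof of (i) uses a general element $h\in\br K'_\beta$ with $\phi(h)=hk$, $k\in\br K^o_\beta$, rather than your lift $\dot\delta\in N(\br\BQ_p)\cap\br K_\beta$, but the computation is identical; for (ii) the paper argues directly at the level of components (if components indexed by $\lambda_1,\lambda_2\in c_{b,\mu}+\Omega^\phi$ map to the same component of $X_\CG(b,\mu)$ then $\lambda_1-\lambda_2\in\pi_0(\CG)\cap\Omega^\phi=\pi_0(\CG)^\phi$), whereas you phrase the same Kottwitz calculation at the level of $k$-points and then upgrade---but the content is the same.
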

 \begin{proof}
 (i) Let $h\in \br K_\beta^\prime$. Then $\phi(h)=hk$, with $k\in \br K_\beta^o$. The invariance follows from 
 $$
 (gh)^{-1}b \phi(gh)=h^{-1}g^{-1}b\phi(g)\phi(h)=h^{-1}g^{-1}b\phi(g)h k ,
 $$
 because the last element lies in $\br K_\beta^oh^{-1}\Adm(\mu)h \br K_\beta^o=\br K_\beta^o\Adm(\mu) \br K_\beta^o.$
 
 (ii) The surjectivity follows from Proposition \ref{separate}, (ii). For the injectivity, we note that by Proposition \ref{separate}, (i), this becomes a question one $\bar\beta$ at the time. But if the components of $X_{\CG^o_{\beta}}(b, \mu)$ corresponding to $\lambda_1$ and $\lambda_2$ in $\Omega$ go to the same component of $X_\CG(b, \mu)$, it follows that $\lambda_1-\lambda_2\in\pi_0(\CG)$. But by \eqref{coset}, this element lies in $\Omega^\phi$, hence also in $ \pi_0(\CG)^\phi$. 
 \end{proof}
 \begin{remark}
 Proposition \ref{separate} and Corollary \ref{imageka} imply  that the map $X_{\CG^o}(b, \mu)\to X_{\CG}(b, \mu)$ is surjective if and only if $\Pi_\CG$ is trivial. The question of the surjectivity of this map is also analyzed in \cite[\S 5]{GHN}. By \cite[Prop. 5.11]{GHN} it holds if there is a product decomposition $\Omega=\pi_0(\CG)\times C$ into subgroups stable under $\phi$ (then, of course, $\Pi_\CG=(0)$). In \cite[\S 5]{GHN} it is also analyzed when $X_{\CG^o}(b, \mu)$ is the full inverse image of $ X_{\CG}(b, \mu)$ under $X_{\CG^o}\to X_\CG$. By \cite[Prop. 5.13]{GHN} this holds when $\pi_0(\CG)^\phi=\pi_0(\CG)$ and $\Omega^\phi\to C_\CG^\phi$ is surjective (again, these conditions imply $\Pi_\CG=(0)$). 
 \end{remark}
 
 \subsection{Integral moduli spaces of local shtukas}
 We first define a morphism of integral LSV
 \begin{equation}\label{naivemap}
 \CM^{\rm int}_{\CG_{\beta}, b, \mu}\to \CM^{\rm int}_{\CG, b, \mu} .
 \end{equation}
Recall that
 $$
 \CM^{\rm int}_{\CG_\beta, b, \mu}(S)=\{\text{isomorphism classes of } (S^\sharp, \CP_\beta, \phi_{\CP_\beta}, i_{r, \beta}) \} .
 $$
 Starting with a point $(S^\sharp, \CP_\beta, \phi_{\CP_\beta}, i_{r, \beta})$ of $\CM^{\rm int}_{\CG_\beta, b, \mu}$, we define a  $\CG$-torsor $\CP$ by twisting the $\CG_\beta$-action by $\dot\gamma$,
 $$
 g\star x=( \dot\gamma g \dot\gamma^{-1})\cdot x, \quad g\in\CG ,
 $$
 where $\dot\gamma$ is the fixed choice above. 
 This makes sense since $\dot\gamma g\dot\gamma^{-1}\in\CG_\beta$. Let $\xi=\phi(\dot\gamma)^{-1}\dot\gamma$. Then $\xi\in \CG(\br\BZ_p)$, cf. Lemma \ref{liftgamma}. 
  We  define the new Frobenius $\phi_{\CP}$ as
 \begin{equation}\label{corrdef}
 \phi_\CP(x)=\xi\star\phi_{\CP_\beta}(x). 
 \end{equation} 
  This is indeed  a morphism of $\CG$-torsors: on the one hand 
 \begin{equation*}
 \begin{aligned}
 \phi_{\CP}(g\star x)&=\xi\star\phi_{\CP_\beta}(g\star x)=
 \dot\gamma\xi\dot\gamma^{-1} \phi_{\CP_\beta}(( \dot\gamma g  \dot\gamma^{-1})x)=\\
 &= \dot\gamma(\phi( \dot\gamma)^{-1} \dot\gamma) \dot\gamma^{-1}(\phi( \dot\gamma)\phi(g)\phi( \dot\gamma)^{-1})\phi_{\CP_\beta}(x)= \dot\gamma\phi(g)\phi( \dot\gamma)^{-1}\phi_{\CP_\beta}(x).
 \end{aligned}
 \end{equation*}
On the other hand,
\begin{equation*}
 \begin{aligned}
\phi(g)\star\phi_\CP(x)&=( \dot\gamma \phi(g)  \dot\gamma^{-1})( \dot\gamma\xi \dot\gamma^{-1})\phi_{\CP_\beta}(x)=\\
&= \dot\gamma \phi(g) (\phi( \dot\gamma)^{-1} \dot\gamma) \dot\gamma^{-1}\phi_{\CP_\beta}(x)
= \dot\gamma \phi(g) \phi( \dot\gamma)^{-1}\phi_{\CP_\beta}(x),
 \end{aligned}
 \end{equation*}
which proves the claim.

 We define the framing $i_r$ as  
 \begin{equation}
 i_r(x)=i_{r, \beta}( \dot\gamma x) .
 \end{equation}
 This is  indeed  an isomorphism of $\CG$-bundles $i_r: G_{\CY_{[r, \infty]}}\to \CP_{|\CY_{[r, \infty]}}$  : on the one hand 
 $$
 i_r(hg)=i_{r, \beta}( \dot\gamma hg)= \dot\gamma h  i_{r, \beta}(g).
 $$ On the other hand,
 $$
 h\star i_r(g)=( \dot\gamma h \dot\gamma^{-1}) i_{r, \beta}( \dot\gamma g)=( \dot\gamma h \dot\gamma^{-1}) \dot\gamma  i_{r, \beta}( g)= \dot\gamma h  i_{r, \beta}(g).
 $$
 The framing $i_r$ is also  compatible with the Frobenius: on the one hand,
 \begin{equation*}
 \begin{aligned}
 \phi_\CP(i_r(g))&=\xi\star\phi_{\CP_\beta}(i_{r, \beta}( \dot\gamma g))= \dot\gamma(\phi( \dot\gamma)^{-1} \dot\gamma) \dot\gamma^{-1}\phi_{\CP_\beta}(i_{r, \beta}(  \dot\gamma g))=\\
 &=   ( \dot\gamma\phi( \dot\gamma)^{-1})\phi( \dot\gamma) i_{r, \beta}( b\phi(g))=
  \dot\gamma i_{r, \beta}( b\phi(g)). 
 \end{aligned}
 \end{equation*} On the other hand,
 \begin{equation*}
 \begin{aligned}
 i_r(b\phi(g))=i_{r, \beta}(  \dot\gamma b\phi(g))= \dot\gamma i_{r, \beta}( b\phi(g)).
 \end{aligned}
 \end{equation*}

 The morphism \eqref{naivemap} is now defined by sending $(S^\sharp, \CP_\beta, \phi_{\CP_\beta}, i_{r, \beta})$ to $(S^\sharp, \CP, \phi_{\CP}, i_{r })$. We precompose  \eqref{naivemap}  with the natural morphism $ \CM^{\rm int}_{\CG^o_{\beta}, b, \mu}\to \CM^{\rm int}_{\CG_\beta, b, \mu}$ to obtain a morphism of integral LSV
 $$
 \CM^{\rm int}_{\CG^o_{\beta}, b, \mu}\to \CM^{\rm int}_{\CG, b, \mu} ,
 $$
 and hence a morphism
 \begin{equation}\label{mor421}
 \pi: \bigsqcup_{\bar\beta\in\Pi_\CG}\CM^{\rm int}_{\CG^o_{\beta}, b, \mu}\to \CM^{\rm int}_{\CG, b, \mu} .
 \end{equation}
 This morphism is compatible,  via the bijective map of Proposition \ref{MX}, with the morphism \eqref{disjsum} for $\mu^{-1}$. Indeed, let $(P_\beta, \phi_{P_\beta}, i_\beta)$ be an object of $\CM^{\rm int}_{\CG_\beta, b, \mu}(\Spd(k))$, which gives a $\CG_\beta$-torsor $P_\beta$ over $W(k)$, a Frobenius and a trivialization of $P_\beta$ over $W(k)[1/p]$. The image of $(P_\beta, \phi_{P_\beta}, i_\beta)$ in $X_{\CG_\beta}(b, \mu^{-1})$  is given by the unique element $g_\beta\in LG(k)/L^+\CG_\beta(k)$ such that $i_\beta^{-1}(P_\beta)=g_\beta^{-1} \cdot\CG_\beta$.  Let $(P, \phi_{P}, i)\in\CM^{\rm int}_{\CG, b, \mu}(k)$ be the  image of $(P_\beta, \phi_{P_\beta}, i_\beta)$ under \eqref{naivemap}. Then $i^{-1}(P)= \dot\gamma^{-1}i_\beta^{-1}(P_\beta)$. Hence the image $g\in LG(k)/L^+\CG(k)$ of  $(P, \phi_{P}, i)$ is equal to $g=g_\beta\gamma$, as required. 
 
  \begin{theorem}\label{quasiThm}
  Let $(\CG, b, \mu)$ be an integral local shtuka datum such that   $\CG$ is a quasi-parahoric group scheme for $G$. The morphism (\ref{mor421}) induces an isomorphism
 $$
\bar\pi:  \bigsqcup_{\bar\beta\in\Pi_\CG}\CM^{\rm int}_{\CG^o_{\beta}, b, \mu}/\pi_0(\CG_\beta)^\phi\isoarrow \CM^{\rm int}_{\CG, b, \mu}  ,
 $$
where the quotient is for an action of $\pi_0(\CG_\beta)^\phi$ which permutes connected components.
 \end{theorem}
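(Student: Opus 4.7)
The plan is to combine two ingredients already in hand: Proposition~\ref{quasiADLV}, which proves the analogous decomposition on underlying reduced loci (equivalently, on $\Spd(\kappa)$-points via Proposition~\ref{MX}), and Proposition~\ref{neutralIso}, which gives the isomorphism of formal completions along the extension-of-structure map $\CG^o\hookrightarrow\CG$. The strategy is to define the $\pi_0(\CG_\beta)^\phi$-action, check that $\pi$ factors through it, and then verify that $\bar\pi$ is bijective on geometric points and induces isomorphisms on all formal completions, concluding via a specialization-map recognition principle for small $v$-sheaves.

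First I would construct the action. For each $\bar\beta\in\Pi_\CG$, introduce the intermediate smooth group scheme $\CG'_\beta$ with $\br K^o_\beta\subset \br K'_\beta\subset \br K_\beta$ whose group of components is $\pi_0(\CG_\beta)^\phi\subset\pi_0(\CG_\beta)$. For $h\in \br K'_\beta$ one has $\phi(h)h^{-1}\in\br K^o_\beta$, so $b_h:=h^{-1}b\phi(h)$ lies in the same $\br K^o_\beta$-$\sigma$-conjugacy class as $b$. The change-of-framing isomorphism of~\S\ref{ss:basepoint}, composed with the canonical identification between $\CM^{\rm int}_{\CG^o_\beta,b_h,\mu}$ and $\CM^{\rm int}_{\CG^o_\beta,b,\mu}$ coming from $\br K^o_\beta$-$\sigma$-conjugacy, yields an automorphism of $\CM^{\rm int}_{\CG^o_\beta,b,\mu}$ attached to $h$. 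One checks that this factors through $\pi_0(\CG_\beta)^\phi=\br K'_\beta/\br K^o_\beta$ and that, by a calculation parallel to the one establishing~(\ref{naivemap}), the morphism $\pi$ is invariant under this action, which yields the descended map $\bar\pi$.

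Next I would verify bijectivity of $\bar\pi$ on $\Spd(\kappa)$-points for every algebraically closed $\kappa/k$. By Proposition~\ref{MX}, $(\CM^{\rm int}_{\CG^o_\beta,b,\mu})_\red\simeq X_{\CG^o_\beta}(b,\mu^{-1})$ and similarly for $\CG$; by construction $\bar\pi$ induces on $\Spd(\kappa)$-points precisely the map of Proposition~\ref{quasiADLV}(ii) (with $\mu$ replaced by $\mu^{-1}$), which is a bijection. For formal completions at such a point $x$ with image $y=\pi(x)$, I would factor $\pi$ (restricted to the $\bar\beta$-summand) as
\[
\CM^{\rm int}_{\CG^o_\beta,b,\mu}\xrightarrow{\pi^o_\beta}\CM^{\rm int}_{\CG_\beta,b,\mu}\xrightarrow{c_\gamma}\CM^{\rm int}_{\CG,b,\mu},
\]
where $\pi^o_\beta$ is the extension of structure (an isomorphism on formal completions by Proposition~\ref{neutralIso}(b), cf.~Remark~\ref{largepointremark}), and $c_\gamma$ is the isomorphism of $v$-sheaves induced by conjugation by $\dot\gamma$ as in~(\ref{naivemap})---well-defined as a morphism of $v$-sheaves over $\Spd(O_{\br E})$ because $\xi=\phi(\dot\gamma)^{-1}\dot\gamma$ lies in $\CG(\br\BZ_p)$ by Lemma~\ref{liftgamma}, so the Frobenius twist in~(\ref{corrdef}) absorbs into an inner $\br\BZ_p$-automorphism. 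By the injectivity of $\bar\pi$ on geometric points, distinct representatives of a $\pi_0(\CG_\beta)^\phi$-orbit on the source have disjoint formal neighborhoods, so the quotient does not collapse them, and the formal completion of the quotient at $y$ coincides with that of the source at $x$.

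To conclude I would argue that $\bar\pi$, being a qcqs morphism of small $v$-sheaves over $\Spd(O_{\br E})$ (qcqs by Proposition~\ref{neutralIso}(a) for $\pi^o_\beta$, since $c_\gamma$ is an iso and qcqs is preserved under finite disjoint unions and finite quotients), which is bijective on $\Spd(\kappa)$-points for all algebraically closed $\kappa/k$ and an isomorphism on all formal completions, must be an isomorphism of $v$-sheaves. The main obstacle I expect is this final recognition step: there is no ready-made general theorem to cite, so the argument likely has to be carried out by hand using the specialization formalism of~\S\ref{def:spec} together with the explicit local description~(\ref{vlocalDescr}) of $\wh{L\CG}_{b,\mu}$ and product-of-points descent in the spirit of Proposition~\ref{AnextProduct}. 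A secondary difficulty is verifying that the composed identifications used to define the $\pi_0(\CG_\beta)^\phi$-action genuinely compose into a group action at the level of $v$-sheaves, not merely as a set map on $\Spd(\kappa)$-points.
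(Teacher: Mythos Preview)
Your strategy is essentially the one the paper follows: reduce to Propositions~\ref{quasiADLV} and~\ref{neutralIso}, then recognize $\bar\pi$ as an isomorphism of $v$-sheaves. The paper's construction of the $\pi_0(\CG_\beta)^\phi$-action is slightly different from yours (it twists the $\CG^o_\beta$-torsor structure directly by $g\star x=(\delta g\delta^{-1})\cdot x$ for $\delta\in\CG_\beta(\BZ_p)$, rather than going through change-of-framing), but the two are equivalent and your version is fine.

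The place where your proposal is incomplete is exactly where you flagged it: the final recognition step. The paper does \emph{not} carry this out ``by hand'' via (\ref{vlocalDescr}) or product-of-points descent. Instead it uses a clean reduction. Since $\bar\pi$ is qcqs and both source and target are partially proper over $\Spd(O_{\br E})$, \cite[Cor.~17.4.10]{Schber} reduces the isomorphism claim to checking bijectivity on $\Spa(C,O_C)$-points for all algebraically closed non-archimedean $C$. Any such point $\wt x$ factors through the formal completion of (the base change to $O(k_C)$ of) the target at its specialization $\bar x={\rm sp}(\wt x)$, and likewise on the source side. Now the bijection on $\Spd(k_C)$-points (Proposition~\ref{quasiADLV}) matches specializations, and the isomorphism on formal completions (Remark~\ref{largepointremark}, i.e.\ the base-changed form of Proposition~\ref{neutralIso}(b)) matches the points lying over them. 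Injectivity and surjectivity on $\Spa(C,O_C)$-points follow immediately. This replaces the ad hoc argument you were anticipating with a two-line reduction, and it is the same mechanism the paper uses again later in the proof of Proposition~\ref{BijectionCpts}.
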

 Let us first define the action of $\pi_0(\CG_\beta)^\phi$ on $\CM^{\rm int}_{\CG^o_{\beta}, b, \mu}$.  We use the exactness of the sequence  
 $$ 0\to \CG_{\beta}^o(\BZ_p)\to \CG_\beta(\BZ_p)\to \pi_0(\CG_\beta)^\phi\to 0 .
$$
Let $\delta\in G(\BQ_p)$ be in the normalizer of $\CG_\beta^o$. Recall that
 $$
 \CM^{\rm int}_{\CG_\beta^o, b, \mu}(S)=\{\text{isomorphism classes of } (S^\sharp, \CP, \phi_\CP, i_r) \} ,
 $$
 where $\CP$ is a $\CG_\beta^o$-torsor, etc. 
 We define a new $\CG_\beta^o$-torsor $\CP'=\CP_\delta$ by twisting the $\CG_\beta^o$-action by $\delta$,
 $$
 g\star x=(\delta g\delta^{-1})\cdot x .
 $$
 The new Frobenius $\phi_{\CP'}$ is taken to be identical to $\phi_\CP$. This is indeed a morphism of $\CG_\beta^o$-torsors since
 $$
 \phi_{\CP'}(g\star x)=\phi_\CP((\delta g\delta^{-1})\cdot x)=(\delta \phi(g)\delta^{-1})\cdot\phi_\CP(x)=\phi(g)\star\phi_{\CP'}(x),
 $$
 where we used that $\delta\in G(\BQ_p)$. Finally, the framing $i'_r$ is given by $i'_r(g)=i_r(\delta g)$. This is indeed an isomorphism of $\CG_\beta^o$-bundles, since
 $$
 i'_r(hg)=i_r(\delta hg)=i_r((\delta h\delta^{-1})(\delta g))=(\delta h\delta^{-1})\cdot i_r(\delta g)=h\star i'_r(g). 
 $$
 The compatibility with the Frobenius follows from
 $$
 \phi_{\CP'}(i'_r(g))=\phi_\CP(i_r(\delta g))=\delta \phi_\CP(i_r(g))=\delta i_r(b\phi(g))=i'_r(b\phi(g)) .
 $$

 Note that if $\delta\in\CG_\beta^o(\BZ_p)$, then the tuple $(S^\sharp, \CP', \phi_{\CP'}, i'_r)$ is isomorphic to $(S^\sharp, \CP, \phi_\CP, i_r)$. Indeed, the map $x\mapsto \delta\cdot x$ defines an isomorphism $\alpha:\CP\to \CP'$ compatible with $\phi_\CP$ and $\phi_{\CP'}$, and with $i_r$ and $i'_r$.  For instance
 $$
 \phi_{\CP'}(\alpha(x))=\phi_\CP(\delta\cdot x)=\phi(\delta)\cdot\phi_\CP(x)=\delta\cdot\phi_\CP(x)=\alpha(\phi_\CP(x)) .
 $$
 Therefore we obtain an action of the factor group $N_{G(\BQ_p)}(\CG_\beta^o)/\CG_\beta^o(\BZ_p)$ on $\CM^{\rm int}_{\CG^o, b, \mu}$. Restricting this action to $\CG_\beta(\BZ_p)/\CG_\beta^o(\BZ_p)$, we obtain the desired action of $\pi_0(\CG)^\phi$ on $\CM^{\rm int}_{\CG^o, b, \mu}$. 
 
 The same argument as the one above shows that $\delta\in\CG_\beta(\BZ_p)$ induces an isomorphism between the images of $(S^\sharp, \CP, \phi_{\CP}, i_r)$ and $(S^\sharp, \CP', \phi_{\CP'}, i'_r)$ in $\CM^{\rm int}_{\CG, b, \mu}$ under \eqref{naivemap}, hence we obtain a morphism $\CM^{\rm int}_{\CG_\beta^o, b, \mu}/\pi_0(\CG_\beta)^\phi\to \CM^{\rm int}_{\CG, b, \mu}$. Letting now $\bar\beta$ vary, we obtain a morphism from the LHS to the RHS in Theorem \ref{quasiThm}. 
  \smallskip
  
\noindent\begin{proof}[{\it Proof of Theorem \ref{quasiThm}}:]
 We deduce from Proposition \ref{MX} and Proposition \ref{quasiADLV} that the morphism induces a bijection on the set of   $\Spd(\kappa)$-points, for all algebraically closed extensions $\kappa/k$. 
 Also it is qcqs by Proposition \ref{neutralIso} (a). Using relative properness and \cite[Cor. 17.4.10]{Schber} we see it is enough to check that  it gives a bijection on  $\Spa(C, O_C)$-points.  Note that  any $\Spa(C, O_C)$-point $\wt x$ of $\CM^{\rm int}_{\CG, b, \mu}$ factors through the formal completion 
$ (\CM^{\rm int}_{\CG, b, \mu})_{O(k_C)/\bar x} $ of the base change of 
$\CM^{\rm int}_{\CG, b, \mu}$ by $O(k_C)=O_{\br E}\otimes_{W(k)}W(k_C)$, at its specialization at $\bar x={\rm sp}(\wt x)$. Here, $k_C=O_C/\fkm_C$ is the residue field. The corresponding fact is also true for $\Spa(C, O_C)$-points of $\CM^{\rm int}_{\CG_\beta^o, b, \mu}$.  

We can now complete the proof. First we show injectivity: Suppose $\wt x_i$, $i=1,2$, are two points in $(\sqcup_{\bar\beta\in \Pi_\CG}\CM^{\rm int}_{\CG_\beta^o, b, \mu}/\pi_0(\CG_\beta)^\phi)(C, O_C)$, mapping by $\bar\pi$
to the same point in $\CM^{\rm int}_{\CG, b, \mu}(C, O_C)$. By Proposition \ref{quasiADLV}, 
these two points have the same specialization $\bar x=\bar x_1=\bar x_2$. By the above, they factor through the  formal completion of (the base change) of $\CM^{\rm int}_{\CG_\beta^o, b, \mu}$ at a corresponding $\bar x$, for some common $\bar\beta\in \Pi_\CG$. By (\ref{largepointeq}) for $\kappa=k_C$, we see that these two points agree in the formal completion of $\CM^{\rm int}_{\CG_\beta^o, b, \mu}$ at $\bar x$, and therefore $\wt x_1=\wt x_2$. The proof of surjectivity  onto $\CM^{\rm int}_{\CG, b, \mu}(C, O_C)$ is by a similar argument.
 \end{proof} 
 
 We define a map of $v$-sheaves
 $$
 \kappa_G\circ {\rm sp}\colon \CM^{\rm int}_{\CG, b, \mu}\to \underline{C_{\CG}} ,
 $$
 where ${\rm sp}\colon |\CM^{\rm int}_{\CG, b, \mu}|\to  |X_\CG(b, \mu^{-1})|$ denotes the (continuous) specialization map under the identification $(\CM^{\rm int}_{\CG, b, \mu})_\red=X_{\CG}(b, \mu^{-1} )$, cf. Proposition \ref{MX}. 
 For $\tau\in C_{\CG}$,  we denote by
\begin{equation}\label{comptau}
 \CM^{\rm int,\tau}_{\CG, b, \mu}=   \CM^{\rm int}_{\CG, b, \mu}\times_{\underline{C_{\CG}}}\underline{\{\tau\}}\subset  \CM^{\rm int}_{\CG, b, \mu}
\end{equation}
 the fiber over $\tau$. This is an open and closed $v$-subsheaf of  $\CM^{\rm int}_{\CG, b, \mu}$. We call $ \CM^{\rm int,\tau}_{\CG, b, \mu}$ the \emph{component of $ \CM^{\rm int}_{\CG, b, \mu}$} corresponding to $\tau$. 
 \begin{corollary}\label{compwise}
  Let $(\CG, b, \mu)$ be an integral local shtuka datum such that   $\CG$ is a quasi-parahoric group scheme for $G$ and let $\CG\to\CG'$ be a morphism  extending the identity morphism of $G$ in the generic fibers of quasi-parahoric group schemes for $G$ with the same associated parahoric group scheme.  Then for every $\tau\in \Omega_G/\pi_0(\CG)$, with image $\tau'\in  \Omega_G/\pi_0(\CG')$, the natural morphism $\CM^{\rm int}_{\CG, b, \mu}\to \CM^{\rm int}_{\CG', b, \mu}$ induces an isomorphism
  $$ \CM^{{\rm int}, \tau}_{\CG, b, \mu}\isoarrow \CM^{{\rm int}, \tau'}_{\CG', b, \mu}    $$
  of $v$-sheaves. \qed
 \end{corollary}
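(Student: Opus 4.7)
The plan is to reduce, via the decomposition of Theorem \ref{quasiThm}, to the observation that each $\tau$-component of $\CM^{\rm int}_{\CG, b, \mu}$ is identified with a single connected component of the common parahoric moduli $\CM^{\rm int}_{\CG^o_\beta, b, \mu}$, and then to verify that the natural map respects this identification. Since the Kottwitz map $\kappa_G\circ{\rm sp}$ factors through the parahoric level, once the right identifications are in place the argument becomes essentially bookkeeping.

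First I would reduce to the case that $\tau$ lies in the relevant $\phi$-stable coset inside $C_\CG^\phi$ (and $\tau'$ in its image), since otherwise both components are empty by the analogue for $X_\CG(b,\mu^{-1})$ of Corollary \ref{imageka}. Let $\bar\beta\in\Pi_\CG$ be the image of $\tau$ under the boundary map in \eqref{exseq}. By naturality of \eqref{exseq} with respect to the inclusion $\pi_0(\CG)\hookrightarrow\pi_0(\CG')$, the image $\bar\beta'\in\Pi_{\CG'}$ of $\tau'$ equals the image of $\bar\beta$. I would then choose a single representative $\beta\in\pi_0(\CG)\subset\pi_0(\CG')$ along with $\gamma\in\Omega_G$ satisfying $(1-\phi)\gamma=\beta$ and a lift $\dot\gamma\in N(\br\BQ_p)$ as in Lemma \ref{liftgamma}, serving both groups simultaneously. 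Since $\CG^o=\CG'^o$ by hypothesis, the conjugate parahorics for $\CG$ and $\CG'$ at $\beta$ coincide, and both sides of the desired isomorphism are described in terms of the same parahoric moduli $\CM^{\rm int}_{\CG^o_\beta, b, \mu}$.

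Next, using Theorem \ref{quasiThm}, I would place $\CM^{\rm int,\tau}_{\CG, b, \mu}$ inside the $\bar\beta$-summand of $\CM^{\rm int}_{\CG^o_\beta, b, \mu}/\pi_0(\CG_\beta)^\phi$, and $\CM^{\rm int,\tau'}_{\CG', b, \mu}$ inside the analogous $\bar\beta$-summand for $\CG'$. Tracking Kottwitz values through the twist \eqref{naivemap}, one sees that the connected components of $\CM^{\rm int}_{\CG^o_\beta, b, \mu}$ whose Kottwitz values project to $\tau\in C_\CG$ form a single orbit under the free translation action of $\pi_0(\CG_\beta)^\phi\subset\Omega_G$. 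Hence the $\tau$-component is canonically represented by a single connected component $\CM^{\rm int,\lambda_0}_{\CG^o_\beta, b, \mu}$, for any chosen lift $\lambda_0\in\Omega_G^\phi$ of $\tau$. Since $\pi_0(\CG)\subset\pi_0(\CG')$, the same $\lambda_0$ also lifts $\tau'$, and the identical argument applied to $\CG'$ identifies $\CM^{\rm int,\tau'}_{\CG', b, \mu}$ with the very same component. Functoriality of the map \eqref{mor421} in the quasi-parahoric over a fixed parahoric then forces the natural map in question to be the identity under these identifications.

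The main obstacle I anticipate is the careful verification of two supporting facts: that the Kottwitz map on the $\bar\beta$-summand surjects onto the expected $\gamma$-shifted coset in $\Omega_G$ (a version of \eqref{coset}), and that $\pi_0(\CG_\beta)^\phi$ acts freely on the set of connected components of $\CM^{\rm int}_{\CG^o_\beta, b, \mu}$ via translation by its image in $\Omega_G$. Both should follow by combining Proposition \ref{MX}, the compatibility of $\bar\pi$ with \eqref{disjsum} established in the proof of Theorem \ref{quasiThm}, and the explicit description of the $\pi_0(\CG_\beta)^\phi$-action from Proposition \ref{quasiADLV}.
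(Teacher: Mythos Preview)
Your plan is correct and follows essentially the same route the paper takes: the corollary is stated with a bare \qed because it is an immediate consequence of Theorem~\ref{quasiThm} applied to both $\CG$ and $\CG'$, using that they share the same parahoric $\CG^o$ so that each $\tau$-component on either side is identified with a single component of the common $\CM^{\rm int}_{\CG^o_\beta, b, \mu}$. One minor notational slip: your lift $\lambda_0$ should live in the coset $c_{b,\mu^{-1}}+\Omega_G^\phi$ (shifted by $\gamma$ under the twist \eqref{Xbeta}), not in $\Omega_G^\phi$ itself, but your phrase ``tracking Kottwitz values through the twist'' shows you are aware of this.
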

    
 \smallskip
 
\noindent {\it Proof of Theorem \ref{varyGthm}}: For the representability conjecture, we assume that $(G, b, \mu)$ is a local Shimura datum, i.e., $\mu$ is minuscule. Now $\CM^{{\rm int}}_{\CG, b, \mu}$ is representable if and only if $\CM^{{\rm int}, \tau}_{\CG, b, \mu}$ is representable for every $\tau\in \Omega_G/\pi_0(\CG)$. Hence the assertion follows from Corollary \ref{compwise} and the fact that every component $\CM^{{\rm int}, \tau}_{\CG, b, \mu}$ is isomorphic to a component $\CM^{{\rm int}, \wt\tau}_{\CG^o, b, \mu}$ of $\CM^{{\rm int}}_{\CG^o, b, \mu}$ for $\wt\tau\in\Omega_G$ mapping to $\tau$ and, conversely, every component $\CM^{{\rm int}, \tau}_{\CG^o, b, \mu}$ is isomorphic to the component $\CM^{{\rm int}, \bar\tau}_{\CG, b, \mu}$ of $\CM^{{\rm int}}_{\CG, b, \mu}$, where $\bar\tau\in \Omega_G/\pi_0(\CG)$ is the image of $\tau$.
\qed
 \smallskip
 
We also mention the following naturality statement which is used in \S \ref{s:proofs}. It concerns the action of the Frobenius centralizer $J_b(\BQ_p)$ on $\CM^{{\rm int}}_{\CG, b, \mu}$. Recall that this action is via changes of the framing, i.e.,
 \[
 g: (S^\sharp, \CP, \phi_{\CP}, i_{r})\mapsto (S^\sharp, \CP, \phi_{\CP}, i_{r}\circ g^{-1}), \quad g\in J_b(\BQ_p) .
  \]
 This action is compatible with the action of $J_b(\BQ_p)$ on $\bar c_{b, \mu}+C^\phi$, via 
 \[
g: \tau\mapsto \tau +\tilde\kappa(g) ,
 \]
 where $\tilde\kappa$ is induced by the composition of maps $J_b(\BQ_p)\hookrightarrow G(\breve \BQ_p)\xrightarrow{\ \kappa_G\ }\Omega_G$.
 \begin{proposition}\label{actJ}
 Assume that the center of $G$ is connected. Then the action of $J_b(\BQ_p)$ on $\Omega_G^\phi$ is transitive. In particular, if $\CG$ is a parahoric, for any two $\tau, \tau'\in\bar c_{b, \mu}+\Omega_G^\phi$, the v-sheaves $\CM^{{\rm int}, \tau}_{\CG, b, \mu}$ and $\CM^{{\rm int}, \tau'}_{\CG, b, \mu}$ are isomorphic. 
 \end{proposition}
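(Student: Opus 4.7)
The action of $g\in J_b(\BQ_p)$ on $\bar c_{b,\mu}+\Omega_G^\phi$ is by translation, $\tau\mapsto\tau+\tilde\kappa(g)$, so transitivity is equivalent to the surjectivity $\tilde\kappa\colon J_b(\BQ_p)\twoheadrightarrow \Omega_G^\phi$. The ``In particular'' clause is then immediate from the fact that, for $\CG$ parahoric, each $g\in J_b(\BQ_p)$ acts on $\CM^{\rm int}_{\CG,b,\mu}$ by a change-of-framing isomorphism compatible with the decomposition \eqref{comptau} via translation by $\tilde\kappa(g)$; so applying this to $g$ with $\tilde\kappa(g)=\tau'-\tau$ produces the desired isomorphism $\CM^{\rm int,\tau}_{\CG,b,\mu}\isoarrow\CM^{\rm int,\tau'}_{\CG,b,\mu}$.

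To prove the surjectivity of $\tilde\kappa$, the key input is that $Z(G)$ connected is equivalent to $G_{\rm der}$ being simply connected. Under this hypothesis $\pi_1(G)\simeq X_*(G^{\rm ab})$ as $\Gamma$-modules, and so $\kappa_G$ factors through the abelianization projection $G\twoheadrightarrow G^{\rm ab}$. For $g\in J_b(\BQ_p)$, the relation $\phi(g)=b^{-1}gb$ combined with the commutativity of $G^{\rm ab}$ forces the image $\bar g\in G^{\rm ab}(\br\BQ_p)$ to be $\phi$-invariant, hence $\bar g\in G^{\rm ab}(\BQ_p)$, and $\tilde\kappa(g)=\kappa_{G^{\rm ab}}(\bar g)$. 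It therefore suffices to prove that both arrows in the composition
\[
J_b(\BQ_p)\longrightarrow G^{\rm ab}(\BQ_p)\xrightarrow{\kappa_{G^{\rm ab}}}\Omega_G^\phi
\]
are surjective. The surjectivity of the second arrow is standard: since $G^{\rm ab}$ is a torus, Lang's lemma applied to the connected component of its N\'eron model gives $H^1(\langle\phi\rangle,G^{\rm ab}(\br\BQ_p)^0)=0$, which forces $\kappa_{G^{\rm ab}}\colon G^{\rm ab}(\BQ_p)\twoheadrightarrow X_*(G^{\rm ab})_I^\phi=\Omega_G^\phi$.

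For the first arrow, let $M=M_{\nu_b}$ be the centralizer of the Newton point, so $J_b\subset M(\br\BQ_p)$ and, after $\phi$-conjugating $b$ to a basic element of $M$, $J_b$ is an inner form of $M$. Since abelianizations are invariant under inner twisting, $J_b^{\rm ab}\simeq M^{\rm ab}$. Since $G_{\rm der}$ is simply connected, so is the Levi derived subgroup $M_{\rm der}$, and hence also $J_b^{\rm der}$ (an inner form of $M_{\rm der}$). Kneser's theorem gives $H^1(\BQ_p,J_b^{\rm der})=1$, yielding the surjection $J_b(\BQ_p)\twoheadrightarrow M^{\rm ab}(\BQ_p)$. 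It then remains to establish the surjectivity of the Levi map $M^{\rm ab}(\BQ_p)\to G^{\rm ab}(\BQ_p)$, which reduces to the vanishing of $H^1(\BQ_p,K)$ for the kernel torus $K=(M\cap G_{\rm der})/M_{\rm der}$.

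The main obstacle is precisely this last cohomological vanishing. Under the standing assumption that $G_{\rm der}$ is simply connected and $M$ is a Levi, $K$ turns out to be quasi-trivial (an induced torus), so $H^1(\BQ_p,K)$ vanishes by Shapiro's lemma and Hilbert~90. This can be verified directly from the decomposition of the maximal torus in a simply connected semisimple group via the subset of simple roots defining $M$, or alternatively circumvented by appealing to Kottwitz's description of the composite $J_b(\BQ_p)\hookrightarrow G(\br\BQ_p)\xrightarrow{\kappa_G}\Omega_G$ in terms of the Tate--Nakayama pairing on $\pi_1(G)^\Gamma$, which gives the surjectivity onto $\Omega_G^\phi$ as a direct consequence of the functoriality of the Kottwitz map on inner forms.
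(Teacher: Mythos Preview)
Your proof rests on the assertion that ``$Z(G)$ connected is equivalent to $G_{\rm der}$ being simply connected,'' and every subsequent step (the identification $\pi_1(G)\simeq X_*(G^{\rm ab})$, the simple connectedness of $M_{\rm der}$ and $J_b^{\rm der}$, the quasi-triviality of $K$) depends on $G_{\rm der}$ being simply connected. But the asserted equivalence is false in both directions: $G=\mathrm{PGL}_n$ has trivial (hence connected) center while $G_{\rm der}=\mathrm{PGL}_n$ is not simply connected; conversely $G=\SL_n$ has $G_{\rm der}$ simply connected but $Z(G)=\mu_n$ disconnected. In particular, for adjoint $G$ (the case of primary interest in the applications), your argument collapses at once: $G^{\rm ab}$ is trivial, so $\kappa_G$ certainly does not factor through it, and $\Omega_G^\phi$ is the whole of $\pi_1(G)_I^\phi$, which is typically nonzero.

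The paper's proof shares with yours the passage through a Levi $M$ for which $b$ becomes basic, and the permutation-module computation you sketch for $K$ is close in spirit to the paper's argument that $\mathrm{coker}\bigl(X_*(T_{M,\mathrm{sc}})\to X_*(T_{\mathrm{sc}})\bigr)$ is a permutation $\Gamma$-module (hence $\pi_1(M)_I^\phi\to\pi_1(G)_I^\phi$ is surjective). The difference is that the paper never assumes $G_{\rm der}$ simply connected: it uses instead the surjectivity of $\kappa_H\colon H(\BQ_p)\to\pi_1(H)_I^\phi$ for \emph{any} reductive $H$ (applied to $H=J_b$, an inner form of $M$), and invokes connectedness of $Z(G)$ only at the very end, to lift the inner-twisting cocycle from $G_{0,\rm ad}(\breve\BQ_p)$ to $G_0(\breve\BQ_p)$ when reducing from general $G$ to its quasi-split inner form $G_0$. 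Your argument would go through under the \emph{stronger and independent} hypothesis that $G_{\rm der}$ is simply connected, but as written it does not prove the proposition. The fallback in your last paragraph (``alternatively circumvented by appealing to Kottwitz's description\dots'') is too vague to fill the gap.
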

 \begin{proof}
 If $b$ is basic, then $J_b$ is an inner twist of $G$. Hence $\pi_1(J_b)_{I}^\phi=\pi_1(G)_{I}^\phi$, and the assertion follows from the surjectivity of the map $\kappa_H: H(\BQ_p)\to \pi_1(H)_{I}^\phi$, valid for any reductive group $H$ over $\BQ_p$, cf. \cite[\S 7.1]{KotIsoII}. 
 
 Now let $b$ be arbitrary. Let us first assume that $G$ is quasisplit, and fix a maximal split torus $A$ and a Borel subgroup of $G$ containing $A$, so that we have the notion of standard parabolics and standard Levi subgroups of $G$. Then there exists a standard Levi subgroup $M$ and a basic element $b_M\in M(\breve\BQ_p)$ which is $\phi$-conjugate to $b$ and such that we have an equality of $\phi$-centralizer groups $J_{M, b_M}=J_b$, cf. \cite[Prop. 6.2]{KotIsoI}. By the basic case treated above, we are reduced to proving the surjectivity of the map $\pi_1(M)_{I}^\phi\to \pi_1(G)_{I}^\phi$. Consider the surjective map
 \[
 \pi_1(M)\to\pi_1(G) .
 \]
 The kernel of this map  is equal to $Q:={\rm coker}(X_*(T_{M, {\rm sc}})\to X_*(T_{\rm sc}))$, where $T$, resp. $T_M$, denotes the centralizer of $A$ in $G$, resp. in $M$, and where the index ``sc'' denotes the inverse image in the simply connected cover of the derived group of $G$, resp. $M$. But $X_*(T_{ {\rm sc}})$, resp. $X_*(T_{M, {\rm sc}})$, has as basis the set of coroots of $G$, resp. of $M$. The coroots of $G$ which are not coroots of $M$ give a permutation basis for the representation of ${\rm Gal}(\bar\BQ_p/\BQ_p)$ on $Q$. It now follows that the above map continues to be surjective after first taking the coinvariants under the inertia $I$ and then the invariants under $\phi$.
 
 Finally, let us drop the assumption that $G$ is quasi-split. Let $G_0$ be the quasi-split inner form of $G$. Since the center of $G$ is connected, there exists $\gamma\in G_0(\breve \BQ_p)$ and an isomorphism $\Psi: G_0\otimes\breve\BQ_p\xrightarrow{\sim} G\otimes\breve\BQ_p$ such that the bijection $g\mapsto \Psi(\gamma g)$ is equivariant for the  action of the Frobenius on $G_0(\breve \BQ_p)$, resp. $G(\breve\BQ_p)$. Let $b_0$ be the preimage of $b$. Then this map induces an isomorphism of the $\phi$-centralizers $J_{0, b_0}$ and $J_b$. From the quasi-split case, we deduce a surjection 
 \[
 J_b(\BQ_p)=J_{0, b_0}(\BQ_p)\to\pi_1(G_0)_{I}^\phi=\pi_1(G)_{I}^\phi,
 \]
 as desired.
 \end{proof}

\subsection{Generic fibers}\label{ss:genfib}  In this subsection, we interpret the induced decomposition in Theorem \ref{quasiThm} in the generic fibers, in the case when $\mu$ is minuscule. Let $(G, b, \mu)$ be a local Shimura datum and let $\CG$ be a  quasi-parahoric for $G$. We  consider the generic fiber
\[
(\CM^{\rm int}_{\CG,b,\mu})_\eta=\CM^{\rm int}_{\CG,b,\mu}\times_{\Spd(O_{\br E})}\Spd(\br E)
\]
of $\CM^{\rm int}_{\CG,b,\mu}$.  Theorem \ref{quasiThm} implies
\begin{equation}\label{quasiGeneric}
(\CM^{\rm int}_{\CG,b,\mu})_\eta \simeq \bigsqcup_{\bar\beta\in \Pi_\CG} (\CM^{\rm int}_{\CG^o_{\beta},b,\mu})_\eta/\pi_0(\CG_\beta)^\phi
\end{equation}
Since $\CG^o_{\beta}$ is parahoric, it follows that 
 \[
 (\CM^{\rm int}_{\CG^o_{\beta},b,\mu})_\eta={\rm Sht}^\diam_{\CG^o_{\beta}(\BZ_p)}(G, b, \mu), 
 \]
 cf.  \cite[\S 23.3, \S 25.3]{Schber}. Here, $ {\rm Sht}^\diam_K(G, b, \mu)$ denotes, for any compact open subgroup $K\subset G(\BQ_p)$, 
 the diamond local Shimura variety, represented by a smooth rigid analytic variety  over $\Sp(\br E)$, cf. \cite[\S 24.1]{Schber}. We use the  notation 
 $ {\rm Sht}_K(G, b, \mu)$ for the representing rigid-analytic variety. 
The morphism
 \[
  {\rm Sht}_{\CG^o_{\beta}(\BZ_p)}(G, b, \mu)\to  {\rm Sht}_{\CG_{\beta}(\BZ_p)}(G, b, \mu)
\]
is a Galois \'etale cover for the finite abelian group
\[
\CG_{\beta}(\BZ_p)/\CG^o_{\beta}(\BZ_p)=(\CG_{\beta}(\br\BZ_p)/\CG^o_{\beta}(\br\BZ_p))^\phi=\pi_0(\CG_\beta)^\phi ,
\]
cf. \cite[23.3]{Schber}. We deduce the following identification  which is the desired interpretation of the decomposition in Theorem \ref{quasiThm} in the generic fiber. 
\begin{theorem} \label{quasiGeneric2}
 Let $(G, b, \mu)$ be a local Shimura datum and let $\CG$ be a quasi-parahoric for $G$. Then 
\begin{equation*}
(\CM^{\rm int}_{\CG,b,\mu})_\eta \simeq \bigsqcup_{\bar\beta\in \Pi_\CG} {\rm Sht}^\diam_{\CG_{\beta}(\BZ_p)}(G, b, \mu)  ,
\end{equation*}
In particular, if $\CM^{\rm int}_{\CG,b,\mu}$ is representable by the formal scheme $\sM_{\CG,b,\mu}$ over $O_{\br E}$, then
\begin{equation*}
\sM_{\CG,b,\mu}^{\rm rig} \simeq \bigsqcup_{\bar\beta\in \Pi_\CG}{\rm Sht}_{\CG_{\beta}(\BZ_p)}(G, b, \mu) .
\end{equation*}\qed
\end{theorem}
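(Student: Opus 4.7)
The plan is to combine the decomposition of Theorem \ref{quasiThm} with the two properties of local shtuka spaces at parahoric level that were recalled just above the statement; the theorem is essentially a packaging result.

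First, I would take the generic fiber of the isomorphism
$$
\bar\pi\colon \bigsqcup_{\bar\beta\in\Pi_\CG}\CM^{\rm int}_{\CG^o_{\beta}, b, \mu}/\pi_0(\CG_\beta)^\phi \isoarrow \CM^{\rm int}_{\CG, b, \mu}
$$
from Theorem \ref{quasiThm}. Since base change along $\Spd(\br E)\to \Spd(O_{\br E})$ commutes with disjoint unions and with quotients by a finite group, this formally yields the decomposition \eqref{quasiGeneric}.

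Second, since $\CG^o_\beta$ is a parahoric group scheme, the identification
$(\CM^{\rm int}_{\CG^o_\beta, b, \mu})_\eta \simeq {\rm Sht}^\diam_{\CG^o_\beta(\BZ_p)}(G, b, \mu)$
of \cite[\S 23.3, \S 25.3]{Schber} allows me to rewrite each summand on the right-hand side of \eqref{quasiGeneric} as ${\rm Sht}^\diam_{\CG^o_\beta(\BZ_p)}(G, b, \mu)/\pi_0(\CG_\beta)^\phi$. Third, I would identify this quotient with ${\rm Sht}^\diam_{\CG_\beta(\BZ_p)}(G, b, \mu)$ by appealing to \cite[\S 23.3]{Schber}: the forgetful map of shtuka spaces is a finite Galois étale cover with deck group $\CG_\beta(\BZ_p)/\CG^o_\beta(\BZ_p)=\pi_0(\CG_\beta)^\phi$, and the deck action agrees with the one used in the proof of Theorem \ref{quasiThm} because both are realized by the same twisting of the framing by a lift $\delta\in\CG_\beta(\BZ_p)$ of a class in $\pi_0(\CG_\beta)^\phi$. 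The main point requiring care is precisely this compatibility of the two actions on the generic fiber; the rest is formal, and unwinding the two definitions (deck transformations of $\CG^o_\beta$-level shtukas vs.\ the $\star$-action on $\CG^o_\beta$-torsors from the proof of Theorem \ref{quasiThm}) gives them to be equal.

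Finally, for the second displayed isomorphism, suppose $\CM^{\rm int}_{\CG,b,\mu}\simeq \sM_{\CG,b,\mu}^\diam$ for a normal flat formal scheme locally formally of finite type over $O_{\br E}$. Then the generic fiber of the $v$-sheaf is naturally the diamond associated to the rigid-analytic generic fiber $\sM_{\CG,b,\mu}^{\rm rig}$, and the first part of the theorem gives the desired decomposition after diamondification. I would conclude by invoking the full faithfulness of $(\cdot)^\diam$ on seminormal rigid-analytic spaces (\cite[Prop. 10.2.3]{Schber}) to descend the diamond isomorphism to one of rigid-analytic varieties.
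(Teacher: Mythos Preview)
Your proposal is correct and follows essentially the same approach as the paper: take the generic fiber of Theorem~\ref{quasiThm} to obtain \eqref{quasiGeneric}, identify each summand via the parahoric identification $(\CM^{\rm int}_{\CG^o_\beta,b,\mu})_\eta\simeq {\rm Sht}^\diam_{\CG^o_\beta(\BZ_p)}(G,b,\mu)$, and then use that ${\rm Sht}^\diam_{\CG^o_\beta(\BZ_p)}(G,b,\mu)\to {\rm Sht}^\diam_{\CG_\beta(\BZ_p)}(G,b,\mu)$ is a $\pi_0(\CG_\beta)^\phi$-Galois cover. The paper treats the result as an immediate consequence of these ingredients and marks it \qed; your version is slightly more explicit in flagging the compatibility of the two $\pi_0(\CG_\beta)^\phi$-actions and in invoking full faithfulness for the rigid-analytic statement, but these are exactly the implicit steps the paper is relying on.
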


\subsection{Interpretation in terms of local systems} Consider a perfectoid $S\to \Spd(\br E)$, i.e. with untilt $S^\sharp$ in char. $0$.
Suppose $r>0$ is sufficiently small, so that $\CY_{[0, r]}(S)$ avoids the divisor corresponding to the untilt.
A $\CG$-shtuka over $S$ with leg at $S^\sharp$, restricts to a vector bundle over $\CY_{[0, r]}(S)$ with an isomorphism covering the action of $\phi^{-1}$ on $\CY_{[0, r]}(S)$; as in \cite[\S 22.1]{Schber}, we call this a $\phi^{-1}$ module over $\CY_{[0,r]}(S)$.
We see that 
an $S$-point of $\CM^{\rm int}_{\CG,b,\mu}$ over $\Spd(\br E)$ gives an exact tensor functor
\begin{equation*}
{\rm Rep}_{\BZ_p}(\CG)\to \text{ $\phi^{-1}$-${\rm mod}/\ \CY_{[0, r]}(S)$}
\end{equation*} 
On the other hand, by \cite[Prop. 22.3.2]{Schber}, we have an exact equivalence of  tensor categories, 
\begin{equation*}
\text{ $\phi^{-1}$-${\rm mod}/\ \CY_{[0, r]}(S)$}\isoarrow \text{ $\und\BZ_p$-${\Loc}(S)$} . 
\end{equation*}
Here, $\text{ $\und\BZ_p$-${\Loc}(S)$}$ stands for the category of pro-\'etale local systems for $\und\BZ_p$ over $S$, i.e. $\und\BZ_p$-torsors for the
pro-\'etale topology on $S$. 

Composing these two functors, we obtain an exact tensor functor
\begin{equation}\label{tensfun}
{\rm Rep}_{\BZ_p}(\CG)\to \text{ $\und\BZ_p$-${\Loc}(S)$}  . 
\end{equation}
The composition of this functor with $\text{ $\und\BZ_p$-${\Loc}(S)$}
\to \text{ $\und\BQ_p$-${\Loc}(S)$}$ is pro-\'etale locally on $S$ 
isomorphic to the forgetful functor (so, pro-\'etale locally, it gives the trivial torsor), cf. \cite[Thm. 22.5.2]{Schber}. 

This leads to an alternative description of the generic fiber analogous to  \cite[Prop. 23.3.1]{Schber}.
In the statement below, $X_{FF, S}:=\CY_{(0,\infty)}(S)/\phi^\BZ$ denotes the Fargues-Fontaine curve over $S$, \cite{FS}.  We will denote by $\CE^b\times_{\Spd(k)}S$, or simply $\CE^b$, the descent of $(G_{\CY_{(r,\infty)}(S)}, \phi_b)$ to $X_{FF, S}$.

\begin{proposition}\label{prop431}
The $S$-points of $(\CM^{\rm int}_{\CG,b,\mu})_\eta$ over $\Spd(\br E)$ are in bijection with isomorphism classes of $5$-tuples $(S^\sharp, \CE, \alpha, \BP, \iota)$ where

\begin{itemize}
\item $S^\sharp$ is an untilt of $S$ over $\br E$;

\item $\CE$ is a $G$-torsor over $X_{FF, S}$, trivial at every geometric point of $S$;

\item $\alpha$ is a $G$-torsor isomorphism
\[
\CE_{|{X_{FF, S}\setminus S^\sharp}}\xrightarrow{\ \sim } {\CE^b}_{|{X_{FF, S}\setminus S^\sharp}},
\]
which is meromorphic along $S^\sharp$ and bounded by $\mu$;

\item $\BP$   is an exact tensor functor 
\[
 {\rm Rep}_{\BZ_p}(\CG)\to \text{ $\und\BZ_p$-${\Loc}(S)$};
\]

\item $\iota$ is an isomorphism of the base change of $\BP$ to $\BQ_p$ with the functor given by the $\und {G(\BQ_p)}$-torsor associated to $\CE$ by \cite[Thm. 22.5.2]{Schber}.

\end{itemize}
\end{proposition}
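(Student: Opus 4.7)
The plan is to follow the strategy of \cite[Prop.~23.3.1]{Schber}, which handles the parahoric case; the only input that needs upgrading for a general quasi-parahoric $\CG$ is the Tannakian form of \cite[Prop.~22.3.2]{Schber}, which extends verbatim to any smooth affine group scheme over $\BZ_p$ after choosing a faithful closed embedding $\CG\hookrightarrow \GL_n$. One constructs the bijection of the proposition in both directions.

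Starting from an $S$-point $(S^\sharp,\sP,\phi_\sP,i_r)$ of $(\CM^{\rm int}_{\CG,b,\mu})_\eta$, one produces $\CE$ by extending the structure group of $\sP$ from $\CG$ to $G$, restricting to $\CY_{(0,\infty)}(S)$, and descending via $\phi_\sP$ to a $G$-torsor on $X_{FF,S}$. Triviality at each geometric point is forced by the existence of the framing $i_r$, together with the triviality of $\CE^b$ at each geometric point. The framing $i_r$, after extension of structure group to $G$, gives the isomorphism $\alpha$ with $\CE^b$ away from $S^\sharp$; meromorphy and boundedness by $\mu$ along $S^\sharp$ are restatements of the shtuka condition. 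For the tensor functor $\BP$, given $V\in {\rm Rep}_{\BZ_p}(\CG)$, the associated vector bundle $\sP\times^\CG V$ restricted to $\CY_{[0,r]}(S)$ (for small $r$, so the leg is avoided) is a $\phi^{-1}$-module and corresponds via \cite[Prop.~22.3.2]{Schber} to a $\und\BZ_p$-local system $\BP(V)$; exactness and the tensor property are formal consequences of the exactness of the equivalence. Finally, the isomorphism $\iota$ is immediate: both $\BP\otimes\BQ_p$ and the $\und{G(\BQ_p)}$-torsor attached to $\CE$ in \cite[Thm.~22.5.2]{Schber} are computed from the same underlying $\phi^{-1}$-$G$-module on $\CY_{[0,r]}(S)$ after inverting $p$.

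For the inverse direction, given a $5$-tuple $(S^\sharp,\CE,\alpha,\BP,\iota)$, the $\CG$-shtuka is reconstructed by gluing. A Tannakian version of \cite[Prop.~22.3.2]{Schber} turns $\BP$ into a $\CG$-torsor on $\CY_{[0,r]}(S)$ with $\phi$-linear structure; the pair $(\CE,\alpha)$, pulled back from $X_{FF,S}$ to $\CY_{(0,\infty)}(S)$, supplies a $G$-torsor with Frobenius and framing on $\CY_{(0,\infty)}(S)$; the identification $\iota$ is precisely what is needed to identify the two underlying $G$-structures on the overlap and to lift the $G$-torsor on $\CY_{(0,\infty)}(S)$ to a $\CG$-refinement compatible with the $\CG$-torsor on $\CY_{[0,r]}(S)$. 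This produces the desired $\CG$-shtuka $(\sP,\phi_\sP)$ on $\CY_{[0,\infty)}(S)$ together with its framing $i_r$ recovered from $\alpha$. The two constructions are manifestly inverse up to isomorphism.

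The main obstacle is the Tannakian upgrade of \cite[Prop.~22.3.2]{Schber} to the smooth affine group $\CG$: one must check that an exact tensor functor ${\rm Rep}_{\BZ_p}(\CG)\to \und\BZ_p\text{-}{\Loc}(S)$, equipped with a $\otimes\BQ_p$-trivialization by the $\und{G(\BQ_p)}$-torsor attached to $(\CE,\alpha)$, recovers a $\CG$-torsor-with-Frobenius on $\CY_{[0,r]}(S)$. Via an embedding $\CG\hookrightarrow \GL_n$, this reduces to the known $\GL_n$ case and to a tensor-compatible reduction of structure group, which is exactly what the exactness and tensor property of $\BP$ provide by standard Tannakian formalism; the datum $\iota$ ensures that the reduction is compatible with the rationalization from $\CE$. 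This is the only place where the argument of \cite[Prop.~23.3.1]{Schber} must be revisited in order to drop the parahoric hypothesis.
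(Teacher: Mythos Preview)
Your approach is essentially the paper's: follow \cite[Prop.~23.3.1]{Schber} and adjust for the quasi-parahoric case. Two points deserve correction or sharpening.

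First, your explanation of why $\CE$ is trivial at every geometric point is wrong: $\CE^b$ is \emph{not} trivial at geometric points unless $[b]=1$ in $B(G)$, so the framing $i_r$ cannot by itself force triviality of $\CE$. The correct reason is that the $\CG$-shtuka extends across $\CY_{[0,r]}(S)$ (no leg there), so its push-out to $G$ gives a $\phi^{-1}$-$G$-module on $\CY_{[0,r]}(S)$; via \cite[Prop.~22.3.2]{Schber} this comes from a $\und\BZ_p$-local system, and hence the associated $G$-bundle on $X_{FF}$ arises from a $\und{G(\BQ_p)}$-torsor, which is exactly the condition of being trivial at every geometric point (cf.\ \cite[Thm.~22.5.2]{Schber}).

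Second, your identification of the ``main obstacle'' is slightly off-target. Proposition~22.3.2 of \cite{Schber} is already an exact tensor equivalence of categories, so composing with it needs no upgrade: a $\CG$-shtuka automatically gives the exact tensor functor $\BP$, and conversely $\BP$ composed with the inverse equivalence gives the $\CG$-torsor with $\phi$-structure on $\CY_{[0,r]}(S)$ by standard Tannakian formalism (as in \cite[proof of Prop.~22.6.1]{Schber}). The actual modification from the parahoric case is more elementary: when $\CG$ has connected fibers, every $\CG$-torsor over $\Spec(\BZ_p)$ is trivial (Lang's theorem plus smoothness), so the tensor functor $\BP$ is equivalent to giving a $\und{\CG(\BZ_p)}$-torsor, which is how \cite[Prop.~23.3.1]{Schber} states it. For quasi-parahoric $\CG$ this triviality fails (the obstruction lives in $\Pi_\CG$), and one must retain the full tensor functor $\BP$ rather than pass to a torsor under the locally constant group $\und{\CG(\BZ_p)}$. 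This is the paper's observation, and it is the only change needed.
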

\begin{proof}
This follows by the argument in the proof of \cite[Prop. 23.3.1]{Schber}. Observe
that, since we do not assume that $\CG$ has connected fibers, $\CG$-torsors over $\Spec(\BZ_p)$ are not necessarily trivial.
So instead of $\und {\CG(\BZ_p)}$-torsors as in loc. cit., we just obtain functors $\BP$ as above, compare also to
\cite[proof of Prop. 22.6.1]{Schber}.
\end{proof}
We can use Proposition  \ref{prop431} to reinterpret the isomorphism in Theorem \ref{quasiGeneric2}. 
For $S$ perfectoid over $k$, we can consider the fiber functor at  $s: \Spa(C, O_C)\to S$, 
 \[
F_s: \text{ $\und\BZ_p$-${\Loc}(S)$}\to \text{$\BZ_p$-${\rm mod}$}.
\]
Given
an $S$-point of $\CM^{\rm int}_{\CG,b,\mu}$ over $\Spd(\br E)$ as above,
the composition of \eqref{tensfun} and $F_s$ defines an exact tensor functor 
\begin{equation}
{\rm Rep}_{\BZ_p}(\CG)\to  \text{$\BZ_p$-${\rm mod}$}.
\end{equation}
It defines a $\CG$-torsor $\CT_s$ and we can consider the contracted product
\[
\CG_{s}:={\rm Aut}(\CT_s)=\CG\wedge^\CG \CT_s ,
\]
where $\CG$ acts on itself by conjugation.   Since $\CG$ is smooth, we have
\[
\CG_{s}\otimes_{\BZ_p}\br\BZ_p\simeq \CG\otimes_{\BZ_p}\br\BZ_p.
\]
The class $[\CT_s]$ of the torsor $\CT_s$ lies in $\ker ({\rm H}^1(\BZ_p, \CG)\to {\rm H}^1(\BQ_p, G))=\Pi_\CG$, comp. Lemma \ref{lemma311}. 

 A $4$-tuple $(S^\sharp, \CE, \alpha, \BP)\in (\CM^{\rm int}_{\CG,b,\mu})_\eta(S)$ defines now the  function 
 \begin{equation}\label{interpret}
 S\to \Pi_\CG , \quad s\mapsto [\CT_{s}].
 \end{equation}
   Assuming  a theory of pro-\'etale systems in this situation that resembles the classical theory
of (\'etale) $\BZ_p$-local systems, it would follow that the isomorphism class of the torsor $\CT_{s}$ is constant on connected components of $S$. More precisely, if we choose another point $s'$ of $S$, then $F_s\simeq F_{s'}$ (by a ``path connecting $s$ to $s'$) and this should give $\CT_{s}\simeq \CT_{s'}$ and $\CG_{s}(\BZ_p)=\CG_{s'}(\BZ_p)$ as subgroups of $G(\BQ_p)$. This would explain how to associate to a point  $s\in (\CM^{\rm int}_{\CG,b,\mu})_\eta(C, O_C)$ the index  $\bar\beta(s)\in\Pi_\CG$ on the RHS of \eqref{quasiGeneric}.  It would also explain how to associate to $s$ the point of ${\rm Sht}^\diam_{\CG_{\beta}(\BZ_p)}(G, b, \mu)$ on  the RHS of \eqref{quasiGeneric} (note that $\CG_s=\CG_{\beta(s)}$).

\subsection{Comparison with Wintenberger's theorem}\label{classPoints}

Let us discuss ``classical" points $x: \Spd(F)\to (\CM^{\rm int}_{\CG,b,\mu})_\eta$, where $F/\br E$ is a finite field extension. Consider  the crystalline period map
$$
\pi_{\rm GM}\colon (\CM^{\rm int}_{\CG,b,\mu})_\eta\to (X_\mu\times_{\Sp (E)} \Sp(\br E))^\diam.
$$
Under the identification of Theorem \ref{quasiGeneric2}, it is the disjoint sum of the period maps, one for each $\bar\beta\in\Pi_\CG$,
\begin{equation}
\pi_{{\rm GM}, \beta}\colon {\rm Sht}_{\CG_{\beta}(\BZ_p)}(G, b, \mu)\to X_\mu\times_{\Sp (E)} \Sp(\br E) .
\end{equation}
On the image of $\pi_{{\rm GM}, \beta}$ (the admissible locus), there is the corresponding local   $G(\BQ_p)$-system. Let $x\in {\rm Sht}_{\CG_{\beta}(\BZ_p)}(G, b, \mu)(F)$. Specializing this local system at $\pi_{{\rm GM}, \beta}(x)$ defines a Galois representation 
\begin{equation}\label{rhox}
\rho_x: {\rm Gal}(\bar F/F)\to K_\beta=\CG_{\beta}(\BZ_p)\subset G(\BQ_p).
\end{equation}
Recall the theorem  of Wintenberger (comp. \cite[Prop. 3.3.4]{KP}), according to which 
the image of the Galois representation $\rho_x$ lies in the neutral component $\CG^o_{\beta}(\BZ_p)$, i.e.,
\begin{equation}
\rho_x: {\rm Gal}(\bar F/F)\to \CG^o_{\beta}(\BZ_p)\subset G(\BQ_p).
\end{equation}
 This may be surprising at first glance, given the construction of \eqref{rhox}. However, consider the Galois \'etale cover for the group $\pi_0(\CG)^\phi$, 
$$
\pi: {\rm Sht}_{\CG^o_{\beta}(\BZ_p)}(G, b, \mu)\to {\rm Sht}_{\CG_{\beta}(\BZ_p)}(G, b, \mu).
$$
By our earlier discussion on connected components, this cover is totally split. This fits with the above result of Wintenberger  which directly
implies that, for every $F$-valued point of ${\rm Sht}_{\CG_{\beta}(\BZ_p)}(G, b, \mu)$, the inverse image $\pi^{-1}(x)$ is split over $F$.

\section{Ad-isomorphisms and integral moduli spaces of local shtukas}\label{s:adiso}

\subsection{Ad-isomorphisms} 
 
Recall from \cite{KotIsoII} that a homomorphism $f: G\to G'$ is an \emph{ad-isomorphism} if the center $Z_G$ of $G$ is mapped to the center $Z_{G'}$ of $G'$ and the induced homomorphism $G_\ad\to G'_\ad$ between the adjoint groups is an isomorphism. Equivalently, $\ker f$ is a central subgroup and $\Im f$ is a normal subgroup of $G'$ with torus cokernel. 
\begin{lemma}\label{adisolem}
An ad-isomorphism is the composition of ad-isomorphisms of one of the following types.
\begin{altenumerate}
\item $f: G\to G'$ is the inclusion of a normal subgroup with torus cokernel.
\item $f: G\to G'$ is a surjection with kernel a central torus which is   a product of induced tori
(a ``quasi-trivial" torus). 
\end{altenumerate}
\end{lemma}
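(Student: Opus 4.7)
The plan is to factor any ad-isomorphism $f \colon G \to G'$ in three steps. First, consider the image factorization $G \twoheadrightarrow \bar G \hookrightarrow G'$, where $\bar G := \Im(f)$. By the definition of an ad-isomorphism, $\bar G$ is a normal subgroup of $G'$ with torus cokernel $G'/\bar G$, so the inclusion $\bar G \hookrightarrow G'$ is already of type (i). It then remains to decompose the surjection $\bar f \colon G \twoheadrightarrow \bar G$, whose kernel $Z := \ker f$ is a central subgroup of $G$ of multiplicative type.

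The main step is to construct an intermediate reductive group $\widetilde G$ fitting into a factorization $G \hookrightarrow \widetilde G \twoheadrightarrow \bar G$ of types (i) and (ii) respectively. For this, I first embed $Z$ into a quasi-trivial torus $T_1$ over $\BQ_p$ --- a standard fact obtained by dualizing any surjection of $\Gamma$-modules from a finite permutation module onto the character lattice $X^*(Z)$, where $\Gamma = \Gal(\bar\BQ_p/\BQ_p)$. This existence of $Z \hookrightarrow T_1$ is the only nontrivial input and is precisely the ingredient behind the theory of $z$-extensions. Then set
\[
\widetilde G := (G \times T_1)/Z,
\]
where $Z$ is embedded anti-diagonally as $z \mapsto (z, z^{-1})$. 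Since $Z$ is central in each factor, $\widetilde G$ is again a reductive group over $\BQ_p$.

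Define $\iota \colon G \to \widetilde G$ by $g \mapsto [(g, 1)]$ and $\pi \colon \widetilde G \to \bar G$ by $[(g, t)] \mapsto f(g)$; the latter is well-defined because $f(z) = 1$ for $z \in Z$. A direct calculation shows that $\iota$ is injective with image normal in $\widetilde G$ and cokernel $T_1/Z$, which is a torus, so $\iota$ is of type (i). The map $\pi$ is surjective with kernel the image of $\{1\} \times T_1$ in $\widetilde G$, which is isomorphic to $T_1$ and is central in $\widetilde G$; hence $\pi$ is of type (ii). Since $\pi \circ \iota = \bar f$, the composite
\[
G \xrightarrow{\iota} \widetilde G \xrightarrow{\pi} \bar G \hookrightarrow G'
\]
realizes $f$ as a succession of maps of types (i), (ii), (i), as claimed. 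All remaining verifications amount to tracking what happens to $G \times \{1\}$, $\{1\} \times T_1$, and the anti-diagonal $Z$ inside the quotient $\widetilde G$.
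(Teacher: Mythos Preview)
Your proof is correct and follows essentially the same route as the paper: the paper also embeds $Z=\ker f$ into a quasi-trivial torus $T$ and forms the pushout $G''$ of $1\to Z\to G\to G/Z\to 1$ along $Z\hookrightarrow T$, which is exactly your $\widetilde G=(G\times T_1)/Z$, yielding the same factorization $G\hookrightarrow G''\to G/Z\hookrightarrow G'$ of types (i), (ii), (i). Your write-up is slightly more explicit in unpacking the pushout and checking the types of each factor, but the argument is the same.
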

\begin{proof} 
 Let $f:G\to G'$ be an ad-isomorphism, and let $Z=\ker f$. Find an embedding $Z\hookrightarrow T$ where $T$ is a quasi-trivial torus. Consider the push out of
\[
1\to Z\to G\to G/Z\to 1
\]
by $Z\hookrightarrow T$. This gives
\[
1\to T\to G''\to G/Z\to 1
\]
and we can view $G\to G'$ as a composition of ad-isomorphisms
\[
G\hookrightarrow G''\to G/Z\hookrightarrow G'
\]
where the first map is of type (i), the second map of type (ii) and the third map of type (i). 
\end{proof}

In this section, we consider an  ad-isomorphism $f: G\to G'$ and an  extension of $f$  to a morphism $f: \CG\to \CG'$ between quasi-parahoric group schemes. It is assumed that the  corresponding parahoric group schemes (the neutral connected components) $\CG^o$ and $ \CG'^o$ correspond to the same point in the common building of $G_\ad$ and $G'_\ad$. Let $(\CG, b, \mu)$ and $(\CG', b', \mu')$ be two integral local shtuka data such that $f(b)=b'$ and $\{\mu'\}=f(\{\mu\})$.  In this case, we have an inclusion of reflex fields $E'\subset E$. We then say that $f$ is an \emph{ad-isomorphism of integral local shtuka data}. We obtain a morphism 
\begin{equation}\label{adonLSV}
f: \CM^{\rm int}_{\CG, b, \mu}\xrightarrow{\ \ }  \CM^{\rm int}_{\CG', b', \mu'}\times_{\Spd(O')}\Spd(O)
\end{equation}
where $O=O_{\br E}$ and $O'=O_{\br E'}$. Consider the commutative diagram 
\begin{equation}
\begin{aligned}
 \xymatrix{
      | \CM^{\rm int}_{\CG, b, \mu}| \ar[r]^{\ \ } \ar[d]_{\rm sp} &  |\CM^{\rm int}_{\CG', b', \mu'}\times_{\Spd(O')}\Spd(O)|   \ar[d]^{\rm sp}\\
        |X_{\CG}(b, \mu^{-1})| \quad\ar[r]^{\ \ }  & |X_{\CG'}(b', \mu'^{-1})|.
        }
        \end{aligned}
\end{equation}
 Composing with the Kottwitz homomorphisms \eqref{kappaCG} gives a commutative diagram
\begin{equation}
\begin{aligned}
 \xymatrix{
        \CM^{\rm int}_{\CG, b, \mu}  \ar[r]^{\ \ } \ar[d]_{\kappa\circ\rm sp} &   \CM^{\rm int}_{\CG', b', \mu'}\times_{\Spd(O)}\Spd(O')    \ar[d]^{\kappa\circ\rm sp}\\
       \underline{C_{\CG}} \quad\ar[r]^{\ \ }  & \underline{C_{\CG'}}.
        }
        \end{aligned}
\end{equation}
Recall the components $\CM^{\rm int,\tau}_{\CG, b, \mu}$ for $\tau\in C_\CG$, resp.  $\CM^{\rm int,\tau'}_{\CG', b', \mu'}$ for $\tau'\in C_{\CG'}$, cf. \eqref{comptau}. 
 The aim of this section is to prove the following theorem.  
\begin{theorem}\label{thm451} Let $f\colon (\CG, b, \mu)\to (\CG', b', \mu')$ be an ad-isomorphism of integral local shtuka data. 
The  morphism \eqref{adonLSV} induces an isomorphism, 
\[
f: \CM^{\rm int,\tau}_{\CG, b, \mu}\xrightarrow{\sim} \CM^{\rm int,\tau'}_{\CG', b', \mu'}\times_{\Spd(O')}\Spd(O),
 \]
 where $\tau'=f(\tau)\in C_{\CG'}$. 
\end{theorem}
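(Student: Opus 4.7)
The plan is to follow the template of the proof of Theorem \ref{quasiThm}: reduce the statement to a bijection on $\Spa(C,O_C)$-points, which is in turn broken into a bijection on reduced loci and an isomorphism on formal completions at each $k$-point, the whole being glued by the qcqs property.

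First, I would invoke Lemma \ref{adisolem} to reduce $f$ to one of two elementary cases: (i) a closed immersion of a normal subgroup with torus cokernel, and (ii) a surjection with kernel a central quasi-trivial torus. In both cases the induced map $\CG\to \CG'$ is qcqs, so the morphism \eqref{adonLSV} is qcqs (argue as in Proposition \ref{neutralIso}(a), using dilations for (i) and the smoothness of the quotient by a central smooth torus for (ii)). Together with \cite[Cor. 17.4.10]{Schber}, this reduces the theorem to checking, for each algebraically closed nonarchimedean $C$, that the map is a bijection on $\Spa(C,O_C)$-points after restricting to a single Kottwitz component.

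The second step is the bijectivity on reduced loci. By Proposition \ref{MX}, the reduced locus of $\CM^{\rm int}_{\CG, b, \mu}$ is $X_\CG(b,\mu^{-1})$, and its components are the nonempty fibers of $\kappa_G$ of \eqref{kappaCG}; compatibility with $f$ is automatic from functoriality of Kottwitz. In case (i), one checks directly that a coset $g'\breve K'$ with $\kappa_{G'}(g')=\tau'=f(\tau)$ lifts, uniquely modulo $\breve K$, to a coset $g\breve K\in X_\CG^\tau(b,\mu^{-1})(k)$: the obstruction lives in the cokernel torus and vanishes because the component has been fixed, after which one adjusts by an element of $\breve K'$ to land in $G(\breve\BQ_p)$. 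In case (ii) one uses instead the triviality of $H^1(\BQ_p, Z)$ for a quasi-trivial central $Z$ together with the surjectivity of $G(\breve\BQ_p)\to G'(\breve\BQ_p)$ modulo a central torus; the Kottwitz component label again absorbs the residual ambiguity.

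The third step is the isomorphism of formal completions $\CM^{\rm int}_{\CG,b,\mu/x}\isoarrow \CM^{\rm int}_{\CG',b',\mu'/f(x)}$. By the change-of-base-point identification \eqref{eqBPt} we reduce to $x=x_0$. Theorem \ref{vLMD} presents both sides as $\bullet$-quotients of $\LG$-torsors $\wh{L\CG}_{b,\mu}\to \BM^v_{\CG,\mu/x_0}$, and $f$ yields a morphism of these local-model diagrams. By \eqref{locmodGtoG0} the map on local models is an isomorphism (both diagrams depend only on the common adjoint datum and the fixed component). The $\LG$-to-$LG'$ discrepancy is, in each of cases (i) and (ii), a loop group of a torus $T$ (the cokernel or the quasi-trivial kernel); the key point is that after fixing the Kottwitz component, the action of this discrepancy on $\wh{L\CG}_{b,\mu}$ via $\bullet$ can be trivialized along the formal completion, the splittings being produced by applying Lemma \ref{recursive} to the $\phi$-conjugation equation $h=\lambda^{-1}\phi(\lambda)$ within $\wh\BW^+ T$ (and in case (ii) using that the $v$-sheaf $\wh \BW^+ \CZ$ has trivial $H^1$ for quasi-trivial $\CZ$).

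Combining the three steps yields the bijection on $\Spa(C,O_C)$-points over each component, and hence the isomorphism of $v$-sheaves, exactly as in the end of the proof of Theorem \ref{quasiThm}. The main obstacle is the third step: the torsor structures for $\pi_\bullet$ and $\pi_\star$ interact differently with $f$, and, as flagged in Remark \ref{rem343}(b), the mere existence of the diagram does not by itself furnish the needed splittings. The delicate point is to verify that, after restricting to a fixed Kottwitz component and passing to the formal completion at $x_0$, the obstruction lives in a loop group of a torus to which Lemma \ref{recursive} applies; this is what transports the isomorphism on local models to an isomorphism on formal completions of $\CM^{\rm int}$.
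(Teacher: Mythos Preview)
Your overall architecture (qcqs $+$ bijection on $(C,O_C)$-points via reduced loci and formal completions) matches the paper's, but there is a genuine gap in the qcqs step for type (ii). The argument of Proposition \ref{neutralIso}(a) is specific to the situation where $\CG^o\to\CG$ is a \emph{dilation}: one has $\CO_{\CG}\hookrightarrow\CO_{\CG^o}$ with bounded $p$-power denominators, and this is what drives the qcqs proof in \cite[Prop.~3.6.2]{PRglsv}. When $f$ is a surjection $\CG\twoheadrightarrow\CG'$ with kernel a quasi-trivial central torus $\CZ$, that mechanism is unavailable; in fact the full map $\CM^{\rm int}_{\CG,b,\mu}\to\CM^{\rm int}_{\CG',b',\mu'}$ is \emph{not} quasi-compact in general, since a fiber meets infinitely many Kottwitz components indexed by the image of $\Omega_Z$. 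The paper's proof of quasi-compactness for $f^\tau$ in type (ii) (Proposition \ref{Propqcqs}) is substantial: it runs a ``sequence of points'' argument, forming a product-of-points perfectoid $(D,D^+)$, extending the $\CG'$-shtuka to a $\CG$-shtuka over $W(D^+)$ via Proposition \ref{AnextProduct}, and then extending the framings. This last step is the crux: one shows all geometric fibers of the resulting $G$-bundle on the Fargues--Fontaine curve are isomorphic to $\CE^b$ (a connectedness argument on $|{\rm Bun}_G|$ using $B(Z)\simeq X_*(Z)_\Gamma$), interprets the remaining obstruction as a $\underline{Z(\BQ_p)}$-torsor over the strictly totally disconnected $T=\Spa(D,D^+)$, and finally uses compactness of the preimage of $\tau$ together with the universal property of the Stone--\v Cech compactification $\pi_0(|T|)\simeq\beta I$ to extend a section from $I$ to $|T|$. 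None of this is captured by ``smoothness of the quotient by a central smooth torus''.

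A secondary issue concerns your treatment of formal completions in type (i). The sequence $1\to\CG\to\CG'\to\CT\to 1$ need not be exact as smooth group schemes over $\BZ_p$, so the discrepancy between $\wh\BW^+\CG'$ and $\wh\BW^+\CG$ is not obviously $\wh\BW^+$ of a torus, and Lemma \ref{recursive} does not apply as you suggest. The paper instead (Proposition \ref{adIso}, type (i)) shows $\hat f_0$ is a closed immersion via Proposition \ref{typeI}, establishes surjectivity on $(C,O_C)$-points of the \emph{generic} fibers using Proposition \ref{adIsoGen} together with Lemmas \ref{lemmaTorKott} and \ref{lemmaBT2}, and then concludes by topological flatness of both sides. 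Your sketch for type (ii) formal completions is closer to the paper's (exact sequence \eqref{exactW+} plus Lemma \ref{recursive} for $\CZ$), though the injectivity step still requires a non-trivial point: one must show the central element $z$ produced actually lies in $\wh\BW^+\CZ(R,R^+)$ rather than merely in $\CZ(W(R^+)[1/\xi])$, which the paper does via a bounded-denominator argument specific to quasi-trivial tori.
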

The proof proceeds in three steps. In subsection \ref{ss:adformcom} we show that $f$ induces an isomorphism of formal completions; in subsection \ref{ss:adADLV} we analyze the map on ADLV induced by $f$; in subsection \ref{ss:qcqs} we complete the proof by showing that the map in question is qcqs.

We note first that for ad-isomorphisms of type (i) and for the claim of qcqs, there is no need to consider components separately. 

\begin{proposition}\label{typeI}
Assume that $f$ is of type (i). Then the morphism
\[
f: \CM^{\rm int}_{\CG, b, \mu}\xrightarrow{\ \ }  \CM^{\rm int}_{\CG', b', \mu'}\times_{\Spd(O')}\Spd(O) 
\]
is qcqs. If in addition $\CG(\br\BZ_p)=G(\br\BQ_p)\cap f^{-1}(\CG'(\br\BZ_p))$, it is a closed immersion of $v$-sheaves.
\end{proposition}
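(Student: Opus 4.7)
Since $f$ is of type (i), the map $\CG \to \CG'$ is a closed immersion of smooth affine $\BZ_p$-group schemes, extending the closed immersion $G\hookrightarrow G'$ whose cokernel is a torus. I would analyze both assertions by unpacking the fiber of the morphism \eqref{adonLSV} over an $S$-point $(S^\sharp, \CP',\phi_{\CP'},i_r')$ of the target, where $S=\Spa(R,R^+)$ is an arbitrary affinoid perfectoid. The fiber consists of $\CG$-reductions $\CP$ of $\CP'$ equipped with compatible Frobenius $\phi_\CP$ and framing $i_r$. Since $i_r'$ trivializes $\CP'|_{\CY_{[r,\infty)}(S)}$ as a $G'$-bundle, one obtains a canonical $G$-reduction there via $f$; because $b'=f(b)\in f(G)$, the Frobenius $\phi_{\CP'}$ preserves this reduction on the outer annulus. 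The fiber is thus identified with the $v$-sheaf of global $\CG$-reductions of $\CP'$ extending the canonical one on $\CY_{[r,\infty)}(S)$.

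For qcqs, the cleanest route goes through the local model diagram of Theorem \ref{vLMD}. The explicit description \eqref{vlocalDescr} presents $\wh{L\CG}_{b,\mu}$ as cut out inside $\wh{L\CG'}_{b',\mu'}$ by the closed condition $h\in\CG(W(R^+)[1/\xi_{R^\sharp}])\subset \CG'(W(R^+)[1/\xi_{R^\sharp}])$, which is closed since $\CG\hookrightarrow \CG'$ is a closed immersion; compatibly, $\wh{L^+_W}\CG\hookrightarrow \wh{L^+_W}\CG'$ is closed. Since $\pi_\bullet$ is a $\wh{L^+_W}\CG$-torsor on each side and the torsor actions are compatible with $f$, quotienting yields a qcqs morphism on formal completions $\CM^{\rm int}_{\CG,b,\mu/x_0}\to \CM^{\rm int}_{\CG',b',\mu'/x_0'}$. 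Globalizing, using the isomorphism \eqref{eqBPt} (change of base point) together with the specialization stratification on $(\CM^{\rm int}_{\CG',b',\mu'})_\red=X_{\CG'}(b',\mu'^{-1})$ from Proposition \ref{MX}, then extends qcqs over the whole $v$-sheaf.

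For the closed immersion statement under the saturation hypothesis, it remains to verify injectivity on $S$-points. Given two lifts $(\CP_i,\phi_i,i_{r,i})$, $i=1,2$, of $(\CP',\phi_{\CP'},i_r')$, the two $\CG$-reductions agree on $\CY_{[r,\infty)}(S)$ via the common framing $i_r'$, so an isomorphism identifying them as $\CG$-torsors is, locally on $S$, given by a global automorphism of $\CP'$ that, on generic points, lies in $f(G(\br\BQ_p))$ and has integral image in $\CG'$. The saturation condition $\CG(\br\BZ_p)=G(\br\BQ_p)\cap f^{-1}(\CG'(\br\BZ_p))$ then forces this automorphism to lie in $\CG$, giving the required isomorphism of lifts; combined with the qcqs step and the closedness of the defining condition, this yields a closed immersion of $v$-sheaves. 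The principal technical obstacle is to make the ``extension of sections'' and the automorphism comparison rigorous in the $v$-sheaf framework, in particular to check the various closedness and injectivity statements descend properly along $v$-covers, which is where the interplay with the explicit presentation \eqref{vlocalDescr} is essential.
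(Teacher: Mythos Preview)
Your proof has a genuine gap at the very first step. You assert that ``the map $\CG \to \CG'$ is a closed immersion of smooth affine $\BZ_p$-group schemes,'' but this is not part of the hypothesis and is in general false. Type (i) only says that $G\hookrightarrow G'$ is a closed immersion on generic fibers with torus cokernel; the integral extensions $\CG\to\CG'$ are merely morphisms of quasi-parahorics for a common point in the adjoint building, and need not be closed immersions. Even under the saturation hypothesis $\CG(\br\BZ_p)=G(\br\BQ_p)\cap f^{-1}(\CG'(\br\BZ_p))$, the map $\CG\to\CG'$ is only a \emph{dilated immersion} (the N\'eron smoothening of the Zariski closure of $f(G)$ in $\CG'$), not a closed immersion. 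Your use of \eqref{vlocalDescr} to cut out $\wh{L\CG}_{b,\mu}$ closedly inside $\wh{L\CG'}_{b',\mu'}$ therefore has no basis, and the subsequent arguments for qcqs and for injectivity on $S$-points, which both rest on this, do not go through.

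The paper's proof proceeds quite differently and avoids the local model diagram entirely. Quasi-separatedness is immediate from a general lemma (if $g\circ f$ is qs then $f$ is qs) together with quasi-separatedness of $\CM^{\rm int}_{\CG,b,\mu}\to\Spd(O)$. For quasi-compactness, the paper first constructs an intermediate quasi-parahoric $\CG_1$ for $G$ with $\CG_1^o=\CG^o$ and $\CG_1(\br\BZ_p)=G(\br\BQ_p)\cap f^{-1}(\CG'(\br\BZ_p))$, using that the kernel of $\Omega_G\to\Omega_{G'}$ is finite; Proposition~\ref{neutralIso}(a) then reduces to the saturated case $\CG=\CG_1$. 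In that case one invokes \cite[Lem.~3.6.1, Prop.~3.6.2]{PRglsv} directly: the dilated-immersion structure of $\CG\to\CG'$ is exactly what is needed there to prove that the induced map on integral moduli spaces is a closed immersion (hence qcqs). Your attempt to globalize qcqs from formal completions via a ``specialization stratification'' is also unjustified as stated; the paper's route sidesteps this issue altogether.
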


\begin{proof} We first show that the morphism is qcqs. Quasi-separateness follows  from Lemma \ref{lemmaQS} below and 
the fact that $ \CM^{\rm int}_{\CG, b, \mu}\to \Spd(O_{\br E})$ is qs (cf. \cite[Prop. 2.25]{Gl21}). It remains to show quasi-compactness and the additional statement about closed immersions.
Under our assumption of type (i), we have a short exact sequence
\[
1\to G\to G'\to T\to 1
\]
with $T$ a torus. This gives an exact sequence 
\[
1\to \pi_1(G)\to \pi_1(G')\to X_*(T)\to 0.
\]
By taking $I$-coinvariants we obtain an exact sequence 
\[
 \Omega_G=\pi_1(G)_I\to \Omega_{G'}=\pi_1(G')_I\to X_*(T)_I\to 0
\]
where  the kernel of the first map is a finite abelian group. Hence, the inverse image $Y$ of $\pi_0(\CG')\subset \Omega_{G'}$ in $\Omega_G$ is a finite group which contains the subgroup $\pi_0(\CG)$ 
\[
\pi_0(\CG) \subset Y\subset \Omega_G.
\]
Using this and Bruhat-Tits theory, cf. the construction of \cite[Prop. (4.6.18)]{BT2}, we obtain a quasi-parahoric 
$\CG_1$ for $G$ with neutral component $\CG^o_1=\CG^o$ and $\pi_0(\CG_1)=Y$.
 Then, $\CG_1(\br \BZ_p)=G(\br\BQ_p)\cap f^{-1}(\CG'(\br\BZ_p))$. We have
\[
\CG^o_1=\CG^o\subset \CG\subset \CG_1
\]
and $\CG\to \CG'$ factors as $\CG\subset \CG_1\to \CG'$.
By applying Proposition \ref{neutralIso} to $\CG$ and $ \CG_1$, we see that it is enough to show quasi-compactness  for the morphism induced by $\CG_1\to \CG'$. Hence  we may assume from the start that $\CG(\br\BZ_p)=G(\br\BQ_p)\cap f^{-1}(\CG'(\br\BZ_p))$.
Then, as in \cite[Lem. 3.6.1]{PRglsv}, $f: \CG\to \CG'$ is a dilated immersion of smooth group schemes,
i.e. $f$ identifies $\CG$ with the Neron smoothening of the Zariski closure $\bar\CG$ of $f(G)$ in $\CG'$. Now the proof of \cite[Prop. 3.6.2]{PRglsv} applies to show that under this assumption the morphism is a closed immersion.
\end{proof}

\subsection{Ad-isomorphisms and generic fibers}\label{ss:adgeneric}

If $(G, b, \mu)$ is a rational local shtuka datum and $\CG$ is a smooth group scheme model over $\BZ_p$ with connected special fiber, we set, for $K=\CG(\BZ_p)$, 
\[ {\rm Sht}_K(G, b, \mu)=(\CM^{\rm int}_{\CG, b, \mu})_\eta .
\]
This is a locally spatial diamond over $\Spd(\br E)$, cf. \cite[\S 23.3]{Schber}.  Note that here,  in contrast to section \ref{ss:genfib}, $ {\rm Sht}_K(G, b, \mu)$ denotes a diamond which  is not representable by a rigid-analytic space, unless $\mu$ is minuscule. We also set 
\[
{\rm Sht}_\infty(G, b, \mu)=\varprojlim_K {\rm Sht}_K(G, b, \mu),
\]
as in \cite[\S 23.3]{Schber}.

\begin{proposition}\label{adIsoGen}
Let  $f\colon (G, b, \mu)\to (G', b', \mu')$ be an ad-isomorphism of rational local shtuka data. 
The natural morphism 
\[
{\rm Sht}_\infty(G, b, \mu)\to {\rm Sht}_\infty(G', b', \mu')\times_{\Spd(\br E')}\Spd(\br E)
\]
 induces an isomorphism of $v$-sheaves over $\Spd(\br E)$, 
\[
{\rm Sht}_\infty(G, b, \mu)\buildrel{\underline{G(\BQ_p)}}\over{\times}\underline{G'(\BQ_p)}\xrightarrow{\sim} {\rm Sht}_\infty(G', b', \mu')\times_{\Spd(\br E')}\Spd(\br E) .
\]

\end{proposition}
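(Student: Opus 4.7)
The plan is to use the $v$-sheaf description of ${\rm Sht}_\infty(G,b,\mu)(S)$ as tuples $(S^\sharp, \CE, \alpha, \beta)$, where $\CE$ is a $G$-torsor on the Fargues--Fontaine curve $X_{FF,S}$ trivial at every geometric point, $\alpha$ is a meromorphic modification of $\CE$ along $S^\sharp$ bounded by $\mu$, and $\beta \colon \underline{G(\BQ_p)} \xrightarrow{\sim} T(\CE)$ is a trivialization of the associated pro-\'etale $\underline{G(\BQ_p)}$-local system (compare Proposition \ref{prop431} and \cite[\S 23.3]{Schber}). Pushout along $f$ defines the natural morphism by $(S^\sharp, \CE, \alpha, \beta) \mapsto (S^\sharp, f_*\CE, f_*\alpha, f_*\beta)$, which is $\underline{G(\BQ_p)}$-equivariant with $\underline{G'(\BQ_p)}$ acting on the target through $f$, and hence factors through the contracted product ${\rm Sht}_\infty(G,b,\mu) \times^{G(\BQ_p)} \underline{G'(\BQ_p)}$. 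The goal is to show that this factored morphism is an isomorphism of $v$-sheaves over $\Spd(\br E)$.

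By Lemma \ref{adisolem} one reduces to ad-isomorphisms of type (i), a normal inclusion with torus cokernel, and type (ii), a surjection with quasi-trivial central torus kernel; compatibility of both sides with composition of ad-isomorphisms is formal. In type (ii), writing $1 \to Z \to G \to G' \to 1$, Hilbert 90 gives a short exact sequence $1 \to Z(\BQ_p) \to G(\BQ_p) \to G'(\BQ_p) \to 1$, and the contracted product collapses to the quotient ${\rm Sht}_\infty(G,b,\mu)/\underline{Z(\BQ_p)}$. The inverse is built $v$-locally by lifting a $G'$-torsor $\CE'$ on $X_{FF,S}$ to a $G$-torsor $\CE$: the obstruction in the relevant $Z$-cohomology vanishes $v$-locally by Hilbert 90 for the quasi-trivial torus $Z$, and the lift of $\beta'$ is unique up to a $Z(\BQ_p)$-twist. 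In type (i), writing $1 \to G \to G' \to T \to 1$, the associated $T$-torsor $\CE'/G$ inherits from $\beta'$ a trivialization of its $\underline{T(\BQ_p)}$-local system, and the Fargues--Scholze equivalence \cite[Thm.~22.5.2]{Schber} specialized to the torus $T$ makes such a compatibly trivialized $T$-torsor on $X_{FF,S}$ canonically trivial. Hence $\CE'$ reduces $v$-locally to a $G$-torsor $\CE$ with $f_*\CE \simeq \CE'$, uniquely up to the action of $G'/G = T$, which is precisely absorbed by the quotient $G'(\BQ_p)/G(\BQ_p)$ on the trivialization side.

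The main technical obstacle is the bookkeeping in type (i): having chosen $\CE$, one must verify that the modification $\alpha'$ descends to a modification $\alpha$ of $\CE$ bounded by $\mu$, not merely by $\mu'$, and that the $\underline{G(\BQ_p)}$-trivialization of $T(\CE)$ extracted from $\beta'$ is compatible with the chosen reduction. Boundedness is controlled by the closed immersion of Schubert varieties ${\rm Gr}_{G, \mu} \hookrightarrow {\rm Gr}_{G', \mu'}$ attached to the identity $f(\mu) = \mu'$, while compatibility of trivializations is a Tannakian check using the exact tensor functor from ${\rm Rep}_{\BZ_p}(G)$ to pro-\'etale local systems attached to a point of the moduli space, applied to the restriction ${\rm Rep}_{\BZ_p}(G') \to {\rm Rep}_{\BZ_p}(G)$. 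Once these compatibilities are in hand, the $v$-local inverse glues to a global inverse and yields the desired isomorphism.
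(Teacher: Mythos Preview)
Your approach differs substantially from the paper's. The paper argues via the period maps $\pi_G\colon {\rm Sht}_\infty(G,b,\mu)\to {\rm Gr}_{G,\Spd(\breve E),\leq\mu}$ and $\pi_{G'}$: it first shows that the ad-isomorphism induces an isomorphism ${\rm Gr}_{G,\Spd(\breve E),\leq\mu}\simeq {\rm Gr}_{G',\Spd(\breve E'),\leq\mu'}\times_{\Spd(\breve E')}\Spd(\breve E)$ (an ad-isomorphism identifies corresponding connected components of the affine Grassmannians), then checks on $(C,O_C)$-points that the admissible loci coincide (a modification of $\CE^b$ is semistable iff its pushforward to $G'$ is, since $f$ induces a bijection on parabolic subgroups and hence on Harder--Narasimhan parabolics). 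Since $\pi_G$ and $\pi_{G'}$ are $\underline{G(\BQ_p)}$- and $\underline{G'(\BQ_p)}$-torsors over these identified admissible loci, the contracted product with $\underline{G'(\BQ_p)}$ is a $\underline{G'(\BQ_p)}$-torsor over the same base, and the induced map to ${\rm Sht}_\infty(G',b',\mu')$ is a map of $\underline{G'(\BQ_p)}$-torsors, hence an isomorphism. No type-by-type analysis or lifting of bundles on $X_{FF,S}$ is needed.

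Your direct torsor-lifting approach is more hands-on, and your type (i) sketch is essentially sound once the compatibility checks you flag in the third paragraph are carried out. But there is a genuine gap in type (ii). The obstruction to lifting a $G'$-torsor $\CE'$ on $X_{FF,S}$ along $G\twoheadrightarrow G'$ with central kernel $Z$ lives in $H^2_{\et}(X_{FF,S},Z)$, not $H^1$, so ``Hilbert 90'' is not the operative input; you would need vanishing of a Brauer-type class on the Fargues--Fontaine curve, or better, you should build the lift directly from the modification $\alpha'$ (since $\CE^{b'}=f_*\CE^b$ already carries the preferred $G$-lift $\CE^b$, and $\alpha'$ identifies $\CE'$ with it away from the leg). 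Even once some lift $\CE$ exists, it is determined only up to an element of $H^1_{\et}(X_{FF,S},Z)$, which is larger than $Z(\BQ_p)$; you must choose $\CE$ so that $\alpha'$ also lifts to a modification $\alpha\colon \CE\dashrightarrow\CE^b$, and you do not address this. The period-map argument sidesteps all of this: rather than lifting bundles on $X_{FF,S}$, one lifts points of the Schubert variety (where the ad-isomorphism gives a literal isomorphism) and then uses that both period maps are torsors over the same base.
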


\begin{proof}
In the case of  minuscule $\mu$ this is  given by \cite[Prop. 3.1.2]{PRglsv}. The argument extends provided we explain that the corresponding admissible loci in the ``Schubert varieties" ${\rm Gr}_{G,\Spd(\br E),\leq \mu}$, resp. 
${\rm Gr}_{G', \Spd(\br E'),\leq\mu'}\times_{\Spd(\br E')}\Spd(\br E)$ agree. Recall that the latter objects are spatial diamonds \cite[Thm. 19.2.4]{Schber} which are proper over $\Spd(\br E)$, cf. \cite[Prop. 19.2.3]{Schber}. The admissible loci are open subobjects, cf. \cite[\S 23.3]{Schber}. 

Since $G\to G'$ is an ad-isomorphism, the morphism  
\[
{\rm Gr}_{G,\Spd(\br E)}\to{\rm Gr}_{G',{\Spd(\br E')}}\times_{\Spd(\br E')}\Spd(\br E)
\]
 induces an isomorphism between corresponding connected components.  In particular,  the morphism
\[{\rm Gr}_{G,\Spd(\br E),\leq \mu}\isoarrow{\rm Gr}_{G', \Spd(\br E'),\leq\mu'}\times_{\Spd(\br E')}\Spd(\br E)
\] is an isomorphism. Therefore, by \cite[Prop. 11.15, or Prop. 12.9]{Sch-Diam}, it remains to show that the induced map on admissible loci  is bijective on points with values in $\Spa(C, C^+)$, for any algebraically closed non-archimedean field $C$ and open bounded valuation ring $C^+$. 
Consider the following commutative diagram of (pro-systems of) locally spatial diamonds  over $\breve E$, in which the vertical arrows are the  period maps (\cite[\S 23.3]{Schber}), 
 \begin{displaymath}
   \xymatrix{
          {{\rm Sht}}_\infty(G, b, \mu) \ar[r]^{} \ar[d] & {{\rm Sht}}_\infty(G', b', \mu')\times_{\Spa(\breve{E'})}\Spa(\breve E)  \ar[d]\\
        {\rm Gr}_{G,\Spd(\br E), \leq\mu}\ar[r]^-{\sim} & {\rm Gr}_{G',\Spd(\br E'), \leq\mu'}\times_{\Spa({E'})}\Spa(\breve E).
        }
    \end{displaymath}
   The images of the vertical maps are the \emph{admissible sets}.
  We claim that under the lower horizontal map  the admissible sets correspond to each other. We have ${\rm Sht}_K(G, b, \mu)(C,  C^+)={\rm Sht}_K(G, b, \mu)(C, O_C)$ and ${\rm Sht}_{K'}(G', b', \mu')(C, C^+)={\rm Sht}_{K'}(G', b', \mu')(C, O_C)$, by their definitions in terms of shtuka. Hence, it is enough to check that the $\Spa(C, O_C)$-points of the admissible sets coincide.  Now a point $x$ of ${\rm Gr}_{G,\Spd(\br E), \leq\mu}$ with values in $\Spa(C, O_C)$ lies in the admissible locus if and only if the corresponding modification $\CE^b_{x}$ at $\infty$ of the $G$-bundle $\CE^b$ over the FF curve  is trivial, cf. \cite[Thm. 22.6.2]{Schber}. This in turn is equivalent to  $\CE^b_{x}$ being a semi-stable $G$-bundle on the FF curve (use $[b]\in B(G, \mu^{-1})$, cf. \cite[proof of Prop. 3.1.1]{PRglsv}).
  The image of $x$ lies in the admissible set in ${\rm Gr}_{G',\Spd(\br E'), \leq\mu'}$ if and only if the  $G'$-bundle $\CE^{b'}_{f_*(x)}=f_*(\CE^{b}_{ x})$ is a semi-stable $G'$-bundle on the FF curve. But since $f$ induces a bijection between parabolic subgroups of $G$, resp. $G'$, the semi-stability of $f_*(\CE^b_{ x})$ is equivalent to the semi-stability of $\CE^b_x$ (apply this bijection to the Harder-Narasimhan parabolics and the fact that semi-stability is equivalent to the fact that the HN-parabolic is the whole group).  Our claim follows. Now consider the following diagram, where in the lower line appears the isomorphism between admissible sets,
    \begin{displaymath}
    \xymatrix{
         {\rm Sht}_\infty(G, b, \mu)\buildrel{\underline{G(\BQ_p)}}\over{\times}\underline{G'(\BQ_p)} \ar[r]^{} \ar[d] & {{\rm Sht}}_\infty(G', b', \mu')\times_{\Spd(\breve{E'})}\Spd(\breve E)  \ar[d]\\
        ({\rm Gr}_{G,\Spd(\br E), \leq\mu})^{\rm adm}\ar[r]^-{\sim}& ({\rm Gr}_{G',\Spd(\br E'), \leq\mu'})^{\rm adm}\times_{\Spd({E'})}\Spd(\breve E).
        }
    \end{displaymath}
Now the fibers of the left vertical arrow are identified with $G(\BQ_p)\times^{G(\BQ_p)}G'(\BQ_p)=G'(\BQ_p)$, and hence map bijectively to the fibers of the right vertical arrow.
\end{proof}
\begin{remark}
In \cite[proof of Prop. 3.1.1]{PRglsv}, there is a different argument for the proof of the isomorphism between the admissible loci of the lower horizontal map  in the  last  diagram.  
\end{remark}

\subsection{Ad-isomorphisms and formal completions} \label{ss:adformcom}

\begin{proposition}\label{adIso}
Let $f\colon (\CG, b, \mu)\to (\CG', b', \mu')$ be an ad-isomorphism of integral local shtuka data.  For each $x\in \CM^{\rm int}_{\CG, b, \mu}(\Spd(k))$, $f$ induces an isomorphism
\[
\wh f: \CM^{\rm int}_{\CG, b, \mu/x}\xrightarrow{\sim} \CM^{\rm int}_{\CG', b', \mu'/x'}\times_{\Spd(O')}\Spd(O),
\]
where  $x'=f(x)$.
\end{proposition}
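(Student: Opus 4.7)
The strategy is to reduce to the base point case and then transfer the claim across the $v$-sheaf local model diagram of Theorem \ref{vLMD}. Using the change-of-base-point isomorphism \eqref{eqBPt}, I first replace $x$ by the base point $x_0$ of $\CM^{\rm int}_{\CG, b_x, \mu}$, at the cost of conjugating $b$ within its $\phi$-conjugacy class; under $f$ this base point is sent to the base point $x_0'$ of $\CM^{\rm int}_{\CG', b'_x, \mu'}$. Appealing to Theorem \ref{vLMD} on both sides, and using that $\pi_\bullet$ and $\pi_\star$ are torsors for $\LG$ (respectively for $\wh{L^+_W\CG'}$), it suffices to produce $f$-compatible isomorphisms
\[
\LG \isoarrow \wh{L^+_W\CG'}, \qquad \BM^v_{\CG,\mu/x_0}\isoarrow \BM^v_{\CG',\mu'/x_0'}\times_{\Spd(O')}\Spd(O)
\]
on the component of each side cut out by $x_0$, respectively $x_0'$.

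By Lemma \ref{adisolem} it is enough to verify this for the two elementary types of ad-isomorphism. For type (i), where $\CG\hookrightarrow\CG'$ is a closed embedding with torus cokernel on generic fibers, Proposition \ref{typeI} supplies a qcqs map of $v$-sheaves, which becomes a closed immersion after passing to the larger quasi-parahoric $\CG_1\supset \CG$ sharing the same neutral component (using Proposition \ref{neutralIso}(b) to bridge $\CG$ and $\CG_1$). The map ${\rm Gr}_G\to{\rm Gr}_{G'}$ identifies connected components, and combined with the hypothesis $f(\{\mu\})=\{\mu'\}$ and \eqref{locmodGtoG0}, it induces an isomorphism between the components of the $\mu$- and $\mu'$-bounded local models containing $x_0$ and $x_0'$; the parallel verification at the level of $\LG$ is immediate since $\CG\to \CG'$ induces an isomorphism on Lie algebras modulo a torus direction that is washed out by the component choice. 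For type (ii), the kernel is a central quasi-trivial torus $Z$ which extends to a smooth central closed subgroup scheme $\CZ\subset \CG$; because $Z$ is a product of induced tori, the sheaf $\wh\BW^+\CZ$ has trivial contribution in the completion, and the morphism on local models is an isomorphism on the chosen component because $\mu$ and $\mu'$ differ only in the direction of $\CZ$.

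The main obstacle is tracking connected components cleanly throughout. The formal completion at $x_0$ is confined to a single component of $\CM^{\rm int}_{\CG, b, \mu}$, and I must verify that $f$ carries this component onto the corresponding component of $\CM^{\rm int}_{\CG', b', \mu'}$. This compatibility is encoded in the commutative diagram of Kottwitz maps appearing before the statement of Theorem \ref{thm451}, together with the fact that $\CG^o$ and $\CG'^o$ correspond to the same point of the common Bruhat-Tits building of $G_\ad\simeq G'_\ad$. Once the component rigidification at the base points is settled, the $\LG$-torsor structures in the two local model diagrams are matched by $f$, and combining the two elementary cases yields the full statement.
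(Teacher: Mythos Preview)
Your reduction to the base point and the decomposition into types (i) and (ii) are correct, but the core of the argument has a genuine gap: the claim that $f$ induces an isomorphism $\LG\isoarrow\wh{L^+_W\CG'}$ is false in both cases, and this is precisely where the difficulty lies. In type (ii), the kernel $\wh\BW^+\CZ$ is \emph{not} trivial---it is a large $v$-sheaf of groups---so the torsors $\wh{L\CG}_{b,\mu}$ and $\wh{L\CG'}_{b',\mu'}$ genuinely differ. The point is that the formal completion is the quotient by the $\bullet$-action, which is $\phi$-conjugation, and one must show that this quotient absorbs the discrepancy. The paper does this by a nontrivial injectivity argument: given two elements that become $\phi$-conjugate in $\wh{L\CG'}_{b',\mu'}$, one lifts the conjugator to $\wh\BW^+\CG$, obtains a residual central element $z\in\CZ(W(R^+)[1/\xi])$, proves via a boundedness trick (powers of $z$ have uniformly bounded denominators) that in fact $z\in\wh\BW^+\CZ$, and then applies Lemma \ref{recursive} to $\CZ$ to write $z=z_1\phi(z_1)^{-1}$ and correct the conjugator. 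Your sentence ``$\wh\BW^+\CZ$ has trivial contribution in the completion'' hides exactly this work.

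For type (i), your approach does not go through at all: here $\wh\BW^+\CG\to\wh\BW^+\CG'$ fails to be surjective (the cokernel torus contributes), and there is no ``component choice'' that washes this out. The paper proceeds entirely differently. It first shows $\hat f_0$ is a closed immersion (via Proposition \ref{typeI} and Proposition \ref{neutralIso}), then establishes surjectivity on $(C,O_C)$-points of the \emph{generic fibers} using Proposition \ref{adIsoGen} together with two auxiliary lemmas: one showing that points in the same period-map fiber inside the formal completion differ by $G(\BQ_p)^0$ (this step actually \emph{uses} the already-proven type (ii) case via a $z$-extension), and one showing $G(\BQ_p)^0/K^o\to G'(\BQ_p)^0/K'^o$ is surjective via Bruhat--Tits theory. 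Finally topological flatness (\cite[Prop.~3.4.9]{PRglsv}) promotes the isomorphism on generic fibers to an isomorphism of the full formal completions. None of this structure is visible in your sketch.
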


\begin{proof} 
As in \cite[proof of Prop. 3.4.1]{PRglsv} we have 
\[
 \CM^{\rm int}_{\CG, b, \mu/x}\simeq  \CM^{\rm int}_{\CG, b_x, \mu/x_0}
\]
and it is enough to show the isomorphism   for the base point $x=x_0$.
By  Lemma \ref{adisolem},  it is enough to show this when $f$ is of type (i) or of type (ii). 
\smallskip

\textit{Assume first that $f$ is of type (ii).} Then   we have an exact sequence
   \[
   1\to Z\to G\to G'\to 1, 
   \]
   with $Z=T$ a quasi-trivial central torus. In this case, we have an exact sequence, where $\CZ$ is the flat closure of $Z$, 
  \[
  1\to \CZ\to\CG\to\CG'.
  \]
Note that since $Z=T$ is quasi-trivial, i.e. $Z\simeq\prod_i\Res_{K_i/\BQ_p}\BG_m$, we have that $\Omega_Z=\pi_1(Z)_I $ is torsion free and $Z$ is an $R$-smooth torus in the sense of \cite[Def. 2.4.3]{KZhou}. By \cite[Prop. 2.4.12]{KZhou}, cf. \cite[Lem. 6.7]{PRTwisted}, we 
conclude that $\CZ$ is smooth and connected and, in fact,  is the connected Neron model of the torus $T$ and that we have a short fppf exact sequence of 
 smooth connected group schemes between the parahoric neutral components, 
\[
1\to \CZ\to \CG^o\to\CG'^o\to 1 .
\]
 Note that in this situation $\CZ\simeq\prod_i\Res_{\CO_{K_i}/\BZ_p}\BG_m$. The image of $\pi_0(\CG)\subset \Omega_G$ under $\Omega_G\to \Omega_{G'}$ is a finite subgroup $Y'$ of $\Omega_{G'}$. We can find a short fppf exact sequence of 
 (smooth) group schemes
\[
 1\to \CZ\to \CG\to \CG_1'\to 1 ,
 \]
 where $\CG_1'\subset \CG'$ is a quasi-parahoric group scheme with the same neutral component $\CG'^0$ and $\pi_0(\CG_1')=Y'$. By Proposition \ref{neutralIso}, there is an isomorphism
 $\CM^{\rm int}_{\CG_1', b'_x, \mu'/x'_0}\simeq \CM^{\rm int}_{\CG', b'_x, \mu'/x'_0}$. Hence we may replace $\CG'$ by $\CG'_1$, i.e., we may assume from the beginning that $\CG\to\CG'$ is surjective.

 Note that
 \[
 \wh \BW^+\CG'=\wh \BW^+\CG'^0.
 \]
We obtain a short exact sequence of group $v$-sheaves
  \begin{equation}\label{exactW+}
 1\to \wh \BW^+\CZ\to \wh \BW^+\CG\to \wh \BW^+\CG'\to 1.
 \end{equation} 
 The local models for $(\CG, \mu)$ and $(\CG', \mu')$ ``agree", i.e. 
 \[
 f: \BM^{v}_{\CG, \mu}\xrightarrow{\sim} \BM^{v}_{\CG', \mu'}\times_{\Spd O'}\Spd O,
  \]
  (\cite[21.5.1]{Schber}), so 
  we have 
  \[
 \wh f_0:  \BM^{v}_{\CG, \mu /y_0}\xrightarrow{\sim}  \BM^{v}_{\CG', \mu' /y_0}\times_{\Spd O'}\Spd O,
   \]
  for the completions of the local models at their common base point $y_0$.

We now use Theorem \ref{vLMD} for $\CG$ and $\CG'$: The morphism
\[
 \wh {L\CG}_{b,\mu}\to  \BM^{v}_{\CG, \mu /y_0}\xrightarrow{\sim}  \BM^{v}_{\CG', \mu' /y_0}\times_{\Spd O'}\Spd O
\]
is a $\wh{\BW^+}\CG\times \Spd(O)$-torsor and 
\[
 \wh {L\CG}'_{b',\mu'}\times_{\Spd(O')}\Spd(O)\to \BM^{v}_{\CG, \mu /y_0}\xrightarrow{\sim}  \BM^{v}_{\CG', \mu' /y_0}\times_{\Spd O'}\Spd O,
\]
a $\wh{\BW^+}\CG'\times_{\Spd(O')}\Spd(O)$-torsor. 
Hence, we have  an isomorphism of  $\wh{\BW^+}\CG'$-torsors, 
  \begin{equation}\label{LMWGtorsor}
 \wh {L\CG}'_{b',\mu'}\times_{\Spd(O')}\Spd(O)\simeq \wh {L\CG}_{b,\mu}\times^{\wh{\BW^+}\CG} \wh{\BW^+}\CG' .
 \end{equation}
It follows that, $v$-locally,
 \[
 \wh {L\CG}_{b,\mu}= g_0\cdot \wh \BW^+\CG , \quad\ 
  \wh {L\CG}'_{b',\mu'}= g'_0\cdot \wh \BW^+\CG' ,
 \]
 (after we choose a section $g_0$ of the first torsor).  
 By Theorem \ref{vLMD} also, we can recover $\CM^{\rm int}_{\CG, b, \mu/x_0}$
 as the quotient  of $\wh{L\CG}_{ b,\mu} $ by the $\phi$-conjugation action of $\wh \BW^+\CG\times\Spd(O) $. More precisely,
 \[
 \wh{L\CG}_{b,\mu} \to  \CM^{\rm int}_{\CG, b, \mu /x_0}
 \]
 is  a $\wh \BW^+\CG\times\Spd(O)$-torsor for the $\phi$-conjugation action. The corresponding statement  for $(\CG',b',\mu')$ also holds.
 The exact sequence (\ref{exactW+}) and the above now implies that the map $\wh f_0$  is $v$-locally surjective. 
 
 To show that $\wh f_0$ is  injective, it is enough 
 to check injectivity for $\phi$-conjugacy classes as follows (for simplicity, we omit the base change to $O$ from the notation). 
 Suppose that $g_0\cdot g_1$ and $g_0\cdot g_2\in \wh{L\CG}_{b, \mu}(R, R^+)$ map, after replacing $(R, R^+)$ by a $v$-cover, to the same $\phi$-conjugacy class (by $\wh \BW^+\CG'(R, R^+)$) in $\wh{L\CG}'_{ b',\mu'}(R, R^+)$.
 Then
 \begin{equation}\label{eqPhi1}
 g_0\cdot g_1= \phi(g'^+)^{-1}\cdot (g_0\cdot g_2)\cdot g'^+ \in \wh{L\CG}'_{ b',\mu'}(R, R^+)\subset \CG'(W(R^+)[1/\xi]),
 \end{equation}
 for some $g'^+\in \wh \BW^+\CG'(R, R^+)$.
 Using (\ref{exactW+}), after further replacing $(R, R^+)$ by a $v$-cover, we can lift  $g'^+$ to $g^+\in \wh \BW^+\CG(R, R^+)$.
 The identity (\ref{eqPhi1}) gives
 \begin{equation}\label{idg01}
 z=[g_1\cdot (g^+)^{-1} \cdot g_2^{-1} ]\cdot g_0^{-1}\cdot \phi(g^+) \cdot g_0
  \end{equation}
 in $\CG(W(R^+)[1/\xi])$ with $z \in  \CZ(W(R^+)[1/\xi])$ (central in $\CG(W(R^+)[1/\xi])$).
 In this, the term $k^+:=g_1\cdot (g^+)^{-1}\cdot g_2^{-1}$ in the bracket belongs to $\wh \BW^+\CG(R, R^+)$. Write the above
\[
z =k^+\cdot (g_0^{-1}\cdot \phi(g^+) \cdot g_0).
\]
We claim that this implies that $z=z^+\in \wh \BW^+\CZ(R, R^+)$. Indeed, for all $m\geq 1$, we obtain
 \[
 z^m=  (k^+)^m\cdot (g_0^{-1}\cdot \phi(g^+)^m \cdot g_0) .
 \]
 Hence, there is $N\geq 1$ depending on $g_0$ only, such that $z^m\in \CZ(W(R^+)[1/\xi])$ has matrix entries (after applying some faithful representation of $\CG$)
 with denominators which are powers of $\xi$ bounded by $N$, for all $m$. This implies  that $z\in \CZ(W(R^+))$. Indeed, write $\CZ=
\prod_{i}{\rm Res}_{O_{K_i}/\BZ_p}\BG_m$. Then,  if $z^m\in (O_{K_i}\otimes_{\BZ_p} W(R^+)[1/\xi])^\times$  has bounded denominators for all $m\geq 1$,
then  $z\in (O_{K_i}\otimes_{\BZ_p} W(R^+))^\times$. 
 We can see using (\ref{idg01}) that we also have $z\equiv 1\, {\rm mod}\, [\varpi]$, for some pseudo-uniformizer $\varpi$ of $R^+$. Hence, we conclude $z=z^+\in \wh \BW^+\CZ(R, R^+)$.
 By applying Lemma  \ref{recursive} for $\CZ$ to $z^+$, we can write $z^+=z_1^+\phi(z_1^+)^{-1}$. Set $h^+=g^+\cdot z^+_1$.
 Using that $Z$ is central, we can rewrite (\ref{idg01}) as 
 \[
   g_0g_1 =  \phi(h^+)^{-1}\cdot g_0 g_2\cdot h^+. 
 \]
 This gives that $g_0g_1$ and $g_0g_2$ are $\phi$-conjugate by $h^+\in \wh{\BW}^+\CG(R, R^+)$.
 This shows injectivity and concludes the proof.
\smallskip

\textit{Assume now that $f$ is of type (i). }
Using Proposition \ref{neutralIso}, we can replace $\CG$, $\CG'$ by their parahoric neutral components $\CG^o$, $\CG'^o$. 
For these we have $\CG^o(\br\BZ_p)\subset G(\br\BQ_p)\cap f^{-1} (\CG'^o(\br\BZ_p))$ with the intersection 
the $\br\BZ_p$-points of a quasi-parahoric $\wt\CG$ with $\wt\CG^o=\CG$. The morphism $f$
factors
\[
 \CM^{\rm int}_{\CG, b, \mu}\to \CM^{\rm int}_{\wt\CG, b, \mu}\xrightarrow{\ \wt f\ } \CM^{\rm int}_{\CG', b', \mu'}\times_{\Spd(O')}\Spd(O).
\] 
By Proposition \ref{typeI} applied to $\wt\CG\to \CG'$, the morphism $\wt f$ is a closed immersion. 
Using \cite[Prop. 4.20]{Gl}
we see that the base change of $\wt f$ along the completion $ \CM^{\rm int}_{\CG', b', \mu' /x_0'}\to \CM^{\rm int}_{\CG', b', \mu'}$ is the completion of $\wt f$ at the base points
 \[
\wh{{\wt f}}_0:  \CM^{\rm int}_{\wt\CG, b, \mu /\wt x_0} \to \CM^{\rm int}_{\CG', b', \mu' /x'_0}\times_{\Spd(O')}\Spd(O).
 \]
 Hence, this is also a closed immersion. 
By Proposition \ref{neutralIso} applied to the quasi-parahoric $\wt\CG$ we see that the source of this morphism can be identified with
 $ \wh{\CM^{\rm int}_{\CG, b, \mu}}_{/x_0} $ and the morphism with $\hat f_0$.
 Hence, $\hat f_0$  is also a closed immersion.

\begin{lemma}\label{lemmaTorKott}
Suppose $\wt x_1$, $\wt x_2\in ( \CM^{\rm int}_{\CG^o, b, \mu /x_0})_\eta(C, O_C)$ have the same image $\pi(\wt x_1)=\pi(\wt x_2)$ under the period map. Then there is $g\in G(\BQ_p)^0$ such that $\wt x_2=g\cdot \wt x_1$.
\end{lemma}

\begin{proof}
Since the period map $\pi: (\CM^{\rm int}_{\CG^o, b, \mu})_\eta\longrightarrow {\rm Gr}_{G,\Spd(\br E), \leq \mu}^{\rm adm}$ is \'etale with geometric fibers $G(\BQ_p)/K^o$, there is $g\in G(\BQ_p)$ with $\wt x_2=g\cdot \wt x_1$. 
We have ${\rm sp}(\wt x_1)={\rm sp}(\wt x_2)=x$ and it remains to show that $\kappa_G(g)=1$.
We first observe that the statement is true when $G=T$ is a torus. Indeed, in this case  
the period map is
\[
(\CM^{\rm int}_{\CT^o, b, \mu})_\eta\longrightarrow \Spd(\br E)
\]
with geometric fibers given by $\kappa_T: T(\BQ_p)/\CT^o(\BZ_p)\xrightarrow{\simeq} \Omega_T$ while 
$ (\wh{\CM^{\rm int}_{\CT^o, b, \mu}}_{/x_0})_\eta\simeq \Spd(\br E)$, see \cite[25.2]{Schber}.
Then, the result follows when $G_{\rm der}$ is simply connected by applying functoriality to $G\to G_{\rm ab}=D$ and observing that, in this case, $\Omega_G\simeq \Omega_{D}$. Finally, we reduce the general case to the case when the derived group is simply-connected by employing a $z$-extension
\[
1\to Z\to \wt G\to G\to 1
\]
and lifting $(\CG^o, b, \mu)$ to a corresponding triple $(\wt\CG^o, \wt b, \wt \mu)$. Here, $\wt G_{\rm der}$ is simply connected and $Z$ is a quasi-trivial torus. By case (ii) applied to $\wt G\to G$ we obtain
\[
( \CM^{\rm int}_{\wt\CG^o,\wt b,\wt \mu /x_0})_\eta\xrightarrow{\sim} ( \CM^{\rm int}_{\CG^o, b, \mu /x_0})_\eta,
 \]
 and the admissible sets for $\wt G$ and $G$ coincide by the proof of Proposition \ref{adIsoGen}.
Since we are assuming that the result holds for $\wt G$, we  deduce it for $G$.
\end{proof}

\begin{lemma}\label{lemmaBT2}
The map $f$ induces a surjection
$
G(\BQ_p)^0/K^o\to G'(\BQ_p)^0/K'^o.$
\end{lemma}

\begin{proof} For any reductive group $G$ over $\BQ_p$ we have
$$
G(\BQ_p)^0=K\cdot \varphi(G_{\rm sc}(\BQ_p)),
$$
where $K$ is an arbitrary parahoric subgroup of $G(\BQ_p)$ and $\phi: G_{\rm sc}\to G_{\rm der}\to G$ is the natural map. Note that both factors on the RHS are contained in the LHS, and the second factor is a normal subgroup, so the inclusion of the RHS into the LHS is clear. For the other inclusion, we choose an apartment $A^\natural$ of the building over $\BQ_p$ as in \cite{BT2} such that $K$ fixes a facet in $A^\natural$. Associated to  $A^\natural$ is the smooth connected group scheme $\sZ^o$ over $\BZ_p$. Now Bruhat-Tits \cite{BT2} prove the following two facts:
\begin{itemize}
\item $G(\BQ_p)^0=  \sZ^o(\BZ_p)\cdot\varphi(G_{\rm sc}(\BQ_p))$, cf. last line of \cite[(5.2.11)]{BT2}.

\item $\sZ^o(\BZ_p)\subset K$, cf. first display in \cite[(5.2.4)]{BT2}.
\end{itemize}
Hence, we also obtain the other inclusion. By applying the above to $G'$ and $K'^o$ and $G$, noting that
$\varphi': G_{\rm sc}=G'_{\rm sc}\to G'$ factors through $f: G\to G'$,  we obtain
\[
G'(\BQ_p)^0=f(G(\BQ_p)^0)\cdot K'^o,
\]
which gives the result.
\end{proof}

The above two lemmas, together with Proposition \ref{adIsoGen}, imply that the map
\[
(\hat f_0)_\eta: ( \CM^{\rm int}_{\CG, b, \mu /x_0})_\eta\to ( \CM^{\rm int}_{\CG', b', \mu' /x'_0})_\eta
\]
induced by $f$, gives a surjection on $(C, O_C)$-points. Since it is also a closed immersion between partially proper $v$-sheaves, it then follows that it is an isomorphism.

 Both the source $X:= \CM^{\rm int}_{\CG, b, \mu /x_0}$ and the target 
$X':= \CM^{\rm int}_{\CG', b', \mu' /x_0'}$ of the closed immersion
\[
\wh f_0: X\to X'
\]
 are topologically flat by \cite[Prop. 3.4.9]{PRglsv}, as extended in  \cite[Rem. 3.4.12]{PRglsv} also for the non-minuscule case (this extension uses the results of \cite{AGLR}). By the definition
of topological flatness, this implies that $|X_\eta|$, resp. $|X'_{\eta}|$, is dense in $|X|$, resp. $|X'|$. Since $(\wh f_0)_\eta: X_\eta\simeq X'_\eta$ is an isomorphism, we obtain $|X|\simeq |X'|$.   The result that $\wh f_0: X\to X'$ is an isomorphism now follows from \cite[Lem. 17.4.1]{Schber}.
\end{proof}

\subsection{Ad-isomorphisms and ADLV}\label{ss:adADLV}
Let $f\colon (\CG, b, \mu)\to (\CG', b', \mu')$ be an ad-isomorphism of integral local shtuka data.  
We have a commutative diagram with surjective vertical arrows,
\begin{equation}\label{Fldiag}
\begin{aligned}
 \xymatrix{
        X_{\CG^o} \ar[r]^{\ \ } \ar[d]_{\kappa_G} &  X_{\CG'^o}   \ar[d]^{\kappa_{G'}}\\
       \ \  \Omega_{G}\quad\ar[r]   &\ \   \Omega_{G'}
        }
        \end{aligned}
\end{equation}
where the lower row are discrete schemes (which we treat like sets).
\begin{lemma}\label{compiso}
 The upper arrow induces an isomorphism between corresponding connected components. The diagram is cartesian.
\end{lemma}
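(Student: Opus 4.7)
The plan is to reduce to the two types of ad-isomorphisms provided by Lemma~\ref{adisolem} and then exploit the fact that an ad-isomorphism does not change the simply connected cover: since $f$ induces an isomorphism $G_{\rm ad}\xrightarrow{\sim} G'_{\rm ad}$, it induces an isomorphism $G_{\rm sc}\xrightarrow{\sim} G'_{\rm sc}$, and since $\CG^o$, $\CG'^o$ correspond to the same facet $\frak F$ in the common building of $G_{\rm ad}=G'_{\rm ad}$, Bruhat--Tits theory gives a canonical identification $\CG^o_{\rm sc}=\CG'^o_{\rm sc}$ of parahoric group schemes.

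First I would recall that, as invoked already in \S\ref{ss:ADLV}, the connected components of $X_{\CG^o}$ are exactly the fibers of the Kottwitz map $\kappa_G\colon X_{\CG^o}\to \Omega_G$ (\cite[Prop. 1.21]{ZhuAfGr}), and that translation by any lift $\dot\omega\in N(\br\BQ_p)$ of $\omega\in\Omega_G$ identifies the neutral component $X^0_{\CG^o}$ with the $\omega$-component. Furthermore, the neutral component is canonically isomorphic to $LG_{\rm sc}/L^+\CG^o_{\rm sc}$ (cf.\ \cite[\S 6]{PRTwisted} and the references in \cite{ZhuAfGr}). Applying the same description to $G'$, the common identification $\CG^o_{\rm sc}=\CG'^o_{\rm sc}$ gives canonically
\[
X^0_{\CG^o} \;\cong\; LG_{\rm sc}/L^+\CG^o_{\rm sc} \;=\; LG'_{\rm sc}/L^+\CG'^o_{\rm sc} \;\cong\; X^0_{\CG'^o},
\]
and this isomorphism is evidently the one induced by $f$, so $f$ is an isomorphism on neutral components. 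For a general $\omega\in\Omega_G$ with image $\omega'\in\Omega_{G'}$, choose $\dot\omega\in N(\br\BQ_p)$ lifting $\omega$; then $f(\dot\omega)\in N'(\br\BQ_p)$ lifts $\omega'$, and the naturality of translation yields a commutative square comparing the $\omega$-component of $X_{\CG^o}$ with the $\omega'$-component of $X_{\CG'^o}$ via the two identifications with the respective neutral components. Hence $f$ is an isomorphism on each matched pair of connected components, which proves the first assertion.

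For the cartesian claim, given the first assertion, the diagram is cartesian simply by comparing fibers: for each $\omega\in\Omega_G$ the scheme $X^{\omega}_{\CG^o}$ maps isomorphically to $X^{f_*(\omega)}_{\CG'^o}$, so the natural map
\[
X_{\CG^o} \;\longrightarrow\; X_{\CG'^o}\times_{\Omega_{G'}}\Omega_G
\]
is a bijection on components and an isomorphism on each of them. The only potential obstacle is the verification that the neutral component of $X_{\CH^o}$ is canonically $LH_{\rm sc}/L^+\CH^o_{\rm sc}$ in the generality we need (not necessarily tamely ramified $H$); I would rely on the general statements of \cite{PRTwisted, ZhuAfGr}, and in the case (ii) of Lemma~\ref{adisolem} one may alternatively give a direct proof using the short exact sequence $1\to \CZ\to \CG^o\to\CG'^o\to 1$ and the vanishing of $H^1$ for the quasi-trivial torus $Z$, which makes $X_{\CG^o}\to X_{\CG'^o}$ an $\Omega_Z$-bundle in the sense required.
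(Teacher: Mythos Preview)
Your proposal is correct and follows essentially the same approach as the paper: the key input is that an ad-isomorphism induces an isomorphism $G_{\rm sc}\xrightarrow{\sim} G'_{\rm sc}$ (and hence $\CG^o_{\rm sc}\simeq\CG'^o_{\rm sc}$), from which the isomorphism on connected components follows, and the cartesian property is then immediate from $\pi_0(X_{\CG^o})=\Omega_G$, $\pi_0(X_{\CG'^o})=\Omega_{G'}$. The paper's proof is simply a two-sentence version of what you wrote; your expansion (identifying the neutral component with $LG_{\rm sc}/L^+\CG^o_{\rm sc}$ and translating to other components) is the natural way to unpack the phrase ``easily implies.'' One small remark: the reduction to the two types of Lemma~\ref{adisolem} that you announce at the outset is never actually used in your argument, which works uniformly for any ad-isomorphism---you can drop that opening sentence.
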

\begin{proof}
An ad-isomorphism induces an isomorphism $f_{\rm sc}: G_{\rm sc}\isoarrow G'_{\rm sc}$. This easily implies the first  assertion. The second assertion follows since $\pi_0(X_{\CG^o})=\Omega_G$ and  $\pi_0(X_{\CG'^o})=\Omega_{G'}$. 
\end{proof}
Let us consider the effect of an ad-isomorphism on Iwahori Weyl groups. An ad-isomorphism induces maps $N(\br\BQ_p)\to N'(\br\BQ_p)$,  and $T(\br\BQ_p)^0\to T'(\br\BQ_p)^0$,  and $\wt W\to \wt W'$ and an identification of affine Weyl groups $W_a=W'_a$, compatible with the semi-direct product decompositions,
$$
\wt W=W_a\rtimes\Omega_{G}\to W_a\rtimes\Omega_{G'}=\wt W' .
$$
Denoting by $K^o$, resp. $K'^o$ the parahoric subgroups of $G(\BQ_p)$, resp. $G'(\BQ_p)$, we have subgroups $W^{K^o}\subset W_a$ and $W^{K'^o}\subset W_a$. These subgroups of $\wt W$, resp. $\wt W'$, can be identified. 

The admissible sets are contained in a single $W_a$-coset and hence the map $\wt W\to \wt W'$ induces a bijection
\begin{equation*}
\Adm(\mu)\isoarrow\Adm(\mu') .
\end{equation*}
Similarly denoting by $\Adm_{K^o}(\mu)$, resp. $\Adm_{K'^o}(\mu')$, the image of $\Adm(\mu)$ in $W^{K^o}\backslash\wt W/W^{K^o}$, resp. of $\Adm(\mu')$ in $W^{K'^o}\backslash\wt W'/W^{K'^o}$, we have a bijection
\begin{equation}\label{adid}
\Adm_{K^o}(\mu)\isoarrow\Adm_{K'^o}(\mu') .
\end{equation}
The commutative diagram \eqref{Fldiag} induces a commutative diagram, cf. \eqref{coset}, 
\begin{equation}
\begin{aligned}
 \xymatrix{
        X_{\CG^o}(b, \mu) \ar[r]^{\ \ } \ar[d]_{\kappa_G} &  X_{\CG'^o}(b', \mu')   \ar[d]^{\kappa_{G'}}\\
       c_{b,\mu}+ \Omega_{G}^\phi\quad\ar[r]^{\ \ }  &  c_{b',\mu'}+ \Omega_{G'}^\phi .
        }
        \end{aligned}
\end{equation}

\begin{lemma}\label{forparas}
The upper arrow induces an isomorphism between corresponding components. The diagram is cartesian, with surjective vertical arrows.
\end{lemma}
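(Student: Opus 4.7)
The plan is to deduce Lemma \ref{forparas} by restricting the already-established Lemma \ref{compiso} to the admissible loci, using the bijection \eqref{adid} between admissible sets.

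First I would check the easy preliminaries. To see the lower horizontal arrow is well-defined, apply $f_*$ to the defining relation $(\phi-1)c_{b,\mu}=\kappa_G(b)-\kappa_G(\mu)$ and use $f(b)=b'$, $f(\mu)=\mu'$ to get $(\phi-1)f_*(c_{b,\mu})=\kappa_{G'}(b')-\kappa_{G'}(\mu')=(\phi-1)c_{b',\mu'}$, so $f_*(c_{b,\mu})-c_{b',\mu'}\in\Omega_{G'}^\phi$ and $f_*$ carries $c_{b,\mu}+\Omega_G^\phi$ into $c_{b',\mu'}+\Omega_{G'}^\phi$. The surjectivity of the two vertical arrows is exactly \eqref{coset} applied to $(G,b,\mu)$ and $(G',b',\mu')$.

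For the isomorphism of corresponding components, I would apply Lemma \ref{compiso}: for each $\tau\in\Omega_G$ with image $\tau'=f(\tau)$, the map $X_{\CG^o}\to X_{\CG'^o}$ identifies the fiber $\kappa_G^{-1}(\tau)$ with $\kappa_{G'}^{-1}(\tau')$. Under this identification, a class $g\br K^o$ lies in the ADLV $X_{\CG^o}(b,\mu)$ iff $g^{-1}b\phi(g)\in \br K^o\Adm(\mu^{-1})\br K^o$, while $f(g)\br K'^o$ lies in $X_{\CG'^o}(b',\mu')$ iff $f(g)^{-1}b'\phi(f(g))\in\br K'^o\Adm(\mu'^{-1})\br K'^o$. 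Since $f$ realizes the identifications $\wt W\simeq \wt W'$ and $W^{K^o}=W^{K'^o}$, the Iwahori--Bruhat decomposition identifies parahoric double cosets in $G(\br\BQ_p)$, resp.~$G'(\br\BQ_p)$, with $W^{K^o}\backslash \wt W/W^{K^o}$, resp.~$W^{K'^o}\backslash \wt W'/W^{K'^o}$, and the bijection \eqref{adid} matches $\Adm_{K^o}(\mu^{-1})$ with $\Adm_{K'^o}(\mu'^{-1})$; hence the two ADLV conditions are equivalent. Thus the isomorphism of components from Lemma \ref{compiso} restricts to an isomorphism of the corresponding ADLV components.

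The cartesian property follows formally: by Lemma \ref{compiso} the ambient diagram \eqref{Fldiag} is cartesian, and we have shown that the two inclusions of subobjects (the ADLV into $X_{\CG^o}$, and $c_{b,\mu}+\Omega_G^\phi$ into $\Omega_G$, and similarly for $G'$) are compatibly matched; so passage to the subdiagram preserves the pullback property. The principal subtlety—and the only point where one might slip—is in justifying the equivalence of the two admissible conditions under $f$ when $f$ is neither injective nor surjective on $\br\BQ_p$-points (which can occur in the two cases of Lemma \ref{adisolem}); here the key input is that $f$ induces a common identification of affine Weyl groups and of the $\mu$-admissible subsets therein, so the conditions reduce to the same Iwahori-Weyl-group statement.
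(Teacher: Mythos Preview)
Your proposal is correct and follows essentially the same line as the paper's proof: invoke Lemma~\ref{compiso} to identify corresponding components of the ambient Witt Grassmannians, then use \eqref{adid} to see that the admissibility condition on $g^{-1}b\phi(g)$ is preserved under $f$, and read off cartesianness and surjectivity from \eqref{coset}. Two small imprecisions worth cleaning up: in this section the ADLV $X_{\CG^o}(b,\mu)$ is defined via $\Adm(\mu)$, not $\Adm(\mu^{-1})$; and the map $\wt W\to \wt W'$ is not an isomorphism in general (only the affine Weyl groups $W_a=W'_a$ are identified), though this does not matter since you only use the bijection \eqref{adid} on admissible sets, which lies in a single $W_a$-coset.
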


\begin{proof}
Here recall that by a component of $X_{\CG}(b, \mu)$, we mean an intersection of $X_{\CG}(b, \mu)$ with a connected component of $X_{\CG}$. The surjectivity of the vertical arrows follows from \eqref{coset}. 

Let $\tau\in  c_{b,\mu}+\Omega_{G}^\phi$, with image $\tau'=f(\tau)\in  c_{b',\mu'}+\Omega_{G'}^\phi$. By Lemma \ref{compiso}, for every $g'\br K'^o\in X_{\CG'^o}(b', \mu')\cap X^{\tau'}_{\CG'^o}$, there exists a unique point $g\br K^o\in X^\tau_{\CG^o}$ lying over it. We have to compare the double cosets of $h:=g^{-1}b\phi(g)$ in $\br K^o\backslash G(\br\BQ_p)/\br K^o=W^{K^o}\backslash\wt W/W^{K^o}$ and of $f(h)=g'^{-1}b'\phi(g')$ in $\br K'^o\backslash G(\br\BQ_p)/\br K'^o=W^{K'^o}\backslash\wt W'/W^{K'^o}$. But by \eqref{adid} it follows that the class of $h$ lies in $\Adm_{K^o}(\mu)$ if and only if the class of $f(h)$ lies in $\Adm_{K'^o}(\mu')$. Hence $g\br K^o\in X_{\CG^o}(b, \mu)^\tau$, as desired.
\end{proof}

Now we pass from the parahorics $\CG^o$ and $\CG'^o$ to the quasi-parahorics $\CG$ and $\CG'$.  The homomorphism $f: G\to G'$ induces homomorphisms
$$
\Omega_G\to\Omega_{G'}, \quad \pi_0(\CG)\to\pi_0(\CG'),\quad  C_\CG\to C_{\CG'},  \quad \Pi_\CG\to\Pi_{\CG'} . 
$$
Here recall that $C_\CG=\Omega_G/\pi_0(\CG)$  and $C_{\CG'}=\Omega_{G'}/\pi_0(\CG')$, and $\Pi_\CG=\ker (\pi_0(\CG)_\phi\to \Omega_{G, \phi})$ and $\Pi_{\CG'}=\ker (\pi_0(\CG')_\phi\to \Omega_{G', \phi})$. We obtain from Corollary \ref{imageka} a diagram with surjective vertical arrows, 
\begin{equation}\label{cartdi}
\begin{aligned}
 \xymatrix{
       X_\CG(b, \mu) \ar[r]^{\ \ } \ar[d]_{\kappa_G} &  X_{\CG'}(b', \mu')    \ar[d]^{\kappa_{G'}}\\
       \bar c_{b,\mu}+ C_{\CG}^\phi\quad\ar[r]^{\ \ }  &  \bar c_{b',\mu'}+ C_{\CG'}^\phi .
        }
        \end{aligned}
\end{equation}

\begin{proposition}\label{cartesianADLVq}
The upper arrow in \eqref{cartdi} induces an isomorphism between corresponding components. The diagram is cartesian, with surjective vertical arrows.
\end{proposition}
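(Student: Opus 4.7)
My plan is to reduce the proposition to its parahoric analogue (Lemma \ref{forparas}) by invoking the quasi-parahoric decomposition of Proposition \ref{quasiADLV}. First, the surjectivity of both vertical arrows in \eqref{cartdi} is already given by Corollary \ref{imageka} applied respectively to $(G,b,\mu,\CG)$ and $(G',b',\mu',\CG')$. The bottom horizontal arrow is induced by the maps $\Omega_G\to\Omega_{G'}$ and $\pi_0(\CG)\to\pi_0(\CG')$ coming from $f$, which visibly send $\bar c_{b,\mu}$ to $\bar c_{b',\mu'}$ and $C_\CG^\phi$ to $C_{\CG'}^\phi$. The real content is the claim about components (which implies the cartesian property since the fibers of $\kappa_G$, $\kappa_{G'}$ are exactly unions of components).

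Next, the ad-isomorphism $f$ induces a homomorphism $\Pi_\CG\to\Pi_{\CG'}$ compatible with the connecting maps $C_\CG^\phi\to\Pi_\CG$ and $C_{\CG'}^\phi\to\Pi_{\CG'}$ of the exact sequence \eqref{exseq}. Given $\tau\in\bar c_{b,\mu}+C_\CG^\phi$, let $\bar\beta\in\Pi_\CG$ be its image, and set $\bar\beta'=f(\bar\beta)\in\Pi_{\CG'}$, which is the image of $\tau'=f(\tau)$. Choose $\gamma\in\Omega_G$ with $\phi(\gamma)-\gamma=\beta$ and lift it to $\dot\gamma\in N(\br\BQ_p)$ as in Lemma \ref{liftgamma}; then $\gamma'=f(\gamma)$ and $\dot\gamma'=f(\dot\gamma)$ are compatible choices for $\bar\beta'$. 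The associated twisted quasi-parahoric group schemes $\CG_\beta$ and $\CG'_{\beta'}$ then satisfy $f(\CG_\beta)\subset\CG'_{\beta'}$, so $f$ restricts to an ad-isomorphism of integral local shtuka data $(\CG^o_\beta,b,\mu)\to(\CG'^o_{\beta'},b',\mu')$ between the underlying parahoric group schemes.

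Now apply Lemma \ref{forparas} to this parahoric ad-isomorphism: for every $\tilde\tau\in c_{b,\mu}+\Omega_G^\phi$ with image $\tilde\tau'=f(\tilde\tau)\in c_{b',\mu'}+\Omega_{G'}^\phi$, the map $X_{\CG^o_\beta}(b,\mu)^{\tilde\tau}\xrightarrow{\sim} X_{\CG'^o_{\beta'}}(b',\mu')^{\tilde\tau'}$ is an isomorphism. The natural maps $\pi_0(\CG_\beta)^\phi\to\pi_0(\CG'_{\beta'})^\phi$ induced by $f$ are well-defined, and the actions on the ADLV for $\CG^o_\beta$ and $\CG'^o_{\beta'}$ respectively are compatible (they only permute connected components, as noted right before Proposition \ref{quasiADLV}). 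Assembling via Proposition \ref{quasiADLV} on both sides and ranging over $\bar\beta\in\Pi_\CG$, we obtain that the induced map
\[
\bigsqcup_{\bar\beta\in\Pi_\CG} X_{\CG^o_\beta}(b,\mu)/\pi_0(\CG_\beta)^\phi\ \longrightarrow\ \bigsqcup_{\bar\beta'\in\Pi_{\CG'}} X_{\CG'^o_{\beta'}}(b',\mu')/\pi_0(\CG'_{\beta'})^\phi
\]
sends the summand indexed by $\bar\beta$ to that indexed by $f(\bar\beta)$ and restricts to isomorphisms on corresponding components. This yields the desired isomorphism on components of $X_\CG(b,\mu)\to X_{\CG'}(b',\mu')$, and together with the surjectivity in Step 1, the cartesian property of \eqref{cartdi}.

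The main obstacle is bookkeeping: one has to verify that the Kottwitz image in $\bar c_{b,\mu}+C_\CG^\phi$ of the component of $X_\CG(b,\mu)$ obtained from a component of $X_{\CG^o_\beta}(b,\mu)^{\tilde\tau}$ via the map \eqref{Xbeta} equals the image in $C_\CG$ of $\tilde\tau+\gamma\in\Omega_G$, and that these matchings are compatible with $f$ on the two sides. Once this dictionary between the indices $\tilde\tau$ (for components on the parahoric level) and $\tau$ (for components on the quasi-parahoric level) is pinned down, the statement for the quasi-parahorics follows formally from the parahoric case.
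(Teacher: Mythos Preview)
Your proposal is correct and follows essentially the same route as the paper: reduce to the parahoric case via the decomposition of Proposition~\ref{quasiADLV} (together with Corollary~\ref{imageka} for the surjectivity of the vertical arrows), and then invoke Lemma~\ref{forparas} on each summand indexed by $\bar\beta\in\Pi_\CG$. The paper compresses the bookkeeping you spell out into the single assertion that ``the maps respect the disjoint sum decompositions''; note a small sign slip in your write-up---the paper's convention is $\beta=(1-\phi)\gamma$, not $\phi(\gamma)-\gamma=\beta$.
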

\begin{proof}
By Theorem \ref{quasiThm} and the proof of Corollary \ref{imageka}, the diagram can be rewritten as  
\begin{equation*}
\begin{aligned}
 \xymatrix{
        \bigsqcup_{\bar\beta\in\Pi_\CG}X_{\CG_{\beta}^o}(b, \mu)/\pi_0(\CG_\beta)^\phi \ar[r]^{\ \ } \ar[d]_{\kappa_G} &  \bigsqcup_{\bar\beta'\in\Pi_{\CG'}}X_{\CG'^o_{\beta'}}(b', \mu')/\pi_0(\CG_\beta')^\phi     \ar[d]^{\kappa_{G'}}\\
        \bigsqcup_{\bar\beta\in\Pi_\CG}( c_{b,\mu}+ \Omega_{G}^\phi)/\pi_0(\CG_\beta)^\phi\quad\ar[r]^{\ \ }  &   \bigsqcup_{\bar\beta'\in\Pi_\CG'}( c_{b',\mu'}+ \Omega_{G'}^\phi)/\pi_0(\CG'_{\beta'})^\phi .
        }
        \end{aligned}
\end{equation*}

The maps in this diagram respect the disjoint sum decompositions. Hence the assertion follows from Lemma \ref{forparas}. 
\end{proof}

\subsection{Proof of Theorem \ref{thm451}}\label{ss:qcqs}

 The plan of the proof is as follows.
We will show in Proposition \ref{Propqcqs} below that
\[
f^\tau:  \CM^{\rm int,\tau}_{\CG, b, \mu}\xrightarrow{\ \ } \CM^{\rm int}_{\CG', b', \mu'}\times_{\Spd(O')}\Spd(O)
 \]
 is qcqs. Since $\CM^{\rm int, \tau'}_{\CG', b', \mu'}\to \CM^{\rm int}_{\CG', b', \mu'}$ is an open and closed immersion, 
\[
f^{\tau, \tau'}:  \CM^{\rm int,\tau}_{\CG, b, \mu}\xrightarrow{\ \ } \CM^{\rm int, \tau'}_{\CG', b', \mu'}\times_{\Spd(O')}\Spd(O)
 \]
 is also qcqs (cf. the proof of Lemma \ref{lemmaQS} below). Assuming this, by \cite[Cor. 17.4.10]{Schber} and the partial properness of 
 $\CM^{\rm int,\tau}_{\CG, b, \mu}$ and $\CM^{\rm int, \tau'}_{\CG', b', \mu'}$
 over $\Spd(O)$, resp. $\Spd(O')$, it is enough to show the following statement.

 \begin{proposition}\label{BijectionCpts}
  Under the above assumptions, $f^{\tau,\tau'}$ induces a bijection on $\Spa(C, O_C)$-points. 
  \end{proposition}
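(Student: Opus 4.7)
The plan is to mimic the final step of the proof of Theorem \ref{quasiThm}, reducing the assertion to the combination of a bijection on specializations (coming from the ADLV level) and an isomorphism of formal completions (Proposition \ref{adIso}). Concretely, any $\Spa(C, O_C)$-valued point $\wt x$ of $\CM^{\rm int,\tau}_{\CG, b, \mu}$ factors through the formal completion $(\CM^{\rm int}_{\CG, b, \mu})_{O(k_C)/\bar x}$ of the base change to $O(k_C) := O_{\br E} \otimes_{W(k)} W(k_C)$ at its specialization $\bar x = {\rm sp}(\wt x) \in X_\CG(b, \mu^{-1})(k_C)$, where $k_C = O_C/\fkm_C$; the analogous statement holds on the $\CG'$-side, with image point $\bar x' = f(\bar x)$.

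The first ingredient is Proposition \ref{cartesianADLVq} (applied with $\mu$ replaced by $\mu^{-1}$), which gives a cartesian square whose vertical arrows are the Kottwitz maps and whose horizontal arrows restrict to isomorphisms between matching Kottwitz components. In particular, $f$ induces a bijection between the $\tau$-component of $X_\CG(b, \mu^{-1})(k_C)$ and the $\tau'$-component of $X_{\CG'}(b', \mu'^{-1})(k_C)$. The second ingredient is Proposition \ref{adIso}, extended as in Remark \ref{largepointremark} to $\Spd(k_C)$-valued points after base change to $O(k_C)$, which for each such $\bar x$ with $\bar x' = f(\bar x)$ yields an isomorphism of $v$-sheaves
\[
(\CM^{\rm int}_{\CG, b, \mu})_{O(k_C)/\bar x} \isoarrow (\CM^{\rm int}_{\CG', b', \mu'})_{O'(k_C)/\bar x'} \times_{\Spd(O'(k_C))} \Spd(O(k_C)),
\]
and in particular a bijection on $\Spa(C, O_C)$-valued points. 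Combining the two: for injectivity, two points $\wt x_1, \wt x_2 \in \CM^{\rm int,\tau}_{\CG, b, \mu}(C, O_C)$ with equal images must share a specialization by the first step and therefore agree by the second; surjectivity is symmetric, lifting the specialization of a target point through the cartesian square and then lifting the point itself through the formal-completion isomorphism.

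The main obstacle, and the reason one must restrict to a single Kottwitz component $\tau$, is that $f$ is generally neither injective nor surjective on the full ADLVs $X_\CG(b,\mu^{-1})$ and $X_{\CG'}(b',\mu'^{-1})$; what saves the day is the cartesianness of Proposition \ref{cartesianADLVq}, which becomes a bijection exactly after fixing the Kottwitz invariant. The purely local input of Proposition \ref{adIso} then upgrades this bijection on specializations to the full $\Spa(C, O_C)$-level statement; no further representability hypothesis is needed at this stage, since the relevant formal completions are already known to be isomorphic as $v$-sheaves by the work of \S\ref{ss:adformcom}.
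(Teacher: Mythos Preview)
Your proposal is correct and follows essentially the same approach as the paper: reduce to the bijection on specializations via Proposition \ref{cartesianADLVq} (applied over the residue field $k_C$), then upgrade to $\Spa(C,O_C)$-points using that every such point factors through the formal completion at its specialization, where Proposition \ref{adIso} (extended to base change by $W(k_C)$ exactly as you indicate) gives an isomorphism. The paper's proof is organized identically, first treating $\Spd(\kappa)$-points via the ADLV identification and then handling $\Spa(C,O_C)$-points by the same formal-completion argument you describe.
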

 
 \begin{proof}
 By Proposition \ref{MX},  if $\kappa$ is a discrete, algebraically closed field of characteristic $p$, we have bijections
\[
\CM^{\rm int,\tau}_{\CG, b, \mu}(\Spd(\kappa))=X^{\tau}_{\CG}(b, \mu^{-1})(\kappa),\quad \CM^{\rm int,\tau'}_{\CG', b', \mu'}(\Spd(\kappa))=X^{\tau'}_{\CG'}(b', \mu'^{-1})(\kappa)
\]
and $f^{\tau, \tau'}(\Spd(\kappa))$  is identified with
the corresponding map
 \begin{equation}\label{BijectionADLV}
 X^{\tau}_{\CG}(b, \mu^{-1})(\kappa) \xrightarrow{} X^{\tau'}_{\CG'}(b', \mu'^{-1})(\kappa) .
 \end{equation} 
But this map  is a bijection by Proposition \ref{cartesianADLVq}.
Hence, $f^{\tau, \tau'}$ induces a bijection on $\Spd(\kappa)$-points. We still have to treat   $\Spa(C, O_C)$-points.

Set $O(\kappa)=O\otimes_{W(k)}W(\kappa)$, and similarly for $O'(\kappa)$.
Note that the reduced locus of the base change 
\[
(\CM^{\rm int,\tau}_{\CG, b, \mu})_{O(\kappa)}:=\CM^{\rm int,\tau}_{\CG, b, \mu}\times_{\Spd(O)}\Spd( O(\kappa))
\]
is $ X^{\tau}_{\CG}(b, \mu^{-1})_\kappa:=X^{\tau}_{\CG}(b, \mu^{-1})\times_{\Spec(k)}\Spec(\kappa)$, and similarly for $(\CM^{\rm int,\tau'}_{\CG', b', \mu'})_{ O'(\kappa)}$. 

Now note that any $\Spa(C, O_C)$-point $\tilde x: \Spa(C,O_C)\to  \CM^{\rm int,\tau}_{\CG, b, \mu}$  factors through the formal completion $ ( \CM^{\rm int,\tau}_{\CG, b, \mu})_{ O(\kappa) /x}$ of $( \CM^{\rm int,\tau}_{\CG, b, \mu})_{ O(\kappa)}$ at $x:={\rm sp}(\tilde x)$, where $\kappa$ is the residue field $k(C)=O_C/\fkm_C$. 
 (Observe that $x$ now gives a closed point of the reduced locus of $(\CM^{\rm int,\tau}_{\CG, b, \mu})_{O(\kappa)}$
 which is $ X^{\tau}_{\CG}(b, \mu^{-1})_\kappa$.)
 Similarly, the corresponding fact is true for $\Spa(C, O_C)$-points of $\CM^{\rm int,\tau'}_{\CG', b', \mu'}$. 
If $x$, resp. $x'$, is a $\Spd(\kappa)$-point of $\CM^{\rm int,\tau}_{\CG, b, \mu}$, resp. $\CM^{\rm int,\tau'}_{\CG', b', \mu'}$, we have  
 \[
 (\CM^{\rm int,\tau}_{\CG, b, \mu})_{O(\kappa) /x}  \simeq  (\CM^{\rm int }_{\CG, b, \mu})_{O(\kappa) /x}, \quad {\rm resp.}\ \ 
 (\CM^{\rm int,\tau'}_{\CG', b', \mu'})_{O'(\kappa) /x'}  \simeq  (\CM^{\rm int }_{\CG', b', \mu'})_{O'(\kappa) /x'}.
 \]
 The result now follows from a simple extension of Proposition \ref{adIso} to the base changes by $W(\kappa)$. 
  Indeed,  this gives
 \[
f^{\tau, \tau'}:   (\CM^{\rm int,\tau}_{\CG, b, \mu})_{O(\kappa) /x}\xrightarrow{\ \sim\ }   (\CM^{\rm int,\tau'}_{\CG', b', \mu'})_{O'(\kappa) /f(x)}\times_{\Spd(O')}\Spd(O).
 \]
 This, combined with the bijection (\ref{BijectionADLV}) above for $\kappa=k(C)$, implies the result.
 \end{proof}
 
It remains to show the following statement.

\begin{proposition}\label{Propqcqs} The map of $v$-sheaves
\[
f^\tau:  \CM^{\rm int,\tau}_{\CG, b, \mu}\xrightarrow{\ \ } \CM^{\rm int}_{\CG', b', \mu'}\times_{\Spd(O')}\Spd(O)
 \]
 is qcqs.
\end{proposition}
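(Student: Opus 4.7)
I would begin by applying Lemma \ref{adisolem} to factor the ad-isomorphism $f$ as a composition of ad-isomorphisms of types (i) and (ii). The restriction-to-component operation $f\mapsto f^\tau$ is compatible with composition: for $f=f_2\circ f_1$ one has $f^\tau=f_2^{f_1(\tau)}\circ f_1^\tau$, since the specialization map and the Kottwitz homomorphism are both functorial, so the component decompositions $\CM^{\rm int,\tau}=({\kappa}\circ{\rm sp})^{-1}(\tau)$ are transported correctly. Since qcqs morphisms of $v$-sheaves are stable under composition, it suffices to prove the proposition when $f$ is of type (i) or of type (ii).

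For type (i), Proposition \ref{typeI} already establishes that the unrestricted morphism $f\colon \CM^{\rm int}_{\CG, b, \mu}\to \CM^{\rm int}_{\CG', b', \mu'}\times_{\Spd(O')}\Spd(O)$ is qcqs. The inclusion $\CM^{\rm int,\tau}_{\CG, b, \mu}\hookrightarrow \CM^{\rm int}_{\CG, b, \mu}$ is an open-and-closed immersion (cf.~\eqref{comptau}), hence qcqs, so the composition $f^\tau$ is qcqs.

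For type (ii), write $1\to Z\to G\to G'\to 1$ with $Z$ a quasi-trivial central torus. As in the proof of Proposition \ref{adIso}, I would first use Proposition \ref{neutralIso}(a) to replace $\CG'$ by a quasi-parahoric $\CG'_1\subset \CG'$ sharing the neutral component $\CG'^{o}$ and such that $\CG\to \CG'_1$ fits into a short exact fppf sequence $1\to \CZ\to \CG\to \CG'_1\to 1$ of smooth affine $\BZ_p$-group schemes, where $\CZ\simeq\prod_i \Res_{\CO_{K_i}/\BZ_p}\BG_m$ is the connected N\'eron model of $Z$. Proposition \ref{neutralIso}(a) ensures this intermediate passage preserves qcqs, so it suffices to treat this fppf-surjective case. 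By $\kappa$-equivariance of $f$, the morphism $f^\tau$ factors through the single target component $\CM^{\rm int,\tau'}_{\CG'_1, b', \mu'}\times_{\Spd(O')}\Spd(O)$ where $\tau'=f(\tau)$, so it suffices to analyze $f^{\tau,\tau'}$. Using the $v$-sheaf local model diagram of Theorem \ref{vLMD} for $\CG$ and $\CG'_1$, together with the short exact sequence \eqref{exactW+} of Proposition \ref{adIso}, the morphism $f^{\tau,\tau'}$ becomes, $v$-locally after pulling back along the $\wh \BW^+\CG\times \Spd(O)$-torsor $\wh{L\CG}_{b,\mu}\to \CM^{\rm int}_{\CG, b,\mu}$, a quotient morphism by the $\phi$-conjugation action of $\wh \BW^+\CZ$. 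Since $\CZ$ is smooth, connected, affine and of finite type over $\BZ_p$, this quotient is representable by a qcqs morphism of $v$-sheaves, and qcqs descends along the $v$-covers involved, yielding qcqs for $f^{\tau,\tau'}$.

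\textbf{The main obstacle} will be making the type (ii) step rigorous, specifically upgrading the \emph{isomorphism on formal completions} at a single base point established in Proposition \ref{adIso} to a \emph{global} qcqs statement on the entire component $\tau$. The bookkeeping with the local model diagram and the $\phi$-conjugation action of $\wh \BW^+\CZ$ (rather than just $\wh \BW^+\CG$) needs to be done carefully, since one must ensure that the quotient-by-$\wh \BW^+\CZ$ picture actually describes the fibers of $f^{\tau,\tau'}$ and not only their formal completions. A secondary technical point is checking the $\kappa$-equivariance carefully so that $f^{\tau,\tau'}$ has image in one component $\tau'$ (this uses the cartesian diagram in Proposition \ref{cartesianADLVq}, which was proved independently of the qcqs statement).
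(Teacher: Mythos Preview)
Your reduction to types (i) and (ii) and your handling of type (i) via Proposition \ref{typeI} match the paper. The gap is in your treatment of type (ii), and you have correctly identified it yourself: the $v$-sheaf local model diagram of Theorem \ref{vLMD} lives only over the \emph{formal completion} $\CM^{\rm int}_{\CG,b,\mu/x_0}$, not over all of $\CM^{\rm int}_{\CG,b,\mu}$ or even over a whole component $\CM^{\rm int,\tau}_{\CG,b,\mu}$. So there is no $\wh\BW^+\CG$-torsor $\wh{L\CG}_{b,\mu}\to \CM^{\rm int}_{\CG,b,\mu}$ along which to pull back $f^{\tau,\tau'}$ globally; the sentence ``$v$-locally after pulling back along the $\wh\BW^+\CG\times\Spd(O)$-torsor $\wh{L\CG}_{b,\mu}\to \CM^{\rm int}_{\CG,b,\mu}$'' does not describe an existing morphism. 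Even were such a global torsor available, the claim that the quotient by the $\phi$-conjugation action of $\wh\BW^+\CZ$ is qcqs cannot be deduced from $\CZ$ being of finite type over $\BZ_p$: the $v$-sheaf $\wh\BW^+\CZ$ is infinite-dimensional, and qcqs for such quotients is not automatic.

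The paper takes an entirely different route for type (ii). After handling quasi-separatedness abstractly (via the composition $\CM^{\rm int}_\CG\to\CM^{\rm int}_{\CG'}\times_{\Spd(O')}\Spd(O)\to\Spd(O)$ and the fact that $\CM^{\rm int}_\CG\to\Spd(O)$ is qs), quasi-compactness is proved by a \emph{sequence of points} argument: given an affinoid perfectoid $Y\to\CM^{\rm int}_{\CG',b',\mu'}\times_{\Spd(O')}\Spd(O)$, set $X=Y\times\CM^{\rm int,\tau}_{\CG,b,\mu}$, take $I=|X|$, and form the product of points $(D,D^+)=(\prod_{i\in I}C_i^+[1/\varpi],\prod_{i\in I}C_i^+)$. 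One then must extend the collection $(x_i)_{i\in I}$ to a single $\Spa(D,D^+)$-point of $\CM^{\rm int,\tau}_{\CG,b,\mu}$. The $\CG$-shtuka extends because the $\xi_i$-denominators of the $\Phi_i$ are uniformly bounded by $\mu$. The delicate part is extending the \emph{framing}: this uses the description of $\wt G_b=\und\Aut(\CE^b)$ on the Fargues--Fontaine curve, the identification of the kernel of $\wt G_b\to\wt G'_{b'}$ with $\underline{Z(\BQ_p)}$, triviality of $\underline{Z(\BQ_p)}$-torsors over strictly totally disconnected perfectoids, and finally the Stone--\v Cech property $\pi_0(|T|)\simeq\beta I$ together with compactness of $\omega^{-1}(\tau)$ (this is where the restriction to a single component $\tau$ enters) to extend the section $I\to Z(\BQ_p)$ continuously to $|T|$. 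None of this is visible from the formal-completion picture.
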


\newcommand{\CMint}{\CM^{\rm int}}

 \begin{proof} 
  In the following, we occasionally just write $\CM^{\rm int}_\CG$, $\CM^{\rm int}_{\CG'}$, etc., for notational simplicity.
First we show that $f: \CM^{\rm int}_\CG\to \CM^{\rm int}_{\CG'}\times_{\Spd(O')}\Spd(O)$ is quasi-separated (qs). This quickly implies that the same is true for $f^\tau$.
 For this we use that 
$f: \CM^{\rm int}_\CG  \to \Spd(O)$ is qs (\cite[Prop. 2.25]{Gl21}) and the following lemma.

\begin{lemma}\label{lemmaQS} Consider morphisms of small $v$-sheaves
  \[
  X\xrightarrow{f}Y\xrightarrow{g} Z.
  \]
  
  1) If $g\circ f$ is qc and $g$ is qs, then $f$ is qc.
  
  2) If $g\circ f$ is qs, then so is $f$. 
\end{lemma}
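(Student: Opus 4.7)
Both assertions are purely formal manipulations with graphs and diagonals, valid in any category with fiber products in which quasi-compactness is stable under composition and base change, and in which quasi-separatedness is defined as quasi-compactness of the diagonal. For small $v$-sheaves these foundational properties are available (e.g.\ \cite[\S10]{Sch-Diam}), so I would treat (1) and (2) together at this level of generality, invoking the concrete $v$-sheaf setup only to cite these stability properties. The key observation throughout is that any map $m\colon A\to B$ whose diagonal $\Delta_m\colon A\to A\times_B A$ is an isomorphism (i.e.\ any monomorphism) is automatically quasi-separated, since the identity is trivially quasi-compact.

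For part (1), I would factor $f$ through its graph:
\[
f\colon X \xrightarrow{\Gamma_f} X\times_Z Y \xrightarrow{\ \pi_Y\ } Y.
\]
The projection $\pi_Y$ is the base change of $g\circ f\colon X\to Z$ along $g\colon Y\to Z$, hence is qc by hypothesis. The graph $\Gamma_f$ fits into a cartesian square
\[
\begin{aligned}
\xymatrix{
X \ar[r]^-{\Gamma_f} \ar[d]_{f} & X\times_Z Y \ar[d]^{(f,\,\mathrm{id})} \\
Y \ar[r]^-{\Delta_g} & Y\times_Z Y
}
\end{aligned}
\]
(check: the pullback parametrizes pairs $(x,y)\in X\times_Z Y$ with $f(x)=y$, which is exactly the graph). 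Since $g$ is qs, $\Delta_g$ is qc, hence so is its base change $\Gamma_f$. Thus $f$ is a composition of two qc morphisms and therefore qc.

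For part (2), I would show that $\Delta_f\colon X\to X\times_Y X$ is qc. The natural factorization
\[
\Delta_{g\circ f}\colon X \xrightarrow{\Delta_f} X\times_Y X \xrightarrow{\ \pi\ } X\times_Z X
\]
identifies $\pi$ as the base change of $\Delta_g\colon Y\to Y\times_Z Y$ along the natural map $X\times_Z X\to Y\times_Z Y$, as one verifies by a direct fiber-product calculation. In particular, $\pi$ is a monomorphism (base changes of monomorphisms are monomorphisms, and any diagonal is a monomorphism), so its own diagonal is an isomorphism and $\pi$ is qs. Now $\Delta_{g\circ f}$ is qc by the assumption that $g\circ f$ is qs, and $\pi$ is qs; applying part (1) to the pair $(\Delta_f,\pi)$ gives that $\Delta_f$ is qc, i.e.\ $f$ is qs.

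\textbf{Main obstacle.} There is no real obstacle here beyond the category-theoretic bookkeeping; the arguments are formal and bootstrap (1) into (2). The only point to verify is that the relevant stability properties of qc and the characterization of qs via the diagonal are available for small $v$-sheaves over $\mathrm{Spd}(O_{\br E})$, which is standard. If anything required care, it would be checking that the cartesian square exhibiting $\Gamma_f$ as a base change of $\Delta_g$ is genuinely cartesian in the $v$-sheaf category, but this follows from the fact that fiber products of small $v$-sheaves are computed as in presheaves.
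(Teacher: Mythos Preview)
Your proof is correct and follows essentially the same route as the paper: both factor $f$ through its graph $X\to X\times_Z Y\to Y$ for part (1), and both prove (2) by factoring $\Delta_{g\circ f}$ as $\Delta_f$ followed by the monomorphism $X\times_Y X\to X\times_Z X$ and then applying (1). The only cosmetic difference is in how the quasi-compactness of $\Gamma_f$ is justified: you exhibit $\Gamma_f$ directly as the base change of $\Delta_g$ along $(f,\mathrm{id})\colon X\times_Z Y\to Y\times_Z Y$, whereas the paper views $\Gamma_f$ as a section of the projection $X\times_Z Y\to X$ (itself qs as a base change of $g$) and then observes that any section of a qs map is a base change of its diagonal; these unwind to the same cartesian square.
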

\begin{proof} 
Note that, by definition, $f: X\to Y$ is called qs when the diagonal $X\to X\times_Y X$ is qc, cf. \cite[\S8]{Sch-Diam}.
Also note that compositions and base changes of qc, resp. qs, maps are also qc, resp. qs. 

{\sl Part 1): } Write $f$ as the composition $X\to X\times_ZY\to Y$, where the first map is ${\rm id}\times f$ and the second the projection. The projection is qc as the base change of the qc map $X\to Z$,  while the first map
  is a section of the projection $X\times_Z Y\to X$; this projection is qs as the base change of $Y\to Z$. It remains to observe
  that a section $s: S\to T$ of a qs map $T\to S$ is qc since it can be viewed as the base change of 
  the qc diagonal $T\to T\times_S T$ by $s\times {\rm id}: T=S\times_S T\to T\times_S T$. 

{\sl Part 2): } Since $g\circ f: X\to Z$ is qs, the map 
  $X\to X\times_Z X$ is qc.  We can write this as a composition
  \[
  X\to X\times_Y X\to X\times_Z X.
  \]
  Here $X\times_Y X\to X\times_Z X$ is an injection and hence it is qs, since then the diagonal map is 
  an isomorphism. We now apply (a) to this composition to deduce that $X\to X\times_Y X$ is qc, hence 
  $X\to Y$ is qs.
  \end{proof}

It remains to show that $f^\tau$ is qc. For this we write again $G\to G'$ as a composition of ad-isomorphisms which are either of type (i), i.e.  closed embeddings (with cokernel a torus), or type (ii), i.e. fppf surjections with kernel a central quasi-trivial torus $Z=T$. Using functoriality and the fact that compositions of qcqs morphisms are again qcqs, we see that it is enough to treat these two cases separately. In the  type (i) case the result follows  from Proposition \ref{typeI}. It remains to deal with cases of type (ii).
Then we have an exact
   \[
   1\to Z\to G\to G'\to 1
   \]
   with $Z=T$ a quasi-trivial torus. For a morphism $\CG'_1\to\CG'$ of quasi-parahoric group schemes  corresponding to a fixed parahoric group scheme for $G'$, the morphism  
 $ \CM^{\rm int }_{\CG_1'}\to  \CM^{\rm int }_{\CG'}$ is qc. Hence the argument in the proof of Proposition \ref{adIso} (for type (ii)) shows that we may assume that the sequence 
 \[
 1\to \CZ\to \CG\to \CG'\to 1
 \]
  is exact.
  
  To establish quasi-compactness we use a ``sequence of points" argument (compare to \cite[proof of Thm. 21.2.1]{Schber}). Let $Y=\Spa(R, R^+)$ be  affinoid perfectoid 
    over $k$ and let 
    \[
    Y\to \CM^{\rm int}_{\CG', b', \mu'}\times_{\Spd(O')}\Spd(O)
    \]
     be a morphism given by an untilt of $Y$ over $O$ and a $\CG'$-shtuka over $\CY_{[0,\infty)}(R, R^+)$
  with a framing. Consider also the small $v$-sheaf
    \[
   X= Y\times_{(\CM^{\rm int}_{\CG', b', \mu'}\times_{\Spd(O')}\Spd(O))}\CM^{\rm int, \tau}_{\CG, b, \mu}\to Y. 
    \]
  Take $I=|X|$ and, for each $i\in I$, choose a point of $X$  given by $x_i: \Spa(C_i, C_i^+)\to \CM^{\rm int, \tau}_{\CG, b, \mu}$ and
    $y_i: \Spa(C_i, C^+_i)\to Y=\Spa(R, R^+)$, with matching compositions to $\CM^{\rm int}_{\CG', b', \mu'}\times_{\Spd(O')}\Spd(O)$, which corresponds to $i\in I=|X|$. 
    Consider the product of points
\[
(D, D^+)=((\prod_{i\in I}C^+_i)[1/(\varpi_i)], \prod_{i\in I}C^+_i).
\]
Here the pseudo-uniformizers $\varpi_i\in C^+_i$ are given by a pseudouniformizer of $R^+$. The collection of $y_i$ extends to $y:\Spa(D, D^+)\to \Spa(R, R^+)$.
The compositions
\[
\Spa(C_i, C^+_i)\to \Spa(D, D^+)\to \Spa(R, R^+)\to \Spd(O)
\]
specify untilts of $(C_i, C^+_i)$ given by $\xi_i\in W(C^+_i)$.  So, we have a $\CG'$-shtuka $\CP'$ with framing over $\Spa(D, D^+)$ obtained from 
\[
\Spa(D, D^+)\to \Spa(R, R^+)\to \CM^{\rm int}_{\CG', b', \mu'}\times_{\Spd(O')}\Spd(O),
\]
and a collection of $\CG$-shtukas $\CP_i$ with framings over each $(C_i, C^+_i)$, and with legs at $\xi_i$, given by $x_i$. These are compatible via the pushout of torsors by $\CG\to \CG'$. We want to show that $(x_i)$ extend to 
    \[
   x: \Spa(D, D^+)\to \CM^{{\rm int}, \tau}_\CG.
    \]
    Since the resulting map
\[
\Spa(D, D^+)\to  X= Y\times_{(\CM^{\rm int}_{\CG', b', \mu'}\times_{\Spd(O')}\Spd(O))}\CM^{\rm int, \tau}_{\CG, b, \mu}
\]
is  a $v$-cover, this fact will imply the quasi-compactness of $X$ and hence of $f^\tau$.

In the above, using the partial properness of $\CMint_\CG$ and $\CMint_{\CG'}$, we can replace $C_i^+$ by $O_{C_i}$. Using Proposition \ref{Anext}, we see that the pairs $(\Phi_i, i_{r_i})$ of $\CG$-shtuka
with framings given by  
    $x_i$, are described by pairs $(\Phi_i, g_i)$, where  $\Phi_i\in\CG(W(O_{C_i})[1/\xi_i])$ and $g_i\in G(\CY_{[r_i,\infty]}(C_i, O_{C_i}))$. We denote by  $(\Phi_i', g_i')$ the images of these 
pairs under  the map induced by $\CG\to\CG'$.
We have
     \begin{equation}\label{frameEq}
     \Phi_i =g_i^{-1} \cdot b\cdot \phi(g_i) .
     \end{equation}
Set
\[
\Phi=(\Phi_i)\in \prod_i \CG(W(O_{C_i})[1/\xi_i])= \CG(\prod_i (W(O_{C_i})[1/\xi_i])).
\]
Since the $\xi_i$-denominators of $\Phi_i$ are uniformily bounded in terms of the coweight $\mu$,
we see that 
\[
\Phi\in   \CG((W(\prod_i O_{C_i}))[1/\xi_i])\subset \CG(\prod_i (W(O_{C_i})[1/\xi_i])).
\]
This says
\[
\Phi\in \CG(W(D^+)[1/\xi]), \quad\hbox{\rm with }\quad \xi=(\xi_i)_i.
\]
The element $\Phi$ defines a $\CG$-shtuka $\CP(\Phi)$ over $\Spa(D, D^+)$. We claim that the corresponding $\CG'$-shtuka $\CP(\Phi')$ given by $\Phi'$  is isomorphic to the $\CG'$-shtuka $\CP'$, given by $\Spa(D, D^+)\to \CMint_{\CG'}$ above. Indeed, let  $\CP'$ be given by $\Psi'\in \CG'(W(D^+)[1/\xi])$ (Note that all $\CG$-torsors, resp. $\CG'$-torsors, over $W(D^+)$ are trivial by Prop. \ref{AnextProduct}.)
By construction, the images $\Phi'_i$, $\Psi_i'$, or $\Phi'$, $\Psi'$, under the projections 
$W(D^+)[1/\xi]\to W(O_{C_i})[1/\xi_i]$ satisfy
\[
\Phi'_i=h'_i\cdot \Psi'_i\cdot \phi(h'_i)^{-1}
\]
with $h'_i\in \CG'(W(O_{C_i}))$.
Since
\[
W(D^+)[1/\xi]\into \prod_i (W(O_{C_i})[1/\xi_i]).
\]
this gives $\Phi'=h'\cdot \Psi'\cdot \phi(h')^{-1}$, for $h'=(h'_i)_i$ in $\CG'(W(D^+))=\prod_i \CG'(W(O_{C_i}))$, and  the claim follows. 

It remains to show that the framings $(i_{r_i})_i$ extend to a framing of $\CP(\Phi)$
which lifts the framing $i'_r$ of $\CP(\Phi')$ given by our $\Spa(D, D^+)$-point of $\CMint_{\CG'}$.
We can understand framings as isomorphisms of corresponding $G$-bundles $\CE$, resp.  $G'$-bundles $\CE'$ over 
the Fargues-Fontaine  curve. Denote by $\CE^b$, resp. $\CE^{b'}$, the $G$-bundle, resp. $G'$-bundle,  over  $X_{FF,\Spd(k)}$
given by $b$, resp. $b'$. Then we have isomorphisms  of $G$-bundles on   $X_{FF, \Spa(D, D^+)}$, resp. $X_{FF, \Spa(C_i, O_{C_i})}$,
\[
g': \CE'\iso \CE^{b'} \times_{\Spd(k)} \Spa(D, D^+), \quad g_i: \CE_i\iso \CE^b\times_{\Spd(k)} \Spa(C_i, O_{C_i}) ,
\]
and we would like to find
\[
g: \CE\iso \CE^{b} \times_{\Spd(k)} \Spa(D, D^+)
\]
which lifts $g'$ and projects to $g_i$ by $\Spa(C_i, O_{C_i})\to \Spa(D, D^+)$, for all $i\in I$.
(Here, we denote these framings again by $g'$, $g_i$, hopefully this does not introduce confusion.)
 We first show that the $G$-bundle $\CE$ over $X_{FF, \Spd(D, D^+)}$ has all its geometric fibers isomorphic to $\CE^b$: The $G$-bundle $\CE$ gives $f: \Spa(D,D^+)\to {\rm Bun}_G$ and the desired statement follows if we show that $f(|\Spa(D,D^+)|)= \{b\}\subset |{\rm Bun}_G|\cong B(G)$. 

Let $T=\Spa(D, D^+)$. For $i\in I$, the projection $(D, D^+)\to (C_i, O_{C_i})$ gives $\Spa(C_i, O_{C_i})\to T$. These combine to give an injection
\[
I\into |T|.
\]
This identifies the discrete set $I$ with a dense subspace 
of $|T|$ and of $\pi_0(|T|)$. As in the proof of  \cite[Prop. 1.5]{Gl}, we see that 
\[
\pi_0(|T|)\simeq \beta I
\]
 is the Stone-\v Cech compactification of the discrete set $I$. 
 
Consider the composition $f': \Spa(D, D^+)\to {\rm Bun}_{G'}$ of $f$ with the natural map ${\rm Bun}_G\to {\rm Bun}_{G'}$ induced by $G\to G'$. The construction of $\CE$ gives that $f'(|T|)=\{b'\}\subset B(G')$ and that 
$f(|\Spa(C_i,O_{C_i})|)=\{b\}$, for all $i\in I$. As above, the points $|\Spa(C_i,O_{C_i})|$, $i\in I$, are dense in $|T|=|\Spa(D,D^+)|$. The result will follow if we establish that any two points of $|{\rm Bun}_G|\cong B(G)$ which map to the same $b'\in |{\rm Bun}_{G'}|\cong B(G')$ and lie in
the same connected component of $|{\rm Bun}_G|$ as $b$, are equal to each other. 
To see this, we use the  commutative diagram 
\begin{equation*}\label{BGconstant}
\begin{aligned}
 \xymatrix{
       & |{\rm Bun}_Z|\cong B(Z)  \ar[r] \ar[d]^{\kappa_Z} &  |{\rm Bun}_G|\cong B(G)\,\ar[r] \ar[d]^{\kappa_G} & |{\rm Bun}_{G'}|\cong B(G')\ar[d]^{\kappa_{G'}} \\
     0\ar[r]   &X_*(Z)_\Gamma\ar[r]  &  \pi_1(G)_\Gamma\,\ar[r]&  \pi_1(G')_\Gamma\ar[r] & 0.
        }
        \end{aligned}
\end{equation*}
Under our assumptions, the second row is exact. The vertical arrows $\kappa_Z$, $\kappa_G$, $\kappa_{G'}$ are the Kottwitz invariant maps which are locally constant by \cite[Thm III.2.7]{FS}. Also, $\kappa_Z$ is a bijection $B(Z)\xrightarrow{\sim} X_*(Z)_\Gamma$, the map $B(G)\to B(G')$ in the top row is surjective and its fibers are identified with $X_*(Z)_\Gamma$, see \cite[Prop. 4.10]{KotIsoII}, cf. \cite[Lem. III.2.10 and the comment below that lemma]{FS}. The result now follows.

For  a perfectoid space $S$ over $k$ and a $G$-torsor $\CE$ over the FF curve $X_{FF, S}$ which has  all geometric fibers isomorphic to $\CE^b$,
  we can
consider the torsors over $S$ under $\wt G_b=\und{\Aut}(\CE^b)$, resp. $\wt G'_{b'}=\und{\Aut}(\CE'^{b'})$, 
\[
Q_S:=\underline{\rm Isom}_{\rm fil}(\CE, \CE^b\times_{\Spd(k)} S), \quad Q'_S:=\underline{\rm Isom}_{\rm fil}(\CE', \CE^{b'}\times_{\Spd(k)} S) ,
\]
cf. \cite[Thm. III.0.2(v)]{FS}. Note here that there is a HN filtration on $\CE$ and $\CE'$, see \cite[proof of Prop. III. 5.3]{FS},
and we ask the isomorphisms to respect the filtrations. This preservation  is automatic over a point $S=\Spa(C, O_C)$, and 
also for $\CE'$ over $S=\Spa(D, D^+)$ since all the fibers have the same HN polygon given by $b'$, cf.  \cite[Thm. II.2.19]{FS}.
We have a map of torsors $Q_S\to Q'_S $ covering the homomorphism $\wt G_b\to \wt G'_{b'}$
By \cite[proof of Prop. III.5.3]{FS}, these are $\wt G_{b}$-,  resp. $\wt G'_{b'}$-torsors which are trivial 
pro-\'etale locally on $S$. 

In the description of $\wt G_b$ and $\wt G'_{b'}$ given by \cite[Prop. III.5.1]{FS}, 
we see that $\wt G_b\to \wt G'_{b'}$ induces an isomorphism $\wt G^{>0}_b\iso \wt G'^{>0}_{b'}$.
Indeed, the central torus $Z$ acts trivially on ${\rm Lie}(G)$ with the adjoint action and so 
for $\lambda> 0$, we have
\[
({\rm Lie}(G)\otimes_{\BQ_p}\br\BQ_p, {\rm Ad}(b)\sigma)^{-\lambda}=({\rm Lie}(G')\otimes_{\BQ_p}\br\BQ_p, {\rm Ad}(b')\sigma)^{-\lambda}
\]
with the notations as in loc. cit..
It follows by loc. cit. that the kernel of $\wt G_b\to \wt G'_{b'}$ is $\underline {Z(\BQ_p)}$. 
This is also the kernel of
$
\underline{G_b(\BQ_p)}\to \underline{G'_{b'}(\BQ_p)}.
$

Recall we let $T=\Spa(D, D^+)$. 
By pulling back the map of torsors $f: Q_T\to Q'_T$ over $T$ 
along
\[
q: T\to Q'_T
\]
given by $g$, we obtain a $\underline {Z(\BQ_p)}$-torsor $\CQ= T\times_{Q'_T}Q_T\to T$. 
Since the perfectoid space $T$ is  strictly totally disconnected, the $\underline {Z(\BQ_p)}$-torsor
$\CQ$ is trivial, cf. \cite[Lem. III.2.6]{FS} (see also the argument on top of loc. cit. p. 90.). 
The framings corresponding to $g_i$, $i\in I$, give $\Spa(C_i, O_{C_i})$-points $q_i$
of $\CQ$. We can think of $(q_i)_{i\in I}$ as giving a section of $\CQ$ over $I$. 

On the other hand,  we have a morphism $\wt y: \CQ\to \CM^{\rm int}_{\CG, b, \mu}$ which 
lifts 
the $T$-point $y$ of $\CM^{\rm int}_{\CG', b', \mu'}$, i.e. it fits in a commutative
diagram
\begin{equation}\label{CDQ}
\begin{aligned}
 \xymatrix{
       \CQ \ar[r]^{\wt y } \ar[d] &  \CM^{\rm int}_{\CG, b, \mu}    \ar[d]^{f}\\
        T\ar[r]^{y\ \ }  &  \CM^{\rm int}_{\CG', b', \mu'} .
        }
        \end{aligned}
\end{equation}
The morphism $\wt y$ is obtained by combining the $\CG$-shtuka 
$\CP(\Phi)$ with the framing provided by the universal point of $\CQ$.
We claim that there is an extension of the given section of $\CQ$ over $I$ to 
a section of  $\CQ$ over $T$. This would give the desired lift $g$ of the framing $g'$ and finish the proof.

 Since $\CQ$ is the  trivial torsor, such a section is given by a continuous function $|T|\to Z(\BQ_p)$ extending the given function $I\to Z(\BQ_p)$. To construct it, we use  a compactness argument. 
Consider the composition
\[
\omega: |\CQ|\xrightarrow{|\wt y|} |\CM^{\rm int}_{\CG, b, \mu}|\xrightarrow{\kappa_G\circ {\rm sp}} C_\CG=\Omega_G/\pi_0(\CG).
\]
Note that
\[
\omega(z\cdot q)=\kappa_G(z)+\omega(q)
\]
for $z\in Z(\BQ_p)$. 

Now observe that, since the points $x_i$ in our construction lie in 
$\CM^{\rm int, \tau}_{\CG, b, \mu}(\Spa(C_i, O_{C_i}))$,  the corresponding points $q_i$ of $\CQ$ satisfy $\omega(q_i)=\tau$, for all $i\in I$. Set $\mathfrak T:=\omega^{-1}(\{\tau\})\subset |\CQ|$. This is a quasi-compact subset 
of $|\CQ|$ since $|\CQ|\simeq |T|\times Z(\BQ_p)$, where 
$|T|$ is quasi-compact, and where the restriction $ Z(\BQ_p)\to C_\CG$ of $\kappa_G$
has compact fibers. The last fact follows because $\Omega_Z=Z(\BQ_p)/\CZ(\BZ_p)$ and $\Omega_Z\to \Omega_G\to C_\CG$ has finite kernel, so each fiber is given by a finite union of cosets of $\CZ(\BZ_p)$ in $Z(\BQ_p)$. 

Since $\pi_0(\mathfrak T)$ is compact, by the universal property of the Stone-\v Cech compactification
we see that the composed map $I\to \mathfrak T\to \pi_0(\mathfrak T)$, 
given by $i\mapsto q_i$, uniquely extends to a continuous map $\beta I\simeq \pi_0(|T|)\to \pi_0(\mathfrak T)\subset \pi_0(|T|)\times Z(\BQ_p)$.
This  corresponds to the desired continuous extension $|T|\to \pi_0(|T|)\to Z(\BQ_p)$.
As above, this produces a section $T\to \CQ$ whose composition with $\wt y: \CQ\to \CM^{\rm int}_{\CG, b, \mu}$ gives $T\to \CM^{\rm int}_{\CG, b, \mu}$. This factors through the open and closed $\CM^{\rm int, \tau}_{\CG, b, \mu}\hookrightarrow  \CM^{\rm int}_{\CG, b, \mu}$ and provides the desired $x: T\to \CM^{\rm int, \tau}_{\CG, b, \mu}$.
\end{proof}

 \section{The case of trivial $\mu_\ad$}\label{s:strivmu}
In this section we prove Theorems   \ref{MainThm} and \ref{thmRepgoal} in the case when $\mu_\ad=1$ is trivial. Note that when  $\mu_\ad=1$ is trivial,  $B(G, \mu^{-1})$ consists of the unique basic element contained in it. In this section, $b$ denotes a representative of this unique element. 
\subsection{The case of trivial $\mu$}
Let first $\CG$ be a parahoric for $G$. It follows from the definition that $\BM^\loc_{\CG,1}=\Spec (\BZ_p)$. 
Furthermore, we have
\begin{equation}
\CM^{\rm int}_{\CG, 1, 1}\simeq \underline{G(\BQ_p)/\CG(\BZ_p)}
\end{equation}
over $\Spd(\br\BZ_p)$, cf. \cite[Prop. 25.2.1]{Schber} (in loc.~cit. only the case of a torus group is considered but the proof is valid in the general case). It follows that $\CM^{\rm int}_{\CG, 1, 1}$ is representable by the formal scheme
\[
\sM_{\CG, 1,1}=\coprod_{G(\BQ_p)/\CG(\BZ_p)} \Spf (\br\BZ_p) ,
\]
and that the formal completion at a $k$-point is isomorphic to $\br\BZ_p$. This proves Theorems \ref{MainThm} and \ref{thmRepgoal} in the case when $\CG$ is a parahoric group scheme. The case of a general quasi-parahoric follows from Theorem \ref{varyGthm}, Proposition \ref{neutralIso} and \eqref{locmodGtoG0}. 

\subsection{The general case}
The case when $\mu_\ad=1$ is reduced to the case of the adjoint group, as follows. Choose an extension $\CG\to\CG'$ to quasi-parahorics of the natural morphism $G\to G':=G_\ad$.  Consider the corresponding morphism of $v$-sheaves 
\[
\CM^{\rm int}_{\CG, b, \mu}\to \CM^{\rm int}_{\CG', 1, 1}. 
\]
We now use the following functorialities:
\begin{itemize}
\item there is an isomorphism $f: \CM^{\rm int,\tau}_{\CG, b, \mu}\xrightarrow{\sim} \CM^{\rm int,\tau'}_{\CG', 1, 1}\times_{\Spd(\br\BZ_p)}\Spd(O_{\br E}),$ for each $\tau\in C_\CG$, cf. Theorem \ref{thm451}. Hence the representability of $\CM^{\rm int}_{\CG', 1, 1}$ implies the representability of $\CM^{\rm int}_{\CG, b, \mu}$. 
\item there is an isomorphism $\wh f:  \CM^{\rm int}_{\CG, b, \mu /x}\xrightarrow{\sim}  \CM^{\rm int}_{\CG', 1, 1 /x'}\times_{\Spd(\br\BZ_p)}\Spd(O_{\br E}),$ for each $x\in \CM^{\rm int}_{\CG, b, \mu}(\Spd k)$, cf. Proposition \ref{adIso}. 
\item there is an isomorphism $\BM^v_{\CG, \mu}\xrightarrow{\sim} \BM^v_{\CG', 1}\times_{\Spd(\BZ_p)}\Spd(O_E)$, cf. \cite[Prop. 21.5.1]{Schber} (attributed in loc.~cit. to J.~Louren\c co).
\end{itemize}
Hence the case $(\CG', 1,1)$ implies the case $(\CG, b, \mu)$. 

\section{The  Hodge type case}\label{s:cruc}

 We now give the first non-banal cases when we can show the isomorphism of Theorem \ref{thmRepgoal}
 and show representability of the formal completions. Let $(G, b, \mu)$ be a local Shimura datum.  Also, throughout this section, we let $\CG=\CG_{\bf x}$ be a stabilizer Bruhat-Tits group scheme for a corresponding point  $\bf x$ in the extended building. We denote by $\CG^o$ the corresponding parahoric. We assume we are in case (A), so in particular $p>2$. 
 
 \subsection{The crucial Hodge type case}

 Let $\iota: (G, \mu)\hookrightarrow (\GL_h, \mu_d)$ be a Hodge embedding. We assume 
 that there is a $\BZ_p$-lattice $\Lambda\subset \BQ_p^h$ such that  the homomorphism $\iota$ extends to a closed immersion
 $
 \iota: \CG\hookrightarrow \GL(\Lambda).
$
 We then say that $\iota: (\CG,\mu)\hook (\GL(\La),\mu_d)$ is an \emph{integral Hodge embedding}.
Then we also have
\[
\CG(\br\BZ_p)=G(\br\BQ_p)\cap \iota^{-1}(\GL(\Lambda\otimes_{\BZ_p}\br\BZ_p)).
\]
By \cite[Prop. 3.6.2]{PRglsv}, this gives a closed immersion
\[
\iota: \CM^{\rm int}_{\CG, b, \mu}\hookrightarrow ({\mathcal M}^{\rm int}_{\GL(\Lambda), \iota(b),\mu_d})_O:=
{\mathcal M}^{\rm int}_{\GL(\Lambda), \iota(b),\mu_d}\times_{\Spd(W(k))}\Spd(O) ,
 \]
   where $O=O_{\br E}$. 
 The formal completion $ \CM^{\rm int}_{\CG, b, \mu /x_0}$ at the base point $x_0$ fits in a cartesian diagram 
 \begin{equation}\label{fibered0}
   \begin{aligned}
   \xymatrix{
     \CM^{\rm int}_{\CG, b, \mu /x_0}    \ar[r]^{\hat\iota\ \ \ \ \ \ } \ar[d] & ( {\mathcal M}^{\rm int}_{\GL(\Lambda), \iota(b),\mu_d /\iota(x_0)})_O \ar[d] \\
    \CM^{\rm int}_{\CG, b, \mu} \ar[r]^{\iota\ \ \ \ \ \ } & {({\mathcal M}^{\rm int}_{\GL(\Lambda), \iota(b),\mu_d}})_O .}
      \end{aligned}
    \end{equation}
 Hence, we also have a closed immersion
  \[
\hat\iota:  \CM^{\rm int}_{\CG, b, \mu /x_0}\hookrightarrow
 {\mathcal M}^{\rm int}_{\GL(\Lambda), \iota(b),\mu_d /\iota(x_0)}\times_{\Spd(W(k))}\Spd(O).
 \]
 Here, by \cite[Thm. 25.1.2]{Schber}   (relating ${\mathcal M}^{\rm int}_{\GL(\Lambda), \iota(b),\mu_d}$ to the RZ-space of EL-type $\CM_\BX$ of $p$-divisible groups of dimension $d$ and height $h$)  and the definition of formal completion, we have 
 \begin{equation}\label{identVersal}
 {\mathcal M}^{\rm int}_{\GL(\Lambda), \iota(b),\mu_d /\iota(x_0)}\times_{\Spd(W(k))}\Spd(O)\simeq \Spd(R\otimes_{W(k)} O),
 \end{equation}
 where $R\simeq W(k)\lps x_1,\ldots , x_m\rps$, for $m=d(h-d)$. The point $\iota(x_0)$ gives a $p$-divisible group $\sG_0$ of height $h$ over $k$ and $R$ is naturally identified with the universal formal deformation ring of $\sG_0$.

 \begin{proposition}
 There is a commutative
 diagram    of morphisms of smooth rigid analytic spaces over $\Sp(\breve E)$,
  \begin{equation}\label{fibered}
  \begin{aligned}
   \xymatrix{
       ( \CM^{\rm int}_{\CG, b, \mu /x_0})_\eta    \ar[r]^{\hat\iota_\eta} \ar[d]_{\pi_\CG} & \Spf(R)^{\rm rig}_{\breve E}
      \ar[d]^{\pi_{\GL_h}} \\
    X_\mu^{\rm an}\ar[r]^{\iota^{\rm an}} & {\rm Gr}(d, h)^{\rm an}_{\breve E} .
    }
        \end{aligned}
    \end{equation}
  The vertical maps are the restrictions of the \'etale period maps to the tubular neighborhoods. The bottom horizontal map is the analytification of the Zariski closed immersion
    \[
    X_\mu\hookrightarrow {\rm Gr}(d, h)_{E} ,
    \]
    which is obtained from   $\iota: G\hookrightarrow \GL_h$. The top horizontal map is a closed immersion.
    \end{proposition}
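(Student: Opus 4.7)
The strategy is to construct the two period maps from the universal de Rham / crystalline realizations on either side of the Hodge embedding, deduce commutativity from functoriality, and extract the closed-immersion property of the top arrow from the given closed immersion $\hat\iota$ of $v$-sheaves combined with the étaleness of the period maps. The existence of the diagram itself is essentially built into the setup.

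First I would recall the two period morphisms. On the right-hand side, the formal scheme $\Spf(R\otimes_{W(k)}O)$ carries a universal $p$-divisible group deforming $\sG_0 = \iota(x_0)$, and the Hodge filtration on its rational de Rham realization over the generic fiber $\Spf(R)^{\rm rig}_{\br E}$ gives the classical Grothendieck--Messing period morphism $\pi_{\GL_h}$, which is étale. On the left-hand side, the de Rham realization at the leg of the universal $\CG$-shtuka on $(\CM^{\rm int}_{\CG,b,\mu})_\eta$ produces a $\CG$-torsor together with a reduction to the parabolic of type $\mu$, hence the étale period morphism to $X_\mu^{\rm an}$ (cf.~\cite[\S 23.3]{Schber}); restricting to the tubular neighborhood $(\CM^{\rm int}_{\CG,b,\mu/x_0})_\eta$ gives the left vertical arrow. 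Commutativity of the square is then automatic: pushing a $\CG$-shtuka along $\iota$ yields the $\GL_h$-shtuka associated to the underlying $p$-divisible group, and the de Rham realization is functorial, so the two Hodge filtrations match along $\iota^{\rm an}: X_\mu^{\rm an}\hookrightarrow \mathrm{Gr}(d,h)^{\rm an}_{\br E}$.

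For the closed-immersion property, I would start from the fact that $\hat\iota$ is a closed immersion of $v$-sheaves, which remains so after base change to $\Spd(\br E)$. Using the identification of the generic fiber of $\Spd(R\otimes_{W(k)}O)$ with the diamond attached to the smooth rigid analytic polydisc $\Spf(R)^{\rm rig}_{\br E}$, the map $\hat\iota_\eta$ becomes a closed immersion of $v$-sheaves into this diamond. Since $\pi_{\GL_h}$ is étale and $X_\mu^{\rm an}\hookrightarrow \mathrm{Gr}(d,h)^{\rm an}_{\br E}$ is a closed immersion, the preimage
\[
Z := \pi_{\GL_h}^{-1}(X_\mu^{\rm an}) \subset \Spf(R)^{\rm rig}_{\br E}
\]
is a closed rigid analytic subspace, smooth because it is étale over the smooth $X_\mu^{\rm an}$. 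By the commutativity established above, $\hat\iota_\eta$ factors through the diamond $Z^\diam$. One then argues that $(\CM^{\rm int}_{\CG,b,\mu/x_0})_\eta$ is represented by a (clopen) rigid analytic subspace of $Z$, which makes $\hat\iota_\eta$ an honest closed immersion of smooth rigid analytic spaces, and simultaneously identifies the left vertical arrow with the restriction of the étale period map to a tubular neighborhood.

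The main obstacle is this last step: promoting the closed immersion of $v$-sheaves into $\Spf(R)^{\rm rig}_{\br E}$ to an actual closed immersion of rigid analytic spaces. This requires matching the abstract ``$\mu$-boundedness'' of $\CG$-shtukas, defined through the local model $\BM^v_{\CG,\mu}$ and its $v$-sheaf embedding into the Beilinson--Drinfeld affine Grassmannian of $\GL_h$, with the concrete Hodge-filtration condition that cuts $X_\mu$ out of $\mathrm{Gr}(d,h)$, and then invoking the standard equivalence between smooth rigid analytic spaces and their associated diamonds to descend the sub-$v$-sheaf $(\CM^{\rm int}_{\CG,b,\mu/x_0})_\eta \hookrightarrow Z^\diam$ to a rigid analytic subspace of $Z$.
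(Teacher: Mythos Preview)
Your overall strategy is sound and the commutativity argument via functoriality of the de Rham realization is exactly right. However, your route to representability of the top-left corner and to the closed-immersion property is more circuitous than necessary, and your ``main obstacle'' paragraph underestimates how directly this can be resolved.

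The paper's proof avoids the detour through $Z=\pi_{\GL_h}^{-1}(X_\mu^{\rm an})$ entirely. The key observation you are missing is that $(\CM^{\rm int}_{\CG,b,\mu/x_0})_\eta \hookrightarrow (\CM^{\rm int}_{\CG,b,\mu})_\eta$ is an \emph{open immersion} of $v$-sheaves (this follows from Gleason's work on formal completions, \cite[Prop.~4.22]{Gl} and \cite[Lem.~2.31]{Gl21}). Since the target is already known to be representable by a smooth rigid analytic space---it is a disjoint union of local Shimura varieties by Theorem~\ref{quasiGeneric2}---the source is immediately representable as an open rigid analytic subspace. Once all four corners of the diagram are representable by (semi-normal) rigid analytic spaces, full-faithfulness of the diamond functor \cite[Prop.~10.2.3]{Schber} promotes the closed immersion $\hat\iota_\eta$ of $v$-sheaves to a morphism of rigid analytic spaces, and a morphism of rigid spaces which is a closed immersion on diamonds is a closed immersion.

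Your approach via $Z$ can be made to work: since both $(\CM^{\rm int}_{\CG,b,\mu/x_0})_\eta$ and $Z^\diam$ are \'etale over $(X_\mu^{\rm an})^\diam$, and the diamond functor induces an equivalence of \'etale sites, the source is representable and the closed immersion into $Z$ is automatically open as well (a closed immersion between spaces \'etale over the same base is clopen). But to justify that the restricted period map $\pi_\CG$ on the tubular neighborhood is \'etale, you still implicitly need to know it is the restriction of the \'etale period map on the full local Shimura variety along an open immersion---so you end up using the same Gleason input anyway, just less transparently. The paper's argument front-loads this step and thereby avoids any need to analyze $Z$ or to match ``$\mu$-boundedness'' conditions explicitly.
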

    
    Here 
   $\Spf(R)^{\rm rig}$ is the rigid analytic generic fiber of $\Spf(R)$ (in Berthelot's sense).
    For simplicity, we use the same symbol for the $v$-sheaf $( \CM^{\rm int}_{\CG, b, \mu /x_0})_\eta$,
    and for the smooth rigid analytic space over $\Sp(\br E)$ that represents it.

    \begin{proof}
   Consider the commutative diagram of $v$-sheaves
    \begin{equation}\label{fibered2}
  \begin{aligned}
   \xymatrix{
       (\CM^{\rm int}_{\CG, b, \mu})_\eta   \ar[r]^{\iota\ \ \ \ \ \ \ \ \ \ \ \ \ } \ar[d]_{\pi_\CG} &  \CM^{\rm int}_{\GL_h, \iota(b), \mu_d}\times_{\Spd(\BZ_p)}\Spd(\br E)
      \ar[d]^{\pi_{\GL_h}} \\
    X_\mu^\diamondsuit\ar[r]^{\iota\ \ \ \ \ \ \ \ \ \ \ \ \ } & {\rm Gr}(d, h)^\diamondsuit\times_{\Spd(\BQ_p)}\Spd(\br E).
    }
        \end{aligned}
    \end{equation}
  In this, the two horizontal maps are closed immersions. In particular, the bottom horizontal map is represented by a Zariski closed immersion. The two vertical period maps are given
  by the \'etale period maps (on the left side we combine the period maps on the components, as in Theorem \ref{quasiGeneric2}).  
   Their images are the corresponding open admissible sets
    and we have
    \[
    X_\mu^{\rm adm}=({\rm Gr}(d, h)\times_{\Spd(\BQ_p)}\Spd(\br E))^{\rm adm}\cap X_\mu, 
    \]
    cf. \cite[proof of Prop. 3.1.1]{PRglsv}. Note that $( \CM^{\rm int}_{\CG, b, \mu /x_0})_\eta\hookrightarrow (\CM^{\rm int}_{\CG, b, \mu})_\eta$ is an open immersion by the argument of \cite[Prop. 4.22]{Gl}, cf. \cite[Lem. 2.31 and its proof]{Gl21}. Hence, all the $v$-sheaves in (\ref{fibered}) are representable by smooth rigid analytic spaces. By full-faithfulness \cite[Prop. 10.2.3]{Schber} the top horizontal map is also representable by a morphism 
    of rigid analytic spaces and the result  follows.
    \end{proof}

 \begin{theorem}\label{thmRep}
Let $(G, b, \mu)$ be a local Shimura datum and let $\CG$ be a  quasi-parahoric  stabilizer group scheme for $G$.  Suppose $p>2$ and assume that there exists a Hodge embedding $\iota: (G, \mu)\hookrightarrow (\GL_h, \mu_d)$
 and a $\BZ_p$-lattice $\Lambda\subset \BQ_p^h$ such that:
 
 \begin{itemize} 
  \item[a)]  The homomorphism $\iota$ extends to a closed immersion
   $
 \iota: \CG\hookrightarrow \GL(\Lambda).
$

 \item[b)] The Zariski closure $\overline {X}_\mu$ of $X_\mu\subset {\rm Gr}(d, h)_{E}$ in ${\rm Gr}(d, \Lambda)_{O_E}$
 is normal. 
 
 \item[c)] The image $\iota(G)$ contains the scalars.

 \end{itemize}
  
\noindent  Then  the following statements hold:
  
  \smallskip
  
  1)
 $\BM^{\rm loc}_{\CG,\mu} \simeq \overline {X}_\mu$. 
 
 \smallskip
 
 2) For any $x\in \CM^{\rm int}_{\CG, b, \mu}(\Spd(k))$ 
  there is $y\in \BM^{\rm loc}_{\CG,\mu}(k)$,   such that 
 \begin{equation}\label{eqFormaliso}
 \CM^{\rm int}_{\CG, b, \mu /x} \simeq ( \BM^{\rm loc}_{\CG,\mu /y})^\diam,
 \end{equation}
provided that $\iota: (\CG,\mu)\hook (\GL(\La),\mu_d)$ is a \emph{very good integral Hodge embedding} in the 
 sense of \cite{KPZ}, see below.  In the above, the orbit $\CG(k)\cdot y$ is equal to $\ell(x)$, cf. (\ref{Lmap}).

\end{theorem}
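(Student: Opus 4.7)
For part 1), my plan is to exploit the integral Hodge embedding to pin down $\BM^{\rm loc}_{\CG,\mu}$ as a closed subscheme of a Grassmannian. The embedding $\iota\colon\CG\hook\GL(\La)$ induces a morphism of $v$-sheaf local models
\[
\BM^v_{\CG,\mu}\to \BM^v_{\GL(\La),\mu_d}\times_{\Spd(O_{\BQ_p})}\Spd(O_E),
\]
whose target is representable by $\overline{\rm Gr}(d,\La)_{O_E}$ and whose generic fiber is the closed immersion $X_\mu\hook {\rm Gr}(d,h)_E$. Since $\BM^{\rm loc}_{\CG,\mu}$ is known (by \cite{AGLR}) to be a normal flat projective $O_E$-scheme with reduced special fiber, and since the generic fiber of the induced map is the Zariski closed immersion $X_\mu\hook {\rm Gr}(d,h)_E$, the map factors through the scheme-theoretic closure $\overline{X}_\mu$. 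By hypothesis (b), $\overline{X}_\mu$ is also normal and flat, with the same generic fiber $X_\mu$; both schemes are projective over $O_E$ and both are characterized as the unique flat normal closure of $X_\mu$ in the given ambient Grassmannian. The resulting proper birational morphism $\BM^{\rm loc}_{\CG,\mu}\to\overline{X}_\mu$ between normal $O_E$-flat schemes which is an isomorphism on generic fibers is then an isomorphism by Zariski's main theorem, once one observes that it is quasi-finite (which follows from flatness and the agreement of Hilbert polynomials on fibers).

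For part 2), the plan has three movements. First, using the change-of-base-point argument of \S\ref{ss:basepoint}, I reduce to the case $x=x_0$ at the cost of replacing $b$ by some $b_x\in \Adm^K(\mu^{-1})$ in the same $\phi$-conjugacy class, and I identify $y$ with the point of $\BM^{\rm loc}_{\CG,\mu}\simeq\overline{X}_\mu$ corresponding to the coset $b_x^{-1}\,\CG(W(k))$, which realizes the orbit $\ell(x)$ by construction. Via (\ref{fibered0}) and (\ref{identVersal}) the problem becomes to identify the image of the closed immersion
\[
\wh{\iota}\colon \CM^{\rm int}_{\CG,b_x,\mu/x_0}\hook \Spd(R\otimes_{W(k)}O)
\]
with the $v$-sheaf $(\BM^{\rm loc}_{\CG,\mu/y})^\diam\simeq \Spd(\wh{R}_y)$, where $\wh{R}_y$ is the completion of the local ring of $\overline{X}_\mu$ at $y$.

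Second, following the template of \cite{KP}, \cite{KZhou}, \cite{KPZ}, I pick a finite family of tensors $(s_\alpha)\subset\La^\otimes$ whose pointwise stabilizer in $\GL(\La)$ equals $\CG$; evaluating at $\iota(x_0)$ produces crystalline tensors $(\tilde s_\alpha)$ on the Dieudonn\'e module of the $p$-divisible group $\sG_0$. Using Zink's theory of displays, which is available because $p>2$, together with the very good integral Hodge embedding hypothesis, I construct a versal $\CG$-equivariant deformation of $\sG_0$ carrying lifts of the tensors over a closed formal subscheme $\Spf(R_\CG)\subset \Spf(R\otimes O)$; the very good condition is precisely what controls the integral comparison between the Hodge filtration data on this display and the local model, and what ensures the scheme $\Spf(R_\CG)$ is flat and agrees formally with $\Spf(\wh{R}_y)$. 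The locus $\Spf(R_\CG)$ is then identified, via the Grothendieck-Messing/Breuil-Kisin style description of the Hodge filtration, with the completion of the local model at $y$.

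Third, it remains to identify the image of $\wh{\iota}$ with $\Spd(R_\CG)$: on any test affinoid perfectoid, the reduction of the universal $\GL(\La)$-shtuka over $R\otimes O$ to a $\CG$-shtuka is equivalent, via the tensors $(s_\alpha)$ and the full faithfulness of the tensor formalism, to the condition that the corresponding display-theoretic lift respects the tensors, i.e., factors through $\Spf(R_\CG)$. Combining the three movements yields \eqref{eqFormaliso}. The main obstacle is the middle step: producing a versal $\CG$-display over an explicit closed formal subscheme of $\Spf(R)$ whose associated Hodge filtration matches $\BM^{\rm loc}_{\CG,\mu/y}$ on the nose. Both the restriction to $p>2$ (Zink displays) and the very good integral Hodge embedding hypothesis are essential at precisely this point; without them, neither the lifting of crystalline tensors nor their compatibility with the integral local model can be guaranteed for general quasi-parahoric $\CG$.
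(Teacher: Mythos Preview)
Your outline for part 1) is close in spirit to the paper's but leaves a step unjustified: you never explain why the $v$-sheaf morphism $\BM^v_{\CG,\mu}\to {\rm Gr}(d,\La)^\diam_{O_E}$ is induced by a morphism of schemes. The paper handles this by invoking full-faithfulness \cite[Prop.~18.4.1]{Schber} (using that $\BM^{\rm loc}_{\CG,\mu}$ is normal), and then argues at the $v$-sheaf level: since $(\overline{X}_\mu)^\diam$ is topologically flat, it is the $v$-sheaf closure of $X_\mu^\diam$ in ${\rm Gr}(d,h)^\diam_{O_E}$, hence equals $\BM^v_{\CG,\mu}$; normality of both schemes then gives the scheme isomorphism. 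Your Zariski's-main-theorem argument with ``quasi-finiteness via Hilbert polynomials'' is not substantiated and is unnecessary.

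The real gap is in your third movement for part 2). You assert that over \emph{any} affinoid perfectoid, having a $\CG$-shtuka is equivalent, via the tensor formalism, to the display-theoretic lift factoring through $\Spf(R_\CG)$. There is no such direct comparison available: Zink's display theory and the tensor-preservation arguments of \cite{KP}, \cite{KZhou} operate over discrete rings (in particular over $O_K$ for $K/\breve E$ finite), not over general perfectoid bases. The paper's route is substantively different here. One first proves an inclusion $(\CM^{\rm int}_{\CG,b,\mu/x_0})_\eta(K)\subset \Spf(R_G)^{\rm rig}(K)$ only at \emph{classical points}: given such a $K$-point, one extracts a $\CG$-Breuil--Kisin module, uses the \'etale--crystalline comparison to verify that the corresponding deformation $\sG_{\wt x}$ satisfies the ``$(\CG,\mu^{-1})$-adapted'' conditions of Proposition~\ref{adapted}, and invokes that proposition to factor $\wt x^*$ through $R_G$. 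Since both rigid generic fibers are smooth of the same dimension and $\Spf(R_G)^{\rm rig}$ is connected (normality of $R_G$), the classical-point inclusion forces equality on generic fibers. Only then does one pass to the integral level, using that both $\CM^{\rm int}_{\CG,b,\mu/x_0}$ and $\Spd(R_G)$ are topologically flat closed sub-$v$-sheaves of $\Spd(R_E)$ with equal generic fibers, so \cite[Lem.~17.4.1]{Schber} gives the full isomorphism. Your plan skips this classical-points-then-topological-flatness mechanism, and without it the identification of the formal completion with $\Spd(R_G)$ does not follow.
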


\begin{proof} By section \ref{ss:basepoint}, we may assume that $x=x_0$ is the base point. For simplicity, we write 
\[
\Ms= \CM^{\rm int}_{\CG, b, \mu /x_0}, \qquad \BM^{\rm loc}=\BM^{\rm loc}_{\CG,\mu}.
\]

 We first show  1), i.e., $\BM^{\rm loc} \simeq \overline {X}_\mu$.
 By \cite[Thm. 21.2.1]{Schber}, $\iota$ induces a closed immersion of $v$-sheaves over $\Spd(\BZ_p)$,
\[
\iota: {\rm Gr}_{\CG, \Spd(\BZ_p)}\to {\rm Gr}_{\GL_h, \Spd(\BZ_p)} .
\]
 This gives a closed immersion of $v$-sheaves over $\Spd(O_E)$,
\[
\BM^v\to {\rm Gr}(d, h)_{O_E}^\diam .
\]
 Indeed, $\BM^v=\BM^v_{\CG, \mu}$ is, by definition, the $v$-sheaf closure of $X_\mu^\diam$ in 
 \[
 {\rm Gr}(d, h)_{O_E}^\diam\into {\rm Gr}_{\GL_h}\times_{\Spd(\BZ_p)}\Spd( O_E).
 \]
 (See \cite[2.1]{AGLR} for a discussion of $v$-sheaf closures.)
Since  $\BM^v=(\BM^{\rm loc})^\diam$ and $\BM^{\rm loc}$ is normal, we deduce from  full-faithfulness (using \cite[Prop. 18.4.1]{Schber} and formal GAGA) that the closed immersion
$\BM^v\to {\rm Gr}(d, h)_{O_E}^\diam$ is represented by a morphism of schemes,
\[
\BM^{\rm loc}\to {\rm Gr}(d, h)_{O_E}.
\]
This morphism extends $X_\mu\hookrightarrow {\rm Gr}(d, h)_E$ on the generic fibers.
Now  $(\overline X_\mu)^\diam$
is topologically flat (\cite[Lem. 3.4.2]{PRglsv}) and so it is  the $v$-sheaf closure of the generic fiber $X_\mu^\diam$
in ${\rm Gr}(d, h)_{O_E}^\diam$. We obtain an isomorphism 
\[
\BM^v=(\BM^{\rm loc})^\diam\xrightarrow{\sim} (\overline X_\mu)^\diam\hookrightarrow {\rm Gr}(d, h)_{O_E}^\diam.
\]
Since $\BM^{\rm loc}$ is normal and we are also assuming that $\overline X_\mu$ is normal, the isomorphism 
$(\BM^{\rm loc})^\diam\xrightarrow{\sim} (\overline X_\mu)^\diam$ is obtained from an isomorphism 
$\BM^{\rm loc}\xrightarrow{\sim} \overline X_\mu$ of $O_E$-schemes, as claimed.

We now proceed to show  2). 
By \cite[Prop. 1.3.2]{KisinJAMS}, there is a finite set of tensors $\{s_a\}_a\subset \Lambda^\otimes$, such that $\CG$ is the scheme theoretic pointwise stabilizer of $s_a$,
\[
\CG=\{g \in \GL(\Lambda)\ |\ g\cdot s_a=s_a, \forall a\} .
\]
By the Tannakian formalism applied to $\CG\to \GL(\Lambda)$, each $\CG$-torsor over an affine $\BZ_p$-scheme $\Spec(A)$ gives a finite projective $A$-module $N$ of rank equal to ${\rm rank}_{\BZ_p}(\Lambda)$ with tensors 
$s_a(N)\in N^{\otimes}$.

Consider the universal $\CG$-shtuka over $\Ms$; its push-out by $\CG\to \GL(\Lambda)$ is the shtuka 
which is obtained from the Breuil-Kisin-Fargues module of the universal $p$-divisible group, as in the proof of \cite[Theorem 25.1.2]{Schber}. By specializing to the base point $x_0$ and using the Tannakian formalism we see that the $\CG$-shtuka over $\Spd(k)$ that corresponds to $x_0$ equips  the Dieudonn\'e module $\BD:=\BD(\sG_0)(\br \BZ_p)$ of $\sG_0$ with Frobenius invariant tensors $s_{a, 0}\in \BD^{\otimes}$.  

Write $R_G$ for the completion of $\BM^{\rm loc}\otimes_{O_E}O$ at the corresponding point $y_0$ with orbit $\ell(x_0)$ and denote by ${\frak m}_G$ the maximal ideal of $R_G$. The $O$-algebra $R_G$ is normal and is a quotient of   the formal completion $R_E\simeq R\otimes_WO$ of the local ring of ${\rm Gr}(d,\La)_{O}$ at $y_0$. Note that, since $\bar X_\mu\simeq \BM^{\rm loc}$, we see that $\BM^{\rm loc}\otimes_{O_E}O$
is identified with the reduced Zariski closure of a $G$-orbit $G\cdot y$ in ${\rm Gr}(d, h)_O$, where $y$ is an $F$-point that corresponds to a filtration 
induced by a $G$-valued cocharacter $\mu_y$ conjugate to $\mu^{-1}$. Here $F$ is a finite extension of $\breve\BQ_p$. 

Let us now briefly review certain constructions of \cite{KP}, \cite{KZhou}, \cite{KPZ}, and in particular the notion of a very good integral Hodge embedding; we will use the notations of these papers. We continue to assume $p>2$ and that (a), (b), (c) are satisfied. 

Set $M=\La\otimes_{\BZ_p}R_E$ and denote by $\hat I_{R_E}M\subset M_1\subset M$ the unique $\widehat{W}(R_E)$-submodule corresponding to the universal $R_E$-valued point of the Grassmannian. This gives a ``Dieudonn\'e pair" $(M, M_1)$. We will  denote by $(M_{R_G}, M_{R_G,1})$ the Dieudonn\'e pair of $\widehat{W}(R_G)$-modules which is obtained by base changing $(M, M_1)$ along $R_E\to R_G$. We set
\[
\widetilde M_{R_G,1}:={\rm Im}(\phi^*M_{R_G, 1}\to \phi^*M_{R_G}).
\] 
Then $\widetilde M_{R_G,1}$ is a finite free $\widehat{W}(R_G)$-module  and
\[
\widetilde M_{R_G,1}[1/p]=(\phi^*M_{R_G})[1/p].
\]

 By the argument of \cite[Cor. 3.2.11]{KP} (which extends to this situation using also the main result of \cite{An}, cf. Remark \ref{ANRem}), the tensors 
 \[
 \tilde s_a:=s_a\otimes 1=\phi^*(s_a\otimes 1)\in \La^{\otimes}\otimes_{\BZ_p} \widehat {W}(R_G)=(\phi^*M_{R_G})^{\otimes}\subset (\phi^*M_{R_G})^{\otimes}[1/p]=\widetilde M_{R_G,1}^{\otimes}[1/p]
 \]
lie in $\widetilde M_{R_G,1}^{\otimes}$ and the scheme
\[
\CT=\underline{\rm Isom}_{(\tilde s_a), (s_a)}(\widetilde M_{R_G,1}, \La\otimes_{\BZ_p}\widehat {W}(R_G))
\]
of isomorphisms that preserve the tensors is a (trivial) $\CG$-torsor over $\widehat {W}(R_G)$.
The scheme $\CT$ is independent of the choice of the set of tensors $(s_a)\subset \La^\otimes$ that cut out $\CG$.

Set ${\frak a}_G={\frak m}_G^2+\pi_ER_G\subset R_G$. There is a canonical isomorphism
\begin{equation}\label{canonicaliso}
c=c_{y_0}:  \widetilde M_{0,1}\otimes_{W(k)}\widehat W(R_G/{\frak a}_G)\xrightarrow{\sim} \widetilde M_{R_G, 1}\otimes_{\widehat W(R_G)}\widehat W(R_G/{\frak a}_G)
\end{equation}
(\cite[Lem. 3.1.9]{KP},  \cite[\S 5.2.1]{KPZ}).
Here, $(M_0, M_{0,1})$ is the Dieudonn\'e pair of $W(k)$-modules obtained from $(M_{R_G}, M_{R_G, 1})$ by the base change given by $y_0^*: R_G\to k$
and $\widetilde M_{0,1}={\rm Im}(\phi^*M_{0,1}\to \phi^*M_0)$. 

We say that the  tensors $(\tilde s_a)$  are preserved by $c$ if we have $ c(\tilde s_{a,0}\otimes 1)=\tilde s_a\otimes 1$, for all $a$. 
Then the isomorphism $c$ uniquely descends to an isomorphism of $\CG$-torsors
\[
c^\CG: \CT_0\otimes_{W(k)}\widehat W(R_G/{\frak a}_G)\xrightarrow{\sim} \CT\otimes_{\widehat W(R_\CG)}\widehat W(R_G/{\frak a}_G).
\]

We say that the integral Hodge embedding  $(\CG,\mu)\hook (\GL(\La),\mu_d)$ is \emph{very good at $y_0$}, if there are tensors $(s_a)\subset \La^{\otimes }$ cutting out $\CG$ in $\GL(\La)$ such that $(\tilde s_a)$ are  preserved by $c=c_{y_0}$, see \cite[\S 5.2]{KPZ}.
 This is equivalent to asking that $c$ descends to an isomorphism of $\CG$-torsors $c^\CG$, as above, and this property does not depend on the choice of $(s_a)$.
We say that $(\CG,\mu)\hook (\GL(\La), \mu_d)$ is \emph{very good}, if it is very good at all $y\in \BM^{\rm loc}_{\CG,\mu}(k)$. 

The following statement encapsulates a construction of \cite{KP}, as extended in \cite{KZhou}, \cite{KPZ}.

\begin{proposition}\label{adapted}  Let $(\CG,\mu)\hook (\GL(\La),\mu_d)$  be an integral Hodge embedding 
 which satisfies (a), (b), (c) of Theorem \ref{thmRep} and which is very good\footnote{this condition was erroneously omitted in \cite{KP}, and the first version of \cite{KZhou}; the fact that it is satisfied in all cases of interest is shown in \cite{KPZ}.} at a point $y_0\in \BM^{\rm loc}_{\CG,\mu}(k)$ corresponding to 
the base point $x_0\in \CM^{\rm int}_{\CG, b, \mu}(\Spd(k))$. Let $R_E$ be the formal completion  of the local ring of ${\rm Gr}(d,\La)_{O}$ at $y_0$.  There exists a $p$-divisible group $\sG^{\rm univ}$ over $R_E$ 
 which is a versal formal deformation of $\sG_0$ and which satisfies the following property:

Let $K/\br E$ be a finite field extension  and $\wt x^*: R_E\to O_K$
a local $O$-algebra homomorphism satisfying:

\begin{itemize}
\item[a)] The filtration on $\BD\otimes_{\breve\BZ_p} K$ which corresponds to 
the deformation $\sG_{\wt x}:={\wt x}^*(\sG^{\rm univ})$ of the $p$-divisible group $\sG_0$ to $O_K$
given by base change of $\sG^{\rm univ}$ via $\wt x^*$,
 is induced by a $G$-valued cocharacter which is $G$-conjugate to $\mu^{-1}$;

\item[b)] The tensors $s_{\alpha, 0}\in\BD^{\otimes}$ correspond to tensors $s_{\alpha, \et}\in T_p\sG^{\vee\otimes}_{\wt x}$
under the $p$-adic (\'etale-crystalline) comparison isomorphism.
\end{itemize}
Then the homomorphism $\wt x^*: R_E\to O_K$ factors through the quotient $R_E\to R_G$.
\end{proposition}

\begin{proof} Let us give a very quick overview to orient the reader. 
The construction of $\sG^{\rm univ}$ uses Zink's theory of displays. By the argument of \cite[\S 3.2.12]{KP} 
  we obtain  a ``Dieudonn\'e display triple" $(M_{R_G}, M_{R_G, 1}, \Psi_{R_G})$, as in loc. cit. (see Remark \ref{ANRem} below and the proof of \cite[Prop. 3.2.7]{KZhou} for the removal of Condition 3.2.2 in \cite{KP}). This triple gives, by \cite[Lem. 3.1.5]{KP}, a Dieudonn\'e display over $R_G$ and then one constructs also  a versal display $(M_{R}, M_{R, 1}, \Psi_{R})$
over $R_E$ which lifts $(M_{R_G}, M_{R_G, 1}, \Psi_{R_G})$. The display $(M_{R}, M_{R, 1}, \Psi_{R})$
gives the desired versal formal deformation $\sG^{\rm univ}$ of $\sG_0$ over $\Spf(R_E)$. 

We can then see that the $p$-divisible group $\sG^{\rm univ}$, as constructed as above, has the property stated in the proposition. For this first note that, by   \cite[Prop. 3.2.7]{KZhou}, a deformation $\sG_{\wt x}$ of $\sG_0$ which satisfies (a) and (b) is a ``$(\CG, \mu^{-1})$-adapted lifting" in the sense of \cite[Def. 3.2.4]{KZhou}. Then the claim follows from \cite[Prop. 3.3.4]{KZhou} (the same statement under  additional tameness hypotheses on $G$ also appears in \cite{KP}, see \cite[Prop. 3.3.13]{KP}).  
\end{proof}

The versal formal deformation $\sG^{\rm univ}$ over $R_E$ of the $p$-divisible group $\sG_0$ induces an identification of $\Spd(R_E)$ with  $ {\mathcal M}^{\rm int}_{\GL(\Lambda), \iota(b),\mu_d /\iota(x_0)}\times_{\Spd(W)}\Spd(O)$.
We will now use Proposition \ref{adapted} to check that the rigid analytic closed subspace
\[
\Spf(R_G)^{\rm rig}\hookrightarrow \Spf(R_E)^{\rm rig}=\Spf(R)^{\rm rig}_{\br E}
\]
agrees with the rigid analytic closed subspace $(\Ms)_\eta$, 
under the  identification induced by diagram \eqref{fibered}.

 Let $K/\br E$ be a finite extension.
We will compare  $(\Ms)_\eta(K)$ and $\Spf(R_G)^{\rm rig}(K)$ as subsets of $\Spf(R)^{\rm rig}_{\br E}(K)$.

\begin{proposition}\label{includeK}
For all finite field extensions $K/\br E$, there is an inclusion 
\[
(\Ms)_\eta(K)\subset \Spf(R_G)^{\rm rig}(K) .
\]
\end{proposition}

\begin{proof}
A $K$-point $x$ of $(\Ms)_\eta$ gives, after composing with $(\Ms)_\eta\hookrightarrow \Spf(R_E)^{\rm rig}$,
a formal scheme morphism $\wt x: \Spf(O_K)\to \Spf(R_E)$. The point $x$ also gives 
 a crystalline representation
 \begin{equation}\label{Gre}
\rho_x: \Gal(\bar K/K)\to \GL(T_p(\sG_{\wt x})^\vee)
 \end{equation}
 on the linear dual of the Tate module
of the $p$-divisible group $\sG_{\wt x}$ obtained  from $\sG^{\rm univ}$ as above, by pulling back by $\wt x$. 
Take $C=\widehat {\bar K}$ which supports a $\Gal(\bar K/K)$-action and consider the corresponding point
$\bar x: \Spa(C, O_C)\to \Spd(K)\to (\Ms)_\eta$ which gives a $(\CG, \mu)$-shtuka 
$(\sP_{\bar x}, \phi_{\sP_{\bar x}})$ with framing.
 By Proposition \ref{prop431} we obtain a functor
\[
\BP_{\bar x}: {\rm Rep}_{\BZ_p}(\CG)\to \text{ $\und\BZ_p$-Loc $(\Spa(C, O_C))\cong \BZ_p$-mod }
\]
such that $\BP_{\bar x}(\Lambda)=T_p(\sG_{\wt x})^\vee$. For $\gamma\in \Gal(\bar K/K)$, there
is an isomorphism ${\mathbb R}_x(\gamma): \BP_{\bar x}\simeq \BP_{\bar x\cdot \gamma}= \BP_{\bar x}$ giving the Galois action \eqref{Gre}, i.e., $\BR_{x}(\gamma)(\Lambda)=\rho_x(\gamma)$.

The $\CG$-invariant tensors $s_a\in \Lambda^\otimes$ give, by applying the functor $\BP$,
corresponding tensors $s_{a,\et}\in T_p(\sG_{\wt x})^{\vee\otimes}$;
these are invariant under the action of $\Gal(\bar K/K)$ through $\rho_x$.
As in \S \ref{classPoints}, we see that the Galois representation $\rho_x$ factors 
\[
\rho_x: \Gal(\bar K/K)\to \CG^o_{\beta}(\BZ_p)\subset \CG_{\beta}(\BZ_p)\subset G(\BQ_p),
\]
where $\bar\beta\in \Pi_\CG$ is such that $x$ lies in the  component ${\rm Sh}_{\CG_\beta(\BZ_p)}(G, b, \mu)$ of $(\CM^{\rm int}_{\CG,b,\mu})_\eta$, cf. Theorem \ref{quasiGeneric2}.
Note that 
\[
\CG_{\beta}\otimes_{\BZ_p}\br\BZ_p\simeq \CG\otimes_{\BZ_p}\br\BZ_p.
\]
Choose a uniformizer $\pi_K$ of $K$ and a collection of roots $\pi_K^{1/p^n}$ in $\bar K\subset C$. We can now see, as in \cite[\S 3.5]{PRglsv}, \cite[(3.3.3)]{KP}, that the Breuil-Kisin module $\fkM_{\wt x}$ of $\sG_{\wt x}$ obtained using these choices can be refined to a $\CG^o_{\beta}$-Breuil-Kisin module.
By definition, this is a $\CG^o_{\beta}$-torsor over $\Spec(W(k)\lps u\rps)$ with meromorphic Frobenius structure, see loc. cit.. Refined here is meant in the sense that the push-out by $\CG^o_{\beta}\otimes_{\BZ_p}W(k)\to \CG\otimes_{\BZ_p}W(k)\to \GL(\Lambda\otimes_{\BZ_p}W(k))$ recovers the $\GL(\Lambda\otimes_{\BZ_p}W(k))$-torsor over $\Spec(W(k)\lps u\rps)$ which corresponds to    $\fkM_{\wt x}$.  By pushing out the torsor by $\CG^o_{\beta}\otimes_{\BZ_p}W(k)\to \CG\otimes_{\BZ_p}W(k)$, we obtain a $\CG$-Breuil-Kisin module $\sP_{\rm BK}$ over $\Spec(W(k)\lps u\rps)$.
The tensors $s_a$ induce tensors $\wt s_a\in \fkM_{\wt x}^\otimes$ over $W(k)\lps u\rps$, comp. \cite[(3.3.3)]{KP}.
(Note that after base changing by $W(k)\lps u\rps\to W(O_C)$ given by $(\pi_K^{1/p^n})_n$, $\sP_{\rm BK}$ gives a $\CG$-Breuil-Kisin-Fargues module. 
By the proof of  \cite[Prop. 3.5.1]{PRglsv}, the restriction of this $\CG$-Breuil-Kisin-Fargues module from $\Spec(W(O_C))$ to $\CY_{[0,\infty)}(C, O_C)$ is isomorphic to the initial
$(\CG, \mu)$-shtuka $(\sP_{\bar x}, \phi_{\sP_{\bar x}})$ which corresponds to the point $x$.)  
Recall that the tensors $s_a$ also induce $s_{a, 0}\in \BD^\otimes$,
where, as above, $\BD=\BD(\sG_0)(\br\BZ_p)$ is the Dieudonn\'e module of the special fiber $\sG_0=\sG_{\wt x}\otimes_{O_K}k$. The existence of $\wt s_a$ above, together with the compatibility properties of the Breuil-Kisin functor (see for example, \cite[Thm. 3.3.2, Prop. 3.3.8]{KP}), implies that $s_{a,\et}$ and $s_{a, 0}$ correspond under the comparison
isomorphism between $p$-adic-\'etale and crystalline cohomology. It also implies that the Hodge filtration on $\BD\otimes_{\br\BZ_p}K$ which corresponds to the deformation $\sG_{\wt x}$
is induced by a $G$-cocharacter. The above compatibility of the $\CG$-Breuil-Kisin module $\sP_{\rm BK}$ 
with the initial $(\CG, \mu)$-shtuka $(\sP_{\bar x}, \phi_{\sP_{\bar x}})$ implies that this cocharacter is conjugate to $\mu^{-1}$.

The conditions a) and b) of Proposition \ref{adapted}  are now satisfied for $\wt x^*$ (the $p$-divisible
group $\sG_{\wt x}$   is   ``$(\CG, \mu^{-1})$-adapted", in the terminology of \cite{KZhou}).
It follows that the morphism $\wt x^*: R_E\to O_K$ inducing
$\sG_{\wt x}$ factors through $R_G\to O_K$. This gives a $K$-valued point of 
$\Spf(R_G)^{\rm rig}$ and hence $x$ belongs to $\Spf(R_G)^{\rm rig}(K)$.
\end{proof}

Now  $(\Ms)_\eta$ and $\Spf(R_G)^{\rm rig}$ are both smooth rigid analytic spaces over $\Sp(\br E)$ of the same dimension, both closed in $\Spf(R)^{\rm rig}_{\br E}$. Since $R_G$ is normal, $\Spf(R_G)^{\rm rig}$ is connected, cf. \cite[Lem. 7.3.5]{deJongCrys}. From Proposition \ref{includeK}  it now follows that, under our identifications,
\begin{equation}\label{genMsRG}
(\Ms)_\eta=\Spf(R_G)^{\rm rig}.
\end{equation}
Let us now complete the proof.
Both $\Spd(R_G)\hookrightarrow \Spd(R_E)$
and $\Ms\into \Spd(R_E)$ are closed immersions of $v$-sheaves.
By Proposition \ref{neutralIso} (b) and \cite[Prop. 3.4.9]{PRglsv}, $(\Ms)_\eta$ is ``topologically flat", i.e.
$|(\Ms)_\eta|$ is dense in $|\Ms|$. Similarly, $\Spd(R_G)$ is topologically flat by \cite[Lem. 3.4.2]{PRglsv}. 
Since by (\ref{genMsRG}), $|(\Ms)_\eta|=|\Spd(R_G)_\eta|$, we deduce that
$|\Ms|=|\Spd(R_G)|$. Hence, by \cite[Lem. 17.4.1]{Schber}, comp. also 
\cite[Prop. 12.15 (iii)]{Sch-Diam}, we have 
\[
\Ms=\Spd(R_G),
\]
under the identification of $\Spd(R_E)$ with  $ {\mathcal M}^{\rm int}_{\GL(\Lambda), \iota(b),\mu_d /\iota(x_0)}\times_{\Spd(W)}\Spd(O)$ induced by $\sG^{\rm univ}$.
 (Compare to the argument in the proof of Prop. \ref{adIso}, case (i).)
\end{proof}

 \begin{remark}\label{ANRem}
 In \cite[\S\S3.2, 3.3]{KP}, there is always the blanket  assumption that  $G$ splits over a tamely ramified extension of $\BQ_p$. This is because 
 essential use is made of ``purity" of $\CG^o$-torsors over $\Spec(W(k)[[u]])\setminus \{(u,p)\}$ which was shown in \cite[Prop. 1.4.3]{KP} under this  tameness hypothesis  on $G$. This purity is now proven in all cases by Ansch\"utz \cite{An}. With this additional ingredient, the proofs go through, see \cite{KZhou}, \cite{KPZ}. 
 Note that when $G$ is    essentially tamely ramified, then in \cite[\S 5]{PRglsv} there is a simpler proof of the purity statement in question. 
\end{remark}

\subsection{Reduction to the (very) good Hodge type case}\label{s:exHE}

 \begin{proposition}\label{goal}
  Let $(G, b, \mu)$ be a local Shimura datum of abelian type and $p> 2$. Let $\CG$ be a quasi-parahoric group scheme
  of $G$ and let ${\bf x}$ be a point in the building $\sB^e(G,\BQ_p)$ such that $\CG^\circ=\CG_{\bf x}^\circ\subset \CG\subset \CG_{\bf x}$. Write
  \begin{equation}\label{eqProduct}
  (G_\ad, \mu_\ad)\simeq \prod_{i=1}^m ({\rm Res}_{F_i/\BQ_p} H_i, \mu_i), 
  \end{equation}
  with all $H_i$ absolutely simple. Assume that all $\mu_i$ are non-trivial. 
  Then there exists a central lift $(G_1, b_1, \mu_1)$ of Hodge type for $(G, b, \mu)$ 
 as in Definition \ref{def:abtype} and a point ${\bf x}_1\in \sB^e(G_1,\BQ_p)$ with stabilizer group scheme $\CG_1=\CG_{{\bf x_1},1}$,  with the  following properties: 
 \begin{itemize}

 \item[1)]  The images of the points ${\bf x}_1$ and ${\bf x}$ in $\sB(G_\ad,\BQ_p)$ define the same parahoric subgroup scheme  of $G_\ad$.
 
  \item[2)] $E_1=E_\ad$.
  
  \item[3)] There is a integral Hodge embedding $(\CG_1,\mu_1)\hook (\GL(\La), \mu_d)$
 which satisfies (a), (b), (c) of Theorem \ref{thmRep} and which is very good. 
\end{itemize}
  \end{proposition}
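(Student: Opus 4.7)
The plan is to reduce to the case where $G_\ad$ is absolutely simple (after Weil restriction) and then construct the data case-by-case for each Dynkin type, drawing on the very-good Hodge embeddings supplied by \cite{KPZ}.

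First, I would exploit the decomposition \eqref{eqProduct}. A central lift of Hodge type for $(G,\mu)$ can be produced factor-by-factor: if for each $i$ I find $(G_{1,i},\mu_{1,i})$ of Hodge type with adjoint $(\mathrm{Res}_{F_i/\BQ_p} H_i,\mu_i)$, a stabilizer group scheme $\CG_{1,i}$ based at a lift $\mathbf{x}_{1,i}$ of $\mathbf{x}_i$, and a very good integral Hodge embedding $\iota_i\colon (\CG_{1,i},\mu_{1,i})\hookrightarrow (\GL(\La_i),\mu_{d_i})$, then I set $G_1=\prod_i G_{1,i}$, $\mu_1=\prod_i\mu_{1,i}$, $\CG_1=\prod_i\CG_{1,i}$, $\mathbf{x}_1=(\mathbf{x}_{1,i})$, and take the Hodge embedding to be the tensor product of the $\iota_i$ into $\GL(\bigotimes_i\La_i)$ with standard minuscule cocharacter $\mu_d$ ($d=\sum_i d_i$, $h=\prod_i h_i$). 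The conditions (a), (b), (c) and the very-good property pass to products, and the reflex field of the product is the compositum of the individual $E_{1,i}$, so we can arrange $E_1=E_\ad$ provided each $E_{1,i}=E_{\ad,i}$.

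Thus I would reduce to the case $G_\ad=\mathrm{Res}_{F/\BQ_p} H$ with $H$ absolutely simple and $\mu_\ad\neq 1$. Because $(G,\mu)$ is of abelian type, $H$ is a classical group (type $A$, $B$, $C$, or $D$), and for each type there is a natural minimal central lift: for type $A$ use an inner form of $\GL_n$ or a similitude unitary group $\mathrm{GU}$; for type $C$ use $\GSp_{2n}$; for type $B$ or $D$ use $\GSpin$. In each case the standard (or spin) representation produces a Hodge embedding $(G_1,\mu_1)\hookrightarrow(\GL_h,\mu_d)$ with $E_1=E_\ad$ (this last identity is the reason one picks these ``simply-connected with similitudes'' lifts rather than e.g. $G\times Z$ for a larger torus $Z$). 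The lift $\mathbf{x}_1$ is obtained as a preimage in $\sB^e(G_1,\BQ_p)$ of the image of $\mathbf{x}$ in $\sB(G_\ad,\BQ_p)$; since $G_1\to G_\ad$ induces an equivariant map of buildings identifying apartments, the induced parahoric at $\mathbf{x}_1$ matches that of $\mathbf{x}$, giving (1).

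The substantive content is condition (3): producing a $\BZ_p$-lattice $\Lambda$ carrying the standard representation so that $\CG_1\hookrightarrow\GL(\Lambda)$ is closed, the Zariski closure $\overline X_{\mu_1}$ in $\mathrm{Gr}(d,\Lambda)_{O_{E_1}}$ is normal, the image contains the scalars (immediate from the similitude structure), and the embedding is \emph{very good} in the sense recalled after \eqref{canonicaliso}. For the lattice, one uses the standard self-dual (or almost self-dual) lattice chains associated to $\mathbf{x}_1$ in each type, exactly as in \cite[\S2]{KP}. Normality of $\overline X_{\mu_1}$ follows from the local model results in \cite{AGLR} together with the coincidence with Pappas--Zhu local models (as noted in \S\ref{ss:LM}, this uses that we are in case (A) and $G$ is essentially tamely ramified by Remark \ref{acc}). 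The very-good property is the main obstacle: it requires that the canonical isomorphism $c_{y_0}$ of \eqref{canonicaliso} preserves the defining tensors for every $y_0$ in the local model. This is precisely what \cite{KPZ} establish for their class of Hodge embeddings arising from classical group lifts with explicit lattices; I would invoke their results type by type, which is legitimate because our reduction to a single absolutely simple factor with a standard classical similitude lift places us squarely inside the setting of \cite{KPZ}. The hard part is the bookkeeping needed to match their normalizations and lattice choices to the building point $\mathbf{x}_1$; once that is done, (3) follows from their main theorem and the verification is complete.
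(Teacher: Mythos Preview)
Your overall strategy---reduce to $\BQ_p$-simple adjoint factors, build an explicit central lift with a concrete representation, and invoke \cite{KPZ} for the very-good property---matches the paper's. But two points need correction.

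First, the product step is wrong as written. Tensoring the $\iota_i$ into $\GL(\bigotimes_i\Lambda_i)$ does not produce a Hodge embedding: the product cocharacter $\prod_i\mu_{d_i}$ acts on $\bigotimes_i\Lambda_i$ with weights $0,1,\ldots,m$, which is not minuscule for $m>1$ (and your formulas $d=\sum d_i$, $h=\prod h_i$ are incompatible anyway). The correct combination is the \emph{direct sum}: embed $\prod_i G_{1,i}\hookrightarrow\prod_i\GL(\Lambda_i)\hookrightarrow\GL(\bigoplus_i\Lambda_i)$, which gives $h=\sum h_i$, $d=\sum d_i$, and $\mu_d$ is genuinely minuscule. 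With this fix, (a), (b), (c) and the very-good property do pass to products.

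Second, your choice of lift in type $D^{\BH}$ diverges from the paper in a way that matters. You propose $\mathrm{GSpin}$ with a half-spin representation, but the paper follows Deligne's table \cite[1.3.9]{DeligneCorvallis}: for $D^{\BH}_n$ the ``underlined node'' is the simple endpoint, so the faithful quotient $H'_{\rm sc}$ acts through the \emph{standard} $2n$-dimensional representation, and $H_1$ is the neutral similitude group $\GO^+(V',h')$ or a quaternionic unitary similitude group ${\rm GU}^+(W,\phi)$---not $\mathrm{GSpin}$. This is not merely cosmetic: the lattices $\Lambda=\bigoplus_i\Lambda_i\subset V^{\oplus r}$ used to verify (a), (b) and the very-good condition in \cite{KPZ} are built from lattice chains in this standard representation, and the normality of $\overline{X}_{\mu_1}$ is read off from the known flatness/normality results for the corresponding (Pappas--Zhu style) local models. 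Whether a $\mathrm{GSpin}$ embedding via the spin representation satisfies (b) and is very good is a separate question not covered by the references you cite. A smaller omission: the paper lifts not ${\bf x}_\ad$ itself but a nearby point ${\bf x}'_\ad$ \emph{generic in its facet} (with the same parahoric), which is needed so that the stabilizer scheme $\CG_1$ has the intended form and sits correctly over the lattice-chain description used in \cite{KPZ}.
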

  
  \begin{proof} 
  We  first observe that we may assume that $G_\ad$ is $\BQ_p$-simple. Indeed,  we can construct first $(G_1,  \mu_1 )$ and then $\CG_1$ and 
  the integral Hodge embedding, by taking the product over the factors in (\ref{eqProduct}). From now on let $G_\ad=\Res_{F/\BQ_p}(H)$, with $H$ absolutely simple over $F$.

  We index by $I$  the set $\{\phi_v\}_{v\in I}$ of embeddings $\phi_v: F\into \bar\BQ_p$.
  For each $v\in I$, we set $\sD_v$ for the Dynkin diagram of $H\otimes_{F, \phi_v}\bar\BQ_p$.
  Also write 
  \[
  \mu: {\BG_m}_{/\bar\BQ_p}\to ({\rm Res}_{F/\BQ_p}H)\otimes_{\BQ_p}{\bar\BQ_p}=\prod_{v\in I} H\otimes_{F, \phi_v}\bar\BQ_p,\quad 
  \mu=(\mu_v)_{v\in I}.
  \]
By the arguments in \cite[1.3.6, 1.3.8]{DeligneCorvallis}, the condition that $(G_\ad, \mu_\ad)$ is of abelian type implies 
that for each $v\in I$, $\mu_v$ is either trivial, or it corresponds to a node $s_v$ in the Dynkin diagram
$\sD_v$ which is among the ``encircled nodes" of the diagrams displayed in the table \cite[1.3.9]{DeligneCorvallis}.
In fact, $(G_\ad, \mu_\ad)$ is one of  types ${\bf A}$, ${\bf B}$, ${\bf C}$, ${\bf D}^{\BR}$, ${\bf D}^{\BH}$ as described
in \cite[2.3.8]{DeligneCorvallis}, see also \cite[\S 3, Annexe]{Serre}. For example, if $n>4$, ${\bf D}^{\BH}$ means that each (non-trivial) factor is $(D_n, \omega^\vee_n)$ or $(D_n, \omega^\vee_{n-1})$ and ${\bf D}^{\BR}$ means that each factor is $(D_n, \omega^\vee_1)$, cf. \cite[\S 5]{HLR}.  Note that, by \cite[\S 3, Cor. 2]{Serre}, since $\mu$ is not trivial, $H$ cannot be a trialitarian form $D_4^{(3)}$ or $D^{(6)}_4$. Recall that we assume $p>2$. Hence, it follows that $H$ splits over a tamely ramified extension of $F$.

Denote by $I_c$ the set of $v\in I$ for which $\mu_v$ is trivial and set $I_{nc}=I\setminus I_c$.
Set 
\[
\sD=\sqcup_{v\in I}\sD_v .
\]
 The Galois group 
$\Gamma=\Gal(\bar\BQ_p/\BQ_p)$ acts on $\sD$ compatibly with its action on the set of embeddings $I$.

By our assumption $\mu$ is not trivial, so $I_{nc}\neq\emptyset$.
 Let $S\subset \sD$  be a Galois stable subset such
that, for each $v\in I_{nc}$, $S\cap \sD_v=\{\underline s_v\}$, a node in $ \sD_v$ from the underlined nodes in the 
table \cite[1.3.9]{DeligneCorvallis} for $(\sD_v, s_v)$.\footnote{The table in \cite[1.3.9]{DeligneCorvallis} requires a small correction which is explained in the translation of the paper
on Milne's webpage: on the first line (case ${\bf A}$) all nodes should be underlined if $p=1$. But in our construction we do not use these omitted 
nodes anyway.}
A node $s\in S\cap\sD_v$ gives an irreducible representation 
\[
\rho_{s}: G_{{\rm sc}, \bar\BQ_p}\to  H_{\rm sc}\otimes_{F, \phi_v}\bar\BQ_p\to \GL(V(s))
\]
over $\bar\BQ_p$. Here,  $ G_{\rm sc}$ and $ H_{\rm sc}$  are the simply connected covers. 

If $(G_\ad,\mu_\ad)$ is of type ${\bf A}_n$ we can choose $\underline s_v$, for $v\in I_{nc}$, to be one of the two endpoint nodes; these 
correspond to the standard representation of $H_{\rm sc}\otimes_{F, \phi_v}\bar\BQ_p\simeq {\rm SL}_{n+1}$ and its dual. In fact, if $H\simeq {\rm PGL}_m(D)$, with $D$ a division algebra over $F$, then we can and will choose every $\underline s_v$, for $v\in I_{nc}$, to be the node that corresponds to the standard representation. Then, $\underline s_v$ is the node
corresponding to the standard representation for all $v$.
If $(G_\ad,\mu_\ad)$ is of type ${\bf D}^\BH_n$, then $\underline s_v$, for each $v$, is the node given by the simple 
endpoint of the type $D_n$ Dynkin diagram.

In all cases, consider
\[
 \bigoplus_{s\in S} V(s)^{\oplus n}
\]
which, for sufficiently divisible $n$, gives a representation $V$ of $ G_{\rm sc}$ defined over $\BQ_p$;
denote by $V_s$ the unique irreducible factor of $V_{\bar\BQ_p}$ isomorphic to $V(s)^{\oplus n}$;
the Galois group $\Gamma$ permutes the factors $V_s$ by an action compatible with its action on $S\subset \sD$.
There is a finite \'etale $\BQ_p$-algebra $K_S$ such that ${\rm Hom}_{\BQ_p}(K_S, \bar\BQ_p)\simeq S$ as $\Gamma$-sets
and so the decomposition
\[
V\otimes_{\BQ_p}\bar\BQ_p\simeq \bigoplus_{s\in S} V_s
\]
is induced by a corresponding $K_S$-module structure on $V$ which is such that
on $V_s$, $K_S$ acts via the map $K_S\to\bar\BQ_p$ that corresponds to $s$.
Note that since $S\to I$ is $\Gamma$-equivariant, $K_S$ is naturally an $F$-algebra.
It is a product $K_S=\prod_j K_j$ of field extensions $K_j$ of $F$ which are all at most tamely ramified over $F$.
The units $K_S^\times=\prod_j K^\times_j$ are the points of a torus $T'={\rm Res}_{F/\BQ_p} \prod_j T_j$
with $T_j/F$ splitting over the tame extension $K_j/F$. We have $K^\times_S\subset \GL(V)$, i.e. $T'\into \GL(V)$,
and this centralizes the map $G_{\rm sc}\to \GL(V)$.
Denote by $ G'_{\rm sc}$ the quotient of $ G_{\rm sc}$ which acts faithfully on $V$; then
\[
 G_{\rm sc}\to G'_{\rm sc}\into \GL(V).
\]
The group $ G'_{\rm sc}$ is the restriction of scalars $G'_{\rm sc}={\rm Res}_{F/\BQ_p} H'_{\rm sc}$, with $H'_{\rm sc}$ a quotient of $ H_{\rm sc}$.
In fact, we see that $ H'_{\rm sc}= H_{\rm sc}$ in all cases, except for groups of type ${\bf D}$. In the case of type ${\bf D}$,
the kernel of $H_{\rm sc}\to  H_{\rm sc}'$ is either trivial in type ${\bf D}_n^\BR$, or of order $2$ in type ${\bf D}^\BH_n$.
Set
\[
G_1:=G'_{\rm sc}\cdot T'\into \GL(V).
\]
We have $G_1={\rm Res}_{F/\BQ_p} H_1$, with  $H_1= H_{\rm sc}'\cdot (\prod_j T_j)$. As in \cite{DeligneCorvallis}, we see that $\mu: {\BG_m}_{/\bar\BQ_p}\to (G_{\ad})_{\bar\BQ_p}$ lifts to a 
fractional cocharacter $\mu'$ of $ G'_{{\rm sc}, \bar\BQ_p}$. The 
 composition of this fractional cocharacter with  $G'_{{\rm sc}, \bar\BQ_p}\into \GL(V_{\bar\BQ_p})\to \GL(V_s)$ is  trivial when $s$ maps to $v\in I_{c}$, in which case we set $V_s=V_s(0)$.  When $s$ maps to $v\in I_{nc}$, then this composition  has exactly two distinct weights $r_v$ and $r_v-1$, with $r_v$ as indicated in the table \cite[1.3.9]{DeligneCorvallis}, so that 
\[
V_s=V_s({r_v-1})\oplus V_s({r_v}) .
\]
We can now consider the direct sum decomposition of $V_{\bar\BQ_p}$ into two summands: one, $V_{\bar\BQ_p}(0)$, given as the sum of all $V_s(0)$ and all $V_s({r_v-1})$,
and the other, $V_{\bar\BQ_p}(1)$, given as the sum of all $V_s(r_v)$. This decomposition defines a cocharacter $\mu_1$ of $\GL(V)_{\bar\BQ_p}$ 
acting by weights $0$ and $1$ on these two summands and $\mu_1$ factors through $(G_1)_{\bar\BQ_p}=(G'_{\rm sc}\cdot T')_{\bar\BQ_p}$. Then $G_1\hook \GL(V)$
gives a Hodge embedding for $(G_1,\mu_1)$ and $(G_{\ad, 1},\mu_{\ad, 1})\simeq (G_\ad, \mu_\ad)$.

We will see that the above construction produces a desired $(G_1,\mu_1)$ but we also need to explain how to choose ${\bf x}_1$ and hence the corresponding stabilizer group scheme
$\CG_1$ for $G_1$. We let ${\bf x}_\ad$ be the point corresponding to ${\bf x}$ in the building $\sB(G,\BQ_p)=\sB^e(G_\ad, \BQ_p)$. Choose a ``nearby" point ${\bf x}_\ad\in 
\sB(G,\BQ_p)$ which is generic in its facet and is such that the parahoric group schemes of $G_\ad$ for ${\bf x}'_\ad$ and ${\bf x}_\ad$ coincide, i.e. have the 
same $\breve\BZ_p$-points.  Now lift ${\bf x}'_\ad$ to ${\bf x}_1$ under the canonical map
 $\sB^e(G_1, \BQ_p)\to \sB(G_\ad, \BQ_p)$. This defines the stabilizer group scheme $\CG_1=\CG_{{\bf x}_1,1}$.

We now verify that the pair $(G_1,\mu_1)$ and the point ${\bf x}_1\in \sB^e(G_1, \BQ_p)$ satisfy the desired conditions (1), (2) and (3).
Conditions (1) and (2) follow immediately from the construction, and it remains to explain Condition (3). This calls for the construction of a suitable integral
Hodge embedding $\iota: (\CG_1,\mu_1)\hook (\GL(\La),\mu_d)$; we will explain how this can be done by choosing the lattice $\La$ in a representation
obtained from $V$ above.

If $(G_\ad,\mu_\ad)$ is of type ${\bf A}$ and $H\simeq {\rm PGL}_m(D)$, with $D$ a division algebra over $F$, then for the above choices,
$V$ is isomorphic to a direct sum of copies of the representation of $\SL_m(D)$ given by the action on $D^m$, considered as a $\BQ_p$-vector 
space; this extends to a representation of $G_1=\Res_{F/\BQ_p}\GL_m(D)$.

If $(G_\ad,\mu_\ad)$ is of type 
${\bf D}_n^\BH$, then for the above choices, $G_1=\Res_{F/\BQ_p}H_1$, with $H_1$ the neutral component of
an orthogonal similitude group over $F$ and $V$ is isomorphic to the restriction of scalars of its standard representation.  
More precisely, as in \cite[\S 5.3.8]{PZ}, \cite{T}, we see that $H_1$ and $V$ are as in one of the following cases:

 a) There is an $F$-vector space $V'\simeq F^{2n}$ and a perfect symmetric $F$-bilinear $h': V'\times V'\to F$ such that $H_1=\GO^+(V', h')$
 where, as usual, for an $F$-algebra $R$,
 \[
 \GO(V', h')(R)=\{ \GL_{R}(V'\otimes_F R)\ |\ h'( gw, gw')=c(g)\psi(w, w'), \ c(g)\in R^\times\},
 \]
 and ${}^+$ signifies taking the neutral component. 
 Then the representation space $V$ is a direct sum of copies of $V'$ considered as an $\BQ_p$-vector space by restriction of scalars.

 b) There is a (left) $D$-module $W\simeq D^n$ for a division quaternion $F$-algebra $D$ and a non-degenerate anti-hermitian form $\phi: W\times W\to D$ 
 for the main involution on $D$, such that $H_1={\rm GU}^+(W, \phi)$, where ${\rm GU}(W, \phi)$ is a unitary similitude group defined as follows: Consider the alternating $F$-bilinear form $\psi: W\times W\to F$ given by 
 \[
 \psi(w_1, w_2)={\rm Tr}^0(\phi(w_1, w_2)).
 \]
where ${\rm Tr}^0: D\to F$ is  the reduced trace
(cf.    \cite[\S 5.3.8]{PZ}, \cite[Prop. A.53]{R-Z}, applied to $n=1$.) 
For an $F$-algebra $R$, 
\[
{\rm GU}(W, \phi)(R)=\{ \GL_{D\otimes_F R}(W)\ |\ \psi( gw, gw')=c(g)\psi(w, w'), \ c(g)\in R^\times\}.
\]
The representation space $V$ is a direct sum of copies of $W$ considered as an $\BQ_p$-vector space of dimension $4n$ by restriction of scalars. 

The existence of an integral Hodge embedding $\iota: (\CG_1,\mu_1)\hook (\GL(\La),\mu_d)$ satisfying (a), (b), and (c) of Theorem \ref{thmRep} and which is very good now follows
from \cite[\S 6]{KPZ}: see \cite[Thm. 6.1.1]{KPZ} (for ``non-exceptional cases'', see loc. cit. Remark 6.1.10) and \cite[Thm. 6.3.2]{KPZ} (for the remaining cases of type ${\mathbf A}$), and \cite[Thm. 6.2.3]{KPZ} (for type ${\mathbf D}^\BH_n$).  
In all cases, $\Lambda=\oplus_{i=1}^r\Lambda_i\subset V^{\oplus r}$ is a lattice in a direct sum of $r$ copies of the representation $V$ as given above, for some $r\geq 1$. The lattice $\Lambda$ is obtained by summing up lattices $\Lambda_i\subset V$ in a suitable lattice chain in $V$.
\end{proof}

\section{Proofs of Theorems \ref{thmRepgoal} and \ref{MainThm}}\label{s:proofs}
In this section we prove our main theorems. We treat the cases (A) and (B) separately. 

\subsection{Proof of Theorem \ref{thmRepgoal} in case (A)} 
Let $(G, b, \mu)$ be of abelian type and of type (A), and let $\CG$ be quasi-parahoric.   Write $(G_\ad,b_\ad, \mu_\ad)= 
\prod_i({\rm Res}_{F_i/\BQ_p}H_i, b_i, \mu_i)$, where $H_i$ is absolutely simple.  We split $(G_\ad,b_\ad, \mu_\ad)$ into the product of two factors: in the first factor we lump together all components   $({\rm Res}_{F_i/\BQ_p}H_i, b_i, \mu_i)$ where $\mu_i$ is trivial, and in the second factor we lump together all components  $({\rm Res}_{F_i/\BQ_p}H_i, b_i, \mu_i)$ where $\mu_i$ is non-trivial.  Let us first assume that the first factor is trivial.

 Write $\CG_{\bf x}^0\subset \CG\subset \CG_{\bf x}$, with $\CG_{\bf x}(\br\BZ_p)$ the stabilizer in $G(\br\BQ_p)$ of a point $\bf x$ in the 
extended building $\sB^e(G, \BQ_p)$ of $G(\BQ_p)$. Using Proposition \ref{goal} we construct $(G_1, b_1,\mu_1)$ of Hodge type
with $\CG_1$ a stabilizer group scheme of $G_1$ such that there is a (very good) integral Hodge embedding
\[
(\CG_1,\mu_1)\hookrightarrow (\GL(\Lambda),\mu_d)
\]
satisfying all the conditions of Theorem \ref{thmRep}. By the construction, we have a group scheme homomorphism
$\CG_1\to \CG'_\ad:=\CG_{\ad, {\bf x}'_\ad}$ extending $
 G_1\to G_\ad$. We similarly have  $\CG\to \CG_\ad:=\CG_{\ad, {\bf x}_\ad}$, giving $G\to G_\ad$. Note $\CG^\circ_{\ad}=\CG'^\circ_{\ad}$. 

 First note the natural isomorphisms
\begin{equation}\label{passLM}
 \BM^{\rm loc}_{\CG, \mu}\simeq  \BM^{\rm loc}_{\CG_\ad, \mu_\ad}
\times_{\Spec(O_\ad)}\Spec(O)\simeq  \BM^{\rm loc}_{\CG_1, \mu_1}
\times_{\Spec(O_1)}\Spec(O)
\end{equation}
obtained from \cite[Prop. 21.5.1]{Schber}, and $ \BM^{\rm loc}_{\CG^\circ_1, \mu_1}\simeq  \BM^{\rm loc}_{\CG_1, \mu_1}$, $ \BM^{\rm loc}_{\CG^\circ, \mu}\simeq  \BM^{\rm loc}_{\CG, \mu}$, 
from \cite[Prop. 21.4.3]{Schber}. These induce isomorphisms of corresponding formal completions.

 We now apply  Theorem \ref{thmRep} to 
 $(\CG_1, b_1, \mu_1)$ and obtain, for each  $x_1\in \CM^{\rm int}_{\CG_1, b_1, \mu_1}(\Spd(k))$, an isomorphism 
  \[
 \CM^{\rm int}_{\CG_1, b_1, \mu_1 /x_1} \simeq ( \BM^{\rm loc}_{\CG_1,\mu_1 /y_1})^\diam, 
 \]
 where $y_1$ is a corresponding point in  $\BM^{\rm loc}_{\CG_1,\mu_1}(k)\simeq \BM^{\rm loc}_{\CG^\circ_1,\mu_1}(k)$ whose $\CG_1(k)$-orbit $\ell(x_1)$ is well-defined and determined by $x_1$, see (\ref{Lmap}). It then follows from Proposition \ref{neutralIso} (b)
 that, for $x_1\in \CM^{\rm int}_{\CG^\circ_1, b_1, \mu_1}(\Spd(k))$, we also have  
 \[
 \CM^{\rm int}_{\CG^\circ_1, b_1, \mu_1 /x_1} \simeq ( \BM^{\rm loc}_{\CG^\circ_1,\mu_1 /y_1})^\diam.
 \]
 On the other hand, Proposition \ref{adIso} applied to the map $(\CG^\circ_1, b_1, \mu_1)\to (\CG'^\circ_\ad, b_\ad, \mu_\ad)$ induced by $\CG_1\to \CG'_\ad$,
  gives
  \[
 \CM^{\rm int}_{\CG^\circ_1, b_1, \mu_1 /x_1} \xrightarrow{\sim}   \CM^{\rm int}_{\CG'^\circ_\ad, b_\ad, \mu_\ad /x'_1}= \CM^{\rm int}_{\CG^\circ_\ad, b_\ad, \mu_\ad /x'_1},
 \]
 where $x'_1$ is the image of $x_1$. Here, the last equality follows from $\CG^\circ_{\ad}=\CG'^\circ_{\ad}$.
 Combining these we obtain  isomorphisms
 \begin{equation}\label{passad}
 \CM^{\rm int}_{\CG^\circ_\ad, b_\ad, \mu_\ad /x'_1}\simeq  (\BM^{\rm loc}_{\CG, \mu/y'_1})^\diam\simeq (\BM^{\rm loc}_{\CG^\circ_\ad, \mu_\ad /y_\ad})^\diam,
 \end{equation}
 for all points $x'_1$ of  $\CM^{\rm int}_{\CG^\circ_\ad, b_\ad, \mu_\ad}(\Spd(k))$ which are in the image
 of  $ \CM^{\rm int}_{\CG^\circ_1, b_1, \mu_1}(\Spd(k))$ under the natural map induced by $\CG^\circ_1\to \CG^\circ_\ad$. Here,  $y'_1$ and $y_\ad$ correspond to $y_1$ above under the isomorphisms given by (\ref{passLM}).
 By \S \ref{ss:adADLV}, this set of points of  $\CM^{\rm int}_{\CG^\circ_\ad, b_\ad, \mu_\ad}(\Spd(k))$ is the set of $\Spd(k)$-points of a union of ``components" 
 $\CM^{\rm int,\tau}_{\CG^\circ_\ad, b_\ad, \mu_\ad}$, for a certain set of $\tau$. Using Proposition \ref{actJ} applied to $G_\ad$  and the fact that the map $\ell$ of (\ref{Lmap}) is constant on each $J_b(\BQ_p)$-orbit, we see that
 the same conclusion, i.e. the isomorphism (\ref{passad}), follows for   points   in all components of $\CM^{\rm int}_{\CG^\circ_\ad, b_\ad, \mu_\ad}$ and, therefore, for all points in $\CM^{\rm int}_{\CG^\circ_\ad, b_\ad, \mu_\ad}(\Spd(k))$.
 We obtain that  Theorem \ref{thmRepgoal} holds for $(G_\ad, b_\ad,\mu_\ad)$ and all parahoric subgroups of $G_\ad$. Therefore, by the argument in the proof of Theorem \ref{varyGthm} (which uses Theorem \ref{quasiThm}), the result also holds for $(G_\ad, b_\ad,\mu_\ad)$ and all quasi-parahoric subgroups of $G_\ad$. In particular, it holds for $\CG_\ad$. Theorem \ref{thmRepgoal} for $(G,b,\mu)$, $\CG$ and $x\in \CM^{\rm int}_{\CG, b, \mu}(\Spd(k))$ now follows from the above
combined with Proposition \ref{adIso} applied to $(\CG, b, \mu)\to (\CG_\ad, b_\ad, \mu_\ad)$ and (\ref{passLM}).
 
 This concludes the proof of Theorem \ref{thmRepgoal} when the first factor of $(G_\ad,b_\ad, \mu_\ad)$ is trivial.

Now let us consider the general adjoint case. Using the compatibility of $\CM^{\rm int}_{\CG, b, \mu}$ and $\BM^v_{\CG, \mu}$ with products (cf. \S \ref{ss:inlocshim}), it suffices to consider each factor separately. The second factor has been treated above. For the first factor the assertion follows from section \ref{s:strivmu}. Now the passage from the adjoint case to the general case  follows by the same argument as above. 
\qed

\subsection{Proof of Theorem \ref{MainThm} in case (A)}  
Via the ad-isomorphism $G\to G_\ad$
it  follows using Theorem \ref{thm451}  that 
$\CM^{\rm int}_{\CG, b,\mu}$ is representable by a normal formal scheme locally formally of finite type over $\br O$, provided this representability holds for the adjoint group. As in the above proof, we may consider separately the case where $\mu_\ad$ is trivial and the case where all components of $\mu_\ad$ are non-trivial. The first case follows from section \ref{s:strivmu}. Let us consider the second case.  

As in the above proof, 
using Proposition \ref{goal} we construct $(G_1, b_1,\mu_1)$ of Hodge type
with $\CG_1$ a stabilizer group scheme such that there is a (very good) integral Hodge embedding
\[
(\CG_1,\mu_1)\hookrightarrow (\GL(\Lambda),\mu_d)
\]
satisfying all the conditions of Theorem \ref{thmRep}.
Then, using Theorem \ref{thmRep} we deduce that $\CM^{\rm int}_{\CG_1, b_1,\mu_1/x}$ is representable by the formal spectrum of a noetherian normal complete local ring, for any $x\in \CM^{\rm int}_{\CG_1, b_1,\mu_1}(\Spd(k))$. The   proof of \cite[Thm. 3.7.1]{PRglsv} now applies and we obtain that  
$\CM^{\rm int}_{\CG_1, b_1,\mu_1}$ is representable by a formal scheme which is normal and flat locally formally of finite type over $\br O$. 
(Note that \cite[Thm. 3.7.1]{PRglsv} is stated   for parahoric $\CG$. However, given the construction of the specialization map for quasi-parahorics as in \S\ref{def:spec} above, the argument extends in a straightforward fashion to our situation, in which $\CG_1$ is quasi-parahoric.
The argument in loc. cit. requires $\CG_1(\br\BZ_p)=\GL(\Lambda\otimes_{\BZ_p}\br\BZ_p)\cap G_1(\br\BQ_p)$ and this holds here since $\CG_1\hookrightarrow \GL(\Lambda)$ is a closed immersion.) Since the maps $G\to G_\ad$ and $G_1\to G_\ad$ are both ad-isomorphisms,
it now follows using Theorem \ref{thm451} twice,  together with Proposition \ref{actJ} applied to $G_\ad$, that 
$\CM^{\rm int, \tau}_{\CG, b,\mu}$, for each $\tau\in \Omega_G$, is representable by such a formal scheme.
It follows that $\CM^{\rm int}_{\CG, b,\mu}=\sqcup_\tau \CM^{\rm int, \tau}_{\CG, b,\mu}$
is representable by a formal scheme which is normal and flat locally formally of finite type over $\br O$.
\qed

\subsection{Proof of Theorems \ref{thmRepgoal}, \ref{MainThm}  in case (B)}

Recall that in case (B) we have $p=2$ and $G_\ad=\prod_{i=1}^m{\rm Res}_{F_i/\BQ_2} H_i$, 
with $H_i=B^\times_i/F_i^\times$, or $H_i={\rm PGSp}_{2n_i},$   or $\mu_i$ trivial. As in the proofs of Theorems \ref{MainThm}  and \ref{thmRepgoal} in the case (A),  we can easily reduce to 
the case that $m=1$ and assume $G_\ad= {\rm Res}_{F/\BQ_2} H$ with 
$H=B^\times/F^\times$, or $H={\rm PGSp}_{2n}$. In the first subcase, we take $G_1={\rm Res}_{F/\BQ_2} B^\times$. In the second subcase, we first set 
\[
\langle v, w\rangle={\rm Tr}_{F/\BQ_2}(v, w)
\]
where $(v, w)$ is the standard perfect alternating $F$-bilinear form on $F^{2n}$. Then we take
$G_1=J$, with the group $J$ defined by
\[
J(R)=\{g\in \GL_{2n}(F\otimes_{\BQ_p}R)\ |\ \langle gv, gw \rangle=c(g) \langle v, w\rangle, \ c(g)\in R^\times\}.
\]
In each subcase, we lift $\mu_\ad$ to a corresponding minuscule $\mu_1$.

The quasi-parahoric $\CG$ gives a corresponding  point $\bf x$ in the extended Bruhat-Tits building $\sB^e(G, \BQ_2)$, so that
$\CG^o=\CG^o_{\bf x}(\br\BZ_2)\subset \CG(\br\BZ_2)\subset \CG_{\bf x}(\br\BZ_2)$. 
Consider the stabilizer group scheme $\CG_1:=\CG_{1,{\bf x}_1}$ of $G_1$ which corresponds to a point ${\bf x}_1$ in $\sB^e(G_1,\BQ_2)$
that lifts the point ${\bf x}_\ad$ in the building $\sB(G_\ad, \BQ_2)$, as in the argument in the proof for the case (A) above.  The devissage results of \S\ref{s:parvs} (Theorem \ref{varyGthm} and its proof), allows us to replace $\CG_{1,{\bf x}_1}$ by the stabilizer group scheme of a point which is generic in the smallest facet that contains it, and still has the same  parahoric neutral component. Therefore, we can assume that ${\bf x}_1$ already has this ``genericity" property.  
   We can now see, using \cite[App. to Chapt. 3]{R-Z} and the standard explicit description of the buildings for these groups (\cite{BTcl}, \cite{BTcl2}), that there is a lattice  chain $(\CL)$, resp. a self-dual lattice  chain $(\CL)$, such that the stabilizer group scheme above is given as a scheme theoretic stabilizer of $(\CL)$ in $G_1$. 
The data $(G_1, b_1, \mu_1)$ together with the  lattice chain $(\CL)$ determine integral EL-, resp.  PEL-data $\CD$
as in \cite{R-Z}, see also \cite[Def. 24.3.3]{Schber}. 

Consider the corresponding   RZ formal scheme $\CM_\CD^{\rm naive}$ (as defined in \cite{R-Z}; the hypothesis $p\neq 2$ is not needed in this case). Then  $\CM_\CD^{\rm naive}$ is a formal scheme  
locally formally of finite type over $O_{\br E}$. Under our assumptions, $\CM^{\rm naive}_\CD$ has formal completions
at closed points which agree with those of the naive local model $\BM^{\rm naive}_\CD$ (\cite[Prop. 3.33]{R-Z}); recall that this follows by Grothendieck-Messing deformation theory which applies for $p=2$. Note that in general, $\BM^{\rm naive}_\CD$ and $\CM^{\rm naive}_\CD$ are not flat over 
$O_{\br E}$ (this accounts for the terminology ``naive".)

Set 
$
\CM_\CD:=\CM_\CD^{\rm flat}
$
 to be the closed
formal subscheme of $\CM^{\rm naive}_\CD$ given by chains of $2$-divisible groups which are
``$\BM^{\rm loc}_{\CG_1,\mu_1}$-admissible" in the sense of \cite[Lect. 24, 25]{Schber}. Note that under our assumptions,
$\BM^{\rm loc}_{\CG_1,\mu_1}$ is a closed subscheme of the ``naive" local model $\BM_{\CD}^{\rm naive}$
which is identified with the flat closure of its generic fiber; in particular, this flat closure has reduced special fiber. Indeed, in the case where $G_1={\rm Res}_{F/\BQ_2} B^\times$, this follows from \cite[Thm. 7.3]{PR-lm2} (based on G\"ortz \cite{G-lm1}). In the case where $G_1=J$, this follows from \cite[Thm. 12.4]{PR-lm2} (based on \cite{G-lm2,G-lm3}).
 (These two results also follow from the proof of the 
coherence conjecture by Zhu \cite{ZhuPR}.)  

By \cite[Cor. 25.1.3]{Schber}, we have
\[
(\CM_\CD)^\diam\simeq \CM^{\rm int}_{\CG_1, b_1, \mu_1}
\]
as $v$-sheaves over $\Spd(O_{\br E})$.  It follows that $\CM^{\rm int}_{\CG_1, b_1, \mu_1}$ is represented by the formal scheme  $\CM_\CD$. By its construction and the above discussion, we obtain that the formal scheme $\CM_\CD$ has formal completions
at closed points which agree with those of $\BM^\loc_{\CG_1,\mu_1}$; in particular, $\CM_\CD$ is normal and 
flat over $O_{\br E}$ since these properties hold for $\BM^\loc_{\CG_1,\mu_1}$.  Hence  the representability conjecture \ref{repconj} holds and  $\sM_{\CG_1, b_1, \mu_1}=\CM_\CD$. We can now pass from $(G_1, b_1, \mu_1)$ and $\CG_1$, to $(G, b, \mu)$
and $\CG$, by the same devissage as  in the proofs in case (A). This completes the proofs of 
Theorems \ref{thmRepgoal} and \ref{MainThm} in case (B).
\qed

\begin{remark}
Our strategy for the proofs of Theorems \ref{thmRepgoal} and \ref{MainThm} in case (B) could be applied
to more EL/PEL cases than the ones currently given,  provided that in the corresponding cases  the results of \cite[App. to Chapt. 3]{R-Z} can be
suitably modified  for $p=2$. An example is given by the unramified unitary group, cf. \cite[App. A]{RSZ-unit}.  On the other hand, it is also reasonable to expect
that the strategy 
of the proof in case (A) could be extended to $p=2$ (at least assuming  that $G$ is   essentially tamely ramified), if the constructions of \cite{KP}, \cite{KPZ} can be extended to cover 
$p=2$. Such an extension was given in \cite{MPKim} in the hyperspecial case, i.e. when $\CG$ is reductive over $\BZ_2$, and in \cite{Yang} in some parahoric cases.
\end{remark}

\end{document}